\date{}
\newcommand{\const}{ (i \pi)^{-1}}
\newcommand{\constinv}{ i \pi }
\newcommand{\ubar}{\br{u}}
\newcommand{\I}{\mathscr{I}}
\newcommand{\Atiyah}{\op{At}}
\newcommand{\Id}{\op{Id}}
\newcommand{\wbar}{\br{w}}
\newcommand{\Fields}{\E}
\newcommand{\F}{\mathcal{F}}
\newcommand{\jhol}{\mscr{J}^{hol}}
\theoremstyle{thm}
\begin{document}
\title{A geometric construction of the Witten genus, II}
\author{Kevin Costello}
\thanks{Partially supported by NSF grants DMS 0706945 and DMS 1007168, and by a Sloan fellowship.}

\address{Department of Mathematics, Northwestern University.}
\email{costello@math.northwestern.edu}

\maketitle

\section*{Introduction}
Let $X$ be a compact complex manifold with vanishing second Chern character.  The \emph{Witten genus} of $X$ was introduced in the physics literature by Witten \cite{Wit87} and Alvarez, Killingback, Mangano and Windey \cite{AlvKilMan87} as the partition function of a $2$-dimensional quantum field theory built from maps from an elliptic curve to $X$.   

The Witten genus has an expression in terms of the characteristic numbers of $X$, as follows.   If $E$ is an elliptic curve with a holomorphic volume form $\omega$,  let $E_{2k}(E,\omega)$ be the Eisenstein series evaluated on $(E,\omega)$.  If $E = \C / \Lambda$ with volume form $\d z$, then the Eisenstein series is given by the formula
$$
E_{2k}(E,\omega) = \sum_{\lambda \in \Lambda \setminus \{0\}} \lambda^{-2k}.  
$$
The Witten class 
$$\op{Wit}(X,E,\omega) \in \oplus H^i(X, \Omega^i_X)$$ 
is defined by the formula
$$
\log \op{Wit}(X,E,\omega) = \sum_{k \ge 2} \frac{(2k-1)!}{ (2 \pi i)^{2 k}}  E_{2k} (E,\omega) \op{ch}_{2k} (T X ).
$$
The Witten genus of $X$ is the integral over $X$ of the component of the Witten class lying in $H^{n}(X,\Omega^n_X)$. 

In this paper, I give a rigorous justification of the original physics definition of the Witten genus.  I define a $2$-dimensional quantum field theory built from maps from an elliptic curve to $X$, and show that the partition function of this theory is the Witten genus of $X$. 

\subsection{}
Let me describe how the classical field theory I consider is constructed, and how the partition function is defined.  

In \cite{Cos11b}, I define a class of classical field theories which I call \emph{cotangent field theories}.  Given any system of elliptic differential equations\footnote{A formal definition of the kind of elliptic differential equations of interested is given in \cite{Cos11b}: I call them \emph{elliptic moduli problems}.} on a manifold $M$ one can construct the corresponding cotangent theory.  In this paper, we are interested in a field theory on an elliptic curve $E$, defined as the cotangent theory to the (derived) moduli space of degree $0$ holomorphic maps from $E$ to a complex manifold $X$. 

In \cite{Cos11} I develop a definition of \emph{quantization} of a classical field theory.  This definition has the property that the space of possible quantizations of a classical field theory on a manifold $M$ is the global sections of a sheaf on $M$.  Further, quantizations have a descent property: if a group $G$ acts properly discontinuously on $M$, then a $G$-equivariant quantization of an equivariant classical theory on $M$ descends to a quantization of the corresponding theory on $M / G$.

The Witten genus will arise for us from quantizing the cotangent theory of degree $0$ holomorphic maps from an elliptic curve $E$ to $X$.  In order to construct such a quantization for every $E$, it suffices to construct a quantization of the corresponding theory on the complex plane $\C$. 
\begin{theorem*}
Consider the cotangent theory to holomorphic maps from $\C$ to a complex manifold $X$ (where we work in the formal neighbourhood of constant maps).  

There is a natural bijection between
\begin{enumerate}
\item Quantizations of this theory, invariant under $\op{Aff}(\C)$ and an additional $\C^\times$ action (to be discussed later).
\item Trivializations of the second Chern character $ch_2(T X ) \in H^2(X, \Omega^2_{cl}(X))$. 
\end{enumerate}
\end{theorem*}
Here $\Omega^{2}_{cl}(X)$ denotes the sheaf of closed holomorphic $2$-forms on $X$.
\begin{corollary}
A trivialization of $ch_2(TX)$ leads to a quantization of the cotangent theory to the moduli space of degree $0$ holomorphic maps $E \to X$ for every elliptic curve $E$. 
\end{corollary}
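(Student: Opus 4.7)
\bigskip

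\noindent\textbf{Proof proposal.} The strategy is to combine the theorem just stated with the descent property for quantizations recalled in the introduction. Let $E = \C / \Lambda$ be an elliptic curve, so $\Lambda \subset \C$ is a rank $2$ lattice of translations, sitting inside $\op{Aff}(\C)$ as a subgroup that acts freely and cocompactly on $\C$, hence in particular properly discontinuously.

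Given a trivialization of $\op{ch}_2(TX) \in H^2(X, \Omega^2_{cl}(X))$, the theorem produces a quantization of the cotangent theory to holomorphic maps $\C \to X$ (in the formal neighbourhood of constant maps) which is invariant under $\op{Aff}(\C)$, and therefore in particular $\Lambda$-invariant. The descent property stated in the introduction, applied to $M = \C$ and $G = \Lambda$, then produces a quantization of the corresponding classical theory on $\C / \Lambda = E$. Independence of the choice of uniformization $E \cong \C / \Lambda$ will follow from the $\op{Aff}(\C)$-invariance, since any two such identifications differ by an element of $\op{Aff}(\C)$.

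The main step to check, therefore, is an identification at the classical level: the classical cotangent theory on $\C$ descends along $\Lambda$ to the cotangent theory on $E$ associated to degree $0$ holomorphic maps $E \to X$ in the formal neighbourhood of constant maps. This reduces to identifying two elliptic moduli problems on $E$ in the sense of \cite{Cos11b}. On the $\C$ side, the formal moduli problem controlling holomorphic maps to $X$ near a constant map at $x \in X$ is described by the Dolbeault complex $\Omega^{0,*}(\C) \otimes \mathfrak{g}_x$ where $\mathfrak{g}_x = T_x X[-1]$ is the shifted tangent Lie algebra; the $\Lambda$-equivariant structure is just the translation action on $\C$. Taking $\Lambda$-invariants (or equivalently descending the sheaf) replaces $\Omega^{0,*}(\C)$ by $\Omega^{0,*}(E)$, yielding the Dolbeault resolution of the tangent complex to the moduli of holomorphic maps $E \to X$ near constants; since $E$ is compact, this is precisely the formal neighbourhood of constant maps inside the moduli of degree $0$ maps. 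Passing to cotangent theories commutes with this identification.

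The one thing that requires care is the finite-dimensionality and ellipticity of the descended theory, but this is automatic since $E$ is compact and the Dolbeault operator on $E$ is elliptic. I expect no additional obstruction: once the classical identification is in place, the content of the corollary is entirely a formal consequence of the theorem together with the descent property of \cite{Cos11}.
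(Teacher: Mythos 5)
Your argument is correct and follows the same route as the paper: the theorem yields an $\op{Aff}(\C)$-invariant (in particular translation-invariant) quantization on $\C$, and the descent property for quantizations of \cite{Cos11} (restated in section \ref{section_symmetries} as the fact that a translation-invariant theory on $\C$ induces one on any quotient $\C/\Lambda$) produces the quantization on each elliptic curve, with the classical theory on $E$ identified as the cotangent theory to degree $0$ maps via the $L_\infty$-space presentation $\Omega^{0,\ast}(E)\otimes\g_{X_{\dbar}}$ of section \ref{section_mapping}. Your additional observations --- that only $\Lambda$-invariance is needed for descent while full $\op{Aff}(\C)$-invariance gives independence of the uniformization --- are consistent with the paper's treatment.
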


\subsection{}
In \cite{Cos11b}, I show that such a quantization of the cotangent theory associated to any system of elliptic equations leads, roughly, to a volume form on the derived moduli space of solutions.  More precisely, we find a right $D$-module structure on the sheaf of functions: this is equivalent to a flat connection on the canonical bundle, or to a trivialization of the sheaf of volume forms up to scalar multiplication.  I call this structure a \emph{projective volume form}.  

The quantization of the cotangent theory to the space of degree $0$ holomorphic maps $E \to X$ thus leads to a projective volume form on this derived mapping space.  

\subsection{}
We will let $X^E$ denote the derived space of degree $0$ maps.  Choosing a holomorphic volume form on $E$ leads to an isomorphism $X^E \iso T[-1] X$. Note that there is an isomorphism of sheaves of algebras on $X$
$$
\Oo_{T^[-1] X} \iso \Omega^{-\ast}_X,
$$
where $\Omega^{-\ast}_X$ indicates the sheaf of holomorphic forms on $X$ where $\Omega^{i}_X$ is placed in cohomological degree $-i$. 
 
Thus,
$$
H^0(X, \Oo_{T[-1] X} ) = \oplus_{i} H^i(X, \Omega^i_X).
$$ 
Let $\d Vol_E$ denote the projective volume form on $X^E$ coming from the quantization described above.  Integrating against $\d Vol_E$ give a linear map
\begin{align*}
H^0(X, \Oo_{T[-1] X} ) = \oplus H^i(X,\Omega^i_X) &\to \C \\
\alpha & \mapsto \int_{T[-1] X} \alpha \d Vol_E,
\end{align*}
defined up to a scalar factor.  We can normalize this scalar factor so that, if $\alpha \in H^n(X, \Omega^n_X)$ is the class Serre dual to $1 \in H^0(X,\Oo_X)$, 
$$\int_{T[-1] X} \alpha \d Vol_E = 1.$$

The second main theorem calculates this linear map. 
\begin{theorem*}
For every trivialization of $ch_2(T X)$, the corresponding projective volume form $\d Vol_E$ on $X^E \iso T[-1] X$ has the property that the integration map
$$\alpha \to \int_{T[-1] X} \alpha \d Vol_E$$ 
is the map sending $\alpha \in \oplus_i H^i(X,\Omega^i_X)$ to
$$
\int_{X} \left[ \op{Wit}(X,E,\omega) \alpha \right]_n.
$$
Here $[-]_n$ indicates the projection onto the component in $H^n(X,\Omega^n_X)$, and 
$$
\op{Wit}(X,E,\omega) \in \oplus H^i(X,\Omega^i_X)
$$
is the Witten class of $X$.
\end{theorem*}

\subsection{}
One can restate this theorem as follows.  The space $T[-1] X$ is equipped with a projective volume form $\d Vol_0$ characterized by the property that the integration map
$$
\alpha \mapsto \int_{T[-1]X} \alpha \d Vol_0
$$
sends $\alpha \in \oplus H^i(X,\Omega^i_X)$ to $\int_X [\alpha]_n$. 

Any two projective volume forms differ by a function (defined up to a scalar).  The theorem states that
$$\d Vol_E = \op{Wit}(X,E, \omega) \d Vol_0$$
where $\op{Wit}(X,E,\omega)$ is viewed as a function on $T[-1] X$.  

\subsection*{Factorization algebra formulation of the results}
In \cite{Cos10a} I announced the results proved here in a slightly different form, using the language of factorization algebras.    In order to connect the results proved here with the statement of \cite{Cos10a}, let me explain a little about the results of the work in progress \cite{CosGwi11}.   Let $M$ be a manifold, and suppose we have a quantum field theory on $M$ in the sense of \cite{Cos11}, Chapter 5.  Let me briefly recall what this is.  We have a graded vector bundle $E$ on $M$, over $\C$, say.  The space of fields will be $\E = \Gamma(M,E)$.  The space $\E$ of fields is equipped with a symplectic pairing of cohomological degree $-1$, arising from a map of vector bundles $E \otimes E \to \op{Dens}(M)$, of cohomological degree $-1$.    

The space $\E$ is equipped with a differential $Q : \E \to \E$, which is a differential operator compatible with the symplectic pairing.  We will let 
$$\Oo(\E) = \prod_{n \ge 0} \op{Hom} ( \E^{\otimes n}, \C)_{S_n}$$
denote the algebra of formal power series on $\E$. In this expression, $\op{Hom}$ denotes continuous linear maps, $\otimes$ denotes the completed projective tensor product, and the subscript $S_n$ denotes coinvariants.  We will let 
$$\Oo_{red}(\E) = \Oo(\E)/  \C$$
denote the algebra of functions on $\E$ modulo constants. 

The essential part of the data of a quantum field theory is a collection
$$
I[L] \in \Oo_{red}(\E) [[\hbar]]
$$
of effective interactions, defined for all $L > 0$.  These must satisfy three axioms: a \emph{renormalization group equation}, expressing $I[L]$ in terms of $I[\eps]$ if $\eps < L$; a \emph{locality axiom}, saying that as $L \to 0$,  $I[L]$ becomes more and more local; and the \emph{quantum master equation}, saying that for all $L$,
$$
Q I[L] + \tfrac{1}{2} \{ I[L], I[L] \}_L +  \hbar \Delta_L I[L] = 0.
$$
Here, $\Delta_L : \Oo(\E) \to \Oo(\E)$ is a Batalin-Vilkovisky operator, which depends on the scale $L$.  The quantum master equation implies that for all $L$, the operator
\begin{align*}
\what{Q}_L : \Oo(\E)[[\hbar]] & \to \Oo(\E)[[\hbar]] \\ 
\Phi & \mapsto Q \Phi + \{I[L], \Phi\}_L + \hbar \Delta_L \Phi 
\end{align*}
is of square zero.  

The renormalization group equation implies that the complexes
$$
\left( \Oo(\E)[[\hbar]], \what{Q}_L \right)
$$
are chain homotopic for different values of $L$.  
\begin{definition}
The complex of \emph{global observables} of the quantum field theory is the complex 
$\left( \Oo(\E)[[\hbar]], \what{Q}_L \right)$, for any $L > 0$. 
\end{definition}
\subsection{}
The results of \cite{CosGwi11} imply the following.
\begin{theorem}
\begin{enumerate}
\item Any quantum field theory on a manifold $M$, in the sense of \cite{Cos11}, yields a factorization algebra $\F$ on $M$, over the ring $\C[[\hbar]]$.  
\item There is a quasi-isomorphism
$$
\F(M) \iso \left( \Oo(\E)[[\hbar]], \what{Q}_L \right)
$$
between the complex of global sections of the factorization algebra, and the complex of global observables of the quantum field theory.
\item Quantum field theories in the sense of \cite{Cos11} on open subsets of $M$ form a sheaf, as do factorization algebras defined on open subsets of $M$.  The map from quantum field theories to factorization algebras is a map of sheaves.
\item  If a discrete group $G$ acts properly discontinuously on $M$, then any quantum field theory on $M$ invariant under the  $G$ action descends to one on $M / G$. Factorization algebras satisfy the same descent property, and the map from quantum field theories to factorization algebras is compatible with descent.
\end{enumerate}
\end{theorem}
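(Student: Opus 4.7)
The result is the main theorem of the work in progress \cite{CosGwi11}, so I will only sketch the broad strategy. The plan is to produce, for each open $U \subset M$, a cochain complex of observables $\op{Obs}^q(U)$, then verify the factorization axioms, and finally check compatibility with the sheaf and descent structures. Fix a scale $L > 0$ and define
\[
\op{Obs}^q(U)[L] \subset \Oo(\E)[[\hbar]]
\]
to be the subcomplex spanned by functionals whose distributional Taylor components are supported in $U^n \subset M^n$. Because the propagator and heat kernel at scale $L$ decay off the diagonal on scale $\sqrt{L}$, the operator $\what{Q}_L$ only preserves such support after enlarging $U$ by $O(\sqrt{L})$; the correct definition therefore takes a colimit as $L \to 0$, or equivalently works at each fixed $L$ and uses the renormalization group flow to identify the complexes across scales.

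Next I would show that the renormalization group operator $W(P_\eps^L,-)$ intertwines $\what{Q}_\eps$ and $\what{Q}_L$, and hence induces a canonical quasi-isomorphism $\op{Obs}^q(U)[\eps] \iso \op{Obs}^q(U)[L]$. This makes $U \mapsto \op{Obs}^q(U)$ a well-defined prefactorization algebra, with the structure maps along open inclusions given by inclusion of subcomplexes. The quasi-isomorphism asserted in part (2) is then tautological, since by construction $\op{Obs}^q(M)[L]$ is the whole of $\Oo(\E)[[\hbar]]$ equipped with $\what{Q}_L$.

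The main obstacle is the factorization (multiplicativity) axiom, asserting that for pairwise disjoint opens $U_1,\dots,U_k \subset V$ the product map
\[
\bigotimes_i \op{Obs}^q(U_i) \longrightarrow \op{Obs}^q(V)
\]
is a quasi-isomorphism. The difficulty is that at fixed scale $L$ the BV bracket $\{-,-\}_L$ and Laplacian $\Delta_L$ are non-local on scale $\sqrt{L}$, so observables supported on the $U_i$ do not literally commute. The strategy is to work at scales $L$ much smaller than the minimum gap between the $U_i$, for which these operators vanish on pairs of observables with disjoint support; one then filters by powers of $\hbar$ and runs a spectral sequence whose $E_1$ page reduces quantum factorization to the classical statement, which in turn is the easily-verified factorization of the commutative prefactorization algebra $U \mapsto \Oo(\E(U))$.

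Finally, the sheaf property in (3) and the equivariant descent in (4) are essentially built into the formalism of \cite{Cos11}: quantum field theories are defined as global sections of a sheaf of simplicial sets on $M$, parametrized by choices of parametrix and gauge fix, and the support condition defining $\op{Obs}^q$ is manifestly local and natural under diffeomorphisms. Consequently a $G$-invariant theory on $M$ produces a $G$-invariant factorization algebra, which descends to $M/G$ on both sides in a compatible way; the only technical point is to lift the gauge fix and parametrix to $M/G$ equivariantly, which is possible by the contractibility of the space of parametrices established in \cite{Cos11}.
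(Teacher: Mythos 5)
The paper itself gives no proof of this theorem: it is stated as a consequence of the work in progress \cite{CosGwi11} and simply quoted, so there is no internal argument to compare yours against. Judged on its own terms, your sketch does capture the broad strategy of the Costello--Gwilliam construction (support-constrained observables, RG flow identifying scales, filtration by powers of $\hbar$ reducing the quantum statement to the classical one), but two points need repair.

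First, your statement of the multiplicativity axiom is wrong: you assert that for pairwise disjoint opens $U_1,\dots,U_k \subset V$ the product map $\bigotimes_i \op{Obs}^q(U_i) \to \op{Obs}^q(V)$ is a quasi-isomorphism. That fails whenever $V$ is strictly larger than $\bigsqcup U_i$ (already classically: functions supported in small discs do not generate all functionals on fields on a large open). The correct axiom is that this map is a quasi-isomorphism only when $V = U_1 \sqcup \cdots \sqcup U_k$; for general $V \supset \bigsqcup U_i$ it is merely a structure map of the prefactorization algebra, and the content of being a factorization algebra is the Weiss-cover codescent condition, not the statement you wrote. Your spectral-sequence strategy is the right tool for the correctly stated axiom, but as written you are trying to prove something false. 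Second, the definition of $\op{Obs}^q(U)[L]$ as the subcomplex of functionals with Taylor components supported in $U^n$ at a fixed scale $L$ does not actually produce a subcomplex, as you yourself observe, and "take a colimit as $L \to 0$" is not a fix (the $L \to 0$ limit of $I[L]$ and hence of $\what{Q}_L$ does not exist beyond tree level). What is actually needed is to define an observable as a whole family $\{O[L]\}_{L>0}$ compatible under the renormalization group flow, with the support condition imposed asymptotically as $L \to 0$; the map to the fixed-scale complex $(\Oo(\E)[[\hbar]], \what{Q}_L)$ in part (2) then forgets all but one member of the family and is a quasi-isomorphism by an argument, not a tautology. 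With those corrections your outline is a fair description of the intended proof.
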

In this paper, we will see that the Witten genus of $X$ is encoded in the scale-infinity effective action $I[\infty]$ constructed from a certain quantum field theory of maps to $X$.  Thus, the main theorem of this paper implies the results announced in \cite{Cos10a}.    Indeed, the construction of the quantum field theory on $\C$ presented in this paper yields a translation invariant factorization algebra on $\C$.  The global sections of this factorization algebra on an elliptic curve $E$ is quasi-isomorphic to the complex of global observables of the quantum field theory on $E$.  This complex of global observables is computed in this paper by explicitly evaluating the scale $\infty$ effective interaction $I[\infty]$ (and seeing that it is the logarithm of the Witten class).

\subsection{}
I should briefly compare the construction of the Witten genus in this paper to the work of Gorbounov, Malikov and Schechtman \cite{GorMalSch00}.  These authors show that the Witten genus of $X$ is the character of a sheaf of vertex algebras on $X$ called the \emph{chiral differential operators} of $X$.  Conjecturally, the factorization algebra of observables of the field theory constructed in this paper is an analytic incarnation of the chiral differential operators.  If this is the case, then one could view the results of this paper as being the Lagrangian counterpart of the results of \cite{GorMalSch00}.  Indeed, they show the Witten genus arises as the character of an operator on the Hilbert space of the theory, whereas in this paper we find the Witten genus directly from the functional integral.  

\subsection{}
Other recent related work is that of Grady and Gwilliam \cite{GraGwi11}.  These authors consider a $1$-dimensional field theory related to the $2$-dimensional field theory considered here.  They find that the partition function of the theory (on $S^1$) is the $\what{A}$-class of the manifold.  

\subsection{}
Nick Rozenblyum's MIT thesis \cite{Roz11} contains some exciting developments related to the results presented here.  The techniques Rozenblyum develops allow one to give a purely algebro-geometric construction of a projective volume form on the derived mapping space $X^E$ from an elliptic curve $E$ to an algebraic variety $X$.  It is natural to conjecture that the projective volume form Rozenblyum constructs coincides with the projective volume form constructed here.  

\subsection{}
The plan of the paper is as follows.  Part 1 develops general techniques which allow us to treat field theories with non-linear targets using the techniques of \cite{Cos11}.   In order to do this, I introduce some formalism related to formal geometry for describing a certain class of ``derived manifolds''.  In formal derived geometry \cite{Lur09, Hin01}, every formal derived space (``formal moduli problem'') can be represented by a dg Lie algebra (or an $L_\infty$ algebra).  For the purposes of this paper, a global derived space is a functor from a certain category of dg ringed manifolds to the category of simplicial sets, satisfying a sheaf property.  I show how to construct such a functor from an ``$L_\infty$ space'', which is a manifold equipped with a sheaf of curved $L_\infty$ algebras. I show that every complex manifold can be represented by an $L_\infty$ space. Further, certain derived mapping spaces are representable in the category of $L_\infty$ spaces.  This allows to talk about the derived space of maps from an elliptic curve to a complex manifold; this derived space of maps is the space of fields of our theory. 

From the point of view of field theory, this approach to derived geometry has the great advantage that it allows us to write a $\sigma$-model (a field theory based on maps) as a ``gauge theory'', where the fields are sections of some bundle of Lie algebras on the space-time manifold.  The perturbative renormalization techniques of \cite{Cos11} are well adapted to working with gauge theories.  Hopefully this point of view will be useful for treating other $\sigma$-models.  

In section \ref{section_volume_form} we develop the concept of projective volume form on $L_\infty$ spaces, and show that, under suitable hypothesis, one can integrate against a projective volume form.  

Part 2 focuses on holomorphic Chern-Simons theory and the proofs of the main results.    We start in section \ref{section_hcs}  by introducing the classical fields of the holomorphic Chern-Simons theory as the derived space of maps from an elliptic curve $E$ to $T^\ast X$, where $X$ is a complex manifold. 

Section \ref{section_rgflow} contains a description of the Wilsonian renormalization group flow, which is a key part of the approach to quantum field theory developed in \cite{Cos11}.  Section \ref{section_symmetries} describes some symmetries of classical holomorphic Chern-Simons.  These symmetries will constrain the possible quantizations of the theory.

In section \ref{section_main_theorem} we can finally give a precise statement of the main theorem, describing the quantizations of holomorphic Chern-Simons theory compatible with the symmetries at the classical level.   

I then proceed to prove the main theorem.  Section \ref{section_counterterms} analyzes the counter-terms that appear in quantizing the theory (it turns out that the theory is finite, so all counter-terms vanish).  Sections \ref{section_obstruction} and \ref{section_obstruction_class} analyze the cohomological obstructions to quantization, and show that the obstruction is precisely the second Chern character of $X$.

Finally, in section \ref{section_witten_genus}, I compute the scale $\infty$ effective interaction $I[\infty]$ of holomorphic Chern-Simons theory, and show that it can be identified with the Witten class of $X$ evaluated at the elliptic curve $E$.

\section*{Acknowledgments}
Conversations with many people have contributed to this paper.  I'm particularly grateful to Dan Berwick-Evans, Damien Calaque, Ryan Grady, Owen Gwilliam, Si Li, Nick Rozenblyum, Josh Shadlen and Yuan Shen, for helpful correspondence and conversations about this material and for their close reading of earlier drafts.  

\section*{\Large Part I: Derived geometry and $L_\infty$ spaces}

In this section, we will introduce some ideas from derived geometry which we will use throughout the paper.   I will develop only the minimum amount of theory required for the application. 

In derived algebraic geometry \cite{Toe06,Lur09b}, one defines the notion of derived scheme using the functor of points.    To give a derived stack over a field of characteristic zero is to give a functor from the category of commutative dgas (concentrated in degrees $\le 0$) to the category of simplicial sets; satisfying appropriate sheaf conditions.  

We will adopt this idea to our setting.  For us, a \emph{derived space} will be a functor from a category of manifolds equipped with a sheaf of dg rings, to the category of simplicial sets, satisfying a descent condition.  

In deformation theory \cite{Hin01,Lur10} formal derived spaces can be represented by dg Lie algebras (or $L_\infty$ algebras).    The derived spaces of interest in this paper will be represented by what I call \emph{$L_\infty$ spaces}; an $L_\infty$ space will be a manifold equipped with a sheaf of curved $L_\infty$ algebras.  

\subsection{}
The reason we need to use the language of derived geometry is that the  space of solutions to the equations of motion of the field theory we consider can be interpreted as the derived version of the space of maps from an elliptic curve $E$ to a complex manifold $X$.   

We would like to study this field theory using the approach to renormalization developed in \cite{Cos11}.   There, however, the spaces of fields are always assumed to be the sections of some vector bundle on the space-time manifold $\Sigma$. 

If we restrict our attention to those maps $f : \Sigma \to X$ which are in the formal neighbourhood of a constant map with value $x \in X$, then the techniques of \cite{Cos11} apply.  In that case, we can linearize $X$ near $x$, and then our space of fields becomes the sections of a trivial vector bundle on $X$.    We then have to ensure that any quantization is independent of the choice of linearization, but the homological techniques of \cite{Cos11} allow one to analyze this question.

Some more work is needed, however, if we want to consider fields $f : \Sigma \to X$ which are near \emph{some} constant map.  The language of $L_\infty$ spaces allows us to solve this problem, by representing the space of maps $\Sigma \to X$ which are near some constant map as the space of Maurer-Cartan elements in a sheaf of $L_\infty$ algebras on $X$.  

The techniques developed here allow one to study a wide class of field theories where the fields are spaces of maps to some non-linear target.  Although I emphasized above the problem of perturbing around constant maps, the same techniques allow one to analyze (in principle) the contributions of non-constant maps.

\section{Differential graded ringed manifolds}
On a manifold $M$, let $\Omega^\ast_M$ denote the de Rham complex of $M$, viewed as a sheaf of commutative dgas on $M$. 
\begin{definition}
A dg ringed manifold (over $\R$) is a manifold $M$, together with a sheaf $\A$ of differential graded unital $\Omega^\ast_M$-algebras, with the following properties.
\begin{enumerate}
\item As a sheaf of graded $\Omega^0_M$-algebras, $\A$ is locally free of finite total rank. 
\item  $\A$ is equipped with a map of sheaves of $\Omega^\ast_M$-algebras $\A \to \cinfty_M$; the kernel of this map must be a sheaf of nilpotent ideals
\item For sufficiently small open subsets $U$ of $M$, the cohomology of $\A(U)$ must be concentrated in non-positive degrees.   
\end{enumerate}
\end{definition} 
If we work over $\C$, we should use the complexified de Rham algebra $\Omega^\ast_M \otimes_{\R} \C$, but otherwise the definition is the same.    

Note that the axioms imply that the graded $\Omega^0_M$-module $\A$ is given by the sections of a graded vector bundle of finite total rank on $M$. 

Here are some examples of dg ringed manifold.
\begin{enumerate}
\item Let $M$ be any manifold.  Then letting $\A = \Omega^\ast_M$, equipped with the de Rham differential, gives a dg ringed manifold which we refer to as $M_{dR}$.  
\item Setting $\A = \cinfty_M$ gives a dg ringed manifold which we just call $M$. 
\item Let $M$ be a complex manifold.  Then there is a complex dg ringed space $M_{\dbar}$ with $\A = \Omega^{0,\ast}(M)$, where the differential is the operator $\dbar$. 
\item Let $M$ be a complex manifold, and let $R$ be 
any finite rank graded commutative algebra in the category of holomorphic bundles on $M$, concentrated in degrees $\le 0$, and equipped with a bundle $I \subset R$ of graded ideals such that $R / I = \underline{\C}$.  Then, $\Omega^{0,\ast}(M,R)$ defines a dg ringed manifold. 
\end{enumerate}

\begin{definition}
A map of dg ringed manifolds $(M,\A) \to (N, \mscr{B})$ is a smooth map $f : M \to N$, together with a map of sheaves of dg $f^{-1} \Omega^\ast_N$-algebras $f^{-1} \mscr{B} \to \mscr{A}$, such that the diagram
$$
\xymatrix{ f^{-1} \mscr{B}\ar[r] \ar[d]  & \mscr{A} \ar[d] \\ 
f^{-1} \cinfty_N \ar[r] & \cinfty_M } 
$$
commutes.  
\end{definition}
Here, $f^{-1}$ refers to the inverse image of a sheaf: so that if $F$ is a sheaf on $N$, then 
$$(f^{-1}F)(U) = \colim_{V \subset f(U)} F(V).$$ 

If $(M,\A)$ is a dg ringed manifold, then the sheaf $\A$ acquires a finite filtration by powers of the nilpotent dg ideal $\mscr{I} \subset \A$, which is the kernel of the map $\A \to \cinfty_M$.  We will let $\op{Gr} \A$ denote the associated graded sheaf; note that $\op{Gr} \A$ is a sheaf of dg algebras over the graded ring $\Omega^\sharp_M$, consisting of the de Rham algebra of $M$ with zero differential. 
\begin{definition}
A map $(M,\A) \to (N,\mscr{B})$ of dg ringed manifolds is an equivalence, if the map of smooth manifolds $M \to N$ is a diffeomorphism, and the map of sheaves
$$
\op{Gr} \A \to \op{Gr} \mscr{B} 
$$
is a quasi-isomorphism. 
\end{definition}

Let $\op{sSets}$ be the category of simplicial sets. 
\begin{definition}
A \emph{derived space} is a functor $\Phi$ from the category of dg ringed manifolds to the category of simplicial sets, which satisfies the following two properties.
\begin{enumerate}
\item $\Phi$ takes equivalences of dg ringed manifolds to weak equivalences of simplicial sets.
\item $\Phi$ satisfies a sheaf property, as follows.  Let $(M,\A)$ be any derived space; then, assigning to an open subset $U \subset M$ the simplicial set $\Phi(U,\A)$ defines a simplicial presheaf on $M$, which we call $\Phi\mid_M$.   We require that this simplicial presheaf satisfy \v{C}ech descent: for every open cover $\mf{U}$ of $M$, the natural map of simplicial sets
$$
\Phi ( M,\A) \to \check{C}(\mf{U}, \Phi\mid_M) 
$$
is a weak homotopy equivalence.
\end{enumerate}
\end{definition}

\section{Derived geometry with curved $L_\infty$ algebras}
\label{section_geometry}

The main theorem of deformation theory asserts that every formal derived space can be represented by an $L_\infty$ algebra. Let us recall briefly how this works.  
\begin{definition}
An Artinian dg algebra over $k$ is a dg algebra $R$ with a nilpotent differential ideal $m$, such that $R /  m  = k$,  such that $R$ is concentrated in degrees $\le 0$ of finite total dimension over $k$. 
\end{definition}
Note that an Artinian dg algebras over $\R$ are (essentially) the same as dg ringed manifolds over $\R$ where the underlying manifold is a point.
\begin{definition}
A formal moduli problem is a functor $F$ from the category of nilpotent dg algebras over $k$ to the category of simplicial sets, such that $F(k)$ is contractible, and $F$ preserves certain homotopy limits.  
\end{definition}
See \cite{Lur10} for more details.

We want to explain briefly how every $L_\infty$ algebra $\g$ gives rise to a formal moduli problem.  Let us work over $\R$ for simplicity.   Let $R$ be an Artinian dg algebra with maximal ideal $m$.  The formal moduli problem associated to $\g$ assigns to $R$ the simplicial set $\op{MC}(\g \otimes m)$ of Maurer-Cartan elements of the nilpotent $L_\infty$ algebra $\g \otimes m$.  An $n$-simplex of this simplicial set is a Maurer-Cartan element in $\g \otimes m \otimes \Omega^\ast(\tr^n)$, where $\Omega^\ast(\tr^n)$ refers to the commutative dg algebra of differential forms on the $n$-simplex. 

\subsection{}
We will introduce a global version of the Maurer-Cartan functor associated to an $L_\infty$ algebra.  This construction will associate a derived space to a manifold $X$ equipped with a certain sheaf of $L_\infty$ algebras.

We will start by giving a general definition of curved $L_\infty$ algebra.   Let $A$ be a differential graded commutative algebra, and let $I \subset A$ be a nilpotent ideal in $A$. 

We will let $A^\sharp$ denote the underlying graded algebra, with zero differential. 
\begin{definition}
A \emph{curved $L_\infty$ algebra over $A$} consists of a locally free finitely generated graded $A^\sharp$-module $V$,  together with a derivation
$$\d :\what{\Sym}^\ast( V [1] ^\vee ) \to \what{\Sym}^\ast ( V [1] ^\vee )$$
of cohomological degree $1$ and square zero. In this expression, all tensors and duals are over the graded algebra $A^\sharp$.

The derivation $\d$ must make the completed symmetric algebra $\what{\Sym}^\ast( V [1] ^\vee )$ into a differential graded algebra over the differential graded algebra $A$.  

Further, when we reduce modulo the nilpotent ideal $I$, the derivation $\d$ must preserve the ideal in $\what{\Sym}^\ast ( V[1]^\vee)$ generated by $V$.
\end{definition}

The Taylor components of the derivation $\d$ are maps
$$
l_k : \wedge^k ( V) \to V
$$
of cohomological degree $2-k$, satisfying a version of the standard $L_\infty$ identities which also incorporates the differential on $A$.  The first operator $l_0$ defines an element of $V$; our axioms imply that the operator $l_0$ lies in the subspace $V \otimes_{A} I$.

Note that if $l_0$ is not zero, then $V$ will not have the structure of a differential graded module over $A$.   However,  the fact that we have an ordinary $L_\infty$ structure when we reduce modulo $I$ implies that $V / I $ is a differential module over the dg algebra $A / I$. 

If $V$ is a curved $L_\infty$ algebra over $A$, we will let $C^\ast(V)$ denote the differential graded $A$-algebras $\what{\Sym}^\ast(V^\vee[1])$, equipped with the differential which appears in the definition of the curved $L_\infty$ structure.

\begin{definition}
Let $X$ be a manifold.   A curved $L_\infty$ algebra over $\Omega^\ast_X$ is a sheaf $\g$ of graded $\Omega^\#_X$ modules on $X$, which is locally free of finite total rank, equipped with the structure of curved $L_\infty$ algebra over $\Omega^\ast_X$, as described earlier; where the nilpotent ideal is $\Omega^{> 0}_X$.  

We let $\g_{red} = \g/ \Omega^{>0}_X$.  

An \emph{$L_\infty$ space} is a manifold $X$ equipped with a curved $L_\infty$ algebra $\g$ over the sheaf $\Omega^\ast_X$. 
\end{definition}

We will think of an $L_\infty$ space as a kind of ``derived space''.  Note that if $(X,\g)$ is an $L_\infty$ space, then $C^\ast(\g)$ (where cochains are taken over $\Omega^\ast_X$) is a sheaf of pro-nilpotent differential graded algebras over $\Omega^\ast_X$.   Let $I \subset C^\ast(\g)$ denote the ideal generated by $\g^\vee$ and by $\Omega^1_X$.  Then, for each $k$, $C^\ast(\g) / I^k$ defines a dg ringed manifold in the sense defined above.  Thus, we should think of $(X,C^\ast(\g))$ as an inverse limit of dg ringed manifolds.

\subsection{}
If $(X,\g)$ is an $L_\infty$ space, $Y$ is a manifold, and $\phi : Y \to X$ is a smooth map, then we can form a curved $L_\infty$ algebra $\phi^\ast \g$ over $\Omega^\ast_Y$, defined by
$$
\phi^\ast \g = \phi^{-1} \g \otimes_{\phi^{-1}\Omega^\ast_X} \Omega^\ast_Y. 
$$
(Here $\phi^{-1} \g$ refers to the sheaf pull back). 

\begin{definition}
Let $(X,\g)$ be an $L_\infty$ space.  Let us define a functor $\op{MC}_{ (X,\g)}$ from dg ringed manifolds to simplicial sets, by saying that  $\op{MC}_{(X,\g)}(M, \A)$ is the simplicial set consisting of smooth maps $f : M \to X$, together with a Maurer-Cartan element
$$
\alpha \in f^\ast \g \otimes_{\Omega^\ast_M} \A
$$
which vanishes modulo the ideal $\mscr{I} \subset \A$. 
\end{definition}
Recall that the ideal $\mscr{I}$ is the kernel of the map of sheaves of algebras $\A \to \cinfty_M$.  

To give a Maurer-Cartan element as above is the same as to give a map of sheaves of pro-$\Omega^\ast_M$-algebras
$$
C^\ast( f^\ast \g) \to \A
$$
such that the diagram
$$
\xymatrix{C^\ast(f^\ast\g) \ar[r] \ar[d] & \A \ar[dl] \\
\cinfty_M & 
}
$$
commutes.
\begin{theorem}
The functor $\op{MC}_{(X,\g)}$ associated to an $L_\infty$ space $(X,\g)$ defines a derived space: that is, it takes equivalences of dg ringed manifolds to weak equivalences of simplicial sets, and it satisfies the \v{C}ech descent property.  
\end{theorem}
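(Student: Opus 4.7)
The plan is to verify the two properties separately, in each case exploiting the finite filtration on $\A$ by powers of the nilpotent ideal $\mscr{I}$ to reduce the problem to standard facts about Maurer--Cartan functors of ordinary nilpotent $L_\infty$ algebras. For any smooth map $f : M \to X$, the pullback $f^\ast \g$ is a curved $L_\infty$ algebra over $\Omega^\ast_M$, and tensoring over $\Omega^\ast_M$ with $\A$ yields a curved $L_\infty$ algebra over the dg algebra $\A(M)$. The requirement that a Maurer--Cartan element $\alpha$ vanish modulo $\mscr{I}$, combined with the fact that $\mscr{I}^N = 0$ locally, means that the piece of $\op{MC}_{(X,\g)}(M,\A)$ lying over a fixed $f$ is the MC simplicial set of a genuinely nilpotent $L_\infty$ algebra, to which the usual deformation-theoretic machinery applies.

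\emph{Equivalence invariance.} Let $(M,\A) \to (N,\mscr{B})$ be an equivalence; after identifying $M$ with $N$ via the diffeomorphism, we may assume the underlying manifolds agree. For each smooth $f : M \to X$ we compare the nilpotent $L_\infty$ algebras $f^\ast\g \otimes_{\Omega^\ast_M} \A$ and $f^\ast\g \otimes_{\Omega^\ast_M} \mscr{B}$, filtered by the powers of their respective nilpotent ideals. The associated graded pieces are $f^\ast\g \otimes_{\Omega^\sharp_M} \op{Gr}\A$ and $f^\ast\g \otimes_{\Omega^\sharp_M} \op{Gr}\mscr{B}$; since $f^\ast\g$ is locally free of finite rank over $\Omega^\sharp_M$, the hypothesis that $\op{Gr}\A \to \op{Gr}\mscr{B}$ is a quasi-isomorphism of sheaves of dg $\Omega^\sharp_M$-modules implies a quasi-isomorphism of these associated gradeds. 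A filtration-by-filtration induction, using the standard fact \cite{Hin01} that a quasi-isomorphism of nilpotent $L_\infty$ algebras induces a weak equivalence of MC simplicial sets, yields the required weak equivalence fiberwise over each $f$, and hence, after taking the disjoint union over all smooth maps $f : M \to X$, on the whole simplicial set.

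\emph{\v{C}ech descent.} Fix an open cover $\mf{U}$ of $M$. The presheaf $U \mapsto \op{Maps}_{\cinfty}(U,X)$ of smooth maps into $X$ satisfies strict sheaf descent. For a fixed $f$, the object $f^\ast\g \otimes_{\Omega^\ast_M} \A$ is the space of sections of a graded vector bundle on $M$ together with its differential, hence a fine sheaf of complexes on $M$, so its \v{C}ech complex relative to $\mf{U}$ is quasi-isomorphic to its global sections. Because the Maurer--Cartan equation is a local, pointwise condition, MC elements also descend along $\mf{U}$ as a simplicial presheaf: a compatible collection of local MC data glues uniquely to a global MC element, and the simplicial direction $\Omega^\ast(\tr^n)$ goes along for the ride. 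Assembling the two descent statements produces the required weak equivalence $\op{MC}_{(X,\g)}(M,\A) \iso \check{C}(\mf{U}, \op{MC}_{(X,\g)}\mid_M)$.

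\emph{Main obstacle.} The most delicate step is the passage from quasi-isomorphism of associated graded pieces to weak equivalence of MC simplicial sets. One must verify that the MC functor of a filtered nilpotent $L_\infty$ algebra decomposes, up to homotopy, as an iterated homotopy fiber sequence whose successive layers are controlled by the graded pieces, all while tracking the simplicial direction coming from $\Omega^\ast(\tr^n)$. This is well known over a field; the work is in upgrading it to nilpotent $L_\infty$ algebras over the dg base $\A$, and in checking naturality in the smooth map $f$ so that the fiberwise argument assembles coherently over the space of smooth maps.
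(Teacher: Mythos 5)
Your treatment of equivalence-invariance is essentially the paper's own argument: fix the underlying smooth map $f$, filter the nilpotent curved $L_\infty$ algebra $f^\ast\g\otimes_{\Omega^\ast_M}\mscr{I}$ by powers of the ideal, observe that the tower of Maurer--Cartan simplicial sets is a tower of fibrations (this uses that the short exact sequences of sheaves split in the $\cinfty$ setting), and conclude by comparing the Dold--Kan layers, which are controlled by the associated graded. That half is fine, and your ``main obstacle'' paragraph correctly locates where the work is.

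The \v{C}ech descent half, however, has a genuine gap. You reduce it to two claims: (a) the underlying fine sheaf of complexes satisfies descent, and (b) ``a compatible collection of local MC data glues uniquely to a global MC element, and the simplicial direction goes along for the ride.'' Claim (b) is a \emph{strict} gluing statement and is not what the theorem asks for: a $0$-simplex of $\check{C}(\mf{U},\op{MC}_{(X,\g)}\mid_M)$ assigns to each $\phi:[k]\to I$ a $k$-simplex over the intersection $U_\phi$, i.e.\ local MC elements together with homotopies on double overlaps, homotopies of homotopies on triple overlaps, and so on. Proving that the natural map to this homotopy limit is a weak equivalence requires rectifying such higher coherence data, and ``the MC equation is local'' does not do this; a simplicial presheaf can be a strict sheaf of $0$-simplices and still fail homotopy descent. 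The fix is exactly the mechanism you already set up for the first half: the paper writes $\op{MC}(\A\otimes f^\ast\g)$ as an inverse limit along the tower of fibrations indexed by $\mscr{I}^n/\mscr{I}^{n+1}$, notes that $\check{C}(\mf{U},-)$ commutes with such limits of fibrations, and thereby reduces descent for the nonabelian MC presheaf to descent for the Dold--Kan simplicial presheaves of the fine sheaves of cochain complexes $\mscr{I}^{n}/\mscr{I}^{n+1}\otimes f^\ast\g$, where partitions of unity apply. Your ingredient (a) is the right abelian input, but you need to route the nonabelian statement through the fibration tower rather than through strict gluing.
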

The proof of this theorem is provided in the appendix. 

\section{Complex manifolds as $L_\infty$ spaces}

Let $X$ be a complex manifold.  In this section we will construct a curved $L_\infty$ algebra over $\Omega^\ast_X$, the $\cinfty$ de Rham complex of $X$.  This curved $L_\infty$ algebra will encode the holomorphic geometry of $X$, and is well-defined up to contractible choice. 

Let $\mscr{J}^{hol}$ denote the infinite-rank vector bundle on $X$ whose fibre at $x \in X$ is the space of infinite jets of holomorphic functions at $x$.  Although $\mscr{J}^{hol}$ has a natural structure of an infinite-rank holomorphic vector bundle, we will only consider $\mscr{J}^{hol}$ has a $\cinfty$ vector bundle.  There is a natural flat connection on $\mscr{J}^{hol}$, and a flat section of $\mscr{J}^{hol}$ over an open subset $U \subset M$ is precisely a holomorphic function on $U$. 

If $p \in X$ and $z_1,\ldots, z_n$ are holomorphic coordinates at $p$, then we can identify the fibre of $\mscr{J}^{hol}$ at $p$ with the algebra $\C[[z_1,\dots,z_n]]$ of formal power series in the variables $z_i$.  Now suppose that $U \subset X$ is an open subset, and $z_1,\dots,z_n$ are holomorphic coordinates on $U$.  For each $p \in U$, the functions $z_i - z_i(p)$ define holomorphic coordinates centered at $p$. Let $y_i$ be the section of $\mscr{J}^{hol}$ whose value at $p$ is the jet of $z_i - z_i(p)$. We thus find an identification between the space $\Gamma(U, \mscr{J}^{hol})$ of smooth sections of $\mscr{J}^{hol}$ on $U$ with the space $\cinfty(U,\C)[[y_1,\ldots,y_n]]$ of formal power series in $n$ variables, with coefficients in the algebra $\cinfty(U,\C)$ of complex-valued smooth functions on $U$.   In these coordinates, the flat connection
$$
\nabla:  \Gamma(U, \mscr{J}^{hol}) \to \Omega^1(U,\mscr{J}^{hol})
$$
takes the form
$$
\nabla = \sum \d \zbar_i \dpa{\zbar_i} + \d z_i \dpa{z_i} - \d z_i \dpa{y_i}.
$$
From this expression, it is clear that section of $\mscr{J}^{hol}$ is flat if and only if it is the jet of a holomorphic function on $U$.  

Since $\mscr{J}^{hol}$ has a flat connection, we can define the de Rham algebra 
$$\Omega^\ast_X(\mscr{J}^{hol}) = \Omega^\ast_X \otimes_{\cinfty_X} \mscr{J}^{hol}$$
with coefficients in $\mscr{J}^{hol}$.  I should emphasize that $\Omega^\ast_X$ denotes the $\cinfty$ de Rham complex, viewed as a sheaf of differential graded algebras on $X$.  

Since the natural algebra structure on $\mscr{J}^{hol}$ is compatible with the flat connection, $\Omega^\ast_X(\mscr{J}^{hol})$ is a differential graded algebra over $\Omega^\ast_X$. If $\Oo_X^{hol}$ denotes the sheaf of holomorphic functions on $X$, there is a natural quasi-isomorphism (of sheaves of dg algebras on $X$)
$$
\Oo_X^{hol} \simeq \Omega^\ast_X( \mscr{J}^{hol} ) .
$$

\begin{lemma}
Then there is a canonical, up to contractible choice,  curved $L_\infty$ algebra $\g_{X_{\dbar}}$ over $\Omega^\ast_X$ (with the nilpotent ideal $\Omega^{> 0}_X$), with the following properties. 
\begin{enumerate}
\item As an $\Omega^\sharp_X$-module, $\g_{X_{\dbar}}$ is isomorphic to $T^{1,0}_X [-1] \otimes_{\cinfty_X} \Omega^\sharp_X$ (where $T^{1,0}_X$ denotes the holomorphic tangent bundle of $X$).
\item There is an isomorphism
$$
C^\ast(\mf g_X ) \simeq \mscr{J}^{hol}
$$
of differential graded $\Omega^\ast_X$-algebras. 
\end{enumerate}
\end{lemma}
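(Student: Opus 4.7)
The plan is a Gelfand--Kazhdan style construction: choose a \emph{formal holomorphic exponential} identifying the sheaf of jets $\mscr{J}^{hol}$ with a completed symmetric algebra on $(T^{1,0}_X)^\vee$, and then transport the flat connection on $\mscr{J}^{hol}$ to get the curved $L_\infty$ structure on $\g_{X_{\dbar}} := T^{1,0}_X[-1] \otimes_{\cinfty_X} \Omega^\sharp_X$.

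First I would construct a $\cinfty_X$-algebra isomorphism
$$
\sigma \colon \what{\Sym}_{\cinfty_X}\bigl((T^{1,0}_X)^\vee\bigr) \xrightarrow{\sim} \mscr{J}^{hol}
$$
that is the identity modulo $\Sym^{\geq 2}$. Locally, given holomorphic coordinates $z_1,\ldots,z_n$ and the sections $y_i$ of $\mscr{J}^{hol}$ described just above the lemma, one takes $\sigma$ to send the $\cinfty$ dual basis of $\partial/\partial z_i$ to $y_i$. To globalize, I would observe that the sheaf of such $\sigma$ over an open set is a torsor under the sheaf of $\cinfty_X$-linear algebra automorphisms of $\what{\Sym}((T^{1,0}_X)^\vee)$ reducing to the identity modulo $\Sym^{\geq 2}$; this last sheaf is a projective limit of sheaves of sections of vector bundles, so a partition of unity argument produces a global section and shows the simplicial set of global formal exponentials is contractible, giving the ``up to contractible choice'' part of the statement.

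Second, extending $\sigma$ $\Omega^\ast_X$-linearly yields an isomorphism of graded $\Omega^\ast_X$-algebras
$$
\tilde\sigma \colon \Omega^\ast_X \otimes_{\cinfty_X} \what{\Sym}_{\cinfty_X}\bigl((T^{1,0}_X)^\vee\bigr) \xrightarrow{\sim} \Omega^\ast_X \otimes_{\cinfty_X} \mscr{J}^{hol}.
$$
Since $\g_{X_{\dbar}}[1]^\vee = (T^{1,0}_X)^\vee \otimes_{\cinfty_X} \Omega^\sharp_X$ is concentrated in degree zero, the source is exactly $C^\ast(\g_{X_{\dbar}})$ as a graded $\Omega^\ast_X$-algebra. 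The target carries the square-zero degree-one derivation $d_{dR} + \nabla$ making it into a sheaf of dg $\Omega^\ast_X$-algebras; transporting this differential through $\tilde\sigma$ produces the required square-zero derivation $\d$ on $C^\ast(\g_{X_{\dbar}})$, realising both claimed properties: (1) by construction of $\g_{X_{\dbar}}$, and (2) as the map $\tilde\sigma$ itself.

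The only non-formal verification is the curvature condition $l_0 \in \g_{X_{\dbar}} \otimes \Omega^{>0}_X$ from the definition of curved $L_\infty$ algebra. This is immediate from the local formula for $\nabla$ recalled above: every term of $\nabla$ carries a $\d z_i$ or $\d\zbar_i$, so $\d$ applied to any generator $\xi \in (T^{1,0}_X)^\vee$ lands in $\Omega^{\geq 1}_X \otimes \what{\Sym}$, forcing the scalar component of $\d\xi$ to have strictly positive de Rham degree. The main obstacle is the contractibility of the space of formal exponentials; once that is in hand, both conclusions of the lemma are formal consequences of flatness of $\nabla$ and the existence of $\sigma$.
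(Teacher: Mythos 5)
Your proposal is correct and is essentially the paper's own argument: your choice of a formal exponential $\sigma$ (an algebra isomorphism $\what{\Sym}((T^{1,0}_X)^\vee)\iso \mscr{J}^{hol}$ that is the identity modulo $\Sym^{\ge 2}$) is exactly equivalent data to the paper's choice of a $\cinfty$ splitting of $F^1\mscr{J}^{hol} \to (T^{1,0}_X)^\vee$, and both proofs then transport the flat-connection differential through the resulting trivialization and invoke contractibility of the space of choices. Your explicit check that $l_0$ lies in $\g_{X_{\dbar}}\otimes\Omega^{>0}_X$ is a welcome addition to a step the paper leaves as ``easily seen.''
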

\begin{proof} 
There is a natural decreasing filtration on $\mscr{J}^{hol}$ by subbundles, where $F^k \mscr{J}^{hol}$ is the subbundle whose fibre at $x \in X$ is the space of jets of holomorphic functions at $x$ which vanish to order $k$.  These subbundles are not preserved by the flat connection: rather, a kind of Griffiths transversality condition holds.  The connection gives a map
$$
F^k \mscr{J}^{hol} \to  F^{k-1} \mscr{J}^{hol} \otimes \Omega^1_X.
$$
Further, one can identify $F^1\mscr{J}^{hol} / F^2 \mscr{J}^{hol}$ with $(T_X^{1,0})^\vee$. 

Let us choose a splitting of the map 
$$
F^1 \mscr{J}^{hol} \to (T_X^{1,0})^\vee
$$
of $\cinfty$ vector bundles.   This leads to an isomorphism of $\Omega^\sharp_X$ modules
$$
\Omega^\sharp_X (\mscr{J}^{hol}) \iso  \what{\Sym}^\ast (T^{1,0}_X)^\vee) \otimes_{\cinfty_X}  \Omega^\sharp_X.
$$
Since the left hand side is a differential graded algebra over $\Omega^\ast_X$, this isomorphism leads to a curved $L_\infty$ algebra over $\Omega^\ast_X$, which is easily seen to have all the desired properties.  

Next, we need to verify that the resulting curved $L_\infty$ structure on $T^{1,0}_X[-1]$ is independent, up to contractible choice, of the splitting of the bundle map $F^1 \mscr{J}^{hol} \to (T^{1,0})^\ast X$.  The space of such splittings is contractible.  Thus, it suffices to verify that if we have a family of such splittings, parameterized by an $n$-simplex $\tr^n$, then we get a family of curved $L_\infty$ structures on $T^{1,0}_X[-1]$ over $\Omega^\ast(\tr^n)$. 

To give such a family of curved $L_\infty$ structure is to give a differential on the completed symmetric algebra 
$$\what{\Sym}^\ast (T^{1,0}_X )^\vee \otimes_{\cinfty_X} \Omega^\sharp_X \otimes_{\C} \Omega^\ast(\tr^n),$$
making it into a sheaf on $X$ of differential graded algebras over $\Omega^\ast_X \otimes \Omega^\ast(\tr^n)$.  As above, the differential must preserve the ideal generated by $\what{\Sym}^{>0} (T^{1,0}_X )^\vee$ and by $\Omega^{> 0}_X$.   

The choice of our splitting gives an isomorphism of sheaves of $\cinfty(\tr^n)$-algebras
$$\what{\Sym}^\ast (T^{1,0}_X )^\vee \otimes_{\C} \cinfty(\tr^n) \iso \mscr{J}^{hol}_X \otimes_{\C} \cinfty(\tr^n) .
$$ 
This isomorphism can be extended, by linearity, to an isomorphism of graded algebras
$$\what{\Sym}^\ast (T^{1,0}_X )^\vee \otimes_{\cinfty_X} \Omega^\sharp_X \otimes_{\C} \Omega^\ast(\tr^n) \iso \mscr{J}^{hol}_X \otimes_{\cinfty_X} \Omega^\sharp_X \otimes_{\C} \Omega^\ast(\tr^n).$$
The right hand side of this equation has a differential coming from the flat connection on $\mscr{J}^{hol}_X$, and this gives the desired family of curved $L_\infty$ structures. 

\end{proof}

The curved $L_\infty$ algebra $\g_{X_{\dbar}}$ -- or rather its restriction to a flat $L_\infty$ algebra over the sheaf of holomorphic functions on $X$ --  was discussed by Kapranov in \cite{Kap97}.  Of course, from a formal point of view \cite{Qui69, Hin01, Lur10} the existence of a Lie algebra structure on $TX[-1]$ is no surprise: it is just defined to be the Koszul dual of the holomorphic bundle of complete augmented commutative algebras given by $\mscr{J}^{hol}$.  

This way of encoding a complex manifold by an $L_\infty$ space is a version of formal geometry.  A general approach to formal geometry, related to the approach used here, was developed in \cite{CalBer10}. 

\subsection{}
Now that we have defined the $L_\infty$ space $(X,\g_{X_{\dbar}})$ associated to a complex manifold $X$, we need to verify that the associated Maurer-Cartan functor represents the problem of holomorphic maps into $X$.
\begin{lemma}
Let $M$ and $X$ be complex manifolds, and let $(X,\g_{X})$ be the $L_\infty$ space encoding the complex structure on $X$. Then, 
\begin{enumerate}
\item The simplicial set $\op{MC}_{(X,\g_{X})}(M,\Omega^{0,\ast}_M)$ is discrete, that is, all higher simplices are constant.
\item Zero simplices of $\op{MC}_{(X,\g_{X})}(M,\Omega^{0,\ast}_M)$ are in 
bijection with holomorphic maps $M \to X$.
\end{enumerate}
\end{lemma}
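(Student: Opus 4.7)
The plan is to use the identification of Maurer--Cartan elements with maps of dg algebras given just before the theorem.  A point of $\op{MC}_{(X,\g_X)}(M,\Omega^{0,*}_M)$ is a smooth $f:M\to X$ together with a map of pro-dg $\Omega^*_M$-algebras
$$
\phi : C^*(f^*\g_X) \longrightarrow \Omega^{0,*}_M
$$
compatible with the maps to $\cinfty_M$.  By the previous lemma $C^*(\g_X)\simeq\jhol$ as $\Omega^*_X$-dg-algebras, and hence $C^*(f^*\g_X)\cong \Omega^*_M \otimes_{\cinfty_M} f^*\jhol$ as a graded $\Omega^\sharp_M$-algebra.  Since $\Omega^{0,*}_M$ is an $\Omega^*_M$-algebra via the projection $\Omega^*_M\to\Omega^{0,*}_M$ killing $\d z^i$, the map $\phi$ is determined by its restriction to a $\cinfty_M$-algebra map $f^*\jhol\to\Omega^{0,*}_M$.

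First I will show this restriction is forced to be the augmentation.  In a chart with holomorphic coordinates $z_1,\ldots,z_n$ on $X$, the centered jet coordinates $y_i$ -- all of cohomological degree $0$ -- exhibit $f^*\jhol$ locally as the power series ring $\cinfty_M[[y_1,\ldots,y_n]]$.  Compatibility with $\cinfty_M$ forces each $y_i$ to map into the ideal $\I = \Omega^{0,>0}_M$, but $\I$ is concentrated in strictly positive cohomological degree, so $y_i\mapsto 0$ is the only option.  Hence $\phi$ is the unique $\Omega^*_M$-linear extension of the natural augmentation $f^*\jhol\to\cinfty_M\hookrightarrow\Omega^{0,*}_M$.

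Next I will translate the chain-map condition into holomorphicity of $f$.  The differential on $C^*(f^*\g_X)$ is the pulled-back flat connection, which the paper writes as
$$
\nabla \;=\; \sum_i \d\zbar_i\,\tfrac{\pa}{\pa\zbar_i} + \d z_i\,\tfrac{\pa}{\pa z_i} - \d z_i\,\tfrac{\pa}{\pa y_i}.
$$
For a local section $s = g + \sum_i c_i y_i + \cdots$ of $f^*\jhol$, a direct computation -- augment in $y$, then project by $\Omega^*_M\to\Omega^{0,*}_M$ -- yields
$$
\phi\bigl(f^*\nabla(s)\bigr) \;=\; \dbar g \;-\; \sum_i c_i\,\dbar(z_i\circ f), \qquad \dbar\,\phi(s) \;=\; \dbar g .
$$
Since the coefficients $c_i$ are arbitrary as $s$ varies, equality for all $s$ is equivalent to $\dbar(z_i\circ f)=0$, i.e.\ to $f$ being holomorphic.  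Conversely, holomorphicity of $f$ makes the augmentation-extension manifestly a chain map.  This proves~(2).

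For (1) the same analysis applies to $n$-simplices, with target $\Omega^{0,*}_M \otimes_\C \Omega^*(\tr^n)$ and nilpotent part $\I \otimes \Omega^*(\tr^n)$.  The cohomological degree-zero component of $\I \otimes \Omega^*(\tr^n)$ is again zero, so every $y_i$ is once more forced to map to $0$, and the chain-map condition reduces to holomorphicity of $f$ alone (no $\tr^n$-dependence enters).  Hence the set of $n$-simplices coincides with the set of $0$-simplices for every $n$, and all face and degeneracy maps are identities, so the simplicial set is discrete.  The only place real care is needed is the degree bookkeeping around $\I$ and the interaction of $f^*\nabla$ with the projection $\Omega^*_M\to\Omega^{0,*}_M$; both are routine.
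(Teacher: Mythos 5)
Your proof is correct and follows essentially the same route as the paper's: both reduce everything to a degree count (the candidate Maurer--Cartan element, equivalently the image of the degree-zero jet coordinates $y_i$, is forced to be trivial because the relevant ideal sits in strictly positive cohomological degree), and both then identify the remaining condition --- vanishing of the curving $l_0$, equivalently the chain-map condition for the augmentation --- with holomorphicity of $f$. Your explicit computation of the chain-map condition in jet coordinates is just a more hands-on version of the paper's observation that the resulting commutative diagram forces pullbacks of holomorphic functions to be holomorphic.
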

\begin{proof}
Let $\phi : M \to X$ be a smooth map.  We are interested in Maurer-Cartan elements in the nilpotent curved $L_\infty$ algebra $\Omega^{0,>0}_M \otimes_{\Omega^\ast_M} \phi^\ast \g_{X_{\dbar}}$.  Note that $\phi^\ast \g_{X_{\dbar}} = \Omega^\sharp_M \otimes \phi^\ast T^{1,0}_X[-1]$, as a graded module over $\Omega^\sharp_M$ (which denotes the graded algebra of forms on $M$ with no differential). 

Thus, the $L_\infty$ algebra of interest is concentrated in cohomological degrees $\ge 2$.  It follows immediately that if there is a Maurer-Cartan element, it is unique, and that the simplicial set of Maurer-Cartan elements is discrete.  The existence of the Maurer-Cartan element is equivalent to the vanishing of the curving $l_0 \in \Omega^{0,2}_M \otimes T^{1,0}_X$. 

It remains to show that the curving vanishes if and only if $\phi$ is holomorphic.  Firstly, suppose that $l_0$ vanishes.  Then, the Maurer-Cartan element gives us a map of differential graded $\Omega^\ast_M$-algebras 
$$C^\ast( \phi^\ast \g_{X_{\dbar}}) \to \Omega^{0,\ast}_M,$$
such that the diagram
$$
\xymatrix{ C^\ast(\phi^\ast \g_{X_{\dbar}})  \ar[r] \ar[d]  & \Omega^{0,\ast}_M \ar[dl] \\ 
\cinfty_M & } 
$$
commutes.  

Since the $\Omega^\ast_X$-module $C^\ast (\phi^\ast \g_{X_{\dbar}})$ is the de Rham complex of $X$ with coefficients in jets of holomorphic functions, this commutative diagram implies that the pull back of a holomorphic function on $X$ is a holomorphic function on $M$, so that $\phi$ is a holomorphic map.

Conversely, suppose that $\phi$ is a holomorphic map.  Then, $\phi$ induces a map of $\phi^{-1} \Omega^{\ast}_X$-algebras
$$
\phi^{-1} \Omega^{0,\ast}_X \to \Omega^{0,\ast}_M.
$$
Now, there is a natural map of $\Omega^\ast_X$-algebras
$$
\Omega^\ast_X ( J (\Oo_X)) \to \Omega^{0,\ast}_X,
$$
where $\Omega^\ast_X (J(\Oo_X))$ indicates the $\cinfty$ de Rham complex of $X$ with coefficients in the $\cinfty$ bundle of jets of holomorphic functions on $X$.  It follows that a holomorphic map $M \to X$ induces a map of $\Omega^\ast_M$-algebras $C^\ast(\phi^\ast \g_{X_{\dbar}}) \to \Omega^{0,\ast}_M$, as desired. 
\end{proof}
 
\subsection{}
We can define curved $L_\infty$ algebras encoding other kinds of geometry on $X$.  For example, it is straightforward to modify the above definition to produce a curved $L_\infty$ algebra which encodes the $\cinfty$ geometry of a smooth manifold $X$. 

\section{Geometric constructions with curved $L_\infty$ algebras}
If $(X,\g)$ is an $L_\infty$ space, we will let $B \g$ denote the ringed space $(X, C^\ast(\g))$.  As always, the Chevalley-Eilenberg cochain complex $C^\ast(\g)$ is defined over the de Rham algebra $\Omega^\ast_X$.   We will often write $\Oo_{B \g}$ to denote the structure sheaf of $B \g$, that is, the sheaf $C^\ast (\g)$ of rings on $X$.  Note that $\Oo_{B \g}$ is a sheaf of commutative dg algebras over $\Omega^\ast_X$.     We will sometimes also use the notation $\Oo(X,\g)$ to denote the sheaf $C^\ast(\g)$.

\subsection{}
According to the standard dictionary between commutative algebras and dg Lie algebras, modules over commutative algebras correspond to modules over dg Lie algebras.    This suggests the following definition.  If $\g$ is an ordinary dg Lie algebra, then a dg module over $\g$ is the same thing as a split square zero extension of $\g$. 
\begin{definition}
A vector bundle $V$ on an $L_\infty$ space $(X,\g)$ is a locally free sheaf of $\Omega^\sharp_X$-modules on $X$, such that $V \oplus \g$ has the structure of curved $L_\infty$ algebra over $\Omega^\ast_X$, with the following properties:
\begin{enumerate}
\item The maps $\g \into \g \oplus V$ and $\g \oplus V \to \g$ are maps of $L_\infty$ algebras.
\item  Any higher product $l_n$ involving two or more sections of $V$ is zero. 
\end{enumerate}
\end{definition}
The $L_\infty$ space $(X , \g \oplus V)$ should be thought of as the total space of the vector bundle $V[1]$, formally completed along the zero section.  

In usual geometry, there are two equivalent languages for discussing vector bundles: we can think of a vector bundle in terms of its total space, or we can think of it in terms of its sheaf of sections.  
\begin{definition}
If $V$ is a vector bundle on $(X,\g)$ let $C^\ast(\g,V)$ be the sheaf of dg modules over $C^\ast(\g)$.  We call $C^\ast(\g,V)$ the sheaf of sections of $V$. 
\end{definition}

Familiar geometric constructions, such as the tangent bundle and the theory of differential forms, make sense on an $L_\infty$ space $(X,\g)$.  For example, the tangent bundle $T (X,\g)$ is given by the module $T(X,\g) = \g[1]$, with its natural structure of module over $\g$.    Thus, sections of the tangent bundle -- that is, vector fields -- are given by the sheaf $C^\ast(\g, \g[1])$. 

 The cotangent bundle is $T^\ast(X,\g)$ is then defined to be  the dual module $\g^\ast[-1]$ to the tangent bundle $\g[1]$.  

Similarly, the exterior powers of the cotangent bundle of  $(X,\g)$ are defined by
$$\wedge^k T^\ast (X,\g) = \wedge^k (\g^\vee[-1] ) =  \Sym^k (\g^\vee) [-k] ).$$
Thus, a $k$-form on $(X,\g)$ is a section of the sheaf $C^\ast(\g, \Sym^k (\g^\vee ) [-k] )$. 

\subsection{}
Recall that the total space of a vector bundle $V$ on $(X,\g)$ is the $L_\infty$ space $(X,\g \oplus V)$.  For example, the total space of the cotangent bundle to $(X,\g)$ can be described as 
$$
T^\ast (X ,\g  ) = (X,  \g \oplus \g^\vee [-1] ) .
$$
Thus, the algebra of functions on $T^\ast (X,\g)$ can be written as 
$$
\Oo(T^\ast (X,\g) ) = C^\ast ( \g, \what{\Sym}^\ast ( \g[1] ) ),
$$
where the completed symmetric algebra $\what{\Sym}^\ast ( \g[1] ) $ is endowed with its natural $\g$ action.  

For example, if $\g = \g_{X_{\dbar}}$ is the curved $L_\infty$ algebra arising from a complex structure on $X$, then there is a weak equivalence of sheaves of dgas on $X$ between $\Oo_{T^\ast B \g_{X_{\dbar}}}$ and the formal completion along the zero section $X \into T^\ast X$ of the sheaf of holomorphic functions on $T^\ast X$.

\section{Derived mapping spaces}
\label{section_mapping}
We have seen that that the Maurer-Cartan functor associated to an $L_\infty$ space is a derived space. If $(X,\g)$ is an $L_\infty$ space, we will view this derived space as representing the functor of maps to $(X,\g)$.   In this section we will show that, if $(M, \A)$ is a dg ringed manifold, a subset of the space of maps $(M,\A) \to (X,\g)$ is itself represented by an $L_\infty$ space. 

Let us define a functor $\op{MC}((M,\A); (X,\g))$, from the category of dg ringed manifolds to the category of simplicial sets, by saying that
$$
\op{MC}((M,\A); (X,\g)) (N,\mscr{B} )  = \op{MC}_{(X,\g) } ( (M \times N, \A \boxtimes B ) ).
$$
Recall that the Maurer-Cartan functor associated to $(X,\g)$ satisfies the axioms of a derived space: it takes equivalences of dg ringed manifolds to weak equivalences of simplicial sets, and satisfies \v{C}ech descent with respect to open covers of dg ringed manifolds.  It follows that the functor $\op{MC} ( (M, \A) ; (X, \g)) $ satisfies these same axioms. 

We will let
$$
\what{\op{MC}}((M,\A); (X,\g) ) \subset \op{MC} ((M,\A); (X,\g))
$$
be the subsheaf consisting of maps such that the map of underlying manifolds $M \to X$ is constant. More precisely, if $(N,\mscr{B})$ is an auxiliary dg ringed manifold, we set 
$$
\what{\op{MC}}((M,\A) ; (X,\g) ) ( N, \mscr{B} ) \subset \op{MC} _{(X,\g)} ( (M \times N,\A \otimes \mscr{B}) 
$$
to be the sub-simplicial set consisting of Maurer-Cartan elements where the underlying map $M \times N \to X$ of smooth manifolds factors through the projection $M \times N \to N$. 

\begin{proposition}
Let $(M, \A)$ be a dg ringed manifold with the property that, if $F^i \A(M)$ denotes the filtration on $\A(M)$ by the powers of the ideal $\mscr{I}(M)$, then the cohomology each $\op{Gr}^i \A(M)$ for $i \ge 1$ is concentrated in degrees $\ge 1$.  

Let $(X,\g)$ be an $L_\infty$ space such that the cohomology of the sheaf of $L_\infty$ algebras $\g^{red} = \g / \Omega^{> 0}_X$ is concentrated in degrees $\ge 1$. 

Then, the restricted Maurer-Cartan functor $\what{\op{MC}}((M,\A); (X,\g))$ is equivalent to the functor represented by the $L_\infty$ space $(X, \g \otimes \A(M))$, where $\A(M)$ is the global sections of the sheaf of commutative dgas $\A$ on $M$.
\end{proposition}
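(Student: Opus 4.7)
The approach is to use sheaf descent on $X$ to reduce to a local statement, then identify the Maurer-Cartan elements on both sides by direct manipulation. First I would observe that both functors in question are derived spaces: the left side is a subfunctor of $\op{MC}_{(X,\g)}$ cut out by the condition that the underlying smooth map $M \times N \to X$ factors through $\pi_N$, a condition that is preserved by equivalences of dg ringed manifolds and by passage to open covers; the right side is a derived space by the previous theorem applied to the $L_\infty$ space $(X, \g \otimes \A(M))$. It therefore suffices to construct a natural transformation that becomes a weak equivalence after restriction to sufficiently small opens of $X$.

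For a test dg ringed manifold $(N,\mscr{B})$, a zero simplex on the left is a pair $(\phi,\alpha)$ with $\phi : N \to X$ smooth and $\alpha$ a Maurer-Cartan element of the nilpotent curved $L_\infty$ algebra
$$\Gamma\bigl(M \times N,\; \pi_N^{-1}\phi^{-1}\g \otimes_{\pi_N^{-1}\Omega^\ast_N} \A \boxtimes \mscr{B}\bigr),$$
while a zero simplex on the right is a pair $(\phi,\beta)$ with the same $\phi$ and $\beta$ a Maurer-Cartan element in the nilpotent curved $L_\infty$ algebra $\Gamma(N,\, \phi^\ast\g \otimes_{\Omega^\ast_N} \mscr{B}) \otimes_\C \A(M)$. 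The key identification is a canonical isomorphism of nilpotent curved $L_\infty$ algebras
$$\Gamma\bigl(M \times N,\; \pi_N^{-1}\phi^{-1}\g \otimes_{\pi_N^{-1}\Omega^\ast_N} \A \boxtimes \mscr{B}\bigr) \;\cong\; \Gamma\bigl(N,\; \phi^\ast\g \otimes_{\Omega^\ast_N} \mscr{B}\bigr) \otimes_\C \A(M),$$
which holds because $\pi_N^{-1}\phi^{-1}\g$ is pulled back from $N$ while $\A \boxtimes \mscr{B}$ has global sections $\A(M) \otimes_\C \mscr{B}(N)$ (with an appropriate completion). The Maurer-Cartan equations and the conditions of vanishing modulo the nilpotent ideal agree on the two sides under this isomorphism, and replacing $\mscr{B}$ by $\mscr{B} \otimes \Omega^\ast(\tr^n)$ gives the analogous identification on $n$-simplices, naturally in $(N,\mscr{B})$.

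The main obstacle is to upgrade this level-wise isomorphism to a genuine weak equivalence of Maurer-Cartan functors, and this is where the two cohomological hypotheses play their role. The assumption that the cohomology of each $\op{Gr}^i \A(M)$ for $i \ge 1$, and of $\g^{red}$, is concentrated in degrees $\ge 1$ ensures that the nilpotent curved $L_\infty$ algebras governing both MC problems have cohomology vanishing in non-positive degrees. This is exactly what is needed for the Maurer-Cartan simplicial set to model the underlying derived moduli problem correctly, with no spurious contributions in low degrees from gauge symmetries or from deformations of the underlying smooth map $\phi$ that would be visible on one side but not the other. With this degree condition in hand, the isomorphism of curved $L_\infty$ algebras constructed in the second paragraph produces a weak equivalence of simplicial sets, and combined with the descent reduction from the first paragraph this gives the required equivalence of derived spaces. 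The technical heart of the verification will lie in checking the compatibility of completed tensor products implicit in $\A \boxtimes \mscr{B}$ with the pullback $\phi^\ast$ and with the curving term of $\g$, using the local freeness of $\g$ as an $\Omega^\sharp_X$-module.
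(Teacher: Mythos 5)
There is a genuine gap in the second and third paragraphs. You claim a canonical \emph{isomorphism} of nilpotent curved $L_\infty$ algebras under which ``the Maurer-Cartan equations and the conditions of vanishing modulo the nilpotent ideal agree on the two sides.'' The vanishing conditions do \emph{not} agree, and this is precisely the point of the proposition. While the underlying curved $L_\infty$ algebras $\phi^\ast \g \otimes_{\Omega^\ast_N} \mscr{B} \otimes \A(M)$ do match on both sides, a simplex of $\what{\op{MC}}((M,\A);(X,\g))(N,\mscr{B})$ is a Maurer-Cartan element required to vanish only modulo the ideal $\mscr{I}_\mscr{B} \otimes \A(M) + \mscr{B} \otimes \mscr{I}(M)$ (the kernel of $\A \boxtimes \mscr{B} \to \cinfty_{M\times N}$), whereas a simplex of $\op{MC}_{(X,\g\otimes\A(M))}(N,\mscr{B})$ must vanish modulo the strictly smaller ideal $\mscr{I}_\mscr{B}\otimes\A(M)$. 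So the natural transformation is an inclusion of simplicial sets, not an isomorphism, and one must prove it is a weak equivalence. Your own text betrays the problem: if the two functors were levelwise isomorphic as you assert, there would be nothing to ``upgrade,'' and the cohomological hypotheses on $\op{Gr}^i\A(M)$ and on $\g^{red}$ would be vacuous; yet the proposition is false without them (an extra direct summand $\op{Gr}^i(\A)\otimes\op{Gr}^0(\mscr{B})\otimes\g^{red}$, $i\ge 1$, is present in the ideal on one side but not the other, and in general it contributes to the Maurer-Cartan space).

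The argument the paper gives, and which your proposal is missing, is the following. Fix the smooth map $\phi : N \to X$ and consider the two nilpotent curved $L_\infty$ algebras $\g_{i,\phi}=\Gamma(N,\phi^\ast\g\otimes_{\Omega^\ast_N}\mscr{I}_i)$ built from the two ideals. Both carry finite bifiltrations coming from the powers of $\mscr{I}_\A(M)$ and $\mscr{I}_\mscr{B}(N)$, the inclusion is filtration-preserving, and the associated graded pieces are abelian. The pieces on which the two sides differ are exactly $\op{Gr}^{i,0}$ for $i\ge 1$, which equal $\op{Gr}^i(\A)\otimes\op{Gr}^0(\mscr{B})\otimes(\text{something built from }\g^{red})$; the hypotheses force their cohomology into degrees $\ge 2$, so the map of associated gradeds is an isomorphism on $H^{\le 1}$, and an induction on the filtration then shows the map of Maurer-Cartan simplicial sets is a weak equivalence. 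Your appeal to the hypotheses (``no spurious contributions in low degrees from gauge symmetries'') gestures in the right direction but is not attached to any concrete step, because the step where they are needed --- comparing the two vanishing conditions --- has been erased by the false isomorphism claim. The opening reduction by descent on $X$ is also unnecessary (the paper works directly with global sections over $N$), but that is harmless.
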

Note that, as always, when we are dealing with topological vector spaces such as $\A(M)$ we take the completed projective tensor product. 
\begin{proof}
Indeed, let $(N, \mscr{B})$ be another dg ringed manifold.  An $n$-simplex of  
$$\what{\op{MC}}((M,\A); (X,\g))(N, \mscr{B})$$ 
is given by a smooth map $\phi : N \to X$ together with a Maurer-Cartan element in the sheaf of curved $L_\infty$ algebras over $\mscr{B}$,
$$\phi^\ast \g \otimes_{\Omega^\ast_N} \mscr{B} \otimes_{\R} \A(M) \otimes_\R \Omega^\ast(\tr^n),$$
where the Maurer-Cartan element vanishes modulo the ideal 
$$\mscr{I}_1[n] = \left( \mscr{I}_\mscr{B} \otimes \A(M)  \otimes \Omega^\ast(\tr^n) \right) + \left( \mscr{B} \otimes \mscr{I}(M) \otimes \Omega^\ast(\tr^n) \right).$$  

We will let $\mscr{C}[n]$ denote the sheaf of $\Omega^\ast_N$-algebras
$$
\mscr{C}[n]=\mscr{B} \otimes_{\R} \A(M) \otimes_\R \Omega^\ast(\tr^n).
$$ 
Thus, $\mscr{I}_1[n]$ is an ideal in $\mscr{C}[n]$.

We will let $\mscr{C}$ and $\mscr{I}_1$ denote $\mscr{C}[0]$ and $\mscr{I}_1[0]$. 

An $n$-simplex of $\op{MC}_{(X, \g \otimes \A(M) )} (N,\mscr{B} ) $ is given by a smooth map $\phi : N \to X$ and a Maurer-Cartan element of 
$$\phi^\ast \g \otimes_{\Omega^\ast_N} \mscr{C}[n]$$
as above; except now we require that it vanishes modulo the ideal 
$$
\mscr{I}_2[n] = \mscr{I}_\mscr{B} \otimes \A(M)  \otimes_{\R} \Omega^\ast(\tr^n).
$$
As above, we will let $\mscr{I}_2$ denote $\mscr{I}_2[0]$. 

Thus, the two simplicial sets are almost identical, except that in the second simplicial set we require a stronger vanishing condition.  (The condition is stronger because $\mscr{I}_1[n] \subset \mscr{I}_2[n]$). 

It follows immediately that there is a natural transformation of functors
$$
\op{MC}_{(X, \g \otimes \mscr{A}(M) )} \to \what{\op{MC}} ( (M,\A); (X,\g ) ).  
$$
It remains to verify that this natural transformation yields a weak equivalence of simplicial sets when evaluated on all $(N,\mscr{B})$.  

Note that both simplicial sets decompose as a disjoint union over the set of all smooth maps $\phi : N \to X$.  Thus, we will fix such a $\phi$ and analyze the components of both simplicial sets corresponding to $\phi$. 

Given $\phi$, the corresponding component of the first (respectively, second) simplicial set is the Maurer-Cartan simplicial set associated to the nilpotent curved $L_\infty$ algebra 
$$
\g_{i,\phi} = \Gamma(N, \phi^\ast \g \otimes_{\Omega^\ast_N} \mscr{I}_i )
$$
where $i=1,2$. 

Both of these nilpotent $L_\infty$ algebras are equipped with finite bifiltrations, induced by the filtrations on $\A(M)$ and $\mscr{B}(N)$ by the powers of the ideals $\mscr{I}_\A(M)$ and $\mscr{I}_{\mscr{B}}(N)$.  The map $\g_{1,\phi} \to \g_{2,\phi}$ is filtration preserving, and the associated graded is Abelian.  

The associated graded of $\g_{i,\phi}$ is $\phi^\ast \g \otimes_{\Omega^\ast_N} \op{Gr} \mscr{I}_i$.  Note that 
$$
\op{Gr}^{i,j} (\mscr{I}_1) = 
\begin{cases}
\op{Gr}^i(\mscr{A}) \otimes \op{Gr}^j (\mscr{B}) & \text{ if } i \ge 0 \text{ or } j \ge 0 \\
0 & \text{ if } i = j = 0.
\end{cases}
$$
Similarly,
$$
\op{Gr}^{i,j} (\mscr{I}_2) = 
\begin{cases}
\op{Gr}^i(\mscr{A}) \otimes \op{Gr}^j (\mscr{B}) & \text{ if } j >0  \\
0 & \text{ if } j = 0.
\end{cases}.
$$
It follows from this observation, and the assumptions in the statement of the proposition, that the induced map on the cohomology of the associated graded 
$$
H^\ast (\op{Gr}^{\ast,\ast}( \g_1) ) \to 
H^\ast (\op{Gr}^{\ast,\ast}( \g_2) )  
$$
is an isomorphism on $H^{\le 1}$.  It follows (by a standard argument using induction on the filtration) that the induced map of Maurer-Cartan simplicial sets is a weak equivalence. 
\end{proof}

\subsection{}
In our study of holomorphic Chern-Simons theory, we are interested in the space of holomorphic maps $E \to X$, where $E$ is a Riemann surface and $X$ is a complex manifold.  We are only interested in those maps which are infinitesimally near to the constant map.    This proposition shows that this mapping space is represented by the $L_\infty$ space $\Omega^{0,\ast}(E) \otimes \g_{X_{\dbar}}$ over $\Omega^\ast_X$. 

The main reason for developing the theory of $L_\infty$ spaces as much as I did is to be able to represent the mapping space in this way.  The quantum field theory techniques developed in \cite{Cos11} apply when our space of fields is linear.  This presentation of the mapping space allows us to apply the techniques of \cite{Cos11} directly to theories where the space of fields is some space of maps.  In the example of interest in this paper, the space of classical fields will be the sheaf of $\Omega^\ast_X$-modules
$$
\Omega^{0,\ast}(E) \otimes \left( \g_{X_{\dbar}} [1] \oplus \g^\vee_{X_{\dbar}} [-1] \right).
$$
The classical action is constructed from the curved $L_\infty$ structure on $\g_{X_{\dbar}}$, and has the property that the derived moduli space of solutions to the equations of motion is the same as the space solutions to the Maurer-Cartan equation in the curved $L_\infty$ algebra $\Omega^{0,\ast}(E) \otimes \left( \g_{X_{\dbar}}  \oplus \g^\vee_{X_{\dbar}} [-2] \right).$  The above proposition allows us to identify this space of Maurer-Cartan elements with the derived space of maps from the elliptic curve $E$ to the cotangent bundle $T^\ast X$ (completed near constant maps to $X$).

Recall that the particular $L_\infty$ algebra $\g_{X_{\dbar}}$ associated to the complex manifold $X$ depends on a choice: namely, the choice of a $\cinfty$ splitting of the vector bundle map
$$
F^1 \jhol \to T^{1,0} X. 
$$
However, we have seen that if we choose a different splitting, then we get a homotopy-equivalent $L_\infty$ algebra structure on $T^{1,0} X \otimes_{\cinfty_X} \Omega^\sharp_X$.  (By definition, a homotopy of $L_\infty$ structures on a graded vector space $V$ is a family of $L_\infty$ structures over the base dg ring $\Omega^\ast([0,1])$). 

Thus, the $L_\infty$ space $\Omega^{0,\ast}(E) \otimes \g_{X_{\dbar}}$ is well-defined up to homotopy.  Since the quantum field theory formalism developed in \cite{Cos11} works relative to an arbitrary dg base ring, we see that the field theory constructions we perform will we independent, up to homotopy, of the choice of splitting $T^{1,0} X \to F^1 \jhol$. 

This point illustrates a general philosophy in perturbative quantum field theory, as developed in \cite{Cos11}.  Although the Feynman diagram expansion of a field theory depends on a linear structure on the space of fields, by talking about homotopy equivalences of classical field theories one can access non-linear local isomorphisms of the space of fields, and so remove this dependence.
\section{Curvature and characteristic classes}
\label{section_atiyah}
In this section,  I will describe how one can construct the Chern classes of a vector bundle on an $L_\infty$ space $(X,\g)$.  

\subsection{}
Let us recall the definition of the Atiyah class. Let $V$ be a holomorphic vector bundle on a complex manifold $Y$.  The Atiyah class is the element
$$
\alpha(V) \in H^1(Y, \Omega^1_Y \otimes_{\Oo_Y} \op{End}(V) )
$$
which is the obstruction to the existence of a holomorphic connection on $V$. Another way to phrase the definition is to say that $\alpha(V)$ classifies the $\Omega^1_Y \otimes_{\Oo_Y} \op{End}(V)$-torsor of holomorphic connections on $V$.

We can find a cochain representative of the Atiyah class as follows.
\begin{definition}
Let 
$$
\nabla_V : \Omega_X^{0,\ast}(V) \to \Omega_X^{1,\ast}(V)
$$
be a connection over $\Omega^{0,\ast}_X$. Thus, $\nabla_V$ must satisfy the Liebniz rule
$$
\nabla_V (\alpha v ) = (\partial \alpha) v + (-1)^{\abs{\alpha} } \alpha \nabla_V v,
$$
for all $\alpha \in \Omega^{0,\ast}_X$ and $v \in V$.  However, we do not assume that $\nabla_V$ is compatible with the $\dbar$ operator. 

Then, the Atiyah class of $\nabla_V$ is
$$
\alpha(\nabla_V) = [\dbar, \nabla_V] \in \Omega^{1,1}_X(\op{End}(V)).
$$
\end{definition}
It is easy to see that the cohomology class of $\alpha(\nabla_V)$ in $H^1(X, \Omega^1 \otimes \op{End}(V) )$ is the usual Atiyah class.

This definition of the Atiyah class of a connection generalizes immediately.  Let $R$ be a differential graded algebra, and let $R^\sharp$ be the underlying graded algebra.  Let $V$ be an $R$-module, which is projective as an $R^\sharp$-module.  Let $\Omega^1_R$ denote the $R$-module of K\"{a}hler differentials of $R$.
\begin{definition}
A \emph{connection} on $V$ is a map (of graded vector spaces)
$$
\nabla_V : V \to \Omega^1_R \otimes_R V
$$
satisfying the Leibniz rule
$$
\nabla_V (r v) = \d_{dR}(r) v + r (-1)^{\abs{r} } \nabla_V v.
$$
The \emph{Atiyah class} of $\nabla_V$ is the class
$$
\Atiyah(\nabla_V) = [\nabla_V, \d] \in \Omega^1_R \otimes_R \op{End}_R(V) 
$$
which measures the failure of $\nabla_V$ to be a cochain map.
\end{definition} 
 Note that $\Atiyah(\nabla_V)$ is a closed element of $\Omega^1_R \otimes_R \op{End}_R(V)$ of cohomological degree $1$.  If we change the connection $\nabla_V$ on $V$, then the Atiyah class $\Atiyah(\nabla_V)$ chains by an exact element. 

\subsection{}
We are interested in the Atiyah class of the tangent bundle to $(X,\g)$ for any $L_\infty$ space $X$. 

The tangent bundle to $(X,\g)$ is represented by the sheaf of $\g$-modules $\g[1]$.  In order to implement the algorithm above, we need to pass to the language of ringed spaces.  As before, the ringed space associated to the $L_\infty$ space $(X,\g)$ is denoted by $B \g$.  The sheaf of rings is $C^\ast(\g)$.    Recall that this is a sheaf of rings over the sheaf $\Omega^\ast_X$.

We will freely pass back and forth between a curved $L_\infty$ space $(X,\g)$ and the associated ringed space $B \g$.  Thus, the tangent bundle $T(X,\g)$ can be viewed as the $\g$-module $\g[1]$; in the language of ringed spaces, the tangent bundle $T_{B \g}$ is the sheaf of $C^\ast(\g)$-modules $C^\ast(\g, \g[1])$.    Similarly, $\Omega^1_{B \g}$ is the sheaf of $C^\ast(\g)$-modules $C^\ast(\g, \g^\vee[-1] )$.  

As before, let $C^\sharp(\g)$ denote the graded algebra underlying $C^\ast (\g)$, equipped with zero differential. 

Note that, as a $C^\sharp(\g)$ module,  $C^\sharp(\g, \g[1])$ is naturally trivialized.  That is, 
$$C^\sharp(\g,\g[1]) \iso C^\sharp(\g) \otimes_{\Omega^\sharp_X} \g[1].$$
This trivialization gives us a natural connection on the tangent bundle $T_{B \g}$: we will let
\begin{align*}
\Atiyah ( T_{B \g} ) &\in H^1(X,\Omega^1_{B \g} (\op{End} T_{B \g} ))\\
&= H^1( X, C^\ast(\g, \g^\vee[-1] \otimes \g^\vee \otimes \g) )
\end{align*} 
be the Atiyah class of this connection.

We can describe this Atiyah class explicitly in terms of the curved $L_\infty$ algebra structure on $\g$.  The expression is entirely local on $X$, and is defined for any curved $L_\infty$ algebras.  

If $\chi$ is a local section on $X$ of $\g[1]$, thought of as a covariant-constant section of the tangent bundle $T _{B \g}$, then the Atiyah class applied to $\chi$ is an element of $\Atiyah_\chi ( T_{B \g} )$ of 
$$
C^\ast(\g, \op{End}(\g) ) = \what{\Sym}^\ast(\g^\vee[-1] ) \otimes \op{End}(\g).
$$ 
Given further elements $U_1,\ldots,U_n \in \g$, the Taylor expansion of $\Atiyah_\chi( T_{B \g})$ is constructed from the sequence of elements
$$
\frac{\partial}{\partial U_1} \dots \frac{\partial}{\partial U_n} \Atiyah_\chi (T_{B \g}) (0) \in \op{End}(\g).
$$ 
\begin{lemma}
\label{lemma_atiyah_hcs}
If $W$ is an element of $\g$, then
$$
\frac{\partial}{\partial U_1} \dots \frac{\partial}{\partial U_n} \Atiyah_\chi (T_{B \g} ) (0)  (W) = l_{n+2} (U_1,\ldots,U_n, \chi, W) \in \g. 
$$
\end{lemma}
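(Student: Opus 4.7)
The plan is to compute $\Atiyah_\chi(T_{B \g})$ directly from the definition $\Atiyah(T_{B \g}) = [\nabla, \d]$, where $\d$ is the Chevalley--Eilenberg differential on $C^\ast(\g, \g[1])$ and $\nabla$ is the trivial connection induced by the trivialization $C^\sharp(\g, \g[1]) \iso C^\sharp(\g) \otimes_{\Omega^\sharp_X} \g[1]$. By construction $\nabla$ annihilates any section $W \in \g[1]$ viewed as a covariant-constant element of $C^\ast(\g, \g[1])$, and acts as the (ordinary) de Rham differential $C^\sharp(\g) \to \Omega^1_{B\g}$ on the coefficient ring. Consequently $\Atiyah(T_{B\g})(W) = \nabla(\d W)$, and the proof amounts to reading off $\d W$ from the curved $L_\infty$ structure and then applying $\nabla$.

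The next step is to unpack $\d W$ for $W \in \g[1]$. The dg-module structure on $\g[1]$ used to define $\d$ is the adjoint $L_\infty$ action of $\g$, whose $k$-th Taylor component is precisely $l_k$ with one entry in the module slot. Choosing local dual bases $\{e_\alpha\}$ of $\g$ and $\{x^\alpha\}$ of $\g^\vee[-1]$, this gives
\begin{equation*}
\d W \;=\; \sum_{k \ge 1} \frac{1}{(k-1)!} \, x^{\alpha_1} \cdots x^{\alpha_{k-1}} \otimes l_k(e_{\alpha_1},\ldots,e_{\alpha_{k-1}}, W).
\end{equation*}
Applying $\nabla$ differentiates one of the generators $x^{\alpha_j}$, producing a factor $\d x^{\alpha_j} \in \Omega^1_{B\g}$; graded symmetry of $l_k$ identifies the $k-1$ resulting terms and gives
\begin{equation*}
\nabla \d W \;=\; \sum_{k \ge 2} \frac{1}{(k-2)!} \, x^{\alpha_1} \cdots x^{\alpha_{k-2}} \otimes \d x^{\alpha_{k-1}} \otimes l_k(e_{\alpha_1},\ldots,e_{\alpha_{k-1}}, W).
\end{equation*}

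To pass from $\Atiyah(T_{B\g})(W)$ to $\Atiyah_\chi(T_{B\g})(W)$ one contracts the $\d x^{\alpha_{k-1}}$ factor against $\chi \in \g[1]$, which is simply substitution of $\chi$ into the corresponding slot of $l_k$. The result lies in $\what{\Sym}^\ast(\g^\vee[-1]) \otimes \g$, and the Taylor coefficient in directions $U_1, \ldots, U_n \in \g$ at the origin isolates the polynomial part of degree exactly $n$ in the $x^\alpha$'s. This forces $k-2 = n$, leaving only the $k = n+2$ summand; after applying $\partial/\partial U_1 \cdots \partial/\partial U_n$ the combinatorial factor $\tfrac{1}{n!}$ cancels against the $n!$ produced by differentiating the symmetric monomial, leaving precisely $l_{n+2}(U_1,\ldots,U_n,\chi,W)$.

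The main obstacle is strictly bookkeeping: tracking the Koszul signs introduced by the shifts $\g \leadsto \g[1]$ and $\g^\vee \leadsto \g^\vee[-1]$, and matching the convention for the CE differential on a module with the convention defining the $L_\infty$-brackets. Once a consistent sign rule is fixed (the one under which the generalized Jacobi identities translate directly into $\d^2 = 0$ on $C^\ast(\g, \g[1])$), the computation above goes through verbatim. No $L_\infty$ identity is actually needed: the lemma is a tautological unpacking of the trivial connection, and the appearance of $l_{n+2}$ is forced by the fact that each degree of $\nabla$ consumes one symmetric power while the contraction with $\chi$ consumes one more slot of the bracket.
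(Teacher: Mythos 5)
Your proposal is correct and is exactly the ``straightforward local calculation'' that the paper leaves to the reader: unpacking the trivial connection from the trivialization $C^\sharp(\g,\g[1]) \iso C^\sharp(\g)\otimes_{\Omega^\sharp_X}\g[1]$, writing the Chevalley--Eilenberg differential of a covariant-constant $W$ in terms of the Taylor components $l_k$, and matching symmetric degrees to isolate $k=n+2$. The bookkeeping of coefficients ($1/(k-1)!$ becoming $1/(k-2)!$ and the $n!$ cancellation) is right, so modulo the sign conventions you flag, this is a complete write-up of the paper's intended argument.
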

\begin{proof}
This is a straightforward local calculation. 
\end{proof}

\section{Volume forms on elliptic $L_\infty$ spaces}
\label{section_volume_form}
The main result of this paper is that we can understand the Witten class of a complex manifold $X$ in terms of a certain natural volume form on the derived mapping space from an elliptic curve to $X$.  The derived mapping space will be represented as an $L_\infty$ space.  In this section I will give a definition (following \cite{Cos11b}) of a \emph{projective volume form} on an $L_\infty$ space.   I will also explain how, under certain hypothesis, one can integrate functions on an $L_\infty$ space against such a projective volume form. 

\subsection{}
To start with, I will give the definition of a projective volume form on an ordinary (non-derived) manifold.  
\begin{definition}
Let $X$ be a complex manifold.  A \emph{projective volume form} on $X$ is a flat connection on the canonical bundle $K_X$.  Equivalently, it is a trivialization of the $\Oo_X^\times / \C^\times$-torsor associated to  $K_X$.
\end{definition}
Note that what we call a projective volume form is \emph{not} the same as a volume form on $X$ up to scalar multiplication.  Locally, the two notions coincide.  Globally, however, the flat connection on $K_X$ may have non-trivial monodromy: this provides an obstruction to lifting a projective volume form to a volume form. 
\begin{lemma}
A projective volume form on $X$ is the same as a right $D_X$-module structure on $\Oo_X$. 
\end{lemma}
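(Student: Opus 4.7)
The plan is to identify both sides of the claimed equivalence with flat holomorphic connections on a line bundle, and then to invoke the standard side-switching equivalence between left and right $D_X$-modules. The translation proceeds in three small steps.

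\textbf{Step 1: left $D_X$-module structures on line bundles are flat connections.} For any holomorphic line bundle $L$ on $X$, giving a left $D_X$-module structure on $L$ that extends its $\Oo_X$-module structure is equivalent to giving a flat holomorphic connection $\nabla : L \to \Omega^1_X \otimes_{\Oo_X} L$. Indeed $D_X$ is generated as a sheaf of $\C$-algebras by $\Oo_X$ and $T_X$, with relations expressing that sections of $T_X$ act as derivations satisfying the holomorphic Lie bracket; extending this to $L$ is precisely the data of a connection, and the relation $[\partial_i,\partial_j]=0$ forces flatness.

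\textbf{Step 2: side-switching.} Recall that the functor $N \mapsto K_X^{-1} \otimes_{\Oo_X} N$ is an equivalence from right $D_X$-modules to left $D_X$-modules, with inverse $M \mapsto K_X \otimes_{\Oo_X} M$. Applied to $N = \Oo_X$, this says that a right $D_X$-module structure on $\Oo_X$ is the same data as a left $D_X$-module structure on $K_X^{-1}$.

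\textbf{Step 3: assembling the equivalences.} By Step 1, a left $D_X$-module structure on the line bundle $K_X^{-1}$ is the same as a flat holomorphic connection on $K_X^{-1}$. Dualizing produces a flat holomorphic connection on $K_X$, and this passage is reversible. By the definition of projective volume form, this is exactly the datum we want. Composing the equivalences from Steps 1--3 yields the bijection claimed by the lemma.

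\textbf{Main obstacle.} There is no genuine obstruction; the lemma is essentially a repackaging of standard facts about $D$-modules. The only point requiring some care is bookkeeping of conventions in the side-switching isomorphism, so that the flat connection ultimately obtained on $K_X$ matches the one defining a projective volume form. In local holomorphic coordinates $z_1,\dots,z_n$ with trivialization $dz_1 \wedge \cdots \wedge dz_n$ of $K_X$, one checks directly that the right action $f \cdot \partial_i = -\partial_i f$ on $\Oo_X$ corresponds under multiplication by $dz_1 \wedge \cdots \wedge dz_n$ to the trivial flat connection on $K_X$, and general right $D_X$-actions on $\Oo_X$ differ from this one by a closed holomorphic $1$-form, exactly as flat connections on $K_X$ do.
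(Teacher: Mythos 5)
Your proposal is correct and follows essentially the same route as the paper: side-switching via $K_X^{-1}\otimes_{\Oo_X}(-)$ converts a right $D_X$-module structure on $\Oo_X$ into a left $D_X$-module structure on $K_X^{-1}$, i.e.\ a flat connection on $K_X^{-1}$, hence on $K_X$. Your Steps 1 and 3 and the local coordinate check simply spell out details the paper leaves implicit.
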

\begin{proof}
If $M$ is a right $D_X$-module, then $M \otimes K_X^{-1}$ is a left $D_X$-module.  Thus, a right $D_X$-module structure on $\Oo_X$ induces a left $D_X$-module structure on $K_X^{-1}$, that is, a flat connection on $K_X^{-1}$; and so a flat connection on $K_X$.    The converse is immediate.   
\end{proof}

\subsection{}

We are interested in projective volume forms on $L_\infty$ spaces.     I will follow a very helpful suggestion of Nick Rozenblyum, and \emph{define} a projective volume form on a formal moduli problem to be a right $D$-module structure on the structure sheaf.  The reason for this approach is that I don't know how to define the canonical sheaf in derived geometry; presumably, the correct definition would involve some version of Grothendieck-Serre duality.  

Let $(X,\g)$ be an $L_\infty$ space.   Let 
$$\op{Vect}(X,\g) = C^\ast(\g,\g[1] )$$ be the sheaf on $X$ of sections of the tangent bundle $T(X,\g)$.   This is a sheaf of dg $C^\ast(\g)$-modules, equipped with an $\Omega^\ast_X$-linear Lie bracket.  We can identify it with the sheaf of $\Omega^\ast_X$-linear derivations of $C^\ast(\g)$, or equivalently with $C^\ast(\g, \g[1])$.

Let us define the associative algebra of differential operators $D (X, \g)$ to be the free associative algebra generated over $\Oo(X, \g)$ by $X \in \op{Vect}( B\g)$ subject to the usual relations:
\begin{align*}
X \cdot f - f \cdot X = (X f) \\
f \cdot X = f X 
\end{align*}
where $\cdot$ denotes the associative product in $D (X, \g)$, and juxtaposition indicates the action of $\op{Vect}(X, \g)$ on $\Oo(X, \g)$ or the $\Oo(X, \g)$-module structure on $\op{Vect}( X, \g)$. 

\begin{definition}
A projective volume form on the $L_\infty$ space $(X,\g)$ is a right $D(B\g)$-module structure on $\Oo( X, \g)$. 
\end{definition}

\subsection{}
The main result of this paper will be the identification of a projective volume form arising from quantizing a certain field theory with the Witten class of a complex manifold $X$.    In order to state this theorem precisely, we need to know how a quantization leads to a projective volume form. 

Let $(X,\g)$ be an $L_\infty$ space, and let $T^\ast[-1] (X,\g)$ be the $L_\infty$ space $(X, \g \oplus \g^\vee [-3] )$.   (As always, $\g^\vee$ denotes the $\Omega^\sharp_X$-linear dual, and we equip $\g \oplus \g^\vee[-3]$ with the natural semi-direct product $L_\infty$ structure).  Note that the invariant pairing of degree $-3$ on $\g \oplus \g^\vee[-3]$ induces a Poisson bracket of degree $+1$ on the Chevalley-Eilenberg cochain complex $C^\ast( \g \oplus \g^\vee[-3])$.  Further, there is a $\C^\times$ action on $(X, \g \oplus \g^\vee[-3])$ given by scaling $\g^\vee[-3]$.  The Poisson bracket has weight $1$ with respect to this $\C^\times$ action. 

\begin{definition}
A $P_0$ algebra is a commutative differential graded algebra with a Poisson bracket of degree $1$.  A $\C^\times$-equivariant $P_0$ algebra is a commutative dga $A$, with a $\C^\times$ action, such that the Poisson bracket has weight $1$. 
\end{definition}
Thus, the sheaf of functions on the $L_\infty$ space $T^\ast[-1] (X,\g)$ has the structure of $\C^\times$-equivariant $P_0$ algebra.

The derived space of solutions to the equations of motion of a classical field theory always has a $P_0$ structure.  For more about these ideas, see \cite{Cos11b,CosGwi11}.  

\subsection{}
Part of the data of our quantum field theory will be a quantization of the $P_0$ algebra describing the classical field theory.  The notion of quantization we use has an operadic definition, developed in more detail in \cite{Cos11b, CosGwi11}.
\begin{definition}
A \emph{BD-algebra} is a cochain complex $A$ flat over $\C[[\hbar]]$, equipped with a commutative product and a Poisson bracket of cohomological degree $1$.  We require that the Poisson bracket satisfies the Leibniz and Jacobi identities, and that the bracket $\{-,-\}$, product $\star$ and differential $\d$ are related by the equation
$$
\d ( a \star b ) = (\d a ) \star b \pm a \star \d b + \hbar \{a,b\}. 
$$ 
\end{definition}
Note that, if $A$ is a BD algebra, then $A$ reduces to a $P_0$ algebra modulo $\hbar$.
\begin{definition}
A quantization of a $P_0$ algebra $A$ is a lift of $A$ to a BD algebra $\til{A}$. 
\end{definition}
We are interested in particular in quantizations of $P_0$ algebras equipped with a $\C^\times$ action.
\begin{definition}
Let $A$ be a $P_0$ algebra with a $\C^\times$ where the Poisson bracket has weight $1$ and the differential and product are preserved.  A $\C^\times$-equivariant quantization of $A$ is a lift to a BD algebra $\til{A}$, where $\til{A}$ has a $\C^\times$ action with the same compatibility with the product, differential, and bracket, and where parameter $\hbar$ has weight $-1$. 
\end{definition}
In this paper, we are only interested in $\C^\times$-invariant quantizations of sheaves of $P_0$ algebras on a manifold $X$ of the form $C^\ast(\g \oplus \g^\vee[-3])$, where $\g$ is a curved $L_\infty$ algebra over $\Omega^\ast_X$ (or, more generally, a sheaf of curved $L_\infty$ algebras).  In this case, there is no loss in generality in assuming that our quantization is of the form $C^\ast(\g \oplus \g^\vee)[[\hbar]]$, with the same product and Poisson bracket, but with a differential of the form $\d + \hbar \tr$.  Here $\d$ is the given differential on $C^\ast(\g \oplus \g^\vee[-3])$, and 
$$\tr: C^\ast(\g \oplus \g^\vee[-3]) \to C^\ast(\g \oplus\g^\vee[-3])$$
is an order $2$ differential operator satisfying the following properties. 
\begin{enumerate}
\item $\tr$ is $\Omega^\ast_X$-linear. 
\item $\tr^2 = 0$ and $[\d,\tr] = 0$, where $\d$ is the differential on $C^\ast(\g \oplus \g^\vee[-3] )$. 
\item The failure of $\tr$ to be a derivation is the Poisson bracket $\{-,-\}$ on $C^\ast(\g \oplus \g^\vee[-3] )$.  That is,
$$
\tr( \alpha \beta ) - (\tr \alpha) \beta - (-1)^{\abs{\alpha}} \alpha \tr \beta = \{\alpha,\beta\}.
$$
\item $\tr$ is of weight $1$ under the $\C^\times$ action on $C^\ast(\g \oplus \g^\vee[-3])$.
\end{enumerate}

\begin{lemma}
There is a natural bijection between $\C^\times$-equivariant quantizations of $T^\ast[-1] (X,\g)$, and projective volume forms on $(X,\g)$. 
\end{lemma}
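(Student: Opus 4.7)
The plan is to match a $\C^\times$-equivariant BV quantization with a right $D$-module structure on $\Oo(X,\g)=C^\ast(\g)$ by restricting the BV operator to the low-weight pieces of $\Oo(T^\ast[-1](X,\g))$, where it precisely encodes a ``divergence''.

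First I would set up the weight decomposition. Since the $\C^\times$ action scales the $\g^\vee[-3]$ factor,
$$\Oo(T^\ast[-1](X,\g)) \;=\; C^\ast\!\bigl(\g \oplus \g^\vee[-3]\bigr) \;=\; \bigoplus_{k\ge 0} C^\ast\!\bigl(\g,\,\what{\Sym}^k(\g[2])\bigr),$$
with $\C^\times$-weight $k$ on the $k$-th summand. The weight-$0$ piece is $\Oo(X,\g)=C^\ast(\g)$, and the weight-$1$ piece $C^\ast(\g,\g[2])$ is (up to shift) the sheaf $\op{Vect}(X,\g)=C^\ast(\g,\g[1])$. Because the BV operator $\tr$ has $\C^\times$-weight $+1$, it sends weight $k$ to weight $k-1$; in particular $\tr$ annihilates $\Oo(X,\g)$ (weight $-1$ is zero) and restricts to an $\Omega^\ast_X$-linear map $\op{div}:=\tr|_{\op{Vect}(X,\g)}\colon \op{Vect}(X,\g)\to\Oo(X,\g)$.

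For the forward direction, I would extract the three identities that are equivalent to a right $D(X,\g)$-module structure on $\Oo(X,\g)$ directly from the BV axioms. Applying the order-$2$ relation $\tr(\alpha\beta)-(\tr\alpha)\beta-(-1)^{|\alpha|}\alpha\,\tr\beta = \{\alpha,\beta\}$ to $\alpha=f\in\Oo(X,\g)$, $\beta=X\in\op{Vect}(X,\g)$ (using $\tr f=0$ and that $\{X,f\}$ is, up to sign, the derivation $X(f)$) gives the Leibniz-type identity $\op{div}(fX)=f\,\op{div}(X)\pm X(f)$. The hypothesis $[\d,\tr]=0$ gives compatibility of $\op{div}$ with the internal differential of $C^\ast(\g)$. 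Applying $\tr^2=0$ to $XY$ with $X,Y\in\op{Vect}(X,\g)$, and noting that $\{X,Y\}$ is the shifted Schouten bracket agreeing with the $L_\infty$-Lie bracket on $\g[1]$, yields $\op{div}[X,Y]=X(\op{div} Y)\mp(-1)^{|X||Y|}Y(\op{div} X)$. These three identities package exactly into a right-action $f\cdot X:=-X(f)+f\,\op{div}(X)$ of $D(X,\g)$ on $\Oo(X,\g)$; by the definition of projective volume form, this is the data required.

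For the reverse direction, given $\op{div}$ I would build $\tr$ by the standard Lie-algebroid BV formula
$$\tr(X_1\cdots X_k) \;=\; \sum_i \pm\,\op{div}(X_i)\,X_1\cdots \widehat{X_i}\cdots X_k \;+\; \sum_{i<j}\pm\,[X_i,X_j]\,X_1\cdots \widehat{X_i}\cdots \widehat{X_j}\cdots X_k,$$
extended $\Omega^\ast_X$-linearly and by the order-$2$ rule to all of $\what{\Sym}^\ast(\g[2])$. Uniqueness is automatic: a $\C^\times$-equivariant weight-$(+1)$ second-order operator whose failure-of-derivation is the fixed Poisson bracket is determined by its restriction to the generators $\op{Vect}(X,\g)$ of the symmetric algebra. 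The axioms $\tr^2=0$ and $[\d,\tr]=0$ then reduce, by the symmetric-algebra structure and the BV relation, to precisely the three identities satisfied by $\op{div}$ on $\op{Vect}(X,\g)$, so the two constructions are mutually inverse.

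The main obstacle is in the reverse direction: checking that the formula for $\tr$ is well-defined and squares to zero in the \emph{curved} $L_\infty$ setting, where the differential on $C^\ast(\g\oplus \g^\vee[-3])$ couples all the higher brackets $l_k$ of $\g$ to the Chevalley differential. Concretely, $\tr^2=0$ becomes a family of identities linking the $l_k$ to the derivatives of $\op{div}$, and one must verify them coherently on every weight. Fortunately the order-$2$ property reduces the check to weight-$2$ generators $X_1X_2$, on which the identity collapses to the Lie-bracket compatibility of $\op{div}$ established above; the remaining content is bookkeeping of signs, of the $\Omega^{>0}_X$-filtration on $\g$, and of the curved correction terms arising from $l_0\in\g\otimes_{\Omega^\ast_X}\Omega^{>0}_X$.
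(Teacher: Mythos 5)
The paper does not actually prove this lemma: it defers to [Kos85] and [Cos11b] and remarks that, for its purposes, one may simply take a $\C^\times$-equivariant quantization to be the \emph{definition} of a projective volume form. Your proposal reconstructs the standard Koszul correspondence contained in those references, and the strategy is sound: the $\C^\times$-weight grading identifies the weight-$0$ and weight-$1$ pieces of $\Oo(T^\ast[-1](X,\g))$ with $\Oo(X,\g)$ and a shift of $\op{Vect}(X,\g)$; weight-homogeneity together with the order-$2$ condition with prescribed symbol forces $\tr$ to be determined by $\op{div}=\tr|_{\op{Vect}(X,\g)}$; and the BV axioms translate exactly into the Leibniz, bracket, and differential compatibilities defining a right $D(X,\g)$-module structure on $\Oo(X,\g)$.

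Two points deserve tightening. First, your reduction of $\tr^2=0$ to ``weight-$2$ generators $X_1X_2$'' is loose: vector fields are not generators of $\what{\Sym}(\g^\vee[-1]\oplus\g[2])$, and the clean statement is that $\tr^2=\tfrac12[\tr,\tr]$ is a differential operator of order $\le 3$, so it suffices to check it on products of at most three algebra generators; the cubic term is the Jacobi identity for the Poisson bracket (automatic), the linear term vanishes for weight reasons since $\op{div}(X)$ lands in weight $0$ where $\tr$ is zero, and the quadratic term is precisely the identity $\op{div}[X,Y]=X(\op{div}Y)\mp Y(\op{div}X)$. Second, your closing worry about the curved setting slightly misplaces where the $l_k$ enter: the explicit formula for $\tr$ built from $\op{div}$ does not involve the $L_\infty$ structure at all, so $\tr^2=0$ never sees the $l_k$; all of the curvature and higher brackets are packaged into the Chevalley--Eilenberg differential $\d$ on $C^\ast(\g\oplus\g^\vee[-3])$, and the single equation $[\d,\tr]=0$ is what encodes compatibility of the right module structure with the dg structure (equivalently, that the two brackets-with-$\d$ identities you extract in the forward direction hold). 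With those adjustments the argument is complete and agrees with the cited sources.
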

For the proof, see \cite{Kos85,Cos11b}.  For the purposes of this paper, the proof is not so important: we can take such a quantization to be the definition of a projective volume form.  We will see shortly how, under certain additional hypothesis, the choice of such a quantization allows one to integrate functions on $(X,\g)$.

\subsection{}
In order to get a geometric understanding of the relationship between quantizations and volume forms, let us consider how this works for ordinary  manifolds.  Thus, let $X$ be a manifold, and let $T^\ast[-1] X$ be the graded manifold whose algebra of functions are polyvector fields on $X$.  Then, a volume form $\omega$ on $X$ induces a divergence operator
$$
\op{Div}_\omega : \op{Vect}(X) \to \cinfty_X
$$  
characterized by the property that, for all vector fields $V$, 
$$
\mscr{L}_V \omega = \left( \op{Div}_\omega V \right) \omega.
$$
This divergence operator extends to a map
$$
\op{Div}_\omega : \Gamma(X, \wedge^k T X ) \to \Gamma(X, \wedge^{k-1} TX) 
$$
characterized by the property that
$$
\left( \op{Div}_\omega (\Phi) \right) \vee \omega = \d_{dR} ( \Phi \vee \omega ),
$$
where, for $\Phi \in \Gamma(X, \wedge^k T X)$, $\Phi \vee \omega \in \Omega^{n-k}(X)$ is the form obtained by contracting $\Phi$ against the volume form. 

An easy calculation shows that the operator $\op{Div}_\omega$ satisfies the properties required to define a $\C^\times$-invariant quantization of the $P_0$ algebra of polyvector fields on $X$.

For a general $L_\infty$ space $(X,\g)$, and a $\C^\times$-invariant quantization of $T[-1] (X,\g)$, we should think of the operator
$$
\tr : \op{Vect}(X,\g) \to \Oo(X,\g)
$$
arising from the quantization as being given by the divergence with respect to the corresponding projective volume form.  

\subsection{}
The next question we want to answer is: under what circumstances can one integrate a projective volume form on an $L_\infty$ space?  

To motivate the answer, let us again consider the case of an ordinary smooth manifold $X$.  We have seen that a volume form $\omega$ on $X$ leads to a quantization of $T^\ast[-1] X$, with the operator $\tr$ given by the divergence $\op{Div}_\omega$ of the volume form.  

By construction, there is an isomorphism of sheaves of cochain complexes on $X$
$$
\left(  \Oo (T^\ast[-1] X),  \tr \right) \iso \left(\Omega^\ast(X)[n],  \d_{dR} \right) .
$$
The isomorphism comes from the map
$$
\Gamma(X, \wedge^i TX) \to \Gamma(X,  \Omega^{n-i} X)
$$
given by contracting with $\omega$. 

In particular, we see that the cohomology of the sheaf of complexes $\left(  \Oo (T^\ast[-1] X),  \tr \right)$ is just the constant sheaf $\C$ concentrated in degree $-n$.  

Now, there is a map of sheaves of cochain complexes
$$
\cinfty_X \to \left(  \Oo (T^\ast[-1] X),  \tr \right) \iso \C[n].
$$
Passing to compactly support sections we get a map
$$
\cinfty_c(X) \to H^n_c(X,\C) = \C
$$
from compactly supported smooth functions on $X$ to $H^n_c(X,\C)$, which, since $X$ is oriented, is $\C$. 

This map is, of course, the integral.  

Now, if we are just given the operator $\tr = \op{Div}_\omega$, then we do not get a canonical isomorphism between the cohomology of $\left( \Oo(T^\ast[-1] X), \tr \right)$ and the constant sheaf $\C[n]$.  All we know is that this cohomology is a constant sheaf of rank one concentrated in degree $-n$. 

Even so, this is enough to give us the integral map, up to a constant factor. Indeed, we get a map
$$
\cinfty_c(X) \to H^n_c( \Oo(T^\ast[-1] X), \tr )
$$
and the right hand side is isomorphic (but not canonically) to $\C$. 

\subsection{}
We would like to generalize this story to provide a definition of an integral associated to a projective volume form on an $L_\infty$ space $(X,\g)$.  

Suppose that we have such a projective volume form, which, as above, we view as a $\C^\times$-equivariant quantization of the $\Omega^\ast_X$-linear sheaf of $P_0$ algebras 
$$\Oo(T^\ast[-1] (X,\g)) = C^\ast(\g \oplus \g^\vee[-3] ).$$
The quantization is encoded in the cochain complex $\Oo( T^\ast[-1] (X,\g) )[[\hbar]]$ with differential $\d + \hbar \tr$.

The integral map we are trying to construct will be encoded in the map of $\Omega^\ast_X$-modules 
$$
\Oo(X,\g) \to \Oo ( T^\ast[-1] ( X, \g )) ((\hbar))
$$
In order to show that we get a reasonably well-behaved integral, we need to know something about the cohomology sheaves of $\Oo( T^\ast[-1] (X,\g) ) ((\hbar))$.   In general we won't have much control over these cohomology sheaves.  However, there are some reasonable assumptions on $(X,\g)$ that we can impose which will guarantee that these cohomology sheaves have some nice properties. 
\begin{definition}
An $L_\infty$ space $(X,\g)$ is \emph{locally trivial} the $\cinfty_X$-linear sheaf of $L_\infty$ algebras $\g^{red}$ is locally quasi-isomorphic to the sheaf of sections of a graded vector bundle $V$, with trivial differential and $L_\infty$ structure.   

We say that $(X,\g)$ is \emph{quasi-smooth} if the cohomology sheaves of $\g^{red}$ are concentrated in degrees $1$ and $2$. 

Finally we say that $(X,\g)$ is \emph{nice} if $(X,\g)$ is both quasi-smooth and locally trivial.  
\end{definition}
If $(X,\g)$ is a locally trivial $L_\infty$ space, then the cohomology sheaves $H^i(\g^{red})$ are locally free sheaves of $\cinfty_X$-modules.  In that case, we let $d_i$ denote the rank of $H^i(\g^{red})$.   

Note that if $(X,\g)$ is nice, then so is 
$$T^\ast[-1](X,\g) = (X, \g \oplus \g^\vee[-3]).$$

\subsection{}
Before I state the lemma which tells us that we can integrate on nice $L_\infty$ spaces, I need to introduce some notation.

Let $(X,\g)$ be any $L_\infty$ space, and let $\omega$ be a projective volume form on $(X,\g)$.  Let $\tr_\omega$ be the corresponding operator on $\Oo(T^\ast[-1] ( X,\g))$.   This operator allows us to define a sheaf of cochain complexes on $X$, which we call the divergence complex associated to $\omega$.  It is defined by the formula
$$
\op{Div}^\ast(\omega) = \left( \Oo(T^\ast[-1] ( X,\g)) ((\hbar)), \d + \hbar \tr_\omega \right) .
$$
We let $\mc{H}^i ( \op{Div}^\ast(\omega))$ denote the $i$'th cohomology sheaf of this complex.  

Note that this is a sheaf of $\C((\hbar))$-modules. Further, this complex has a $\C^\times$-action lifting that on $\C((\hbar))$ under which $\hbar$ has weight $-1$. 
\begin{lemma}
Let $(X,\g)$ be a nice $L_\infty$ space, and let $d_i$ denote the rank of $H^i(\g^{red})$. Then, for any projective volume form $\omega$ on $(X,\g)$, the cohomology sheaves $\mc{H}^i( \op{Div}^\ast(\omega))$ are zero except for $i = - d_1 - d_2$.  Further, $\mc{H}^{-d_1 - d_2}(\op{Div}^\ast(\omega))$ is a locally constant rank one sheaf of $\C((\hbar))$ vector spaces. 
\end{lemma}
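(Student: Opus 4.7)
The plan is to reduce the statement to a local computation and then combine the Poincar\'e lemma with a standard Koszul calculation. Both assertions concern cohomology sheaves on $X$, so it suffices to prove them on a contractible open neighborhood $U$ of every point $x_0 \in X$. Using the nice hypothesis, after shrinking $U$ I would fix a quasi-isomorphism of $\cinfty_U$-linear sheaves of $L_\infty$ algebras between $\g^{red}|_U$ and the constant sheaf $V \otimes \cinfty_U$, where $V$ is a graded vector space concentrated in degrees $1$ and $2$, of dimensions $d_1$ and $d_2$ respectively, equipped with the zero $L_\infty$ structure.

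Next I would lift this to a homotopy equivalence of curved $L_\infty$ algebras over $\Omega^\ast_U$ between $\g|_U$ and $V \otimes \Omega^\ast_U$ (with trivial curved $L_\infty$ structure), via an obstruction-theoretic argument; after further shrinking $U$, the obstructions live in cohomology groups of $\cinfty_U$-modules that are acyclic on the contractible $U$. The given projective volume form can then be transported along this equivalence, and any two projective volume forms on the trivial local model give quasi-isomorphic divergence complexes (a direct check). Thus it suffices to compute the divergence complex of the trivial local model equipped with its standard volume form.

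In this local model, $\Oo(T^\ast[-1](U, V \otimes \Omega^\ast_U))$ is isomorphic to $\Omega^\ast_U \otimes \what{\Sym}^\ast(V^\vee[-1] \oplus V[2])$. The generators in cohomological degree $0$ are $V^{1,\vee} \oplus V^2$ (a total of $d_1 + d_2$ even coordinates) and the generators in degree $-1$ are $V^{2,\vee} \oplus V^1$ (a total of $d_1 + d_2$ odd coordinates). The differential is the de Rham differential on $\Omega^\ast_U$, while the BV operator $\hbar\tr$ is the standard BV Laplacian $\hbar\Delta$ on the symmetric algebra factor, pairing each even coordinate with its dual odd one. The divergence complex therefore factors as $(\Omega^\ast_U, d_{dR}) \otimes (\what{\Sym}^\ast(V^\vee[-1] \oplus V[2])((\hbar)), \hbar\Delta)$. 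By the Poincar\'e lemma on the contractible $U$, the first factor has cohomology $\C$ concentrated in degree $0$. The second factor, after inverting $\hbar$, is the standard Koszul complex on formal $(d_1+d_2)$-dimensional affine space; an elementary calculation, using that $\partial_{x^i}$ is surjective on formal power series, shows its cohomology is $\C$ concentrated in degree $-(d_1+d_2)$ and generated by the top exterior monomial. The K\"unneth formula assembles these into $\C((\hbar))$ in degree $-(d_1+d_2)$ and zero elsewhere, which is exactly the claim.

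The main obstacle is the first reduction step: promoting the $\cinfty$-linear trivialization of $\g^{red}$ to a curved $L_\infty$ equivalence over $\Omega^\ast_U$, and verifying that the divergence complex is invariant up to quasi-isomorphism under such equivalences and under change of projective volume form. A safer alternative that bypasses the explicit lift is to use the spectral sequence associated to the filtration of $\Oo(T^\ast[-1](X,\g))((\hbar))$ by powers of the ideal generated by $\Omega^{>0}_X$: the $E_1$ page reduces to the $\cinfty_X$-linear divergence complex for $\g^{red}$, to which the trivialization of $\g^{red}$ applies directly, and the remaining de Rham page is handled by Poincar\'e.
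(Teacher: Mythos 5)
Your fallback route is the correct one; your primary route has a genuine gap at the lifting step. The ``locally trivial'' hypothesis only trivializes the $\cinfty_X$-linear reduction $\g^{red}=\g/\Omega^{>0}_X$; it does \emph{not} imply that the full curved $L_\infty$ algebra $\g|_U$ over $\Omega^\ast_U$ is equivalent to $V\otimes\Omega^\ast_U$ with zero structure maps, and in general it is not. Already for $\g=\g_{X_{\dbar}}$ (which is nice, with $d_2=0$) the sheaf $C^\ast(\g)|_U$ is quasi-isomorphic to the sheaf of holomorphic functions on $U$, while $C^\ast(V\otimes\Omega^\ast_U)$ for the trivial structure is quasi-isomorphic to the locally constant sheaf $\what{\Sym}^\ast(V^\vee[-1])$; these are not equivalent, so the obstructions to your proposed lift do not vanish. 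The curving and the $\Omega^{>0}_U$-components of the structure maps encode the flat connection on $\jhol$ and cannot be gauged away, even locally. Your computation of the trivial local model does produce the right answer, but the reduction to that model is unjustified as written.

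The spectral sequence you offer as a ``safer alternative'' is essentially the paper's actual proof: one filters $\Oo(T^\ast[-1](X,\g))((\hbar))$ by the images of multiplication by $\Omega^{i}_X$, so that the associated graded is $\Omega^\sharp_X$ tensored over $\cinfty_X$ with $C^\ast(\g^{red}\oplus(\g^{red})^\vee[-3])((\hbar))$, to which the trivialization of $\g^{red}$ applies directly; the residual de Rham differential is then handled by the Poincar\'e lemma. You should promote this to the main argument. One further point you pass over too quickly: the independence of the choice of projective volume form is not quite ``a direct check.'' On the local model a $\C^\times$-equivariant quantization has $\tr=\tr_0+\{S,-\}$ with $S$ a formal function of the even coordinates, and the paper disposes of the $\{S,-\}$ term by a second filtration, by polynomial weight with $\hbar$ given weight $2$, under which $\hbar\tr_0$ preserves weight while $\hbar\{S,-\}$ strictly raises it; only after that reduction does your Koszul/formal Poincar\'e computation of the $\hbar\tr_0$-cohomology (which matches the paper's identification of $\tr_0$ with a divergence operator followed by the formal Poincar\'e lemma) apply.
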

\begin{remark}
The result holds without the $\C^\times$-equivariant assumption, and with the same proof. 
\end{remark}
\begin{proof}
We need to compute, locally on $X$, the cohomology of $\Oo(T^\ast[-1] (X,\g))[[\hbar]]$ with differential $\d + \hbar \tr$.  Since this cohomology does not change if we replace $\g$ by something quasi-isomorphic, we can assume without loss of generality that the differential and all $L_\infty$ structures on $\g^{red}$ vanish.    Recall that $T^\ast[-1] (X,\g)$ refers to the $L_\infty$ space $(X, \g \oplus \g^\vee [-3])$.  Thus, $\Oo( T^\ast[-1] ( X,\g))$ is the sheaf of $\Omega^\ast_X$-algebras $C^\ast(\g \oplus \g^\vee[-3])$.  Let us filter this by defining $F^i$ to be the image of multiplication by  $\Omega^i_X$.  We can compute the cohomology by the spectral sequence associated to this filtration.  The first term is $\oplus^i \Omega^i[-i] \otimes_{\cinfty_X} C^\ast( \g^{red} \oplus (\g^{red})^\vee[-3])$. 

Thus, to prove the lemma, we have to verify that locally, the cohomology of $C^\ast( \g^{red} \oplus (\g^{red})^\vee[-3])[[\hbar]]$ is a copy of $\cinfty_X$ concentrated in degree $-d_1 -d_2$. 
    
We will get the same answer if we replace $\g^{red}$ by a quasi-isomorphic $L_\infty$ algebra.  We have assumed that $(X,\g)$ is nice, and in particular locally trivial. We can thus assume, with out loss of generality, that $\g^{red}$ has trivial differential and $L_\infty$ structure. Further, by working locally, we can assume that $\g^{red}$ is a free $\cinfty_X$ module.  Let $V$ be the graded vector space which is $\C^{d_1}$ in degree $0$ and $\C^{d_2}$ in degree $-1$.   Locally there is an isomorphism $\g^{red} \iso V^\vee[-1] \otimes_{\C} \cinfty_X$.   The Lie algebra cochains $C^\ast(\g^{red} \oplus(\g^{red})^\vee[-3] ) $ can be identified with $\cinfty_X \otimes \what{\Sym}^\ast ( V \oplus V^\vee[1] )$.  

Note that $\what{\Sym} ( V \oplus V^\vee[1])$ has a $P_0$ structure where the bracket, on generators, is given by the pairing between $V$ and $V^\vee[1]$.  Let us thus assume that we are given a $\C^\times$-equivariant quantization of this $P_0$ algebra, described by a differential operator $\tr$.  We need to verify that the cohomology is, after inverting $\hbar$, concentrated in degree $-d_1 -d_2$.  

Let $x_i, \alpha_j$ refer to a basis of $V^0$ and $V^{-1}$ respectively, and let $\beta_i, y_j$ refer to a dual basis of $V^\vee[1]$.  Thus, $\alpha_j$ and $\beta_i$ are in degree $-1$, whereas $x_i$ and $y_j$ are in degree $0$.  

Then, $\what{\Sym}(V \oplus V^\vee[1])$ is the algebra $\C[[x_i,\eps_j, \delta_i, y_j]]$.   The Poisson bracket is defined by
\begin{align*}
\{x_i, \delta_i\} & = 1\\
\{\eps_j, y_j\} &= 1
\end{align*}
and all other brackets being $0$.  

Let us choose a $\C^\times$ equivariant quantization of this $P_0$ algebra. Such a quantization is defined by an operator $\tr$, which is necessarily of the form $\tr = \tr_0 + \{S,-\}$, where
$$
\tr_0 = \sum_i \dpa{x_i}\dpa{\beta_i}  + \sum_j \dpa{\alpha_j}\dpa{y_j},
$$
and where $S \in \C[[x_i]]$.

We need to compute the cohomology of $\C[[x_i, \alpha_j, \beta_i, y_j, \hbar]]$ with differential 
$$\hbar \tr_0 + \hbar \{S,-\}.$$  
Let us grade $\C[[x_i, \alpha_j, \beta_i, y_j,\hbar]]$ by giving the generators $x_i,\alpha_j, \beta_i, y_j$ all weight $1$, and giving $\hbar$ weight $2$.  Let us filter our complex by defining $F^k \C[[x_i, \alpha_j, \beta_i, y_j,\hbar]]$ to be the subspace of elements of weight $\ge k$.  The differential $\hbar \tr$ preserves this filtration.   Therefore, we can compute cohomology by using a spectral sequence.  

The operator $\hbar \tr_0$ preserves weight, whereas the operator $\hbar \{S_0,-\}$ strictly increases weight.   It follows that the first term in the spectral sequence is given by the cohomology with respect to the operator $\hbar \tr_0$.

Next, observe that we have an isomorphism 
$$
\C[[x_i, \alpha_j, \beta_i, y_j,\hbar]] \iso \C[[x_i,y_j,  \partial_{x_i}, \partial_{y_i} , \hbar]] ,
$$
where, as usual, $\partial_{x_i}$ and $\partial_{y_j}$ are put in degree $-1$.  This isomorphism sends $\alpha_j$ to $\partial_{y_j}$ and $\beta_i$ to $\partial_{x_i}$.  Under this isomorphism, the operator $\tr_0$ corresponds to divergence with respect to the translation invariant volume form 
$$\d Vol = \d x_1 \wedge \dots \wedge \d x_{d_1} \wedge \d y_{1} \dots \wedge \d y_{d_2}.$$
As usual, by contracting with $\d Vol$ we can turn polyvector fields into forms, so that we get an isomorphism
$$
\C[[x_i, \alpha_j, \beta_i, y_j,\hbar]] \iso \C[[x_i,y_j, \d x_i, \d y_j, \hbar]][d_1 + d_2]
$$
where the right hand side is equipped with the differential $\hbar \d_{dR}$.  

When we invert $\hbar$, the formal Poincar\'e lemma gives the desired result.  
\end{proof}

\subsection{}
The lemma shows that the divergence complex $\op{Div}^\ast(\omega)$ is quasi-isomorphic to a local system of $\C((\hbar))$-lines, with a shift.  Further, this local system has an action of $\C^\times$, compatible with the action on $\C((\hbar))$ under which $\hbar$ has weight $-1$.  Thus, we can take the $\C^\times$ invariants, to get a local system of $\C$-lines.

We will let 
$$\mc{D}(\omega) = \mc{H}^{-d_1 -d_2} (\op{Div}^\ast(\omega))^{\C^\times}$$ denote this local system of $\C$-lines. Thus, we have a quasi-isomorphism of sheaves of $\C((\hbar))$-modules
$$
\mc{D}(\omega)((\hbar)) [d_1 + d_2] \iso \op{Div}^\ast(\omega). 
$$

\subsection{}
This lemma is nearly enough to show that we can integrate on a nice $L_\infty$ space.  We need one more condition.
\begin{definition}
A projective volume form $\omega$ on a nice $L_\infty$ space $(X,\g)$ is \emph{integrable} if the local system $\mc{D}(\omega)$ on the manifold $X$ is isomorphic to the orientation local system on $X$.  
\end{definition}

Now suppose that $(X,\g)$ is such an $L_\infty$ space and if $\omega$ is an integrable projective volume form on $(X,\g)$. Then, there is a map of sheaves  
$$
\Oo(X,\g) \to \op{Div}^\ast(\omega)
$$
coming from the natural pull-back map $\Oo(X,\g) \to \Oo(T^\ast[-1](X,\g))$. 

Passing to compactly cohomology, and taking $\C^\times$-invariants on the right hand side, we get a map
$$
H^i_c( X, \Oo(X,\g)) \to H^{i+d_1 + d_2}_c ( X, \mc{D}(\omega) ) .
$$
Since $\mc{D}(\omega)$ is isomorphic to the orientation local system on $X$, Poincar\'e duality tells us that $H^{i+d_1 + d_2}_c ( X, \mc{D}(\omega) )$ is one-dimensional if $i + d_1 + d_2$ is the (real) dimension of the smooth manifold $X$. 
\begin{definition}
Let $(X,\g)$ be as above, and let $n$ denote the real dimension of $X$. The \emph{integral} of an integrable projective volume form on $(X,\g)$ is the map just constructed 
$$
H^{n - d_1 - d_2}_c( X, \Oo(X,\g)) \to H^n_c(X, \mc{D}(\omega)) \iso \C. 
$$
\end{definition}
Because the isomorphism between $\mc{D}(\omega)$ and the orientation local system on $X$ is non-canonical, this integral map is only defined up to a scalar factor.  

\section{Volume forms on the shifted tangent bundle}
Let $X$ be a complex manifold, and let $\g_{X_{\dbar}}$ denote the curved $L_\infty$ algebra encoding the complex structure on $X$. In our study of the Witten genus, projective volume forms on $T[-1] (X,\g_{X_{\dbar}})$ will play an important role.   Note that the sheaf $\Oo(T[-1] (X,\g_{X_{\dbar}}))$ is quasi-isomorphic to the sheaf of complexes 
$$\Omega^{-\ast,\ast}_X = \oplus \Omega^{p,q}_X [p-q]$$
with differential $\dbar$.   Thus,
$$
H^0(X, \Oo( T[-1] (X,\g_{X_{\dbar}} )) = \oplus H^i(X, \Omega^i_{X,\op{hol}}).
$$
If $X$ is compact, there is a natural integration map on this space, which is zero on $H^i(X, \Omega^i_{X, \op{hol}})$ if $i < n$, and which is usual integration on $H^n(X,\Omega^n_{X,\op{hol}})$.  

In this section we will see that this integration map is realized by a canonically-defined projective volume form on the $L_\infty$ space $T[-1] (X,\g_{X_{\dbar}})$.
\begin{theorem}
Let $(X,\g_{X_{\dbar}})$ be the $L_\infty$ space encoding the complex structure on a compact complex manifold $X$.  Then, there is a unique projective volume form $\omega_0$ on $T[-1] (X,\g_{X_{\dbar}})$ which is integrable, and where the integral map
$$
\int : H^0 ( X,  \Oo( T[-1] (X,\g_{X_{\dbar}})) \to \C
$$
(defined up to a multiplicative constant) coincides with the map described above.
\end{theorem}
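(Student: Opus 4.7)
My plan is to construct $\omega_0$ explicitly via a local-to-global procedure using holomorphic coordinate charts, verify that its associated integration map matches the prescribed one by tracing through the identifications in the divergence complex, and finally establish uniqueness through a Serre duality argument.

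For the construction, I work locally first: on a chart $U \subset X$ with holomorphic coordinates $z_1,\ldots,z_n$, the $L_\infty$ space $T[-1](X,\g_{X_{\dbar}})|_U$ is quasi-isomorphic to $T[-1]\C^n$, and the translation-invariant holomorphic volume form $dz_1 \wedge \cdots \wedge dz_n$ determines an honest volume form and hence a $\C^\times$-equivariant BV quantization of $T^\ast[-1]T[-1](X,\g_{X_{\dbar}})|_U$. Under a holomorphic change of coordinates these local volume forms transform by multiplication by the Jacobian, a nowhere-vanishing function, so they glue to a globally well-defined \emph{projective} volume form $\omega_0$ on $T[-1](X,\g_{X_{\dbar}})$. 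Integrability -- that is, $\mc{D}(\omega_0)$ being the orientation local system on $X$ -- follows from tracing through the Poincar\'e lemma computation in the earlier lemma, which identifies the divergence complex locally with the de Rham complex of $X$ shifted by the appropriate amount.

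To verify the integration map, I use the quasi-isomorphism from $\Oo(T[-1](X,\g_{X_{\dbar}}))$ to the divergence complex $\op{Div}^\ast(\omega_0)$. Elements of $H^i(X,\Omega^i)$ with $i < n$ have vanishing image in the top compactly-supported cohomology for degree reasons, while an element of $H^n(X,\Omega^n)$ pairs with the local holomorphic volume form to yield a top-degree smooth form on $X$, whose integral is the usual one. This identifies the induced integration map with $\alpha \mapsto \int_X [\alpha]_n$.

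For uniqueness, suppose $\omega_0'$ is another integrable projective volume form realizing the same integration. Then $\omega_0' = f \cdot \omega_0$ for some $f \in H^0(X, \Oo(T[-1](X,\g_{X_{\dbar}}))) = \bigoplus_{i=0}^n H^i(X, \Omega^i)$, defined up to a scalar. Writing $f = c + \sum_{i \ge 1} f_i$ with $c \in \C$ and $f_i \in H^i(X, \Omega^i)$, applying the equality $\int f \alpha = \int \alpha$ to $\alpha \in H^{n-i}(X, \Omega^{n-i})$ yields $\int_X f_i \wedge \alpha = 0$ for every such $\alpha$; the nondegeneracy of Serre duality then forces $f_i = 0$ for all $i \ge 1$, so $f = c$ is a scalar and $\omega_0 = \omega_0'$. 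The main obstacle is tracking the non-canonical identifications of the divergence complex with the shifted de Rham complex, so that the global construction actually produces a projective (rather than merely local) volume form and the induced integration truly matches the prescribed map; once these identifications are in place, the cohomological arguments are routine.
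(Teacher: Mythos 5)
Your construction step contains a genuine error. A projective volume form is a flat connection on the Berezinian line, i.e.\ a trivialization up to \emph{locally constant} invertible functions --- not a volume form up to arbitrary nowhere-vanishing functions. If $\omega_U$ and $\omega_V = J\,\omega_U$ are local volume forms on an overlap with $J$ nowhere vanishing but non-constant, the flat connections they determine differ by $\d \log J \neq 0$, so they do not glue; this is exactly the global monodromy phenomenon the paper warns about when it distinguishes projective volume forms from volume forms up to scale. Applied verbatim to $X$ itself instead of $T[-1](X,\g_{X_{\dbar}})$, your argument would produce a projective volume form on $\mathbb{P}^1$, which does not exist since $K_{\mathbb{P}^1}$ admits no flat connection. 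The construction can be rescued, but for a different reason than the one you give: on the shifted tangent bundle the even and odd Jacobians cancel, so the Berezinian transition functions are identically $1$ (not ``the Jacobian'') and the local forms glue to an honest canonical volume form. The paper sidesteps all of this by working globally: it identifies $T^\ast[-1]\,T[-1](X,\g_{X_{\dbar}})$ with $T[-1]\,T^\ast(X,\g_{X_{\dbar}})$ and defines $\tr_0$ either by an explicit kernel $(\eps\otimes 1 + 1\otimes\eps)K$, or as the divergence of the canonical symplectic volume form on $T^\ast(X,\g_{X_{\dbar}})$, or as the Lie derivative $L_\pi$ along the Poisson bivector.

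Two further gaps. The vanishing of the integral on $H^i(X,\Omega^i_X)$ for $i<n$ is not ``for degree reasons'': all of these classes sit in the same total cohomological degree $0$ of $\Oo(T[-1](X,\g_{X_{\dbar}}))$ and map to the same one-dimensional target, and the quasi-isomorphism of the divergence complex with a shifted constant sheaf scrambles the internal $(p,q)$-grading. The paper proves this vanishing by an explicit homotopy: $\hbar^{-1} m_\eta\circ\Phi\circ\pi^\ast$, with $\eta$ the Liouville vector field on $\what{T}^\ast X$, exhibits $(n-i)\,\Phi\circ\pi^\ast$ as nullhomotopic on $\Omega^{-i,\ast}(X)$. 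You need some substitute for this computation. Finally, in the uniqueness argument, two projective volume forms differ by a function \emph{modulo constants}, and $H^0(X,\Oo(T[-1](X,\g_{X_{\dbar}}))/\C)$ contains, besides $\oplus_{i\ge 1}H^i(X,\Omega^i_X)$, a summand isomorphic to $H^0(X,\Omega^1_{X,\op{hol}})$ coming from $H^0(X,\Oo_X/\C)$. Writing $\omega_0' = f\cdot\omega_0$ with $f$ an honest function misses these directions entirely; they do not change the integration map, and it is precisely the integrability hypothesis (triviality of the rank-one local system, i.e.\ integrality of the periods of the associated closed one-form) that controls them in the paper's argument. Your Serre-duality step for the $\oplus_{i\ge 1}H^i(X,\Omega^i_X)$ directions does agree with the paper's.
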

Although the theorem is morally completely obvious, I will give a detailed proof which will occupy the rest of this section. 

The first step is to construct the volume form.  We will do this explicitly.  Observe that we can represent $T[-1] (X,\g_{X_{\dbar}})$ as the $L_\infty$ space $(X,\g_{X_{\dbar}}[\eps])$ where $\eps$ is a parameter of degree $1$.  We are interested in projective volume forms on this space.  Thus, we need to understand 
$$T^\ast[-1] T[-1] (X,\g_{X_{\dbar}}) = T^\ast[-1] (X,\g_{X_{\dbar}}[\eps] ).$$
In general, $T^\ast[-1](X,\g) = (X,\g \oplus \g^\vee[-3])$.  Thus, we see that 
\begin{align*}
T^\ast[-1] T[-1] (X,\g_{X_{\dbar}}) &= \left(X, \g_{X_{\dbar}}[\eps] \oplus \left(\g_{X_{\dbar}}[\eps]\right)^\vee[-3] \right) \\
& = \left(X, \g_{X_{\dbar}}[\eps] \oplus \g_{X_{\dbar}}^\vee[\eps][-2] \right) \\
&=\left(X, \left( \g_{X_{\dbar}} \oplus \g_{X_{\dbar}}^\vee[-2]\right)[\eps] \right).
\end{align*}
The sheaf $\g_{X_{\dbar}}[\eps] \oplus \g_{X_{\dbar}}^\vee[\eps][-2] )$ is given the $L_\infty$ structure arising from the natural $\g_{X_{\dbar}}[\eps]$ action on $\g_{X_{\dbar}}^\vee[\eps]$, which is the $\eps$-linear extension of the $\g_{X_{\dbar}}$ action on $\g_{X_{\dbar}}^\vee$.  The invariant pairing of degree $-3$ on $(\g_{X_{\dbar}} \oplus \g_{X_{\dbar}}^\vee[-2])[\eps]$ is the composition of the natural $\C[\eps]$-valued pairing of degree $-2$ with the degree $-1$ map $\C[\eps] \to \C$, sending $\eps$ to $1$. 

Note that $(X,\g_{X_{\dbar}} \oplus \g_{X_{\dbar}}^\vee[-2])$ is the $L_\infty$ space $T^\ast(X,\g_{X_{\dbar}})$.  Thus, we have constructed a natural isomorphism
$$
T^\ast[-1] (T[-1] (X,\g_{X_{\dbar}})) = T[-1] T^\ast(X,\g_{X_{\dbar}})).  
$$
We will use this isomorphism extensively shortly.  

Now, to construct a projective volume form, we need to produce an operator $\tr_0$ on $C^\ast( \g_{X_{\dbar}}[\eps] \oplus \g_{X_{\dbar}}^\vee[\eps] [-2])$.  I will give three descriptions of this operator: one as a formula, and two more conceptual interpretations.

Let $K \in \g_{X_{\dbar}} \otimes_{\Omega^\sharp_X} \g_{X_{\dbar}}^\vee$ denote the inverse of the pairing between $\g_{X_{\dbar}}$ and $\g_{X_{\dbar}}^{\vee}$. (In what follows, tensor products will always be taken over $\Omega^\sharp_X$ unless otherwise specified). 

From $K$ we construct an anti-symmetric tensor
$$
\til{K} = (\eps \otimes 1 + 1 \otimes \eps) K \in (\g_{X_{\dbar}} [\eps] \oplus \g_{X_{\dbar}}^\vee[\eps][-2] ) ^{\otimes 2}.
$$
We define the operator
$$
\tr_0 : C^\ast(\g_{X_{\dbar}}[\eps] \oplus \g_{X_{\dbar}}^\vee[\eps][-2]  ) \to C^\ast(\g_{X_{\dbar}}[\eps] \oplus \g_{X_{\dbar}}^\vee[\eps][-2]  )
$$
to be the operator of contracting with $\til{K}$.   In other words, $\tr_0$ is the unique order $2$ differential operator which is zero when restricted to constant and linear elements of 
$$C^\ast(\g_{X_{\dbar}}[\eps] \oplus \g_{X_{\dbar}}^\vee[\eps][-2]  ) = \Sym^\ast \left( \g_{X_{\dbar}}[\eps][1] \oplus \g_{X_{\dbar}}^\vee[\eps][-1]  \right)^\vee 
$$
and which, on quadratic elements, is defined by $\til{K}$. 

One needs to verify that $\tr_0$ is a cochain map, that $\tr_0^2 = 0$, and that the failure of $\tr_0$ to be a derivation is measured by the Poisson bracket on $C^\ast(\g_{X_{\dbar}}[\eps] \oplus \g_{X_{\dbar}}^\vee[\eps][-2]  )$.  All of these properties are simple computations.  Thus, we have constructed our projective volume form. 

\subsection{}
Let us now give the more conceptual construction of the projective volume form.  We have constructed an isomorphism
$$
T^\ast[-1] T[-1] (X,\g_{X_{\dbar}} ) \iso T[-1] T^\ast(X,\g_{X_{\dbar}}). 
$$
The symplectic form on $T^\ast(X,\g_{X_{\dbar}})$ gives an isomorphism
$$
T[-1] T^\ast(X,\g_{X_{\dbar}}) \iso T^\ast[-1] T^\ast(X,\g_{X_{\dbar}}).
$$
Composing, we get an isomorphism
$$
T^\ast[-1] T[-1] (X,\g_{X_{\dbar}} ) \iso T^\ast[-1] T^\ast(X,\g_{X_{\dbar}}).
$$
This isomorphism respects the natural Poisson brackets on both sides.

We want to construct an operator $\tr_0$ on $\Oo(T^\ast[-1] T[-1] (X,\g_{X_{\dbar}}))$.  By this isomorphism, it suffices to construct such an operator on $T^\ast[-1] T^\ast(X,\g_{X_{\dbar}})$. This will be given by a projective volume form on $T^\ast(X,\g_{X_{\dbar}})$. Now, $T^\ast(X,\g_{X_{\dbar}})$ has a canonically-defined volume form. We take our projective volume form to be that associated to this actual volume form.

A simple computation verifies the equivalence between the two definitions of the operator $\tr_0$ we have given so far.  

\subsection{}
The third description actually works on a general $L_\infty$ space $(X,\g)$ and not just one arising from a complex manifold.  As above, we have a canonical isomorphism 
$$T[-1] T^\ast(X,\g) \iso T^\ast[-1] T[-1] (X,\g).$$ 
Now, functions on $T[-1] T^\ast(X,\g)$ can be identified with forms on $T^\ast(X,\g)$.  Thus, one has a de Rham operator
$$
\d_{dR} : \Oo(T[-1] T^\ast(X,\g)) \to \Oo(T[-1] T^\ast(X,\g)) 
$$
of cohomological degree $-1$.   We also have an operator
$$
\iota_\pi : \Oo(T[-1] T^\ast(X,\g)) \to \Oo(T[-1] T^\ast(X,\g)) 
$$
given by contracting with the Poisson tensor $\pi$ on $T^\ast(X,\g)$.  In the language of forms, $\iota_\pi$ maps $\Omega^{i}(T^\ast(X,\g))$ to $\Omega^{i-1}(T^\ast(X,\g))$. 

Let $L_\pi$ denote the Lie derivative with respect to $\pi$, defined by
$$
L_\pi  = [\d_{dR}, \iota_\pi].
$$
Then, the third description of the quantization of $T^\ast[-1] T[-1] (X,\g)$ is that the associated BD algebra is
$$
\left( \Oo(T[-1] T^\ast(X,\g) ), \d + \hbar L+\pi \right)
$$
where $\d$ is the standard differential on $\Oo(T[-1] T^\ast(X,\g))$.

\subsection{}
The next thing to verify is that this volume form is integrable.  Integrability is a property of the divergence complex associated to our projective volume form; that is, of the complex $\Oo(T^\ast[-1] T[-1] (X,\g_{X_{\dbar}}))[[\hbar]]$ with differential $\d + \hbar \tr_0$.

Now, by the second construction, this complex is isomorphic to the divergence complex for the canonical volume form on $T^\ast(X,\g_{X_{\dbar}})$.  As usual, we can identify this divergence complex with the complex of $\Omega^{\ast,\ast}(\what{T}^\ast X) ((\hbar))[2 \op{dim} X]$, where $\what{T}^\ast X$ denotes the formal completion of the cotangent bundle of $X$ along the zero section.  The cohomology sheaves of this complex are zero except in degree $-2 \op{dim} X$, and in this degree is $\C((\hbar))$.  Since this is a trivial local system, and since $X$ is a complex manifold and therefore orientable, we see that our projective volume form is integrable.  

\subsection{}
Next, we have to verify that the integral map for this projective volume form is as claimed.   In order to calculate this, we will translate the integral map into the language of ordinary complex geometry.

Let us use the notation
$$
\PV^{i,\ast}(\what{T}^\ast X) = \Omega^{0,\ast}(\what{T}^\ast X, \wedge^i T (\what{T}^\ast X)).
$$
The notation $\PV^{-\ast,\ast}(\what{T}^\ast X)$ will refer to $\oplus \PV^{i,\ast}(\what{T}^\ast X)[i]$.

The symplectic form $\omega$ on $\what{T}^\ast X$ induces an isomorphism 
$$
\Phi : \PV^{i,\ast}(\what{T}^\ast X) \iso \Omega^{i,\ast}(\what{T}^\ast X).
$$
Let $\tr : \PV^{i,\ast}(\what{T}^\ast X) \to \PV^{i-1,\ast}(\what{T}^\ast X)$ denote the divergence operator for the canonical volume form.  The divergence complex for this volume form is then $\PV^{-\ast,\ast}(\what{T}^\ast X)((\hbar))$ with differential $\dbar + \hbar \tr$. 

The integral map arises from the cochain map 
\begin{equation*}
\Omega^{-\ast,\ast}(X) \xto{\Phi \circ \pi^\ast} \left( \PV^{-\ast,\ast}(X)((\hbar)), \dbar + \hbar \tr \right).
\tag{$\dagger$}
\label{eqn_dagger_integral}
\end{equation*}
As we have seen, the cohomology sheaf on the right hand side is a copy of the constant sheaf $\C((\hbar))$ in degree $-2\op{dim}_{\C} X$. 

The first thing to check is that this integral map is zero on $\Omega^{-i,\ast}(X)$.  Thus, we need to verify that the map $\Phi \circ \pi^\ast$ in equation (\ref{eqn_dagger_integral}) is homotopically trivial.  To see this, let $\eta$ denote the Liouville vector field on $\what{T}^\ast X$.  Let $m_\eta$ denote the operator of wedging with $\eta$ on $\PV^{-\ast,\ast}(\what{T}^\ast X)$.  Then, 
\begin{lemma}
The map $\hbar^{-1} m_\eta \circ \Phi \circ \pi^\ast$ is a cochain homotopy between $(n-i) \Phi \circ \pi^\ast$ and $0$, where $n = \op{dim}_{\C} X$.
\end{lemma}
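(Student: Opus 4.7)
The plan is a direct local computation in Darboux coordinates on $\what{T}^\ast X$. Since every operator in sight ($\dbar$, $\tr$, $m_\eta$, $\Phi$, $\pi^\ast$) is local on $\what{T}^\ast X$, it suffices to verify the chain homotopy identity
$$(\dbar + \hbar \tr) \circ (\hbar^{-1} m_\eta\, \Phi\, \pi^\ast) + (\hbar^{-1} m_\eta\, \Phi\, \pi^\ast) \circ \dbar = (n-i)\, \Phi\, \pi^\ast$$
on sections in a single chart.

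Choose holomorphic coordinates $q_1,\dots,q_n$ on $U \subset X$ and the induced fibre coordinates $p_1,\dots,p_n$ on $\what{T}^\ast U$, so that $\omega = \sum_k dp_k \wedge dq_k$ and $\eta = \sum_k p_k \partial_{p_k}$. Under the isomorphism $\Phi$ induced by $\omega$, $dq_k$ corresponds to $\partial_{p_k}$, so for $\alpha = \sum_{I,J} f_{I,J}(q,\bar q)\, dq_I\, d\bar q_J \in \Omega^{i,\ast}(U)$ one has $\Phi\, \pi^\ast \alpha = \sum f_{I,J}\, \partial_{p_I}\, d\bar q_J$. Because $\partial_{p_k} \wedge \partial_{p_k} = 0$, wedging with $\eta$ annihilates the summands with $k \in I$, leaving exactly $n - i$ contributions $f_{I,J}\, p_k \partial_{p_k} \wedge \partial_{p_I}\, d\bar q_J$ for each multi-index $I$ of size $i$.

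Now apply the BV Laplacian $\tr$ to each surviving summand using its Leibniz rule
$$\tr(uv) = (\tr u)\, v + (-1)^{|u|} u\, \tr v + \{u,v\},$$
together with the elementary facts $\tr(p_k \partial_{p_k}) = 1$, $\tr(\partial_{p_I}) = 0$, and $\{p_k \partial_{p_k}, \partial_{p_I}\} = 0$ for $k \notin I$; each summand contributes precisely $f_{I,J}\, \partial_{p_I}\, d\bar q_J$, so $\tr(m_\eta\, \Phi\, \pi^\ast \alpha) = (n-i)\, \Phi\, \pi^\ast \alpha$. For the $\dbar$ part, since $\eta$ is holomorphic and $m_\eta$ is odd as a derivation on $\PV^{-\ast,\ast}$, one has $\dbar \circ m_\eta = -m_\eta \circ \dbar$; combined with the $\dbar$-linearity of $\Phi$ and $\pi^\ast$, this provides exactly the cancellation required to yield the homotopy identity.

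The computation is entirely mechanical; the only real difficulty is bookkeeping of signs (the Koszul sign in the $\dbar$-$m_\eta$ commutator, the orientation of the symplectic isomorphism $\Phi$, and the $\hbar$-weight convention in the BD complex). Conceptually, the content is transparent: $\eta$ generates fibrewise dilations, a lifted $(i,\ast)$-form on $\what{T}^\ast X$ has weight $i$ under the induced action on polyvector fields, the canonical volume form has weight $n$, so the divergence of the wedge realises the weight difference $n-i$.
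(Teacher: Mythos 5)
Your proof is correct and is essentially the paper's argument: the paper simply invokes the standard identity $[\tr, m_\chi] = L_\chi + (\tr \chi)$ together with $\tr \eta = n$ and the fact that $L_\eta$ acts by $-i$ on the image of $\Omega^{-i,\ast}(X)$, which is exactly what your Darboux-coordinate computation verifies term by term. The concluding conceptual paragraph of your proposal is, almost verbatim, the proof given in the paper.
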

\begin{proof}
Let $L_\eta$ denote the Lie derivative of $\eta$ acting on $\PV^{-\ast,\ast}(\what{T}^\ast X)$.  Note that $L_\eta$ acts is $-i$ on the image of $\Omega^{-i,\ast}(X)$.  Further, the divergence of $\eta$ is the constant $n$: that is $\tr \eta = n$. It is a standard identity that (for any holomorphic vector field $\chi$ on $\what{T}^\ast X$)
$$
[\tr, m_\chi] = L_\chi + (\tr \chi).
$$
It follows that, for all $\alpha \in \Omega^{-i,\ast}(X)$, 
$$
\tr  m_\chi \Phi \circ \pi^\ast \alpha = (n-i)\Phi \circ \pi^\ast \alpha,
$$
as desired.
\end{proof}

Next, we need to verify that the map
$$
H^n(X,\Omega^n_{X,\op{hol}} ) \to H^0( \PV^{-\ast,\ast}(\what{T}^\ast X)((\hbar)) = H^{2n}( X, \C((\hbar)) ) 
$$
is proportional to the usual integral map (where we identify $H^{2n}(X,\C((\hbar))$ with $\C((\hbar))$).  It suffices to verify that this map is non-zero. We can do this by working locally: if $D$ is a disc in $X$, we need to verify that the map
$$
H^n_c(D, \Omega^n_{D,\op{hol}}) \to H^{2n}_c(D, \C((\hbar)) ) 
$$
is proportional to the usual integral (where the subscript $c$ indicates compactly supported cohomology).  This computation can be performed explicitly in coordinates, and is left to the reader.

\subsection{}

The final part of the theorem was the uniqueness claim.  In fact, we will prove something a little stronger (which we will use later).

Let $(X,\g_{X_{\dbar}})$ be, as above, the $L_\infty$ space associated to a compact complex manifold, and let $\d Vol_0$ denote the canonical projective volume form on $T[-1] (X,\g_{X_{\dbar}})$.  Let $\tr_0$ be the associated divergence operator.   

Recall that projective volume forms on any $L_\infty$ space $(X,\g)$ form a torsor for $H^0(X, C^\ast_{red}(\g))$, where $C^\ast_{red}(\g)$ refers to the reduced Lie algebra cochains of $\g$.  Indeed, if $\tr$ is the divergence operator describing any such projective volume form, and if $f \in H^0(X,C^\ast_{red}(\g))$ is a function, then $\tr + \{f,-\}$ defines a new projective volume form (where $\{-,-\}$ denotes the Poisson bracket on functions on $T[-1](X,\g)$). 

Recall also that $C^\ast_{red}(\g)$ corresponds to functions on $(X,\g)$ modulo constants:
$$
C^\ast_{red}(\g) = \Oo(X,\g) / \C
$$
where $\C$ is the constant sheaf.

In the case of interest, there is a natural isomorphism$$H^0(X, \Oo(T[-1](X,\g)) / \C) \iso \oplus_{i > 0} H^i(X, \Omega^i_{X,\op{hol}}) \oplus H^0(X, \Oo_X / \C).$$ 
Further, note that $H^0(X, \Oo_X / \C)$ is isomorphic (via the de Rham differential) to $H^0(X, \Omega^1_{X,\op{hol}})$.   Here, $\Omega^1_{X,\op{hol}}$ refers to the sheaf of holomorphic $1$-forms on $X$. 

Thus, any projective volume from $\omega$ on $T[-1](X,\g_{X_{\dbar}})$ has divergence operator $\tr_\omega$ of the form
$$
\tr_\omega = \tr_0 + \{S_\omega,-\} + \{O_\omega,-\} 
$$
where 
\begin{align*}
S_\omega &\in \oplus_{i > 0}H^i(X,\Omega^i_{X,\op{hol}}) \\
O_\omega & \in H^0(X,\Omega^1_{X,\op{hol}}). 
\end{align*}
Note that we can view $O_\omega$ as an element of $H^1(X,\C)$. 
\begin{proposition}
A projective volume form $\omega$ on $T[-1](X,\g_{X_{\dbar}})$ is integrable if and only if 
$$O_\omega \in H^1(X,\Z 2 \pi i ) \subset H^1(X,\C);$$ 
that is, if $O_\omega/ 2 \pi i $ has integral periods. 

If $\omega$ is integrable, then the integration map
$$
\int - \omega : H^0(X, \Oo(T[-1] (X,\g_{X_{\dbar}}))  ) = \oplus H^i(X,\Omega^i_{X,\op{hol}}) \to \C
$$ 
sends
$$
\alpha \to \int_X \left[ e^{S_\omega} \alpha \right]_n.
$$
where $[-]_n$ indicates projection onto the component in $H^n(X,\Omega^n_{X,\op{hol}})$. 
\end{proposition}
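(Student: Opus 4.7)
The strategy is to interpret the passage from $\d Vol_0$ to $\omega$ as the formal gauge transformation of the divergence complex by $e^{S_\omega + O_\omega}$, and to read off both the integrability criterion and the integration formula from this transformation. Setting $f = S_\omega + O_\omega$, a direct calculation, using that $\tr_0$ is an order-$2$ differential operator whose failure to be a derivation is the Poisson bracket, yields the BV conjugation identity
\[
e^{-f}(\d + \hbar \tr_0)(e^f \alpha) \;=\; \bigl(\d + \hbar (\tr_0 + \{f,-\})\bigr)\alpha \;+\; \bigl(\d f + \hbar\tr_0 f + \tfrac{\hbar}{2}\{f,f\}\bigr)\alpha.
\]
The hypothesis that $\tr_\omega = \tr_0 + \{f,-\}$ defines a valid divergence operator (compatible with $\d$ and squaring to zero) forces the extra term $\d f + \hbar\tr_0 f + \tfrac{\hbar}{2}\{f,f\}$ to be a locally constant, which is harmless in the projective setting. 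Consequently, wherever a globally defined representative of $e^f$ exists, conjugation by $e^f$ induces an isomorphism between the divergence complexes for $\omega_0$ and $\omega$; the integrability criterion therefore reduces to analysing the obstruction to $e^f$ being globally single-valued.

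For the integrability criterion, the $S_\omega$ component lies in $\bigoplus_{i > 0} H^i(X,\Omega^i_{X,\op{hol}})$ and is locally $\dbar$-exact by the $\dbar$-Poincar\'e lemma; the associated local primitives $\beta_U$ sit in strictly negative cohomological degree, and a filtration argument parallel to the one used earlier in the computation of $\mc{H}^\ast(\op{Div}^\ast(\omega))$ shows that this higher-degree homotopy data does not contribute to the rank-one local system $\mc{D}(\omega)$. In contrast, $O_\omega$ corresponds under $H^0(X,\Oo_X/\C) \iso H^0(X,\Omega^1_{X,\op{hol}})$ to a closed holomorphic $1$-form whose local holomorphic primitives $\phi_U$ differ on overlaps by additive constants $c_{UV} \in \C$; the exponentials $e^{\phi_U}$ assemble into a \v{C}ech $1$-cocycle in the constant sheaf $\C^\times$ whose class in $H^1(X,\C^\times)$ is $\exp([O_\omega])$. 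This cocycle is precisely the transition data for the twist $\mc{D}(\omega) \otimes \mc{D}(\omega_0)^{-1}$. Since $X$ is a complex manifold, hence orientable, and $\mc{D}(\omega_0)$ is already trivial, $\omega$ is integrable iff this cocycle is trivial, iff $\exp(\oint_\gamma O_\omega) = 1$ for every $\gamma \in H_1(X,\Z)$, iff $O_\omega \in H^1(X, 2\pi i\, \Z)$.

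For the integration formula, suppose $\omega$ is integrable. The chain map $e^f$ sends a class $\alpha$ pulled back from $\Oo(X,\g_{X_{\dbar}})$ to $e^f\alpha$ in the divergence complex for $\omega_0$; composing with the known integration map for $\omega_0$ gives $\int \alpha\,\omega = \int_X [e^{S_\omega + O_\omega}\alpha]_n$. Now $O_\omega \in H^0(X,\Omega^1_{X,\op{hol}})$, so every positive power $O_\omega^k \in H^0(X,\Omega^k_{X,\op{hol}})$ acts by multiplication sending $H^i(X,\Omega^i_X)$ into $H^i(X,\Omega^{i+k}_X)$, which meets the target $H^n(X,\Omega^n_X)$ of $[-]_n$ only when $k = 0$. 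Hence the factor $e^{O_\omega}$ contributes trivially under $[-]_n$ and the formula collapses to $\int \alpha\,\omega = \int_X [e^{S_\omega}\alpha]_n$, as claimed. The main obstacle I anticipate is making rigorous the assertion that the higher-degree gauge data arising from $S_\omega$ does not twist $\mc{D}(\omega)$: since the relevant classes in $H^i(X,\Omega^i_X)$ are not represented by globally defined functions, one must work with local primitives that glue only up to homotopy, and the cleanest way to conclude is probably via the two-step filtration on $C^\ast(\g_{X_{\dbar}}[\varepsilon] \oplus \g_{X_{\dbar}}^\vee[\varepsilon][-2])$ by powers of $\Omega^{>0}_X$ followed by Hodge type, with a spectral-sequence collapse patterned on the earlier computation of $\mc{D}(\omega_0)$.
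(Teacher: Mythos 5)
Your overall strategy is the same as the paper's: conjugate the divergence complex by the exponential of the function by which $\omega$ differs from $\omega_0$, observe that the $S_\omega$-part gives a global cochain isomorphism $\op{Div}^\ast(\omega)\iso\op{Div}^\ast(\omega_0)$ (whence the formula $\int_X[e^{S_\omega}\alpha]_n$), and observe that the $O_\omega$-part twists the rank-one local system $\mc{D}(\omega)$ by the monodromy $\exp(\oint_\gamma O_\omega)$, so that integrability is exactly the condition that $O_\omega/2\pi i$ have integral periods. The paper runs the two steps separately (first assuming $O_\omega=0$, then $S_\omega=0$, phrasing the second step as adding the flat connection $O_\omega\wedge$ to $\Omega^{\ast,\ast}(\what{T}^\ast X)$ rather than as a \v{C}ech cocycle in $\C^\times$), but the content is the same.

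Two of your justifications need repair, though neither changes the conclusion. First, the extra term $\d f+\hbar\tr_0 f+\tfrac{\hbar}{2}\{f,f\}$ in your conjugation identity is not merely ``forced to be locally constant'' by the validity of $\tr_\omega$ (the relations $[\d,\tr_\omega]=0$ and $\tr_\omega^2=0$ do not directly give this); it simply vanishes, because $f$ is a closed degree-zero function pulled back from $T[-1](X,\g_{X_{\dbar}})$, and $\tr_0$ --- being the contraction of one base direction against one cotangent-fibre direction --- annihilates such functions, as does the Poisson bracket of two of them. Second, your argument that $e^{O_\omega}$ drops out of the integration formula rests on a misidentification of degree: the object entering the gauge transformation $e^{S_\omega+O_\omega}$ is the degree-zero multivalued function in $H^0(X,\Oo_X/\C)$, not its de Rham image in $H^0(X,\Omega^1_{X,\op{hol}})$, so multiplication by $e^{O_\omega}$ does not shift Hodge type and the degree count $H^i(X,\Omega^i_X)\to H^i(X,\Omega^{i+k}_X)$ is not the relevant mechanism. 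The correct reason $O_\omega$ is invisible in the formula is that, once the periods lie in $2\pi i\Z$, suitably normalized local exponentials $e^{\phi_U}$ glue to a global nonvanishing holomorphic function on the compact manifold $X$, which is locally constant; so an integrable $\omega$ on a connected compact $X$ in fact has $O_\omega=0$, and any residual ambiguity is absorbed into the overall scalar to which the integral is only defined in any case.
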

\begin{proof}
First, suppose that $O_\omega$ is zero.  Note that the function $S_\omega$, which \emph{a priori} is a function on $T[-1](X,\g_{X_{\dbar}})$ modulo constants, lifts to an actual function. Our convention is that the constant term of the lift (that is, the term in $H^0(X,\Oo_X)$) is zero.  We will refer to this lift as $S_\omega \in H^0(X,\Oo(T[-1](X,\g_{X_{\dbar}})))$.  

Then, a standard formula shows that
$$
e^{-S_\omega} \tr_0 e^{S_\omega} = \tr_0 + \{S_\omega,-\} = \tr_\omega.
$$
Let $\omega_0$ denote the standard projective volume form corresponding to $\tr_0$.  Recall that the divergence complex for $\omega_0$ is the complex
$$
\op{Div}^\ast(\omega_0) = \left(  \Oo ( \what{T}^\ast[-1] (T[-1] (X,\g_{X_{\dbar}}))) ((\hbar)) , \d  + \hbar \tr_0\right).
$$
The identity above shows that multiplying by $e^{S_\omega/\hbar}$ gives a cochain isomorphism 
$$
e^{S_\omega} : \op{Div}^\ast(\omega) \to \op{Div}^\ast(\omega_0).
$$
Recall that integrability of $\omega$ amounts to the statement that the cohomology sheaves of the divergence complex $\op{Div}^\ast(\omega)$ form a trivial $\C((\hbar))$-local system.  It follows that integrability of $\omega_0$ implies integrability of $\omega$.

The integral against $\omega$ is defined by the map
$$
\Oo(T[-1](X,\g_{X_{\dbar}})) \to \op{Div}^\ast(\omega).
$$
The fact that $e^{S_\omega}$ provides a cochain map from $\op{Div}^\ast(\omega)$ to $\op{Div}^\ast(\omega_0)$ immediately implies that, for $\alpha \in H^0(X, \Oo(T[-1](X,\g_{X_{\dbar}})))$, we have
$$
\int_{T[-1](X,\g_{X_{\dbar}})} \alpha \omega = \int_{T[-1](X,\g_{X_{\dbar}}}) e^{S_\omega} \alpha \omega_0.
$$
We have already shown that
$$
\int_{T[-1](X,\g_{X_{\dbar}})} \alpha \omega_0 = \int_X [\alpha],
$$
leading to the desired formula for the integral against $\omega$. 

Finally, we need to verify that $\omega$ is integrable if and only if $O_\omega \in H^0(X, \Omega^1_{X,\op{hol}})$ vanishes.  We have already seen that the divergence complex is independent, up to quasi-isomorphism, of $S_\omega$.  Thus, we will assume that $S_\omega = 0$.   

Recall that we can identify the divergence complex for $\omega_0$ as
$$
\op{Div}^\ast( \omega_0 ) = \left( \Omega^{\ast,\ast}(\what{T}^\ast X) ) ((hbar)) [2 n ] , \dbar + \hbar \partial \right),
$$
where, as before, $\what{T}^\ast X$ indicates the formal completion of the cotangent bundle of $X$ along the zero section.  

Modifying the operator $\tr_0$ by adding on $\{O_\omega,-\}$ for some $O_\omega \in H^0(X, \Omega^1_{X,\op{hol}})$ amounts to adding the operator $O_\omega \wedge$ to the complex of forms on $\what{T}^\ast X$.  The cohomology sheaf of this is the non-trivial local system obtained from viewing the closed $1$-form $O_\omega$ as a connection on the trivial line bundle.  

This local system is trivial if and only if $O_\omega/2 \pi i$ has integral periods.  
\end{proof}

\subsection{}
The main theorem of this paper states that we can identify the volume of the derived space of degree $0$ maps from an elliptic curve $E$ to a complex manifold $X$ with the Witten genus of $X$.  The volume form on this mapping space arises from quantum field theory, as we will see shortly.

So far, we have developed a language to discuss such derived mapping spaces and volume forms on them.  As a pay-off, we can state a more precise version theorem. 

Let $E$ be an elliptic curve, and let $\mc{H}(E) \subset \Omega^{0,\ast}(E))$ be the harmonic part of the Dolbeaut complex of $E$.  Let $X$ be a complex manifold, and let $(X,\g_{X_{\dbar}})$ denote the corresponding $L_\infty$ space.  We have seen that the derived space of degree $0$ maps from $E$ to $X$ can be described by the $L_\infty$ space $(X,\g_{X_{\dbar}} \otimes\Omega^{0,\ast}(E))$.  

This is equivalent to the $L_\infty$ space $(X, \g_{X_{\dbar}} \otimes \mc{H}(E))$.    We will let $(X, \g_{X^E})$ denote this $L_\infty$ space. 

The choice of a holomorphic volume form $\omega$ on $E$ leads to an isomorphism $\mc{H}(E) \iso \C[\eps]$, under which the element $\alpha \in \mc{H}(E)$ with $\int_E \alpha \wedge \omega = 1$ goes to $\eps$.

Thus, if we choose such an $\omega$, we find an isomorphism of $L_\infty$ spaces
$$
(X,\g_{X^E}) \iso (X, \g_{X_{\dbar}}[\eps] ) = T[-1] (X,\g_{X_{\dbar}}). 
$$

As we will see shortly, a quantization of a classical field theory in the sense of \cite{Cos11} leads to the quantization of a $P_0$ algebra associated to the classical field theory.  

In the case of interest, the field theory is the cotangent theory \cite{Cos11b} associated to the space of holomorphic maps from $E$ to $X$.  It turns out that this leads to a $\C^\times$-equivariant quantization of the $P_0$ algebra $T^\ast[-1](X, \g_{X^E})$, and so to a volume form on $(X,\g_{X^E})$.

We have seen that $(X,\g_{X^E})$ can be identified with $T[-1] (X,\g_{X_{\dbar}})$ (once we have chosen a holomorphic volume form on $E$).  Our main theorem asserts that, after making this identification, the volume form $\d Vol_E$ on $(X,\g_{X^E})$ corresponds to the volume form $\op{Wit}(X,E) \d Vol_0$, where $\op{Wit}(X,E) \in H^0(X, \Oo(T[-1](X,\g_{X_{\dbar}}))$ is the Witten genus of $X$, and $\d Vol_0$ refers to the trivial projective volume form on $T[-1](X,\g_{X_{\dbar}})$ constructed above.  
\newpage
\section*{\Large Part II: Holomorphic Chern-Simons theory}
\section{Introduction}
\label{section_hcs}
In this section we will describe the classical space of fields of the theory I call holomorphic Chern-Simons theory, and then consider a generalization of this theory which we will use throughout the rest of the paper. 

Let $X$ be a complex manifold, and let $(X,\g_{X_{\dbar}})$ denote the corresponding $L_\infty$ space. 
We have seen that we can encode the space of maps from an elliptic curve $E$ to $X$ (which are infinitesimally near the constant map) in terms of the $L_\infty$ space  $\Omega^{0,\ast}(E) \otimes \g_{X_{\dbar}}$. This will allow us to describe the space of fields of holomorphic Chern-Simons theory in a way amenable to the perturbative renormalization techniques of \cite{Cos11}.

Let $E$ be a Riemann surface equipped with a never-vanishing holomorphic volume element $\omega$.  We will be concerned with maps $E_{\dbar} \to T^\ast X$ which are infinitesimally near a constant map to $X$.  Thus, the space of fields of our field theory is 
$$
\Omega^{0,\ast}(E) \otimes (\mf g_X[1] \oplus \mf g_X^\vee[-1]). 
$$
The summand $\g_{X}^\vee$ is introduced because $\g_{X} \oplus \g_{X}[-2]$ is the curved $L_\infty$ algebra encoding $T^\ast X$. Since we are only interested in the formal completion $\what{T}^\ast X$ of $T^\ast X$ near $X$, we have restricted this curved $L_\infty$ algebra to $X$.

In the language of \cite{Cos11b}, the theory we are constructing is the cotangent theory to the elliptic moduli problem of degree $0$ holomorphic maps from $E$ to $X$.  

\subsection{}
Our main theorem will be stated in a more general situation, where our target is any $L_\infty$ space $(X, \g)$.  The most general situation is as follows.  

In the general situation, the space of maps $E_{\dbar} \to \what{T}^\ast( X, \g)$ which are infinitesimally close to a constant map to $X$ is represented by the $L_\infty$ space
$$
\left( X, \Omega^{0,\ast}(E) \otimes (\mf g \oplus \mf g^\vee[-2])    \right).
$$
The space of fields is the sheaf of $\Omega^\ast_X$-modules
$$
\Fields = \Omega^{0,\ast}(E) \otimes (\mf g[1] \oplus \mf g^\vee[-1]),
$$ 
The results of \cite{Cos11} apply in this context, where the fields are sheaves of modules over a general Fr\'echet base ring.  

\subsection{}
The classical action on our space of fields can be described as follows.  

Let 
$\alpha \in \Omega^{0,\ast}(E) \otimes \mf g[1]$, and $\beta \in \Omega^{0,\ast}(E) \otimes \mf g^\vee[-1]$.  (We are abusing notation here: $\alpha,\beta$ are local sections of a sheaf on $X$).  If $E$ is non-compact we must assume that both $\alpha$ and $\beta$ have compact support on $E$. 

Then, the classical action $S$ is given by the formula
$$
S(\alpha + \beta) = \int_\C \omega \wedge \left( \ip{l_0, \beta}  + \ip{\dbar \alpha, \beta  } + \sum_{k \ge 1} \frac{1}{k!} \ip{l_k(\alpha^{\otimes k}), \beta }   \right).
$$
The fact that the $l_i$ define an $L_\infty$ structure implies that the action $S$ satisfies the classical master equation
$$
\{S,S\} = 0.
$$

\subsection{}
Now we have described classical holomorphic Chern-Simons theory in the generality we need.  Next, I will restate the main theorem of this paper in this generality. 
\begin{theorem}
Let $(X,\g)$ be an $L_\infty$ space.  
\begin{enumerate}
\item The simplicial set of quantizations of the holomorphic Chern-Simons theory of maps $\C \to \what{T}^\ast(X,\g)$\footnote{where, as always, we only consider maps infinitesimally close to a constant map to $X$} invariant under the symmetry group $\op{Aff}(\C) \times \C^\times$ (where the $\C^\times$ symmetry arises from rescaling the cotangent fibres of $\what{T}^\ast(X,\g)$ is weakly equivalent to the simplicial set of trivializations of the class
$$
\op{ch}_2(T (X,\g) ) \in \R \Gamma( X, \Omega^2_{cl}(X,\g) [2] ) 
$$
where $\Omega^2_{cl}(X,\g)$ is the sheaf on $X$ of closed two-forms on $(X,\g)$. In the case when $\g = \g_{X}$ is the $L_\infty$ algebra associated to a complex manifold $X$, this is quasi-isomorphic to the sheaf of closed holomorphic $2$-forms on $X$.
\item
Invariance under $\op{Aff}(\C)$ implies that any such quantization yields a quantization of holomorphic Chern-Simons theory on any elliptic curve $E$.   If we choose a volume form $\omega$ on $E$, then we find an quasi-isomorphism of $BD$ algebras  between the global observables of the theory on $E$ and the complex
$$
\left( \Omega^{-\ast}(\what{T}^\ast (X,\g)), \hbar L_\pi + \hbar\{ \op{log} \op{Wit}((X,\g),E,\omega), - \} \right).
$$
\end{enumerate}
\end{theorem}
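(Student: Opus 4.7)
Both parts are attacked via the obstruction--deformation formalism of \cite{Cos11}, working order by order in $\hbar$. The space of $\op{Aff}(\C)\times\C^\times$-equivariant quantizations extending a given one modulo $\hbar^{n+1}$ is a torsor for the degree-$0$ cohomology of the complex of equivariant local functionals on $\Fields$, while the obstruction to lifting sits one cohomological degree higher. The first task is therefore to compute this equivariant local cohomology and show it is concentrated in a very small subspace of natural characteristic classes on $(X,\g)$. The $\C^\times$-action assigns weight $+1$ to the $\g^\vee[-1]$-summand of $\Fields$, so only interactions of $\C^\times$-weight $0$ survive; combined with translation- and dilation-invariance on $\C$, this forces all admissible local functionals to be built from one ``$\beta$'' and several ``$\alpha$''s with no derivatives, and makes the loop expansion terminate at one loop. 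The counter-term analysis of Section~\ref{section_counterterms} then shows that no renormalization is needed, leaving an honest one-loop obstruction.

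\textbf{Identification with $\op{ch}_2$.} The single relevant one-loop Feynman graph is a wheel with two $l_2$-vertices connected by two propagators, the propagators being Green's functions for $\dbar$ on $\C$. By Lemma~\ref{lemma_atiyah_hcs}, the $l_2$-vertex on $\g\oplus\g^\vee[-2]$ is literally the Atiyah class $\Atiyah(T(X,\g))$, so the wheel evaluates to a scalar multiple of $\op{Tr}\bigl(\Atiyah(T(X,\g))^{2}\bigr)$, which is a cocycle representative of $\op{ch}_2(T(X,\g))$ landing naturally in closed $2$-forms on $(X,\g)$. Plugging this into the general obstruction machinery of Sections~\ref{section_obstruction}--\ref{section_obstruction_class} yields the claimed bijection between $\op{Aff}(\C)\times\C^\times$-invariant quantizations and trivializations of $\op{ch}_2(T(X,\g))$.

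\textbf{Descent, observables, and computing $I[\infty]$.} Translation-invariance gives descent to any elliptic curve $E=\C/\Lambda$. The complex of global observables is $(\Oo(\Fields)[[\hbar]],\what Q_L)$; taking $L\to\infty$ and using the Hodge decomposition $\Omega^{0,\ast}(E)=\mc{H}(E)\oplus(\text{contractible})$, the renormalization-group flow integrates out the contractible modes. What remains is built on the harmonic subspace $\mc{H}(E)\otimes(\g[1]\oplus\g^\vee[-1])$, which under the choice of holomorphic volume form $\omega$ is identified with the fields of $T^\ast[-1]T[-1](X,\g)$; the BV operator degenerates to $\hbar L_\pi$, matching the canonical volume form from Section~\ref{section_volume_form}. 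The effective interaction $I[\infty]$ is, again by the weight and one-loop constraints, a sum of elliptic wheels of length $2k$, built from the $\dbar$-propagator on $E$ and $2k$ copies of the $l_2$-vertex. After organizing the Atiyah-class contributions via a cyclic trace, each such wheel contributes $c_k\cdot\op{ch}_{2k}(T(X,\g))$, where $c_k$ is a scalar given by the integral over $E^{2k}$ of a product of $2k$ propagators along a cycle.

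\textbf{Main obstacle.} The decisive step is the evaluation of the scalars $c_k$: one must express the $\dbar$-Green's function on $E$ as a Kronecker--Eisenstein series, perform the $2k$-fold cyclic convolution explicitly, and recognize the result as $\tfrac{(2k-1)!}{(2\pi i)^{2k}}E_{2k}(E,\omega)$. Once this analytic-combinatorial computation is accomplished, collecting contributions gives $I[\infty]=\log\op{Wit}((X,\g),E,\omega)$, and hence the stated quasi-isomorphism of BD algebras. A secondary but delicate point is the rigorous justification that all higher-loop, non-wheel, and RG-boundary contributions vanish; this should follow from the $\C^\times$ weight grading together with the one-loop exactness already established on $\C$, but must be re-verified on $E$ since the propagator for $\dbar$ on $E$ is no longer scale-invariant.
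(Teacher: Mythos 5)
Your overall strategy coincides with the paper's: vanishing counterterms, the $\C^\times$-weight argument that truncates the loop expansion at one loop, identification of the obstruction with $\op{ch}_2$ via the Atiyah class, and the computation of $I[\infty]$ on harmonic fields as a sum of wheels weighted by Eisenstein series. Two points, however, deserve attention.

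The first is a genuine gap in part (1). You assert that $\op{Aff}(\C)\times\C^\times$-invariance ``forces all admissible local functionals to be built from one $\beta$ and several $\alpha$'s with no derivatives.'' That describes the classical interaction $I^{(0)}$, not the one-loop obstruction--deformation complex: the weight computation of Section \ref{section_symmetries} shows that the order-$\hbar$ term is a functional on the summand $\Omega^{0,\ast}(\C,\g[1])$ \emph{alone}, with no $\beta$ at all, and the relevant cocycles do carry a $z$-derivative (the transgressed $2$-form evaluates to $c\int_{\C}(\partial_z\prod f_i)\prod g_j$). More importantly, identifying the deformation complex $\Ool(\Omega^{0,\ast}(\C)\otimes\g[1])^{\op{Aff}(\C)}$ with $\Omega^2_{cl}(B\g)[1]$ is not a weight count but a genuine Gel'fand--Fuks computation: the paper rewrites local functionals as $C^\ast_{red}(\g[[z,\zbar,\d\zbar]])\otimes^{\mbb L}_{\C[\partial_z,\partial_{\zbar}]}\C\,\d z\,\d\zbar$, resolves by a Koszul complex, and takes $\C^\times$-invariants to land on the two-term complex $\Oo(B\g)\to\Omega^1(B\g)$, which the de Rham differential identifies with closed $2$-forms. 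Without this step you know neither that the obstruction group is $H^2(\Omega^2_{cl}(B\g))$ nor that the space of trivializations has the homotopy type claimed in the theorem. A smaller inaccuracy: the obstruction is a sum over \emph{all} two-vertex wheels with arbitrarily many external legs, not the single graph with two $l_2$-vertices; it is Lemma \ref{lemma_atiyah_hcs} that assembles the full tower of operations $l_{n+2}$ into $\Atiyah(T_{B\g})$.

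Second, on what you call the main obstacle: your position-space evaluation of the cyclic convolution of Kronecker--Eisenstein Green's functions would work, but the paper's route in Section \ref{section_witten_genus} is simpler. It computes the same scalar as the operator trace $\op{Tr}\bigl((\mu^{-1}\partial_z D^{-1})^{2k}\bigr)$ by diagonalizing $\partial_z D^{-1}$ in the Fourier basis $F_{n,m}$ on $E=\C/\Lambda$; the eigenvalues are proportional to $\lambda^{-1}$ for nonzero $\lambda\in\Lambda$, so the trace is immediately the lattice sum defining $E_{2k}(E,\omega)$, with no convolution to perform. Your remaining concerns (vanishing of non-wheel and RG-boundary terms on $E$) are handled in the paper exactly as you anticipate, by the observation that $\dbar^\ast$ annihilates products of harmonic forms and that the $\op{Aff}(\C)$-invariant correction term $J$ contains a $z$-derivative and hence vanishes on harmonic fields.
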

Note that $\Omega^{-\ast}(\what{T}^\ast(X,\g) )$ is the same as $\Oo(T[-1] \what{T}^\ast (X,\g) )$.    

\subsection{}
Let us specialize to the case of the $L_\infty$ space $(X,\g_{X_{\dbar}})$ associated to a complex manifold.  Then, the quantization provides a projective volume form $\d Vol_E$ on $T[-1] (X,\g_{X_{\dbar}})$.   An immediate corollary of this result, and of the results of section \ref{section_volume_form}, is the following.
\begin{corollary}
This projective volume form is integrable, and the integration map 
\begin{align*}
 H^0(X, \Oo(T[-1](X,\g_{X_{\dbar}})) &\to \C\\
\alpha & \mapsto \int_{T[-1] (X,\g_{X_{\dbar}} )}  \alpha \d Vol_E 
\end{align*}
(which is defined up to a projective factor) is the map which sends 
$$\alpha \in \oplus H^i(X,\Omega^i_X) = H^0(X,\Oo(T[-1](X,\g_{X_{\dbar}})))$$ 
to
$$
 \int_X [ \op{Wit}(X,E) \alpha]_n ,
$$
where $[-]_n$ is the projection onto $H^n(X,\Omega^n_X)$. 
\end{corollary}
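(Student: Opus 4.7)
The plan is to read off the divergence operator for $\d Vol_E$ from the BD-algebra description supplied by the preceding theorem, and then feed it into the classification proposition for projective volume forms on $T[-1](X,\g_{X_{\dbar}})$.

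First I would match the data. The third description of the canonical quantization constructed earlier identifies the BD algebra associated to the trivial projective volume form $\d Vol_0$ on $T[-1](X,\g_{X_{\dbar}})$ with
\[
\left(\Omega^{-\ast}(\what{T}^\ast(X,\g_{X_{\dbar}})),\ \d + \hbar L_\pi\right),
\]
once we use the canonical isomorphism $T^\ast[-1]T[-1](X,\g_{X_{\dbar}}) \cong T[-1]T^\ast(X,\g_{X_{\dbar}})$ established in the section on volume forms on the shifted tangent bundle. Part (2) of the theorem says the BD algebra coming from the quantization of holomorphic Chern--Simons theory is
\[
\left(\Omega^{-\ast}(\what{T}^\ast(X,\g_{X_{\dbar}})),\ \hbar L_\pi + \hbar\{\log\op{Wit}(X,E,\omega),-\}\right).
\]
Subtracting the two differentials, the divergence operator of $\d Vol_E$ is $\tr_{\d Vol_E} = \tr_0 + \{\log\op{Wit}(X,E,\omega),-\}$, where $\log\op{Wit}(X,E,\omega)$ is viewed as an element of $H^0(X,\Oo(T[-1](X,\g_{X_{\dbar}})))$.

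Next I would plug this into the proposition classifying projective volume forms. That proposition writes any such divergence operator in the form $\tr_0 + \{S_\omega,-\} + \{O_\omega,-\}$ with $S_\omega \in \bigoplus_{i>0}H^i(X,\Omega^i_{X,\op{hol}})$ and $O_\omega \in H^0(X,\Omega^1_{X,\op{hol}})$, and says the volume form is integrable precisely when $O_\omega/2\pi i$ has integral periods, in which case
\[
\int_{T[-1](X,\g_{X_{\dbar}})} \alpha\,\d Vol_E \;=\; \int_X \bigl[e^{S_\omega}\alpha\bigr]_n.
\]
Inspection of the defining formula $\log\op{Wit}(X,E,\omega) = \sum_{k\ge 2}\tfrac{(2k-1)!}{(2\pi i)^{2k}}E_{2k}(E,\omega)\op{ch}_{2k}(TX)$ shows every term lies in $H^{2k}(X,\Omega^{2k}_X)$ with $k\ge 2$, so $\log\op{Wit}(X,E,\omega)$ contributes entirely to $S_\omega$ and $O_\omega = 0$. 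Hence $\d Vol_E$ is integrable, and substituting gives
\[
\int_{T[-1](X,\g_{X_{\dbar}})} \alpha\,\d Vol_E \;=\; \int_X \bigl[e^{\log\op{Wit}(X,E,\omega)}\alpha\bigr]_n \;=\; \int_X\bigl[\op{Wit}(X,E)\,\alpha\bigr]_n,
\]
which is the claimed formula.

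The only non-routine point is the bookkeeping in the first step: one must verify carefully that the identification of $T^\ast[-1]T[-1](X,\g_{X_{\dbar}})$ with $T[-1]T^\ast(X,\g_{X_{\dbar}})$ implicit in the theorem agrees with the one used to construct $\tr_0$, so that the two BD differentials are being compared on the same cochain complex and the Hamiltonian perturbation genuinely sits in the $S_\omega$-summand rather than contributing a spurious $O_\omega$ term. Once this compatibility is checked, everything else is an immediate application of results already established in the excerpt.
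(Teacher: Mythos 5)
Your proposal is correct and follows essentially the same route as the paper: the paper declares this corollary an immediate consequence of part (2) of the main theorem together with the proposition in the section on volume forms on the shifted tangent bundle (which classifies divergence operators as $\tr_0+\{S_\omega,-\}+\{O_\omega,-\}$, characterizes integrability by $O_\omega$, and computes the integral as $\int_X[e^{S_\omega}\alpha]_n$), and your argument simply spells out that deduction, correctly observing that $\log\op{Wit}$ has no component in $H^0(X,\Omega^1_{X,\op{hol}})$ so that $O_\omega=0$. The compatibility of identifications you flag at the end is a real bookkeeping point, but it is the same one the paper absorbs into the statement of the main theorem, so your treatment matches the paper's.
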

This is the form of the theorem presented in the introduction. 
												
\subsection{}
The rest of the paper is devoted to proving this theorem.   Thus, throughout the rest of the paper, we will omit all mention of the space $X$, and instead consider a curved $L_\infty$ algebra $\g$ as above.   The results stated in the introduction arise when we specialize to the case $\g = \g_{X_{\dbar}}$. 

Note that if we change the the $L_\infty$ space $(X,\g)$ by a homotopy -- that is, by a family of $L_\infty$ structures over $\Omega^\ast([0,1])$ -- then the classical field theory is also changed by a homotopy.  One can treat such homotopies at the quantum level, by quantizing the theory relative to the base ring $\Omega^\ast([0,1])$.  The $L_\infty$ algebra $\g_{X_{\dbar}}$ associated to a complex manifold $X$ is only well-defined up to homotopy (indeed, up to a contractible choice).  In order to ensure that the quantum theory we construct behaves well with respect to these homotopy equivalences, we will  always work relative to an arbitrary nuclear Fr\'echet dg base ring $A$.  

\section{Holomorphic Chern-Simons theory in more detail}
\label{section_hcs2}
In this section, we will describe, in more detail, the action functional and the propagator of our holomorphic Chern-Simons theory, and give a more precise statement of the main theorem.   In this section (and throughout) we will often avoid mention of the manifold $X$; of course, everything is a sheaf of $\Omega^\ast_X$-modules.  

There are only two classes of Riemann surface $E$ of interest to us. Either $E$ is compact, and therefore (because of the existence of a holomorphic volume element) an elliptic curve.  Or, $E = \C$ with $\omega = \d z$. In what follows, we will assume that we are in one of these two situations.

Let us split $S$ up into kinetic and interacting parts by 
$$
S(\alpha,\beta) = \int_E \omega \vee \left( \ip{\dbar \alpha, \beta} + \ip{l_1 \alpha,\beta} \right) + I_{hCS}(\alpha,\beta)
$$
where
$$
I_{hCS}(\alpha,\beta) = \int_E \omega \wedge \left( \ip{l_0, \beta}  +  \sum_{k \ge 2} \frac{1}{k!} \ip{l_k(\alpha^{\otimes k}), \beta }   \right).
$$
The holomorphic Chern-Simons interaction $I_{hCS}$ will be a key object throughout this paper.

We will let
$$
Q = \dbar + l_1 :\Fields \to \Fields.
$$

\subsection{}
In order to apply the renormalization techniques of \cite{Cos11}, we need to choose a gauge-fixing operator.  The natural gauge-fixing operator on our situation is the operator
$$
\dbar^\ast : \Fields \to \Fields,
$$
defined using the flat metric on $E$ associated to the holomorphic volume element $\omega$. 

A key part of the approach to quantum field theory of \cite{Cos11} is the Laplacian
$$
D = [\dbar^\ast, Q] : \Fields \to \Fields.
$$
Note that $[\dbar^\ast, l_1] = 0$, so that 
$$
D = [\dbar^\ast,\dbar]
$$
is the usual Laplacian acting on the Dolbeaut complex.

\subsection{}
As a first step in the analysis of holomorphic Chern-Simons theory on $E$, we will write an explicit expression for the propagator of the theory. 

Let $\op{Id}_{\mf g}$ be the element of cohomological degree $0$ of $\g[1] \otimes \g^\vee[-1]$ corresponding to the identity element of $\op{End}(\g ) = \g \otimes \g^\vee$.  Let 
$$
C_\g = \op{Id}_{\g} + \Id_{\g^\vee} \in \g[1] \otimes \g^\vee[-1] \oplus \g^\vee[-1] \otimes \g[1].
$$
Note that $C_\g$ is an anti-symmetric element.

For clarity, let me explain what this looks like in the special case when $\g$ is Lie algebra of cohomological degree $0$, and we work over $\C$ instead of over $\Omega^\ast_X \otimes A$.  In that case, let $V_i$ be a basis of $\g$, and let $V_i^\vee$ be the dual basis of $\g^\vee$.  Then,
$$
C_\g = \sum V_i \otimes V_i^\vee + \sum V_i^\vee \otimes V_i.  
$$

In the case when our Riemann surface $E$ is $\C$, the heat kernel for the operator $D$ is given, up to factors of $\pi$, by the expression
$$
K_t  = t^{-1} e^{-\norm{z-w}^2 / t} (\d \br{z} \otimes 1 - 1 \otimes \d \br{w} ) \otimes C_\g \in \Fields \otimes \Fields.
$$
Note that $K_t$ is a symmetric element of cohomological degree $1$ of $\Fields \otimes \Fields$.

If we work in the simple situation where $\g$ is a purely even Lie algebra over $\C$, and we choose a basis $V_i$ for $\g$, then we can write
$$
K_t = t^{-1} e^{-\norm{z-w}^2 / t} \left ( \sum \d \zbar X_i \otimes X_i^\vee + \d \zbar X_i^\vee \otimes X_i + X_i \otimes \d \wbar X_i^\vee + X_i^\vee \otimes \d \wbar X_i \right).
$$
The propagator for the theory is
$$
P(\eps,L) = \int_{\eps}^L \dbar^\ast K_t \d t.
$$
If our source Riemann surface is $\C$, we can write the propagator as
$$
P(\eps,L) = \int_{\eps}^L t^{-2} (\zbar - \wbar) e^{-\norm{z-w}^2/t}  C_\g \d t
$$
(up to factors of $\pi$).  Here, $\eps$ is an ultra-violet regulating parameter, and $L$ is an infra-red regulating parameter.  Sending $\eps \to 0$ and $L \to \infty$ amounts to turning off these regulating parameters.   When working on $\C$, we will always keep $L < \infty$, but we will send $\eps \to 0$.

\subsection{}
Before I can give a precise statement of the main theorem of this paper, I need to recall the definition of quantum field theory used in \cite{Cos11}, adapted to our particular situation.

We will let
$$
\Oo(\Fields) = \prod_{n \ge 1} \Hom_{\Omega^\sharp_X )} (\Fields^{\otimes n}, \Omega^\sharp(X) )
$$
denote the algebra of formal power series on the $\Omega^\sharp_X$-module $\Fields$, modulo constants.   In this expression, everything is taken relative to our base ring, $\Omega^\sharp_X$.  In addition, all tensor products are completed tensor products of sheaves of nuclear Fr\'{e}chet spaces, and $\op{Hom}$ refers to the space of continuous $\Omega^\sharp_X$-linear maps.   (See \cite{Cos11} for further details). 

Thus, $\Oo(\Fields)$ is a sheaf of graded commutative algebra over $\Omega^\sharp_X$. 

There is a subsheaf of $\Oo(\Fields)$ of particular interest, consisting of those functions on $\Fields$ which are \emph{local action functionals}.  A local action functional is a function on $\Fields$ which arises by integral of a Lagrangian density (see \cite{Cos11} for further details).  We will let
$$
\Ool(\Fields) \subset \Oo(\Fields)
$$
be the subsheaf of $\Omega^\sharp_X$-modules consisting of local action functionals.

\subsection{}
As we have seen, the classical field theory is described by a classical interaction functional 
$$I \in \Ool(\Fields)$$ 
which satisfies the classical master equation
$$
Q I + \tfrac{1}{2} \{I,I\} = 0.
$$
(Of course, by writing $I \in \Ool(\Fields)$ I mean that $I$ is a global section of the sheaf $\Ool(\Fields)$ on $X$.  I will often abuse notation in this way). 

Naively, one could say that a quantization of the classical theory is described by a quantum interaction functional $I \in \Ool(\Fields)[[\hbar]]$ satisfying the quantum master equation
$$
Q I + \tfrac{1}{2} \{I,I\} + \hbar \Delta I = 0.
$$
However, the quantum master equation is ill-defined; the expression $\Delta I$ is defined by the multiplication of two distributions which have coincident singularities.

\subsection{}
In \cite{Cos11}, I gave a definition of quantum field theory in the Batalin-Vilkovisky formalism which resolves this difficulty.  The idea of the definition is roughly as follows.  A quantization of the classical field theory described by the classical interaction $I \in \Ool(\Fields)$ will be given by a collection of functionals 
$$
I[L] \in \Oo(\Fields)[[\hbar]],
$$
one for each $L \in (0,\infty)$.   The functional $I[L]$ is called the scale $L$ effective interaction.  If one knows $I[L]$, one can deduce the behaviour of physical phenomena occurring at scales larger than $L$. 

If $\eps < L$, then the functional $I[L]$ can be expressed in terms of $I[\eps]$ by the \emph{renormalization group equation}.  Informally, the renormalization group equation tells us that $I[L]$ is obtained as an average over all fluctuations of the field of with wavelength between $\eps$ and $L$, each counted by $e^{I[\eps] / \hbar}$.  Formally, the renormalization group equation is an expression writing $I[L]$ as a sum over Feynman graphs, with vertices labelled by $I[\eps]$ and edges by the propagator $P(\eps,L)$.   An extensive treatment of this renormalization group flow is given in \cite{Cos11}; a precise definition of the renormalization group flow is reproduced in section \ref{section_rgflow} of this paper. 

In this axiomatic framework, the locality axiom of quantum field theory takes the following form. We require that the functionals $I[L]$, as $L \to 0$, must become more and more local. More precisely, we require that there is a small $L$ asymptotic expansion of the functionals $I[L]$ in terms of local action functionals:
$$
I[L] \simeq \sum f_i(L) I_i
$$
where the $f_i$ are smooth functions of $L$, and $I_i \in \Ool(\Fields)$.    The $L \to 0$ limit won't exist, however, except modulo $\hbar$.

The effective interactions $I[L]$ provide a quantization of the classical field theory described by the classical interaction $I$ if
$$
\lim_{L \to 0} I[L] = I \mod \hbar.
$$

\subsection{}
When we work with gauge theories, or other theories with a homological component, it is essential that our action functionals satisfy the quantum master equation.

In the definition of \cite{Cos11}, the quantum master equation is implemented as follows.  For every scale $L > 0$, there is a scale $L$ BV operator $\Delta_L$, constructed using the heat kernel $K_L$.   Associated to this BV operator is, as usual, a BV bracket $\{-,-\}_L$.   We require that the scale $L$ effective interaction $I[L]$ satisfies the scale $L$ quantum master equation:
$$
Q I[L] + \tfrac{1}{2} \{I[L], I[L]\}_L + \hbar \Delta_L I[L] = 0.
$$ 
Unlike the naive quantum master equation, this equation is well-defined.

The reason this definition works is that the renormalization group equation and the quantum master equation are intimately connected.  If the functionals $\{I[L]\}$ satisfy the renormalization group equation, then $I[L]$ satisfies the scale $L$ quantum master equation if and only if $I[\eps]$ satisfies the scale $\eps$ quantum master equation.  

\subsection{}
Thus, we can summarize the definition of quantum field theory of \cite{Cos11} as follows.
\begin{definition}
Suppose we have a classical action functional $I \in \Ool(\Fields)$ satisfying the classical master equation 
$$
Q I + \tfrac{1}{2} \{ I , I\} = 0.
$$
A \emph{quantization} of the classical field theory defined by $I$ consists of a collection $I[L]$ of effective interactions, satisfying the following properties.
\begin{enumerate}
\item The renormalization group equation expressing $I[L]$ in terms of $I[\eps]$ must hold, whenever $\eps < L$.
\item The $I[L]$ must satisfy a \emph{locality} axiom, saying that as $L \to 0$ the functional $I[L]$ becomes more and more local.
\item The functional $I[L]$ must satisfy the scale $L$ quantum master equation.
\item Modulo $\hbar$, the $L \to 0$ limit of $I[L]$ agrees with the classical action functional $I$.
\end{enumerate}
\end{definition}

One of the main results of \cite{Cos11} is an obstruction-theoretic framework for constructing quantum field theories in this sense.
\begin{theorem}
Let us equip the space $\Ool(\E)$ with the differential $Q + \{I,-\}$.

Suppose we have a quantization $\{I[L]\}$ of our classical field theory, defined modulo $\hbar^{n+1}$.

Then, the obstruction to lifting to a theory defined modulo $\hbar^{n+2}$ is a closed element
$$
O_{n+1} \in \Ool(\E)  
$$
of cohomological degree $1$. 

The simplicial set of lifts of the theory to  one defined modulo $\hbar^{n+2}$ coincides with the simplicial set of cochain homotopies between $O_{n+1}$ and $0 \in \Ool(\E)$.
\end{theorem}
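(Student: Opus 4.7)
The plan is to mimic the standard obstruction-theoretic argument for lifting solutions of a cohomological equation one order at a time, adapted to the scale-dependent BV formalism. Suppose $\{I[L]\}$ is a quantization defined modulo $\hbar^{n+1}$. For each $L>0$ set
$$
F[L] \;=\; Q I[L] + \tfrac{1}{2}\{I[L],I[L]\}_L + \hbar \Delta_L I[L],
$$
so that by hypothesis $F[L] \in \hbar^{n+1} \Oo(\Fields)[[\hbar]]$. Write $F[L] = \hbar^{n+1} O[L] \mod \hbar^{n+2}$. The first step is to check that the collection $\{O[L]\}$ is compatible with the renormalization group flow: differentiating the RG equation for $I[L]$ in $L$, together with the identity $\tfrac{d}{dL}(\tfrac{1}{2}\{-,-\}_L + \hbar \Delta_L) = [Q,\tfrac{d}{dL} P(\cdot,L)]$-conjugation that drives the RG flow, shows that $O[L']$ is obtained from $O[L]$ by a linearized RG flow (specifically, $O[L']= W(P(L,L'), O[L])$ where $W$ is the RG operator viewed as acting linearly on a first-order deformation around $I[L]$).

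The second step is to show that $O[L]$ admits a small-$L$ asymptotic expansion whose singular part vanishes and whose $L \to 0$ limit is a local functional $O_{n+1} \in \Ool(\Fields)$. This is not automatic: it relies crucially on the fact that the $I[L]$ themselves have a local small-$L$ asymptotic expansion, and that the obstruction at order $\hbar^{n+1}$ is (by the usual argument in \cite{Cos11}) one order less singular than naively expected, because $F[L]$ is built from sums of Feynman diagrams in which the QME relates the $\Delta_L$-contractions to boundary terms in the RG flow. Concretely, one checks that each Feynman diagram contributing to $O[L]$ is either local or comes in a pair whose singular parts cancel, so that the limit $O_{n+1} := \lim_{L\to 0} O[L]$ exists and lies in $\Ool(\Fields)$.

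The third step is to verify the cocycle condition: applying $Q + \{I[L],-\}_L + \hbar\Delta_L$ to $F[L]$ and using $Q^2=0$, the Jacobi identity for $\{-,-\}_L$, and $\Delta_L^2=0$, yields $0$; the $\hbar^{n+1}$ coefficient of this identity, together with the fact that $\{I[L],I[L]\}_L$ and $\Delta_L I[L]$ already account for the higher-order terms, gives $(Q + \{I[L],-\}_L)O[L] = 0$ modulo $\hbar$. Passing to the $L \to 0$ limit, this becomes $(Q + \{I,-\})O_{n+1} = 0$ in $\Ool(\Fields)$, as desired.

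For the parametrization of lifts, a lift modulo $\hbar^{n+2}$ is given by adding $\hbar^{n+1} J[L]$ to $I[L]$, where $J[L]$ is a collection satisfying the RG equation linearized at $I[L]$, the small-$L$ locality axiom (so that it has a local $L \to 0$ limit $J \in \Ool(\Fields)$), and
$$
(Q + \{I,-\}) J \;=\; -\, O_{n+1}
$$
in the $L \to 0$ limit. Thus lifts are precisely null-homotopies of $O_{n+1}$ in $(\Ool(\Fields), Q + \{I,-\})$, and a simplicial family of lifts is the same as a simplicial family of such null-homotopies, i.e.\ a simplex in the mapping space from $O_{n+1}$ to $0$. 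The main obstacle I expect is the locality of $O_{n+1}$: one must control the singularities of all graph integrals contributing to $F[L]$ as $L \to 0$ and show that the obstruction, even though built from non-local $I[L]$, assembles into a local functional in the limit. Everything else is a formal manipulation of the QME and the RG equation.
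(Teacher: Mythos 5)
Your proposal is correct and follows essentially the same route as the paper, which in fact does not reprove this theorem but defers it to Chapter 5 of \cite{Cos11} and later (in section \ref{section_obstruction}) recapitulates exactly your strategy: the obstruction $O[L]$ satisfies a linearized renormalization group equation and a locality axiom (packaged there by saying $\{I[L]+\delta O[L]\}$ is a theory over $\C[\delta]$ modulo $\hbar$), hence has a local $L\to 0$ limit that is closed for $Q+\{I,-\}$, and lifts are null-homotopies. Your identification of the locality of $\lim_{L\to 0}O[L]$ as the one non-formal step is exactly right; the precise mechanism in \cite{Cos11} is that $O[L]$ is the leading term in the auxiliary parameter $\delta$ and leading-order terms of RG-compatible local asymptotic expansions have no singular part.
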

This theorem is proved using the techniques of perturbative renormalization. 

\subsection{}
There is a related result which computes the obstruction-deformation group for translation invariant quantizations on $\R^n$.   Any translation-invariant field theory on $\R^n$ yields a field theory on any $n$-manifold equipped with an affine structure (i.e.\ an atlas where the transition functions are translations). 

Thus, a quantization of Chern-Simons theory on any elliptic curve is determined by a translation-invariant quantization on $\C$.

\section{Feynman graphs and the renormalization group flow}
\label{section_rgflow}
In this section, I will reproduce the precise definition of the renormalization group flow and the weights of Feynman graphs.

If $\gamma$ is a graph, let $T(\gamma)$ denote the set of tails, or external edges, of $\gamma$.  Let $E(\gamma)$ denote the set of internal edges of $\gamma$.  Let $H(\gamma)$ denote the set of half-edges (or germs of edges) of $\gamma$.  Let $V(\gamma)$ denote the set of vertices of $\gamma$.  The vertices of our graphs are labelled by an element $g(v) \in \Z_{\ge 0}$, called the genus of the vertex.

We will view a tail as being a half-edge, so that there is an inclusion $T(\gamma) \into H(\gamma)$. Similarly, if $E_{or}(\gamma)$ denotes the set of internal edges $e \in E(\gamma)$ equipped with an orientation, there is a map $E_{or}(\gamma) \to H(\gamma)$, sending an oriented edge $e$ to the half-edge at the start.    We can identify $H(\gamma)$ as the disjoint union of $T(\gamma)$ with $E_{or}(\gamma)$. 

There is a map $H(\gamma) \to V(\gamma)$, which sends a half-edge to the vertex where it is located.  The fibre over $v \in V(\gamma)$ is the set $H(v)$ of half-edges incident to $v$.   

We will let $g(\gamma)$, the genus of $\gamma$, be the sum
$$
g(\gamma) = b_1(\gamma) + \sum_{v \in V(\gamma)} g(v). 
$$

\subsection{}
For any $I \in \Oo(\Fields)[[\hbar]]$, and any graph $\gamma$, I will describe the Feynman graph weight 
$$
W_\gamma( P(\eps,L) , I) : \Fields^{\otimes T(\gamma)} \to \C.
$$
The renormalization group flow will be defined by summing the weights of graphs. 

Let us expand
$$
I = \sum \hbar^i  I_{i,k},
$$
where $I_{i,k}$ is homogenous of degree $k$ as a functional on $\Fields$.  

In general, for any vector space $V$, we will identify the space of homogeneous polynomials of degree $k$ on $V$ with the space of $S_k$-invariant linear maps $V^{\otimes k} \to \C$,  by the map which sends a polynomial $f$ to the linear map
\begin{align*}
V^{\otimes k} &\to \C \\
v_1 \otimes \cdots \otimes v_k &\mapsto \left( \frac{\partial}{\partial v_1} \dots \frac{\partial}{\partial v_k} f \right) (0).
\end{align*}

By this identification we will view $I_{i,k}$ as an $S_k$-invariant linear map
$$
I_{i,k} : \Fields^{\otimes k} \to \C.
$$

When we define $W_\gamma( P(\eps,L), I)$, every vertex $v$ of genus $g(v)$ and valency $k$ is labelled by $I_{g(v), k}$. We will denote this element by
$$
I_{v}  : \Fields^{\otimes H(v) } \to \C,
$$
where $H(v)$ is the set of germs of edges (or half-edges) of the graph $\gamma$ which are incident to $v$.

Every internal edge $e$ is labelled by the propagator
$$
P_e = P(\eps,L) \in \Fields^{\otimes H(e)}
$$
where $H(e) \subset H(\gamma)$ is the two-element set consisting of the half-edges forming $e$.

Then, we can contract 
$$
\otimes_{v \in V(\gamma)} I_v  : \Fields^{\otimes H(\gamma)} \to \C
$$
with 
$$
\otimes_{e \in E(\gamma)} P_e \in \Fields^{\otimes H(\gamma) \setminus T(\gamma)}
$$
to yield a linear map
$$
W_\gamma(P(\eps,L), I) : \Fields^{\otimes T(\gamma)} \to \C. 
$$
\subsection{} 

Let 
$$
\Oo^+(\E)[[\hbar]] \subset \Oo(\E)[[\hbar]]
$$
be the subspace consisting of those functionals $I$ which are at least cubic when reduced modulo $\hbar$ and modulo the nilpotent ideal $\I$ in our base ring $\A$. 
\begin{definition}
The renormalization group flow operator from scale $\eps$ to scale $L$ is the map
\begin{align*}
\Oo^+(\Fields)[[\hbar]] & \to \Oo^+(\Fields)[[\hbar]] \\
I &\mapsto \EA{P(\eps,L), I   } \defeq \sum_{\gamma}\frac{1}{\abs{\op{Aut}{\gamma}}}  W_\gamma ( P(\eps, L) , I )  \hbar^{g(\gamma)} .
\end{align*}
Thus, a collection
$$
\{I[L]  \in \Oo^+(\Fields)[[\hbar]] \mid L \in \R_{> 0} \}
$$
of functionals satisfies the renormalization group equation if, for all $\eps < L$, 
$$
I[L] = \sum_{\gamma}\frac{1}{\abs{\op{Aut}{\gamma}}}  W_\gamma ( P(\eps, L) , I[\eps]  )  \hbar^{l(\gamma)} ,
$$
where the sum is over all connected graphs as above. 
\end{definition}

\begin{figure}
\label{figure rg flow}
\includegraphics{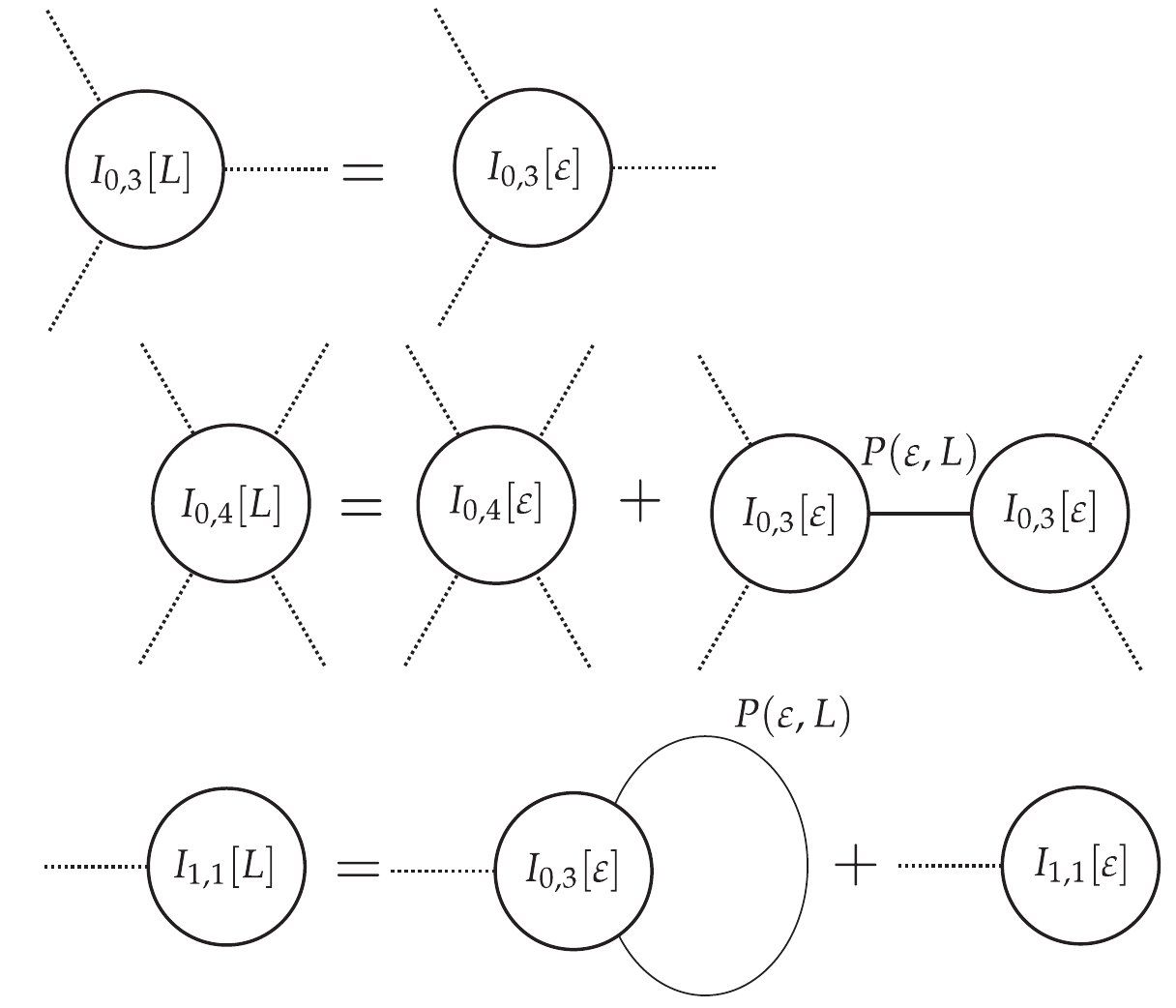}
\caption{The first few terms in the renormalization group flow from scale $\eps$ to scale $L$}
\end{figure}

Finally, I will explain the scale $L$ quantum master equation more precisely.  The heat kernel
$$
K_L \in \Fields \otimes \Fields
$$
is a symmetric element of cohomological degree $1$. Thus, we can define an operator
$$
\Delta_L : \Oo(\Fields) \to \Oo(\Fields)
$$
by contracting with $K_L$.  The operator $\Delta_L$ is an order two differential operator on the commutative algebra $\Oo(\Fields)$, and is the unique continuous order two differential operator with the property that it is zero on the subspace $\A \oplus \Fields^\vee$ of constant and linear functionals on $\Fields$, and, on the space $\Sym^2 \Fields^\vee$ of quadratic functionals, it is given by pairing with $K_L$.  

The operator $\Delta_L$ is of square zero, and commutes with the differential $Q:  \Oo(\Fields) \to \Oo(\Fields)$. 

Let us define a bracket
$$
\{-,-\}_L : \Oo(\Fields) \otimes \Oo(\Fields) \to \Oo(\Fields)
$$
by the formula
$$
\{\Phi, \Psi  \}_L = \Delta_L ( \Phi \Psi)  - (\Delta_L \Phi) \Psi - (-1)^{\abs{\Phi} } \Phi (\Delta_L \Psi) .
$$
This bracket is automatically a derivation in each factor. It follows from the facts that $\Delta_L$ has square zero and commutes with $Q$ that the bracket $\{-,-\}_L$ satisfies the graded Jacobi identity, and that both $Q$ and $\Delta_L$ are derivations for the bracket $\{-,-\}_L$. 

\begin{definition}
Let $\{I[L] \in \Oo^+(\E)[[\hbar]]\}$ be a collection of effective interactions which satisfy the renormalization group equation. We say that they satisfy the quantum master equation if, for all $L$,
$$
Q I [L] + \hbar \Delta_L I[L] + \tfrac{1}{2} \{I[L], I[L] \}_L = 0.
$$
\end{definition}
There is a compatibility between the quantum master equation and the renormalization group equation which implies that if $I[\eps]$ satisfies the scale $\eps$ quantum master equation, then $I[L]$ satisfies the scale $L$ quantum master equation, and conversely.  Thus, it suffices to check the quantum master equation at any scale.

\section{Symmetries of holomorphic Chern-Simons theory} \label{section_symmetries}
We are only interested in quantizations which preserve certain additional symmetries, which I will now describe.

Let us give the ring $\C[[\hbar]]$ a $\C^\times$ action, by giving $\hbar$ weight one.   Let the same $\C^\times$ act on the space $\Fields(E)$, by
$$
t \cdot (A + B ) = A + t^{-1} B
$$
if $A \in \Omega^{0,\ast}(E,\g[1])$ and $B \in \Omega^{0,\ast}(E, \g^\vee[-1])$.   

There is an induced action of $\C^\times$ on the space $\Oo(\Fields(E))$ of functionals on fields on $E$.  If $t \in \C^\times$, we will denote this action by
$$
\til{R}(t) : \Oo(\Fields(E) ) \to \Oo(\Fields(E)). 
$$

We will let
$$
R(t) : \Oo(\Fields(\C) ) \to \Oo(\Fields(\C))
$$
be a modified action defined by
$$
R(t) (\Phi) = t^{-1} \til{R}(t) (\Phi).
$$
Finally, let us extend the action $R(t)$ on $\Oo(\Fields(\C))$ to an action on $\Oo(\Fields(\C)) [[\hbar]]$, by giving $\hbar$ weight $1$.

\subsection{}

\begin{lemma}
The following operations are $\C^\times$ invariant.
\begin{enumerate}
\item The renormalization group flow operator of \cite{Cos11},
\begin{align*}
\Oo(\Fields(E)) [[\hbar]] &\to \Oo(\Fields(E) )[[\hbar]]  \\
\Phi &\to  \EA{P(\eps,L), \Phi}.
\end{align*}
\item The differential
$$
Q : \Oo(\Fields(E)) [[\hbar]] \to \Oo(\Fields(E)) [[\hbar]] ,
$$
as well as its quantized version
$$
\what{Q}_L = Q + \hbar \Delta_L.
$$
\item The BV bracket
$$
\{-,-\}_L : \Oo(\Fields(E)) [[\hbar]] \otimes_{\C[[\hbar]] } \Oo(\Fields(E)) [[\hbar]]  \to \Oo(\Fields(E)) [[\hbar]] .
$$
\end{enumerate}
\end{lemma}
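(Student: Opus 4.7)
The plan is a direct weight-counting argument. First I record the fundamental $\til R$-weights: $\alpha$ has weight $0$ and $\beta$ has weight $-1$ as fields, so dually a monomial functional $\Phi \in \Oo(\Fields)$ of $\hat\beta$-valency $b$ has $\til R$-weight $b$, and hence $R$-weight $b - 1$. The kernel $C_\g = \Id_\g + \Id_{\g^\vee}$ pairs an $\alpha$-leg with a $\beta$-leg, so both $P(\eps,L)$ and $K_L$, viewed as elements of $\Fields \otimes \Fields$, have $\til R$-weight $-1$. Finally $\hbar$ has $R$-weight $+1$ by definition.

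For items (2) and (3), the verifications are immediate from these weights. The operator $Q = \dbar + l_1$ acts separately on $\Omega^{0,\ast}(E) \otimes \g[1]$ and on $\Omega^{0,\ast}(E) \otimes \g^\vee[-1]$---the semi-direct product structure on $\g \oplus \g^\vee[-2]$ makes $l_1$ block-diagonal---so it commutes with $\til R$ and hence with $R$. The BV operator $\Delta_L$ is contraction with $K_L$ and so reduces $\hat\beta$-valency by one, giving it $R$-weight $-1$; combining with $\hbar$ of $R$-weight $+1$, the operator $\hbar \Delta_L$ has $R$-weight $0$, and so does $\what Q_L = Q + \hbar \Delta_L$. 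For the bracket, each summand of
$$
\{\Phi, \Psi\}_L = \Delta_L(\Phi\Psi) - (\Delta_L \Phi)\Psi - (-1)^{\abs{\Phi}} \Phi(\Delta_L \Psi)
$$
has $\til R$-weight $w(\Phi) + w(\Psi) - 1$, so the bracket's $R$-weight is $(w(\Phi) - 1) + (w(\Psi) - 1)$, the sum of the $R$-weights of the inputs; bilinearity over $\C$ then yields $R(t)\{\Phi,\Psi\}_L = \{R(t)\Phi, R(t)\Psi\}_L$.

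For (1), the only additional ingredient is the combinatorial identity $V - E + b_1(\gamma) = 1$ for a connected graph. For a graph $\gamma$ with $V$ vertices labeled by $I_{g(v), k_v}$ of $\hat\beta$-count $b_v$ and $E$ internal edges, each edge contracts one $\hat\alpha$-leg with one $\hat\beta$-leg via $P$, so $W_\gamma$ is a functional of external $\hat\beta$-count $\sum_v b_v - E$ and hence $\til R$-weight $\sum_v b_v - E$. Multiplying by $\hbar^{g(\gamma)} = \hbar^{b_1(\gamma) + \sum g(v)}$ and converting to $R$-weight via Euler's formula yields
$$
\bigl(\sum\nolimits_v b_v - E - 1\bigr) + b_1(\gamma) + \sum\nolimits_v g(v) = \sum\nolimits_v \bigl(g(v) + b_v - 1\bigr),
$$
which is precisely the sum of $R$-weights of the vertex inputs $\hbar^{g(v)} I_{g(v), k_v}$. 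Hence $\EA{P(\eps, L), R(t)I} = R(t)\EA{P(\eps, L), I}$. There is no real obstacle: the normalization $R = t^{-1}\til R$ together with $\hbar$ at weight $+1$ is designed precisely so that, for a connected Feynman graph, the weight $-1$ per propagator is cancelled by the $t^{-1}$ in $R$ and the factor $\hbar^{g(\gamma)}$.
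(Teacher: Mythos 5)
Your proposal is correct, and in fact the paper states this lemma without any proof at all, treating it as a routine verification; your weight-counting argument (propagator and heat kernel each carry one $\g$-leg and one $\g^\vee$-leg, hence $\til{R}$-weight $-1$, cancelled against the $t^{-1}$ in $R$ and the $\hbar^{g(\gamma)}$ via Euler's formula $V-E+b_1=1$) is exactly the intended justification. The only point worth making explicit is that $l_1$ is block-diagonal on $\g\oplus\g^\vee[-2]$ because the semi-direct product structure has no components $\g^{\otimes n}\to\g^\vee$, which you correctly note.
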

A quantization of holomorphic Chern-Simons theory is a collection $\{I[L]\}$ of effective interactions satisfying the renormalization group equation and the quantum master equation.   Since both of these equations are compatible with the $\C^\times$ action, it is meaningful to ask that such a quantization be $\C^\times$-invariant.  This means that each effective interaction $I[L]$ is invariant.  We are only interested in such $\C^\times$-invariant quantizations.  

\begin{lemma}
Let 
$$
I[L] = \sum I^{(i)} [L] \hbar^i \in \Oo(\Fields(E)) [[\hbar]]
$$
be $\C^\times$ invariant.  Then $I^{(k)} = 0$ for $k > 1$.  Further, $I^{(1)} $ lies in the subspace
$$
\Oo (\Omega^{0,\ast} (E, \g[1] ) ) \subset \Oo(\Fields(E)) .
$$
\end{lemma}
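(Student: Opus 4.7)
The strategy is to translate the invariance condition into a weight calculation on bi-graded pieces of $\Oo(\Fields(E))[[\hbar]]$, and read off the claimed vanishing from the constraint $b\ge 0$ on the homogeneity degree in the cotangent field $B$.

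First I would decompose $\Oo(\Fields(E))$ according to bi-degree. Writing a field as $A+B$ with $A\in \Omega^{0,\ast}(E,\g[1])$ and $B\in \Omega^{0,\ast}(E,\g^\vee[-1])$, any $\Phi\in\Oo(\Fields(E))$ is a (formal) sum of pieces $\Phi_{a,b}$ homogeneous of polynomial degree $a$ in $A$ and $b$ in $B$. The action on $\Oo(\Fields(E))$ induced from $t\cdot(A+B)=A+t^{-1}B$ is given dually by $(\tilde R(t)\Phi)(A+B)=\Phi(A+tB)$, so $\tilde R(t)\Phi_{a,b} = t^{b}\Phi_{a,b}$. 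Consequently $R(t)\Phi_{a,b}=t^{b-1}\Phi_{a,b}$.

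Next I would incorporate the $\hbar$ weight. Since $\hbar$ has weight $1$ under $R(t)$, the element $\hbar^i \Phi_{a,b}$ has $R(t)$-weight $i+b-1$. Writing $I^{(i)}[L]=\sum_{a,b\ge 0}I^{(i)}_{a,b}[L]$, the $\C^\times$-invariance of $I[L]=\sum_i\hbar^i I^{(i)}[L]$ forces $I^{(i)}_{a,b}[L]=0$ unless $i+b-1=0$, i.e.\ unless $b=1-i$.

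The conclusions follow immediately. Since the polynomial degree $b$ must satisfy $b\ge 0$, the equation $b=1-i$ has no solution for $i\ge 2$, so $I^{(k)}[L]=0$ for all $k>1$. For $i=1$ the only permitted value is $b=0$, meaning $I^{(1)}[L]$ has no dependence on the $B$-component of the field, i.e.\ $I^{(1)}[L]\in\Oo(\Omega^{0,\ast}(E,\g[1]))\subset\Oo(\Fields(E))$. (For completeness: at $i=0$ one finds $b=1$, consistent with the classical interaction $I_{hCS}$ being linear in $B$, though this is not part of the statement.) No step poses any real obstacle; the proof is purely a weight bookkeeping, and the only care needed is to correctly track that $R(t)$ is the \emph{shifted} action $t^{-1}\tilde R(t)$ rather than $\tilde R(t)$ itself, which is precisely what makes the positivity constraint $b\ge 0$ bite at $i\ge 2$.
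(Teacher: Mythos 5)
Your proof is correct and is essentially the paper's own argument: the paper likewise observes that $\C^\times$-invariance forces $I^{(i)}[L]$ to have weight $1-i$ under $\til{R}(t)$, that the induced weights on $\Oo(\Fields(E))$ are the non-negative integers $b$ (the polynomial degree in the $\g^\vee[-1]$-component), and that weight $0$ picks out functionals on $\Omega^{0,\ast}(E,\g[1])$. Your bi-degree decomposition just makes the same bookkeeping explicit, and your sanity check at $i=0$ (linearity of $I^{(0)}$ in $B$) is consistent with $I_{hCS}$.
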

\begin{proof}
Indeed, saying that $I[L]$ is $\C^\times$ invariant means that $I^{(i)}[L]$ is of weight $-i$ for the action $R(t)$ of $\C^\times$ on $\Oo(\Fields(E))$. This means that each $I^{(i)}[L]$ is of weight $1 - i$ for the action $\til{R}(t)$.  The action of $\til{R}(t)$ on $\Oo(\Fields(E))$ is induced from an action of $\C^\times$ on $\Fields(E)$, which has only negative weights.  It follows that there are no elements of $\Oo(\Fields(E))$ of negative weight for the action $\til{R}(t)$, and that the only elements of weight $0$ for the $\til{R}(t)$ action are elements functionals on the subspace $\Omega^{0,\ast}(E, \g[1] ) \subset \Fields(E)$. 
\end{proof}

This lemma implies that we only Feynman diagrams with one loop appear when considering $\C^\times$ invariant quantizations of holomorphic Chern-Simons theory.  This makes the task of analyzing possible quantizations far easier. 

\subsection{}
Next, we will define an action of the group $\op{Aff}(\C) = \C \ltimes \C^\times$ of affine linear automorphisms of $\C$ on the space $\Fields(\C)$ of holomorphic Chern-Simons theory.  

The action of $\op{Aff}(\C)$ on 
$$\Fields(\C) = \Omega^{0,\ast}(\C) \otimes \left( \mf g[1]\oplus \mf g^\vee[-1] \right)$$
is induced from the natural action of $\op{Aff}(\C)$ on $\Omega^{0,\ast}(\C)$, and a certain non-trivial action $\rho$ of $\op{Aff}(\C)$ on $\g[1] \oplus \g^\vee[-1]$.  The action $\rho$ is defined by
$$\rho (s,t)(A+ B)=A +  t^{-1}B,$$
if $(s,t) \in \C \ltimes \C^\times$, $A \in \g[1]$ and $B \in \g^\vee[-1]$.

The resulting $\op{Aff}(\C)$ action on $\Fields(\C)$ preserves the holomorphic Chern-Simons action functional, because every term in the action contains one $\d z$ and one field in $\Omega^{0,\ast}(\C, \g^\vee[-1])$.  By naturality, $\op{Aff}(\C)$ acts on $\Oo(\Fields(\C))[[\hbar]]$.

Let 
$$
\op{Isom}(\C) \subset \op{Aff}(\C)
$$
be the subgroup of isometries of $\C$.   This subgroup preserves the propagator
$$
P(\eps,L) \in \Fields(\C) \otimes \Fields(\C).
$$
Therefore, the action of $\op{Isom}(\C)$ on $\Oo(\Fields)^+(\C)[[\hbar]]$ commutes with the renormalization group flow, and also preserves the operator $\Delta_L$ and the bracket $\{-,-\}_L$. 
\begin{definition}
A collection of effective interactions
$$
\{I[L] \in \Oo(\Fields)^+[[\hbar]] \mid L \in \R_{> 0} \} 
$$
satisfying the renormalization group flow is invariant under $\op{Isom}(\C)$ if each $I[L]$ is invariant under the natural action of $\op{Isom}(\C)$ on $\Oo(\Fields)^+[[\hbar]]$.
\end{definition}

\subsection{}
We can write
$$
\op{Aff}(\C) = \op{Isom}(\C) \times \R_{> 0}
$$ 
as a product of the isometry group of $\C$ and the group $\R_{> 0}$, which acts by dilation.   Because the action of $\R_{> 0}$ does not preserve the propagator $P(\eps, L)$, it does not commute with the renormalization group flow.  Thus, it requires some more work to say whether a collection of effective interactions $\{I[L]\}$ is invariant under $\R_{> 0}$.

If $l \in \R_{> 0}$, let
$$
R_l : \Oo(\Fields) \to \Oo(\Fields)
$$
be the map induced from the $\R_{> 0}$ action.

In Chapter 4 of \cite{Cos11}, then 
\begin{lemma}
Suppose that 
$$\{I[L] \in \Oo^+(\Fields)[[\hbar]] \mid L \in \R_{> 0} \}$$
is a collection of effective interactions satisfying the renormalization group flow.  The collection of effective actions $I_l [L]$
$$
I_l[L] = R_l^\ast I [ l^{2} L].
$$
Further, if $\{I[L]\}$ satisfies the quantum master equation, then so does each $I_l [L]$.
\end{lemma}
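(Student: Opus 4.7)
The claim has two parts: that the family $\{I_l[L]\}$ satisfies the renormalization group equation, and that it satisfies the quantum master equation whenever $\{I[L]\}$ does. The plan is to reduce both to a single scaling identity for the heat kernel,
$$R_l^\ast K_{l^2 t} = K_t,$$
and, by integration in $t$, the corresponding identity for the propagator $R_l^\ast P(l^2 \eps, l^2 L) = P(\eps, L)$. This is a dimensional-analysis calculation from the explicit formula $K_t = t^{-1} e^{-\norm{z-w}^2/t}(\d\bar z \otimes 1 - 1 \otimes \d\bar w) \otimes C_\g$: under $z \mapsto lz$, the rescaled prefactor $(l^2 t)^{-1}$, the Gaussian, the pullback of the one-forms $\d\bar z, \d\bar w$, and the action of $\rho(l) \otimes \rho(l)$ on $C_\g \in (\g[1] \oplus \g^\vee[-1])^{\otimes 2}$ conspire to return $K_t$. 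The action $\rho$ and the exponent $l^2$ in $I_l[L] = R_l^\ast I[l^2 L]$ are chosen precisely so that this cancellation occurs.

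Given this identity, the renormalization group equation for $\{I_l[L]\}$ is automatic. Indeed, $R_l^\ast$ is an algebra automorphism of $\Oo(\Fields(\C))$ induced by a linear automorphism of $\Fields(\C)$, so it commutes with the Feynman-diagram sum $W$ when applied simultaneously to propagators and to vertices. Therefore, for $\eps < L$,
\begin{align*}
I_l[L] = R_l^\ast I[l^2 L] &= R_l^\ast W(P(l^2\eps, l^2 L), I[l^2 \eps])\\
&= W(R_l^\ast P(l^2\eps, l^2 L), R_l^\ast I[l^2\eps])\\
&= W(P(\eps, L), I_l[\eps]).
\end{align*}

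For the QME, the same heat-kernel identity yields $\Delta_L \circ R_l^\ast = R_l^\ast \circ \Delta_{l^2 L}$ (since $\Delta_L$ is contraction with $K_L$), and hence $\{R_l^\ast \Phi, R_l^\ast \Psi\}_L = R_l^\ast \{\Phi, \Psi\}_{l^2 L}$. The differential $Q = \dbar + l_1$ commutes with $R_l^\ast$ because $\dbar$ is natural under diffeomorphisms and $l_1$ is $\Omega^{0,\ast}(\C)$-linear in a $\rho$-equivariant way. Combining these,
$$Q I_l[L] + \tfrac{1}{2}\{I_l[L], I_l[L]\}_L + \hbar \Delta_L I_l[L] = R_l^\ast\bigl(QI[l^2 L] + \tfrac{1}{2}\{I[l^2 L], I[l^2 L]\}_{l^2 L} + \hbar\Delta_{l^2 L} I[l^2 L]\bigr) = 0.$$

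The main obstacle is the precise verification of $R_l^\ast K_{l^2 t} = K_t$: one has to balance the $l^{-2}$ from the rescaled prefactor, the invariance of the Gaussian under $(z, w, t) \mapsto (lz, lw, l^2 t)$, the factors of $l$ from pulling back $\d\bar z$ and $\d\bar w$, and the $\rho(l) \otimes \rho(l)$ action on $C_\g$. This is essentially the same bookkeeping that underlies the $\op{Aff}(\C)$-invariance of the classical action, so no new input beyond careful tracking of the scaling dimensions already built into the setup is required.
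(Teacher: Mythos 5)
Your argument is correct and is essentially the standard one: the paper itself gives no proof of this lemma, deferring to Chapter 4 of \cite{Cos11}, where the statement is established in exactly the way you describe, by reducing everything to the scaling identity for the heat kernel and propagator and then using the equivariance of the Feynman-weight contraction, of $\Delta_L$, and of $Q$ under the induced automorphism of the space of fields. The one piece you defer --- the exact cancellation in $R_l^\ast K_{l^2 t} = K_t$ coming from the prefactor, the pullback of $\d \br{z}$, and the $\rho(l)\otimes\rho(l)$ action on $C_\g$ --- does hold with the paper's conventions (this is also what makes the counterterm-free theory a fixed point of the local renormalization group flow, as used later in the paper), so no gap remains.
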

Thus, the group $\R_{> 0}$ acts on the space of quantum field theories.  This action is called in \cite{Cos11} the local renormalization group flow.   Wilson's concept of renormalizability concerns this renormalization group flow: a theory is renormalizable if it converges to a fixed point under the local renormalization group flow as $l \to 0$.

We say that a collection of effective interactions $\{I[L]\}$ is invariant under $\R_{> 0}$ if, for all $l \in \R_{> 0}$, 
$$
I[L] = R_l^\ast I [ l^{2} L].
$$
Thus, a theory is $\R_{> 0}$ invariant if it is a fixed point under the local renormalization group flow.  

We now can say what it means for a theory to be invariant under the full group 
$$\op{Aff}(\C) = \op{Isom}(\C) \times \R_{> 0}$$
of affine automorphisms of $\C$.
\begin{definition}
Let 
$$\{I[L] \in \Oo^+(\Fields)[[\hbar]] \mid L \in \R_{> 0} \}$$
be a collection of effective interactions satisfying the renormalization group equation.  We say that $\{I[L]\}$ is invariant under $\op{Aff}(\C)$, if each $I[L]$ is fixed by the natural action of $\op{Isom}(\C)$ on $\Oo^+(\Fields)[[\hbar]]$, and if the collection $\{I[L]\}$ is a fixed point of the local renormalization group flow. 
\end{definition}

\section{Main theorem}
\label{section_main_theorem}
In this section we will precisely state the main theorems of this paper.  The first theorem identifies the space of possible quantizations of holomorphic Chern-Simons theory on $\C$, and thus on any elliptic curve.   The second part identifies the complex of global observables of the resulting field theory on an elliptic curve $E$ in terms of the Witten genus.

\begin{theorem}
The simplicial set of $\C^\times \times \op{Aff}(\C)$-invariant quantizations of holomorphic Chern-Simons theory on $\C$ is weakly equivalent to the simplicial set of trivializations of the cocycle
$$\op{ch}_2 (T B \g) \in  \Omega^2_{cl}(B \g) [-2].$$
\end{theorem}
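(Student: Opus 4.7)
The plan is to apply the general obstruction-theoretic framework for quantizations from \cite{Cos11} inductively in powers of $\hbar$, exploiting the symmetries of the problem to drastically restrict the relevant deformation complex. The first step is to invoke the lemma from Section \ref{section_symmetries} showing that $\C^\times$-invariance of $I[L]$ forces $I^{(i)}[L] = 0$ for $i > 1$ and constrains $I^{(1)}[L]$ to be a functional on the $\g[1]$-fields only. Therefore the quantization problem is entirely one-loop: we only need to construct $I^{(1)}[L]$, satisfying the one-loop renormalization group equation and the one-loop quantum master equation $Q I^{(1)}[L] + \{I^{(0)}, I^{(1)}\}_L + \Delta_L I^{(0)} = 0$. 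Two-loop and higher contributions cannot appear because they would carry the wrong $\C^\times$-weight.

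Second, I would handle counter-terms. The claim (to be verified in \ref{section_counterterms}) is that the theory is UV-finite: for every one-loop wheel graph $\gamma$ with vertices labelled by $I_{hCS}$ and edges labelled by the propagator $P(\eps,L)$ on $\C$, the limit $\lim_{\eps \to 0} W_\gamma(P(\eps,L), I_{hCS})$ exists as a smooth local functional in the external fields. The key input is the explicit form of $P(\eps,L)$ as a Gaussian times $(\zbar-\wbar)$; combined with the $\C^\times$- and $\op{Aff}(\C)$-equivariance, this kills what would otherwise be the leading divergences (the scale symmetry forces the would-be divergent integrals to have weight zero under rescaling of $\C$, and a direct estimate using the Gaussian decay of the heat kernel then gives convergence). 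Finiteness means no counter-terms are needed, so the obstruction and deformation problem is entirely cohomological.

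Third, one sets up the symmetry-reduced deformation complex. $\op{Aff}(\C) \times \C^\times$-invariant, translation-invariant local functionals on $\Fields(\C)$ modulo the classical action of $Q + \{I_{hCS}, -\}$ can be identified, by unpacking the definition of local functional and using the translation-invariance together with the $\C^\times$-weight condition, with a shift of the complex $C^\ast(\g, \Sym^{\ge 0}\g^\vee)$ cut off in the appropriate symmetric degree; this is exactly the complex of de Rham forms on $B\g$, suitably truncated by the closedness and degree condition that the $\C^\times$-weight imposes. The upshot is a quasi-isomorphism from the symmetry-reduced deformation complex to $\Omega^2_{cl}(B\g)[-2]$, and similarly from the obstruction complex (one cohomological degree higher) to $\Omega^2_{cl}(B\g)[-1]$. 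This identification is the content of the companion computation \ref{section_obstruction}.

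Finally, I would compute the one-loop obstruction cocycle and identify it with $\op{ch}_2(TB\g)$. By the one-loop structure of the anomaly, the obstruction is given by a single wheel diagram with four external $\g[1]$-legs, built from four trivalent vertices read off from the $L_\infty$-brackets $l_k$ and four copies of $P(\eps,L)$. Performing the $\C$-integral in the $\eps \to 0, L \to \infty$ limit produces a universal constant times a trace of a product of $L_\infty$-brackets. By Lemma \ref{lemma_atiyah_hcs}, these brackets are exactly the Taylor components of the Atiyah class $\Atiyah(T B\g)$, so the trace assembles into $\tfrac{1}{2}\tr(\Atiyah(T B\g)^2) = \op{ch}_2(TB\g)$ under the identification of the symmetry-reduced complex with $\Omega^2_{cl}(B\g)[-2]$. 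Once the obstruction is identified, the standard obstruction theory of \cite{Cos11} provides a weak equivalence between the simplicial set of quantizations and the simplicial set of trivializations of this class. The main obstacle is this last step: the precise Feynman diagram computation, including both the combinatorial prefactors and the evaluation of the $\C$-integral, and matching these normalizations to the standard normalization of $\op{ch}_2$.
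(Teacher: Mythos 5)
Your overall architecture matches the paper's: $\C^\times$-invariance kills everything above one loop, UV-finiteness eliminates counter-terms, the $\op{Aff}(\C)\times\C^\times$-reduced deformation complex is identified with $\Omega^2_{cl}(B\g)$ up to shift, and the obstruction class is matched to $\op{ch}_2(T B\g)$ via Lemma \ref{lemma_atiyah_hcs}. Steps one through three are sound in outline (with the caveat that scale-invariance alone only constrains a would-be divergence to be logarithmic; the actual proof of finiteness is the explicit Gaussian estimate on trivalent wheels, with the $n=2$ and $n>2$ cases handled separately, which you correctly defer to).

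The genuine gap is in your fourth step, in two respects. First, the mechanism: since every diagram is finite, the obstruction is not the singular part of any weight; it is the failure of the scale-$L$ quantum master equation, and the paper extracts it by writing $O[L]$ as a sum over one-loop graphs with one \emph{distinguished} edge labeled by $K_\eps - K_0$ (coming from $\tfrac12\{I_{hCS},I_{hCS}\}_\eps - \tfrac12\{I_{hCS},I_{hCS}\}_0$) and all remaining edges labeled by $P(\eps,L)$, then showing that the $\eps\to 0$ limit of such a weight vanishes unless the distinguished edge lies in a wheel with exactly \emph{two} vertices. You assert instead that the anomaly is a single wheel with four trivalent vertices and four propagators; that diagram has a finite $\eps\to 0$ limit in which the regulator can be removed edge by edge, so it contributes to $I^{(1)}[\infty]$ (ultimately to the $E_4\,\op{ch}_4$ term of the Witten class), not to the obstruction. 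Second, your identification is internally inconsistent: by Lemma \ref{lemma_atiyah_hcs} each vertex of a wheel contributes one Taylor component of the Atiyah class, so a wheel with $2k$ vertices assembles into $\op{Tr}(\Atiyah(T_{B\g})^{2k})$; a trace of a product of four brackets therefore gives a multiple of $\op{ch}_4(T_{B\g})$, not $\tfrac12\op{Tr}(\Atiyah(T_{B\g})^2)=(2\pi i)^2\,2!\,\op{ch}_2(T_{B\g})$. The correct statement is that the obstruction is carried by two-vertex wheels (each vertex of arbitrary valency, so arbitrarily many external legs), whose Lie-theoretic factor is $\op{Tr}(\Atiyah^2)$ and whose analytic factor is the nonzero constant computed by the integration-by-parts and Wick-lemma argument of Section \ref{section_obstruction_class}. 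As written, your diagrammatic computation would produce either the wrong characteristic class or zero.
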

The quantizations referred to in this theorem are collections of effective interactions $\{I[L]\}$, which satisfy the renormalization group flow and the quantum master equation, are invariant under the group $\C^\times \times \op{Aff}(\C)$ in the sense described earlier, and such that modulo $\hbar$, $I[L]$ converges to the holomorphic Chern-Simons interaction $I_{hCS}$ as $L \to 0$. 

\subsection{}
A quantization of holomorphic Chern-Simons theory on $\C$, invariant under $\C^\times \times \op{Aff}(\C)$, yields a quantization of holomorphic Chern-Simons theory on any elliptic curve $E$.  The next theorem states that the quantized effective action is related to the Witten genus.

Let $\Fields(E)$ be the space of fields for holomorphic Chern-Simons theory on an elliptic curve.  The fact that the effective interactions $I[L]$ satisfy the quantum master equation means that the map
\begin{align*}
\Oo(\Fields(E))[[\hbar]] & \to \Oo(\Fields(E)) [[\hbar]] \\ 
\alpha & \mapsto \what{Q}_L \alpha = Q \alpha + \{ I[L], \alpha \}_L + \hbar \Delta_L \alpha
\end{align*}
is a differential on $\Oo(\Fields(E)) [[\hbar]]$.    The renormalization group equation, which relates the effective interactions $I[L]$ for varying $L$, implies that the complexes $(\Oo(\Fields(E))[[\hbar]], \what{Q}_L)$ are homotopic for different values of $L$.

\subsection{}
Our main theorem shows how these complexes are related to the Witten genus. In order to state this theorem precisely, let us recall the definition of the Eisenstein series.

Let us represent our elliptic curve $E$ as a quotient of $\C$ by a lattice $\Lambda$, in such a way that the volume form $\omega$ on $E$ descends from the form $\d z$ on $\C$.

Let us define the Eisenstein series $E_{2k}(E,\omega)$ of our elliptic curve $E$ with volume element $\omega$  by
$$
E_{2k} (E,\omega) = \sum_{\lambda \in \Lambda \setminus 0} \lambda^{-2k}.
$$
This series is absolutely convergent if $k > 1$.    

Let us define a class
$$
\log \op{Wit}(E,\omega) \in \Omega^{-\ast}( B \g)
$$
by the formula
$$
\log \op{Wit}(X,E,\omega) =  \sum_{k \ge 2} \frac{(2k-1)!}{ (2 \pi i)^{2 k}}  E_{2k} (E,\omega) \op{ch}_{2k} (T B \g).
$$
\begin{theorem}
For any $L$,  there is a quasi-isomorphism of cochain complexes
$$
\left( \Oo(\Fields(E))[[\hbar]] , \what{Q}_L \right) \simeq \left( \Omega^{-\ast}(B \g) [[\hbar]], \hbar \Delta + \hbar \{ \log \op{Wit}(X,E,\omega)   , - \}  \right)
$$
\end{theorem}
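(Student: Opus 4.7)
The plan is to compute the complex of global observables in the limit $L \to \infty$, where the theory on the compact elliptic curve $E$ collapses onto its harmonic model, and then identify the resulting effective interaction $I[\infty]$ with the Witten class by an explicit Feynman diagram calculation.

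First, I would use the standard fact that the renormalization group equation provides cochain homotopies between $(\Oo(\Fields(E))[[\hbar]], \what Q_L)$ for different $L$, so it suffices to prove the claim for any single $L$, and in particular the limit $L \to \infty$ makes sense because $E$ is compact (the heat operator $e^{-LD}$ converges to harmonic projection). Combining this with homological perturbation, I would establish a quasi-isomorphism between $(\Oo(\Fields(E))[[\hbar]], \what Q_L)$ and $(\Oo(\Harmonic(E) \otimes (\g[1] \oplus \g^\vee[-1]))[[\hbar]], \what Q_\infty)$, where $\what Q_\infty$ is constructed from the scale-$\infty$ effective interaction $I[\infty]$ and the BV operator $\Delta_\infty$ built from the harmonic projection kernel. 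The choice of holomorphic volume form $\omega$ gives $\Harmonic(E) \iso \C[\eps]$ with $\eps$ of degree $1$, identifying the harmonic fields with $(\g \oplus \g^\vee[-2])[\eps][1]$, whose algebra of functions is exactly $\Oo(T[-1]\what{T}^\ast(X,\g)) = \Omega^{-\ast}(B\g)$. Under this identification, $\Delta_\infty$ coincides with the canonical BV operator $\Delta$ on $\Omega^{-\ast}(B\g)$ constructed in the volume-form section (this is the second description of $\tr_0$ on $T^\ast[-1] T[-1](X,\g)$).

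Next I would compute $I[\infty]$ as a sum of Feynman diagrams. By the $\C^\times$-invariance lemma in the symmetries section, any invariant effective interaction has only one-loop and tree-level contributions, and the tree-level part is the classical action, so $I[\infty] - I_{hCS}$ is a sum over connected one-loop graphs, i.e.\ wheels. Each wheel has $n \ge 1$ external legs labelled by $\g[1]$-type fields (again by $\C^\times$-invariance), $n$ internal vertices given by Taylor components of $I_{hCS}$, and $n$ propagator edges. Using Lemma \ref{lemma_atiyah_hcs}, the vertex contractions around the wheel assemble into the $n$-fold product of the Atiyah class of $TB\g$, so the combinatorial output is proportional to $\op{ch}_n(TB\g)$.

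The key analytical step, and the main obstacle, is the evaluation of the analytic factor in each wheel: an integral over $E^n$ of a product of propagators $P(0,\infty)$ against the holomorphic volume form $\omega^{\otimes n}$. Writing $E = \C / \Lambda$, one expresses $P(0,\infty)$ as a lattice sum (or equivalently in terms of the Weierstrass $\wp$-function and its relatives); the integral over $E^n$ reduces, after Fourier / lattice rearrangement, to a sum over non-zero elements of $\Lambda$, producing exactly $E_{2k}(E,\omega) = \sum_{\lambda \neq 0} \lambda^{-2k}$ for the wheel with $n = 2k$ vertices, while wheels with an odd number of vertices vanish by the $\lambda \leftrightarrow -\lambda$ symmetry. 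Tracking the combinatorial $\tfrac{1}{\abs{\op{Aut}\gamma}}$ factors and the numerical prefactors from the vertex insertions gives precisely the coefficient $(2k-1)!/(2\pi i)^{2k}$ appearing in $\log \op{Wit}(X,E,\omega)$. A detailed version of this computation is the content of section \ref{section_witten_genus}.

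Putting the three ingredients together, $I[\infty] = I_{hCS} + \log \op{Wit}(X,E,\omega)$ as functionals on the harmonic fields. Since $I_{hCS}$ vanishes on harmonic fields by type reasons (there are no purely-$(0,0)$ contributions to the classical action once $\dbar$ is set to zero and only constant-in-$\zbar$ harmonic representatives survive in $\g[1]$), the differential $\what Q_\infty = Q + \{I[\infty], -\}_\infty + \hbar \Delta_\infty$ reduces on $\Omega^{-\ast}(B\g)[[\hbar]]$ to $\hbar \Delta + \hbar\{\log \op{Wit}(X,E,\omega), -\}$, as required. The hardest part of this plan is unambiguously the lattice-sum evaluation producing the Eisenstein series with the correct prefactor; the homotopy reduction to the harmonic model and the identification of $\Delta_\infty$ with $\Delta$ are, by contrast, formal consequences of the framework set up earlier.
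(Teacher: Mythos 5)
Your overall route is the paper's route: reduce to the harmonic model $\mc H(E)\iso \C[\eps]\otimes(\g[1]\oplus\g^\vee[-1])$, identify $\Delta_\infty$ with the canonical BV operator, and evaluate the one-loop wheels via the Atiyah class (the Lie-theoretic factor) and a lattice sum (the analytic factor) producing $E_{2k}$. But your final step contains a genuine error: $I_{hCS}$ does \emph{not} vanish on harmonic fields. A term $\int_E \omega\wedge\langle l_k(\alpha^{\otimes k}),\beta\rangle$ is nonzero whenever exactly one of the constant-coefficient harmonic inputs carries a $\d\zbar$; for instance $\int_E \d z\wedge \d\zbar\,\langle l_k(a_1,\dots,a_k),b\rangle\neq 0$. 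Indeed the restriction of the tree-level effective interaction $I^{(0)}[\infty]$ to $\mc H(E)$ is precisely what produces the internal Chevalley--Eilenberg differential that $\Omega^{-\ast}(B\g)$ carries on the right-hand side of the theorem; if it vanished, the target complex would have zero differential, which is not what is being claimed. Moreover, identifying $I^{(0)}[\infty]\mid_{\mc H(E)}$ with that differential is itself a nontrivial step: one must show that every tree with at least one internal edge contributes zero on harmonic inputs, which the paper does by integrating the $\dbar^\ast$ in the propagator by parts onto a product of harmonic forms and using that such products are again harmonic.

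You also omit two necessary vanishing statements and one renormalization issue. Connected one-loop graphs are wheels \emph{with trees attached}, not just wheels, so you must show the non-wheel one-loop contributions die on $\mc H(E)$ (same $\dbar^\ast$ argument); and you must show that the correction $J[\infty]$ --- the genus-one local functional added to $I_{naive}$ to solve the quantum master equation, which is part of $I^{(1)}[\infty]$ --- also restricts to zero on harmonic fields; the paper deduces this from $\op{Aff}(\C)$-invariance forcing $J$ to contain a $z$-derivative. Finally, the two-vertex wheel produces a lattice sum that is \emph{not} absolutely convergent: the diagrammatics yield a term $E_2^{ren}(E,\omega)\op{Tr}(\alpha^2)$ involving a renormalized $E_2$, which is absent from $\log\op{Wit}(X,E,\omega)$ (that sum starts at $k\ge 2$) and must be discarded using the fact that $\op{Tr}(\alpha^2)$ is a multiple of $\op{ch}_2(T_{B\g})$ and hence exact once the trivialization is chosen. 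Your claim that the lattice rearrangement produces ``exactly $E_{2k}$'' silently assumes absolute convergence, which fails precisely for $k=1$.
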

This theorem is proved by explicitly calculating $I[\infty]$.

\section{First attempt at quantization}
\label{section_counterterms}
Let me outline the strategy for constructing quantum field theories given in \cite{Cos11}.

Given a classical interaction functional -- such as the holomorphic Chern-Simons interaction functional $I_{hCS}$ -- one can try to construction quantum effective interactions $I[L]$ by applying the renormalization group flow from scale $0$ to scale $L$.  That is, one can try to define
$$
I[L] = \EA{P(0,L), I_{hCS}}  = \lim_{\eps \to 0} \EA{P(\eps, L) , I_{hCS} }
$$
However, the famous ultraviolet divergences of quantum field theory say that this limit does not always exist.   

The the technique of counter-terms allows one to solve this problem.  The counterterms will be elements
$$
I_{hCS}^{CT} (\eps) \in \Ool (\E) \otimes_{alg} \cinfty( (0,\infty)_\eps )_{<0} \otimes \C[[\hbar]].
$$
In this expression, $\otimes_{alg}$ denotes the algebraic tensor product, which allows only finite sums.  The space $\Ool(\E)$ is the space of local action functionals, as before, and $\cinfty( (0,\infty)_\eps )_{<0}$ is the space of smooth functions on $(0,\infty)$ which are ``purely singular''.  There is a choice of what it means to be purely singular; in \cite{Cos11} this choice is referred to as the choice of a renormalization scheme.

The results of \cite{Cos11} imply that there is a unique set of counterterms $I_{hCS}^{CT}(\eps)$ with the property that the limit
$$
\lim_{\eps \to 0} \EA{P(\eps, L) , I_{hCS} - I_{hCS}^{CT}(\eps) } \in \Oo(\Fields)[[\hbar]]
$$
exists.

\subsection{}
Once one has constructed the counter-terms $I_{hCS}^{CT}(\eps)$, one defines the first approximation to the quantum effective interaction by
$$
I_{naive}[L] = \lim_{\eps \to 0} \EA{P(\eps, L) , I_{hCS} - I_{hCS}^{CT}(\eps) }.
$$
The sequence of functionals  $I_{naive}[L]$ automatically satisfies the renormalization group equation and the locality axiom.  But, in general, $I_{naive}[L]$ may not satisfy the quantum master equation, and thus may not define a quantum field theory.    In order to turn $I_{naive}[L]$ into a solution to the quantum master equation, one analyzes the possible cohomological obstructions to solving the QME, term by term in $\hbar$.

\subsection{}
For holomorphic Chern-Simons theory, the obstruction analysis will be performed in section \ref{section_obstruction}.  For now, we will only consider $I_{naive}[L]$.   The main result of this section is the following.
\begin{proposition}
On $\C$ or an elliptic curve $E$, the counter-terms $I_{hCS}^{CT}(\eps)$ vanish. Thus, the limit
$$
\lim_{\eps \to 0} \EA{P(\eps, L) , I_{hCS} }
$$
exists. The value of this limit will be denoted by
$$
I_{naive}[L] \in \Oo^+(\E) [[\hbar]].
$$
\end{proposition}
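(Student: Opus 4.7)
The plan is to use the $\C^\times$-invariance to reduce to one-loop Feynman diagrams, and then show that all such diagrams are UV finite, with the two potentially divergent cases (self-loops and bubbles) vanishing by symmetry.

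By the $\C^\times$-invariance lemma of section \ref{section_symmetries}, an $\R(t)$-invariant element of $\Oo(\Fields(E))[[\hbar]]$ has no contributions from powers of $\hbar$ greater than one. Since the power of $\hbar$ in a Feynman diagram is the genus of the diagram, this means that only tree diagrams and one-loop diagrams appear in the expansion of $\EA{P(\eps,L), I_{hCS}}$ that we are allowed to consider. Tree diagrams have no UV singularities whatsoever: their weights are finite iterated integrals of the propagator against smooth test inputs and admit an obvious $\eps \to 0$ limit. All potentially divergent behavior therefore comes from one-loop diagrams.

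After absorbing the tree branches into effective vertices, every one-loop diagram becomes a wheel with $n$ cyclic vertices. Its weight is a smooth function on $E^n$ times a product of $n$ propagators $P(\eps,L)(z_i, z_{i+1})$ around the cycle. Letting $\eps \to 0$ gives, up to form factors and constants,
\begin{equation*}
P(0,L)(z,w) = \frac{\bar z - \bar w}{|z-w|^2}\, e^{-|z-w|^2/L}\, C_\g,
\end{equation*}
which is bounded in magnitude by $C |z-w|^{-1}$ near the diagonal and decays exponentially away from it. In local coordinates on the small diagonal of $E^n$, the integrand has a pointwise singularity of order $|z|^{-n}$, while the transverse measure after quotienting by an overall translation contributes $|z|^{2(n-1)}$. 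Hence the wheel weight $W_n(\eps,L)$ converges absolutely as $\eps \to 0$ for every $n \ge 3$.

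The borderline cases $n = 1$ (tadpole) and $n = 2$ (bubble) must be treated separately and in fact vanish identically. The tadpole weight is proportional to $P(\eps,L)(z,z)$; since the heat kernel $K_t = t^{-1} e^{-|z-w|^2/t}(d\bar z \otimes 1 - 1 \otimes d \bar w) \otimes C_\g$ contains the antisymmetric factor $d\bar z \otimes 1 - 1 \otimes d\bar w$ which is identically zero on the diagonal $z = w$, the propagator restricted to the diagonal vanishes, so every tadpole weight is zero. For the bubble, the two propagators connecting the same pair of vertices must be wedged together; the $d\bar z$-form part of $P$ is antisymmetric in its two legs, and the wedge of two such antisymmetric factors vanishes pointwise in $(z,w)$. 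Combining these three cases shows that the limit $\lim_{\eps \to 0} \EA{P(\eps,L), I_{hCS}}$ exists, and by the uniqueness of counterterms from \cite{Cos11} we conclude $I_{hCS}^{CT}(\eps) = 0$. The main technical obstacle is the careful form-degree bookkeeping needed to verify the vanishings for $n = 1$ and $n = 2$, because one must track how the $d\bar z$-factor in $\dbar^* K_t$ interacts with the wedge structure inherited from the holomorphic volume form $\omega = dz$ at each vertex; power counting alone leaves the $n = 2$ case marginally logarithmically divergent, so the antisymmetry argument is essential.
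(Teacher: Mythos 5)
Your overall strategy coincides with the paper's: use the $\C^\times$-invariance to reduce to one-loop graphs, reduce one-loop graphs to wheels (trees contribute no singularities), observe that the tadpole vanishes, and establish convergence for wheels with $n \ge 3$ vertices by power counting near the diagonal. Your $n \ge 3$ argument, which bounds the $\eps \to 0$ propagator by $|z-w|^{-1}$ and counts the transverse measure, is a legitimate and somewhat slicker packaging of the paper's change-of-variables estimate. The problem is the $n=2$ case, which you correctly identify as the borderline one but then dispose of incorrectly.

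The two-vertex wheel does \emph{not} vanish identically. After applying $\dbar^\ast$, the propagator is a \emph{function} (times $C_\g$), namely $P(\eps,L)(z,w) = \int_\eps^L t^{-2}(\bar{z}-\bar{w})\, e^{-|z-w|^2/t}\, dt \otimes C_\g$ up to constants; there is no residual $d\bar{z} \otimes 1 - 1 \otimes d\bar{w}$ form factor left to wedge. The product of the two propagators around the bubble is therefore proportional to $(\bar{z}_1 - \bar{z}_2)(\bar{z}_2 - \bar{z}_1) = -(\bar{z}_1-\bar{z}_2)^2$, which is nonzero, and this is exactly the factor appearing in the paper's explicit $n=2$ integral. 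A structural sanity check confirms this: in the following sections the obstruction to the quantum master equation is computed to be a nonzero multiple of $\op{ch}_2(T_{B\g})$, and that obstruction is supported \emph{entirely} on wheels with two vertices; if the bubble weight vanished pointwise, the main theorem of the paper would be vacuous. What actually saves the $n=2$ case is more delicate: one writes $\bar{u}^2 e^{-u\bar{u}/\mu} = \mu^2 \partial_u^2 e^{-u\bar{u}/\mu}$ modulo total derivatives (where $\mu = t_1 t_2/(t_1+t_2)$), integrates by parts twice to move the derivatives onto the test function, and then applies Wick's lemma; the gain of $\mu^2$ and the extra $\mu$ from Wick's lemma convert the naively log-divergent $t$-integral into $\int \frac{t_1 t_2}{(t_1+t_2)^3}\, dt_1\, dt_2$ plus better terms, which converges as $\eps \to 0$. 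So the antiholomorphic structure of the propagator is indeed what rescues the marginal case, but via a gain of regularity under integration by parts rather than via a pointwise cancellation. As written, your proof has a genuine gap at $n=2$.
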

\begin{corollary}
If we work on $\C$, then $I_{naive}[L]$ is invariant under $\C^\times \times \op{Afff}(\C)$.
\end{corollary}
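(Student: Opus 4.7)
The strategy is to establish UV finiteness in two stages: first a combinatorial reduction to low-genus Feynman graphs, then a direct convergence analysis for the remaining weights as $\eps \to 0$. Invariance under $\C^\times \times \op{Aff}(\C)$ will then follow automatically because the construction introduces no scheme-dependent choices.

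First I would exploit the fact that every vertex of $I_{hCS}$ is linear in the $\g^\vee$-component $\beta$, while the propagator, whose kernel is $C_\g \in (\g[1] \otimes \g^\vee[-1]) \oplus (\g^\vee[-1] \otimes \g[1])$, couples only $\alpha$-legs to $\beta$-legs. For a connected Feynman graph with $V$ vertices, $E$ internal edges, and first Betti number $g = E - V + 1$, counting $\beta$-legs yields $T_\beta = V - E = 1 - g$, where $T_\beta$ is the number of $\beta$-type tails. Since $T_\beta \ge 0$, only $g = 0$ trees and $g = 1$ wheels contribute, and the higher-loop Feynman sum is automatically empty. Tree weights are integrals of the smooth kernel $P(\eps,L)$ against smooth vertex data, and admit the $\eps \to 0$ limit trivially.

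For the one-loop wheels, an explicit $t$-integral gives the pointwise limit
\begin{equation*}
\lim_{\eps \to 0} P(\eps, L)(z,w) \;\propto\; \frac{e^{-|z-w|^2/L}}{z-w}\, (d\bar z - d\bar w) \otimes C_\g,
\end{equation*}
so each propagator extends across $\eps = 0$ as a distribution on $\C \times \C$ whose singularity along the diagonal is of the same order as the $\dbar$-Green's function. The weight of an $n$-gon is a sum of iterated integrals on $\C^{n-1}$ (after translation) of products of $n$ such kernels, contracted with the tensors $l_k$ and the volume forms $dz_i \wedge d\bar z_i$; the antiholomorphic form degrees must distribute so that each vertex receives exactly one $d\bar z_i$. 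In complex dimension one the factor $(z-w)^{-1}$ is locally $L^1$, and for $n \ge 3$ the real codimension $2n-2$ of the full coincidence locus strictly exceeds the singularity order $n$, so the iterated integral converges absolutely, the Gaussian cutoffs providing decay at infinity. The potentially problematic small-$n$ cases are harmless: the $n = 1$ tadpole vanishes because the diagonal value of $(\bar z - \bar w)$ is zero, and the $n = 2$ bigon reduces to the distributional integral of $(z-w)^{-2}$ against a smooth Gaussian, which is finite (it is essentially the convolution of $1/z$ with the $\dbar$ of the Gaussian). The same estimates apply verbatim to an elliptic curve, whose propagator differs from that on $\C$ only by smooth periodizing terms. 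This proves $I_{hCS}^{CT}(\eps) = 0$, and consequently $I_{naive}[L] = \lim_{\eps \to 0} \EA{P(\eps,L), I_{hCS}}$ is well-defined.

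For the corollary, invariance of $I_{naive}[L]$ under $\op{Isom}(\C) \times \C^\times$ is immediate, since both $I_{hCS}$ and $P(\eps,L)$ are manifestly invariant under these symmetries and taking the $\eps \to 0$ limit preserves invariance. For the $\R_{>0}$ dilation factor of $\op{Aff}(\C)$, one checks that $R_l^\ast P(\eps,L) = P(l^{-2}\eps, l^{-2}L)$, so $R_l^\ast I_{naive}[l^2 L]$ is obtained from $I_{naive}[L]$ by relabeling integration variables via $w = lz$; since $I_{hCS}$ is itself $\op{Aff}(\C)$-invariant and no counterterm was inserted, this relabeling yields $R_l^\ast I_{naive}[l^2 L] = I_{naive}[L]$, i.e.\ the naive quantization is a fixed point of the local renormalization group flow. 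The main obstacle in the above is the convergence estimate for the $n = 2$ bigon, where naive absolute-value power counting is borderline; cleanly handling it requires treating the propagator as a distribution on $\C \times \C$ and checking that the wedge product along the two-cycle gives a well-defined distribution on the configuration space, or equivalently establishing uniform bounds on the smoothed $\eps > 0$ integrals and invoking dominated convergence to take the limit.
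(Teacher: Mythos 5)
Your treatment of the corollary itself coincides with the paper's: invariance under $\op{Isom}(\C)\times\C^\times$ is immediate from the invariance of the propagator and of $I_{hCS}$, and dilation invariance reduces to the vanishing of counterterms. Where the paper simply cites Chapter 4 of \cite{Cos11} for the implication that a theory with no counterterms is a fixed point of the local renormalization group flow, you unpack it via the scaling identity $R_l^\ast P(\eps,L) = P(l^{-2}\eps, l^{-2}L)$ and the substitution $\eps \mapsto l^{-2}\eps$ inside the (already existing) $\eps\to 0$ limit; that is a correct, self-contained rendering of the cited fact. Your supporting finiteness argument differs from the paper's in two ways worth noting. First, you reduce to one-loop wheels by counting $\beta$-legs ($T_\beta = V - E = 1-g \ge 0$) rather than by the $\C^\times$-weight argument; these are equivalent, but yours makes the vanishing of all $g\ge 2$ graphs visible graph by graph. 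Second, for wheels with $n\ge 3$ vertices you integrate out $t$ first, obtaining the limiting kernel $e^{-|z-w|^2/L}/(z-w)$, and then do position-space power counting together with monotone convergence (legitimate, since $|P(\eps,L)|$ increases to $|z-w|^{-1}e^{-|z-w|^2/L}$ as $\eps\downarrow 0$); the paper instead keeps the heat-kernel representation and bounds the $(t_i,u_i)$-integrals uniformly via the substitution $v_i = t_i^{-1/2}u_i$. Both work, and yours is shorter for $n\ge 3$. The one place you stop short is the bigon $n=2$: there the limiting kernel squared is not locally $L^1$, dominated convergence fails, and one must integrate by parts against the test function \emph{before} taking $\eps\to 0$ --- exactly what the paper does, trading the $\br{u}^2$ factor for two $u$-derivatives of the test function and then checking that $\int\int t_1 t_2 (t_1+t_2)^{-3}\, \d t_1 \, \d t_2$ converges. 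Since you name the correct fix, this is an incomplete step rather than a wrong one, but a full writeup would have to carry it out.
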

\begin{proof}
Invariance under $\C^\times \times \op{Isom}(\C)$ is immediate, because $\EA{P(\eps,L), I_{hCS}}$ is invariant under this group.

It remains to check invariance under the dilation subgroup $\R_{> 0} \subset \op{Aff}(\C)$.   Invariance under this group follows from the fact that all counterterms are zero. Indeed, it  is shown in chapter 4 of \cite{Cos11} that a theory with no counterterms is a fixed point of the local renormalization group flow. 
\end{proof}

\subsection{}
Thus, to complete the proof of the proposition, we need to verify that the counterterms for holomorphic Chern-Simons theory vanish. As we have seen, the $\C^\times$ symmetry implies that we need only consider one-loop counterterms.  The counterterms are defined by
$$
I_{hCS}^{CT}(\eps) =  \hbar \op{Sing}_{\eps} \sum_{\gamma} W_{\gamma}( P(\eps,L), I_{hCS}) 
$$
where the sum is over all connected graphs $\gamma$ with one loop.  The symbol $\op{Sing}_{\eps}$ refers to the singular part in $\eps$ of the expression on the right hand side. Of course, one needs a choice -- called a \emph{renormalization scheme} in \cite{Cos11} -- to define the singular part.  In this paper, however, the renormalization scheme plays no role, because the counterterms vanish.

We need to show the following.
\begin{lemma}
For all graphs $\gamma$ with one loop, 
$$
\lim_{\eps \to 0} W_{\gamma}( P(\eps,L), I_{hCS}) 
$$
exists. 
\end{lemma}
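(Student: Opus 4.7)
The strategy combines the $\C^\times$-invariance reduction of the previous section with an analytic estimate on the propagator. By the $\C^\times$-lemma, only graphs with exactly one loop contribute, so the counterterm analysis reduces to connected one-loop $\gamma$. Topologically these are wheels of length $n\ge 1$ with trees attached; the tree edges give a convergent pairing with smooth external field data as $\eps\to 0$, because $P(0,L)$ exists as a locally integrable distribution on $\C\times\C$ (the same holds locally on an elliptic curve by a flat-coordinate analysis of the heat kernel). It therefore suffices to show, uniformly in $\eps$, that the ``wheel weight'' obtained by integrating the cyclic product of $n$ propagators around the loop against a smooth vertex factor $V$ is finite and admits a limit as $\eps\to 0$.

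The key input is the explicit formula
\begin{equation*}
P(\eps,L)(z,w) \;=\; \frac{\bar z - \bar w}{|z-w|^{2}}\bigl(e^{-|z-w|^{2}/L} - e^{-|z-w|^{2}/\eps}\bigr)\,C_\g,
\end{equation*}
which gives the $\eps$-uniform pointwise bound $|P(\eps,L)(z,w)|\le 2/|z-w|$ and the pointwise limit $P(0,L)(z,w) = (z-w)^{-1}e^{-|z-w|^{2}/L}C_\g$ off the diagonal. The tadpole $n=1$ is identically zero for every $\eps>0$, because the factor $\bar z - \bar w$ vanishes on the diagonal. For $n\ge 3$, power counting near any partial collision of $k\ge 2$ adjacent wheel vertices at scale $r$ gives an integrand of size $r^{-(k-1)}$ against the transverse radial measure $r^{2k-3}\,dr$, so the resulting $r^{k-2}\,dr$ is integrable at $r=0$; the analogous count for the total collision of all $n$ vertices ($n$ propagators, $r^{2n-3}\,dr$ radial measure) gives $r^{n-3}\,dr$, also integrable for $n\ge 3$.

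The delicate case is the bigon $n=2$, whose integrand features $P^{2}\propto (z_1-z_2)^{-2}$ and is only logarithmically divergent in absolute value. Convergence is rescued by the holomorphic structure of the propagator: the identity $(z_1-z_2)^{-2} = -\partial_{z_1}(z_1-z_2)^{-1}$ permits integration by parts in $z_1$, trading the non-integrable kernel $|z_1-z_2|^{-2}$ for the locally integrable $|z_1-z_2|^{-1}$ at the cost of a $\partial_{z_1}$-derivative on the smooth factor $V$; equivalently, the angular phase $e^{-2i\theta}$ in $(z_1-z_2)^{-2} = r^{-2}e^{-2i\theta}$ annihilates the principal symbol under the $\theta$-integration, leaving only subleading, integrable contributions from the Taylor expansion of $V$ about the diagonal. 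Uniformity in $\eps$ and the existence of the $\eps\to 0$ limit then follow by dominated convergence, using the $\eps$-uniform bound above together with the pointwise vanishing of $e^{-|z-w|^{2}/\eps}$ off the diagonal. The main subtlety is confirming that the $L_\infty$-vertex data indeed produces a smooth Dolbeault-variable factor $V$; since the $l_k$ act only on the $\g$-part of the fields, this is direct. Assembling the cases yields convergence of $W_\gamma(P(\eps,L),I_{hCS})$ for every one-loop $\gamma$, proving the lemma.
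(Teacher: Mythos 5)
Your proposal is correct in substance and follows the same skeleton as the paper's proof (reduce to wheels, then split into the cases $n=1$, $n\ge 3$, $n=2$, with holomorphy of the propagator rescuing the logarithmically divergent bigon), but the analytic technique is genuinely different. The paper never integrates out the Schwinger parameters: it keeps the representation $P(\eps,L)=\int_\eps^L \partial_z K_t\,\d t$, orders the $t_i$, rescales $v_i = t_i^{-1/2}u_i$, and bounds the resulting $t$-integrals (which converge precisely for $n>2$); for $n=2$ it integrates by parts in $u$ and applies Wick's lemma to reduce to $\int \tfrac{t_1t_2}{(t_1+t_2)^3}\d t_1\d t_2$. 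You instead perform the $t$-integral once and for all to get the closed form $P(\eps,L)\propto (z-w)^{-1}\bigl(e^{-|z-w|^2/L}-e^{-|z-w|^2/\eps}\bigr)$, the $\eps$-uniform bound $|P|\lesssim |z-w|^{-1}$, and then run position-space power counting over collision strata plus dominated convergence. This buys you a cleaner, more conceptual argument for $n\ge 3$ (absolute integrability of the dominating kernel is checked once, with no need for the paper's reduction to trivalent wheels), at the cost of having to invoke the Weinberg/Hepp-type statement that negative degree of divergence on every collision subset suffices for absolute convergence --- which for a one-loop wheel is elementary but should be said. Your $n=1$ and reduction-to-wheels steps match the paper exactly.

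One point in the $n=2$ case is stated too glibly and deserves care. Writing $(z_1-z_2)^{-2}=-\partial_{z_1}(z_1-z_2)^{-1}$ and integrating by parts does \emph{not} only transfer $\partial_{z_1}$ onto the smooth factor $V$: the derivative also hits the $\eps$-dependent Gaussian cutoff, producing a term proportional to $\tfrac{\bar u}{u}\,\partial_a\bigl(e^{-a/L}-e^{-a/\eps}\bigr)^2$ with $a=|u|^2$, which contains $\eps^{-1}e^{-|u|^2/\eps}$ and is \emph{not} dominated uniformly in $\eps$ by an integrable function; it concentrates like an approximate identity at the diagonal. This term does converge, but only because the angular factor $\bar u/u = e^{-2i\theta}$ forces the angular average of $V$ to contribute at order $r^2$, which exactly compensates the concentration. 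So the ``equivalently'' clause of your argument --- the vanishing of $\int e^{-2i\theta}\,\d\theta$ against the zeroth and first Taylor coefficients of $V$, leaving only the $\partial_z^2 V(0)$ term --- is not an optional rephrasing but the step that actually closes the case $n=2$; it is the position-space avatar of the paper's Wick's-lemma computation producing $\phi^{(2)}(0)\,t_1t_2/(t_1+t_2)^3$. With that clause promoted from an aside to the actual proof of the bigon case, your argument is complete.
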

It follows from this that all counter-terms vanish.  

\subsection{} Let us now turn to the proof of this lemma.   The local nature of counter-terms (proved in \cite{Cos11}) allows us to restrict attention to the case $E = \C$.

Thus, let $\gamma$ be any connected one-loop graph. For each tail $t$ of $\gamma$, let us choose an element
$$
f_t \otimes X_t \in \Omega_c^{0,\ast}(\C) \otimes \mf g[1]
$$
where the space $\Omega_c^{0,\ast}(\C)$ refers to the Dolbeaut complex with compact support.  We need to check that
$$
\lim_{\eps \to 0} W_\gamma (P(\eps,L), I_{hCS}) ( \otimes_{t\in T(\gamma)} f_t \otimes X_t ) 
$$
exists.

The weight $W_\gamma(P(\eps,L) , I_{hCS})$ is constructed from contracting tensors in 
\begin{align*}
\Fields(\C) &=  \Omega^{0,\ast}(\C) \otimes \left( \g[1] \oplus \mf g^\vee[-1]\right) \\ 
&=  \cinfty(\C)  \otimes \left\{ \C[\d \zbar]  \g[1] \oplus \C [\d \zbar] \mf g^\vee[-1]\right\}. 
\end{align*}
Thus, we can write the weight as a product of a combinatorial factor $W_\gamma^{\mf g}(I)$, which arises from contracting tensors in the Lie algebra $\mf g$; with an analytic factor, $W_\gamma^{an}(P(\eps,L), I)$ which arises from contracting tensors in  $\cinfty(\C)$.  The combinatorial factor $W_\gamma^{\g}$ is independent of $\eps$. Thus, in order to check that the $\eps \to 0$ limit exists, we can focus our attention on the analytic factor. 

We say a one-loop graph is a \emph{wheel} if it cannot be disconnected by the removal of a single edge.  Any one-loop graph is a wheel with trees attached to some of the tails.  Since trees can not contribute any singularities, to show that the $\eps \to 0$ limit exists for any graph, it suffices to show that it exists for wheels.  Further, without loss of generality, we need only consider trivalent wheels; it is easy to check that showing the $\eps \to 0$ limit exists for trivalent wheels implies that the limit exist for all wheels. 
\begin{figure}
\label{wheel diagram}
\includegraphics{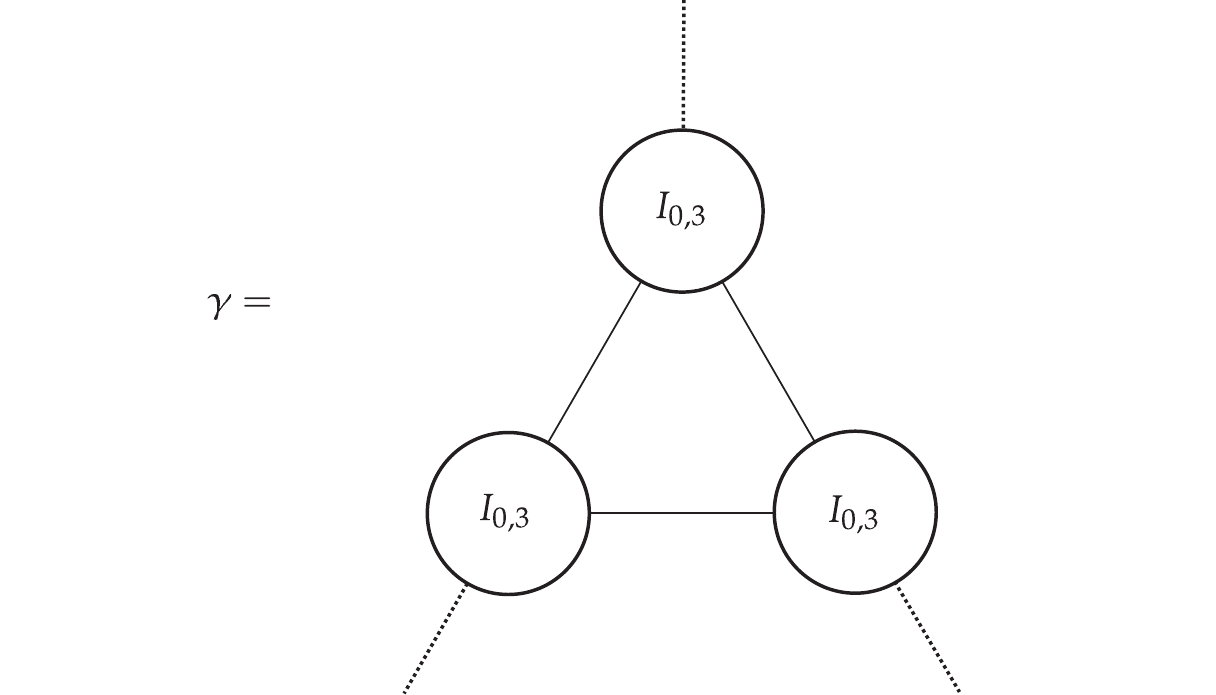}
\caption{The trivalent wheel with three vertices}
\end{figure}

Thus, let us assume that our graph is a trivalent wheel $\gamma_{n}$.   If $f_1,\ldots, f_n \in \cinfty_c(\C)$, we write $W_{\gamma_n}^{an}(P(\eps,L), I_{hCS})(f_1,\ldots,f_n)$ as an explicit integral: 
\begin{multline*}
W^{an}_{\gamma_n} (P(\eps,L), I_{hCS})  (f_1,\ldots, f_n) \\=
\int_{z_1,\ldots, z_n \in \C} \prod_{i = 1}^n \left( f_i(z_i,\zbar_i) P(\eps,L) (z_i, z_{i+1 \op{mod} n}) \prod \d z_i \d \zbar_i  \right). 
\end{multline*}
Here,
$$
P(\eps,L) = \int_{t = \eps}^L \frac{\d}{\d z} K_t \d t
$$
and 
$$
K_t \in \cinfty(\C \times \C)
$$
is the heat kernel for the standard Laplacian operator
$$
D = -\frac{\d^2} { \d z \d \zbar} .
$$
Note that, up to constants,
$$
K_t(z,w) = t^{-1} e^{-\abs{z - w}^2 / t}.
$$ 

To show that the $\eps \to 0$ limit of this integral exists, it suffices to show that for all compactly supported smooth functions $\phi \in \cinfty_c(\C^{n-1} )$, the limit
$$
\lim_{\eps \to 0} \int_{t_i = \eps}^L  \int_{\substack{z_1,\ldots,z_n \in \C \\ \sum z_i = 0 }  } \phi(z,\zbar)  \prod_{i = 1}^n t_i^{-1} \frac{\d}{\d z_i} e^{-\abs{z_i - z_{i+1 \mod n} }^2 / t_i } \d t_i \prod_{i = 1}^{n-1} \d z_i \d \zbar_i
$$ 
exists.  Note that for $n = 1$ the integrand is zero.   Thus, we will assume $n > 1$. 

There are two cases which will be treated separately: $n > 2$ and $n = 2$.  
\subsection{}
We will first consider the case $n > 2$.   It suffices to consider the integral when $t_1 < t_2 < \cdots < t_n$.   Let $u_i = (z_i - z_{i+1})$ for $i = 1,2,\ldots,n-1$.     After this change of coordinates, our integral becomes
$$
 \int_{0 < t_1 < \cdots < t_n  < L} \int_{\substack{u_1,\ldots,u_{n-1} \in \C }  } \psi ( u_i , \br{u}_i  ) \left(  \prod_{i = 1}^{n-1} t_i^{-2} \br{u}_i   e^{-\abs{u_i}^2 / t_i } \d t_i  \d u_i \d \br{u}_i \right)  t_n^{-2} \sum \br{u}_i   e^{- \abs{\sum u_i}^2 / t_n} \d t_n .
$$
We will show that the integral converges absolutely. 

The integral is bounded, in absolute value, by
$$
 \int_{0 < t_1 < \cdots < t_n  < L} \int_{\substack{u_1,\ldots,u_{n-1} \in \C }  } \left( \prod_{i = 1}^{n-1} t_i^{-2} \abs{u_i }  e^{-\abs{u_i}^2 / t_i } \d t_i  \d u_i \d \br{u}_i \right)  t_n^{-2} \sum \abs{u_i}  \d t_n.
$$
Let us further change coordinates, and let $v_i = t_i^{-1/2} u_i$ for $i = 1, \ldots, n-1$. 

After this change of coordinates, we see that integral becomes
$$
 \int_{0 < t_1 < \cdots < t_n  < L} \int_{\substack{v_1,\ldots,v_{n-1} \in \C }  } \left( \prod_{i = 1}^{n-1} t_i^{-1/2} \abs{v_i }  e^{-\abs{v_i}^2  } \d t_i  \d v_i \d \br{v}_i \right)  t_n^{-2} \sum_{i = 1}^{n-1} t_i^{1/2}\abs{ v_i }  \d t_n.
$$
Using the fact that $t_i < t_n$ for $i = 1,\ldots,n-1$, we see that the integral is bounded by
$$
\left(  \int_{0 < t_1 < \cdots < t_n  < L}  \prod_{i=1}^{n-1} t_i^{-1/2} \d t_i t_n^{-3/2} \d t_n \right) \left( \int_{\substack{v_1,\ldots,v_{n-1} \in \C }  }  P( \abs{v_i} ) e^{-\sum \abs{v_i}^2  } \d v_i \d \br{v}_i\right),
$$
where $P$ is some polynomial in the variables $\abs{v_i}$. 

Both integrals in this expression converge absolutely if $n > 2$.

\subsection{}
Let us next consider the case $n = 2$.  Then, we aim to show that limit
\begin{multline*}
\lim_{\eps \to 0} \int_{z_1 + z_2 = 0 } \int_{t_1, t_2 = \eps}^{L} \phi t_1^{-2} t_2^{-2} (\zbar_1 - \zbar_2)^2  e^{ - \abs{z_1 - z_2}^2 \left( t_1^{-1} + t_2^{-1} \right) } \d t_1 \d t_2 \d z_1 \d \zbar_1 \\
= \lim_{\eps \to 0} \int_{u \in \C } \int_{t_1, t_2 = \eps}^{L} \phi(u,\br{u}) t_1^{-2} t_2^{-2} \br{u}^2   e^{ - u \br{u}\left( t_1^{-1} + t_2^{-1} \right) } \d t_1 \d t_2 \d u \d \ubar 
\end{multline*}
exists.

To keep the notation simple, let
$$
\mu = \left(t_1^{-1} + t_2^{-1} \right)^{-1} = \frac{t_1 t_2}{t_1 + t_2}.
$$
We can evaluate
$$
\int_{u \in \C} \phi(u,\ubar) \ubar^2 e^{- u \ubar  \mu^{-1} } \d u \d \ubar
$$
by parts, by observing that
$$
\phi(u,\ubar)  \br{u}^2   e^{ - u \br{u} \mu^{-1}  }  = 
\mu^2 \left( \tfrac{\d^2}{(\d u)^2} \phi(u,\ubar) \right) e^{ - u \br{u} \mu^{-1}  }  + \text{ total derivatives in } u.
$$
If we let 
$$
\phi^{(2)}(u,\ubar) = \frac{\d^2}{(\d u)^2} \phi(u,\ubar)
$$
then we find that we need to show the limit
\begin{multline*}
 \lim_{\eps \to 0} \int_{u \in \C } \int_{t_1, t_2 = \eps}^{L} \phi^{(2)}(u,\ubar) \mu^2 t_1^{-2} t_2^{-2}  e^{ - u \br{u} \mu^{-1} } \d t_1 \d t_2 \d u \d \ubar  \\
=
 \lim_{\eps \to 0} \int_{u \in \C } \int_{t_1, t_2 = \eps}^{L} \phi^{(2)}(u,\ubar) (t_1 + t_2)^{-2}  e^{ - u \br{u} \mu^{-1} } \d t_1 \d t_2 \d u \d \ubar  
\end{multline*}
exists.

We can perform the integral over $u$  using Wick's lemma, to find
$$
\phi^{(2)}(0)  (t_1 + t_2)^{-2} \mu + (t_1 + t_2)^{-2} O (\mu^2) = \phi^{(2)}(0) \frac{t_1 t_2}{ (t_1 + t_2)^3} + (t_1 + t_2)^{-2} O( \mu^2),
$$
where $O(\mu^2)$ indicates an expression tending to zero as fast as $\mu^2$.
 
The limit
$$
\lim_{\eps \to 0} \int_{t_1, t_2 = \eps}^{L} \frac{t_1 t_2}{ (t_1 + t_2)^3} \d t_1 \d t_2 
$$
is easily seen to exist.

\section{The obstruction complex}
\label{section_obstruction}
In this section we will analyze the complex containing possible obstructions to quantizing holomorphic Chern-Simons theory.  

We have seen that there is an extra $\C^\times$ symmetry present on holomorphic Chern-Simons theory.  The $\C^\times$ invariant effective actions are of the form
$$
I[L] = I^{(0)}[L] + \hbar I^{(1)}[L],
$$
where $I^{(1)}[L]$ is a functional on the summand $\Omega^{0,\ast}(\C,\mf g[1])$ of $\Fields(\C)$.  

It follows that only one-loop obstructions to quantizations can appear, and that the obstruction-deformation complex consists of functionals only on $\Omega^{0,\ast}(\C, \mf g[1])$.   We are only interested in quantizations which are not only $\C^\times$-invariant, but also invariant under the group $\op{Aff}(\C)$ of affine symmetries of $\C$.

We have seen that there is a quantization 
$$
I_{naive}[L] = I^{(0)}[L] + \hbar I_{naive}^{(1)}[L]
$$
which is invariant under all these symmetries, and which satisfies the renormalization group equation, but which may fail to satisfy the quantum master equation.  

The quantum master equation automatically holds modulo $\hbar$. In addition, $\Delta_L I_{naive}^{(1)}[L] = 0$ for all $L$.  The failure of $I_{naive}[L]$ to satisfy the quantum master equation is thus encoded in an obstruction
$$
O[L] =  \Delta_L I^{(0)}[L] + Q I_{naive}^{(1)}[L] + \{ I^{(0)}[L], I_{naive}^{(1)}[L] \} _L.
$$
It was shown in Chapter 5 of \cite{Cos11}  that the family of obstructions $O[L]$ satisfy a renormalization group equation and a locality axiom.  If $\delta$ is a parameter of cohomological degree $-1$, these properties can be summarized by saying that the collection of effective interactions $\{I^{(0)}[L] + \delta O[L]\}$ satisfies the renormalization group equation, the quantum master equation, and the locality axiom, all modulo $\hbar$. 

As explained in Chapter 5 of \cite{Cos11},  these properties imply that the $L \to 0$ limit of $O[L]$ exists, and is a local action functional. We will denote this $L \to 0$ limit by
$$
O \in \Ool( \Omega^{0,\ast} \otimes \g[1] ) ^{\op{Aff}(\C)}.
$$
This obstruction is an element of cohomological degree $1$, and satisfies
$$
Q O + \{I,O\} = 0.
$$
Further, we can replace $I[L]$ by a collection of effective interactions which do solve the quantum master equation if and only if we can make $O$ exact; that is, if and only if we can find some 
$$
J \in \Ool( \Omega^{0,\ast} \otimes \g[1] ) ^{\op{Aff}(\C)}
$$
of cohomological degree $0$, which satisfies the equation
$$
Q J + \{I,J\} = O.
$$

\subsection{}
Thus, in order to construct the quantum theory, we need to first compute the cohomology of the complex $\Ool(\Omega^{0,\ast}(\C, \g[1]))$ of local functionals on $\Omega^{0,\ast}(\C,\g[1])$.  

The main result of this section is the following. 
\begin{proposition}
There is a quasi-isomorphism of cochain complexes
$$
\left( \Ool( \Omega^{0,\ast} (\C) \otimes \g[1] ) ^{\op{Aff}(\C)} , Q + \{I,-\} \right) \simeq \Omega^2_{cl}( B \g) [1]
$$
between the obstruction-deformation complex and the complex of closed $2$-forms on $B \g$, with a shift of one.

Further, the map
$$
\Omega^2_{cl}(B \g)[1] \to \Ool( \Omega^{0,\ast}(\C, \g[1] ) )
$$
arises by a transgression using the holomorphic volume form on $\C$, as explained in the next subsection. 
\end{proposition}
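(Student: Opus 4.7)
The plan is to follow the standard strategy used throughout \cite{Cos11} for identifying a local-functional cohomology: reduce to jets, apply a formal Poincar\'e lemma, and then use weight constraints from the additional symmetries to cut down to a computable complex.

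\emph{Step 1 (Jet reduction).} Any local functional on $\Omega^{0,\ast}(\C) \otimes \g[1]$ can be written as an integral over $\C$ of a Lagrangian density which is polynomial in finitely many jets of the fields. By translation invariance along $\C \subset \op{Aff}(\C)$ the Lagrangian may be evaluated at the origin, so the translation-invariant part of $\Ool$ is computed as the reduced symmetric algebra on the dual of the formal Dolbeault complex $\g[1] \otimes \C[[z,\br z]][d\br z]$, quotiented by the image of $\partial_z$ to account for total $z$-derivatives. The full differential $Q + \{I_{hCS}, -\}$ then acts $\C[[z,\br z]]$-linearly, as the Dolbeault differential $\bar\partial$ together with the curved $L_\infty$ coderivation built from the $l_k$.

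\emph{Step 2 (Poincar\'e lemma).} Since $\C[[z,\br z]][d\br z]$ equipped with $\bar\partial$ is quasi-isomorphic to $\C[[z]]$, the fiber-jet complex of the fields is equivalent to $\g[1] \otimes \C[[z]]$. Taking reduced Chevalley-Eilenberg cochains of the curved $L_\infty$ algebra $\g \otimes \C[[z]]$ (with $\C[[z]]$-linear extension of the $l_k$) then computes the translation-invariant local-functional cohomology, up to the cohomological shift coming from the density $d\br z$ consumed by integration.

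\emph{Step 3 (Weights cut down to $2$-forms).} The remaining $\op{Aff}(\C) \times \op{Isom}(\C)$ symmetries assemble into a $\C^\times$ action under which $z$ has weight $1$, $\partial_z$ has weight $-1$, and $d z$, $d\br z$ have weights $\pm 1$. The integration measure carries the overall weight defect, and a short weight count forces surviving monomials to have total $\partial_z$-degree one and every field insertion to contribute only the constant or linear Taylor coefficient in $z$. Level by level in Chevalley-Eilenberg degree, the resulting complex is identified with $\Omega^2(B\g)[1]$: each monomial is a symmetric pairing on $\g[1]$-valued inputs with exactly one of them differentiated in $z$, which is precisely the datum of a $2$-form on $B\g$. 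Under this identification the piece of $Q$ coming from $l_1$ becomes the de Rham differential on $\Omega^{\ast}(B\g)$, while the higher $l_{\ge 2}$ contribute the curvature of $B\g$; cocycles thus correspond to closed $2$-forms, yielding the quasi-isomorphism with $\Omega^2_{cl}(B\g)[1]$. The inverse is the transgression $\omega \mapsto \int_\C \omega(\alpha, \partial_z \alpha) \wedge d z$, which manifestly lands in $\op{Aff}(\C)$-invariants.

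\emph{Main obstacle.} The hardest step will be the weight bookkeeping in Step~3: verifying rigorously that $\op{Aff}(\C)$-invariance restricts every surviving field insertion to Taylor order $\le 1$, and matching the induced differential with the honest de Rham differential on $B\g$ in the presence of higher brackets $l_k$ for $k \ge 2$. This should follow from a spectral sequence filtered by Chevalley-Eilenberg (symmetric-algebra) degree, with $E_1$-page given by the pure de Rham differential and higher pages vanishing by the dilation weight constraint.
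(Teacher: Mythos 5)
Your overall strategy (jets, a Dolbeault Poincar\'e lemma, then a weight count) is in the right family of ideas, but two of your steps would fail as written. First, in Steps 1--2 you pass to the \emph{naive} quotient of the jet complex by total derivatives and then apply the Dolbeault lemma to reduce to $\g[[z]]$. The paper instead identifies the translation-invariant local functionals with the \emph{derived} coinvariants $C^\ast_{red}(\g[[z,\zbar,\d\zbar]]) \otimes^{\mbb L}_{\C[\partial_z,\partial_{\zbar}]} \C\, \d z\, \d\zbar$, computed via a Koszul resolution that introduces odd generators $\eps, \br{\eps}$ with $\d \eps = \partial_z$, $\d \br{\eps} = \partial_{\zbar}$. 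This is not cosmetic: once you have reduced to $\g[[z]]$ the operator $\partial_{\zbar}$ acts by zero, so the naive and derived quotients diverge --- indeed the $\C^\times$-invariants of your naive quotient would vanish outright, because the factor $\d\zbar$ carries a weight that only the Koszul generator $\br{\eps}$ can cancel. The surviving terms in the paper's computation are exactly those containing one $\br{\eps}$, and it is the residual Koszul differential $\eps \mapsto \partial_z$ (not $l_1$, as you assert) that produces the de Rham differential in the answer; all the $l_k$ are already absorbed into the Chevalley--Eilenberg differential of $C^\ast_{red}(\g[[z]])$.

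Second, your weight count in Step 3 lands one de Rham degree too high. An $\op{Aff}(\C)$-invariant local functional has exactly one $z$-differentiated insertion, so it is an element of $C^\ast(\g, z^\vee\g^\vee) \cong \Omega^1(B\g)$ --- a (twisted) \emph{one}-form on $B\g$, not a two-form. The paper's invariant complex is the three-term complex $\A[3] \to \Oo(B\g)[2] \to \Omega^1(B\g)[1]$ with de Rham differentials, and the identification with $\Omega^2_{cl}(B\g)[1]$ comes only afterwards, from the acyclicity of the full de Rham complex of $B\g$; closed two-forms never appear directly as local functionals but only through their transgression, which is a closed one-form (equivalently a function modulo constants) on the mapping space. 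Your proposed inverse $\omega \mapsto \int_\C \omega(\alpha,\partial_z\alpha)\wedge \d z$ is indeed the correct transgression, but showing it is a quasi-isomorphism requires the two ingredients above rather than a direct cochain-level identification of the invariants with $\Omega^2(B\g)$.
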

In the next section, we will see that the obstruction class corresponds to a non-zero multiple of 
$$
\op{ch}_2 (T B \g ) \in H^2 \Omega^2_{cl}( B \g).
$$

\subsection{}
Before I prove this proposition, I will describe, geometrically, a map
$$
\Omega^2_{cl}( B \g) [1] \to \Ool( \Omega^{0,\ast} (\C) \otimes \g[1] ).
$$
This map leads to the quasi-isomorphism of the proposition. 

In fact, this map is somewhat simpler to describe if we work on an elliptic curve $E$ rather than on $\C$.

If $E$ is an elliptic curve $\Ool(\Omega^{0,\ast}(E,\g[1]))$ is a subcomplex of the reduced Chevalley-Eilenberg complex of the curved $L_\infty$ algebra $\Omega^{0,\ast}(E,\g[1])$.     We will denote the classifying space of this curved $L_\infty$ algebra  by $(B \g)^{E_{\dbar}}$.

Thus,
$$
\Ool(\Omega^{0,\ast}(\C, \g[1])) \subset \Oo ( (B \g)^{E_{\dbar} } ) / \A = \Omega^1_{cl} ( (B \g)^{E_{\dbar}}).
$$  
We quotient by the ground ring $\A$ because we are considering the reduced Chevalley-Eilenberg complex.  The de Rham differential identifies functions modulo constants with closed one-forms.

There is a natural map
$$
\Omega^2_{cl}(B \g)[1] \to \Omega^1_{cl} ( (B \g)^{E_{\dbar}},
$$
given by transgressing a closed two-form on $B \g$ to a closed one-form on $(B \g)^{E_{\dbar}}$.  The transgression uses the volume element on $E_{\dbar}$, which is of cohomological degree $-1$.

Since this transgression is given by an integral along $E$, it is easy to see that the map factors through the subcomplex
$$
\Ool( \Omega^{0,\ast}(E, \g[1] ) \subset \Omega^1_{cl} ( B \g)^{E_{\dbar}} ).
$$
In fact, it factors through the subcomplex of translation-invariant local functionals in $
\Ool( \Omega^{0,\ast}(E, \g[1] )$, which can be identified with translation invariant local functionals on $\C$.

\subsection{}
Let us now turn to the proof of the proposition.

In \cite{Cos11},  Chapter 5, Section 6,  it was shown how complexes of local action functionals, like $\Ool(\Omega^{0,\ast}(\C) \otimes \g[1])$, can be rewritten in the language of $D$-modules. Let me explain how this applies to the present situation.

We can identify the space of jets of sections of $\Omega^{0,\ast}(\mf g)$ at $0 \in \C$ with the differential graded Lie algebra space
$$
\g[[z,\zbar, \d \zbar]] 
$$
where $\d \zbar$ has cohomological degree $1$, and the differential is the $\dbar$ operator. 

The Lie algebra $\g[[z,\zbar, \d \zbar]]$ is acted on by the abelian Lie algebra $\C\{ \frac{\d}{\d z}, \frac{\d}{\d \zbar} \}$, in the obvious way.  This action is by Lie algebra derivations.  Thus, it extends to an action of $\C\{ \frac{\d}{\d z}, \frac{\d}{\d \zbar} \}$ on the reduced Lie algebra cochain complex $C^\ast_{red}(\g[[z,\zbar,\d \zbar]]$.  (Note that we need to use continuous duals and completed symmetric products in the definition of this Lie algebra cochain complex.  Lie algebra cohomology groups of this form are often called Gel'fand-Fuks cohomology).

Lemma 6.7.1 in chapter 6 implies the following.
\begin{lemma}
There is a quasi-isomorphism of cochain complexes
\begin{multline*}
\left\{ C^\ast_{red} (\mf g[[z,\zbar,\d \zbar]] ) \otimes^{\mbb L}_{\C\left[ \frac{\d}{\d z},\frac{\d}{\d z} \right] } \C \d z \d \zbar \right\}^{\C^\times} \\ 
\simeq \left( \Ool( \Omega^{0,\ast} (\C) \otimes \g[1] ) ^{\op{Aff}(\C)} , Q + \{I,-\} \right) 
\end{multline*}
On the left hand side, we are taking the fixed point for the subgroup $\C^\times \subset \op{Aff}(\C)$.   Here $\C \d z \d \zbar$ refers to the one-dimensional vector space with the trivial action of the Lie algebra $\C\{ \frac{\d}{\d z}, \frac{\d}{\d \zbar} \}$, and equipped with the natural action of the group $\C^\times \subset \op{Aff}(\C)$.
\end{lemma}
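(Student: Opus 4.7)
The plan is to deduce this from the general identification of local functionals with jet-level Gel'fand--Fuks cochains, as established in Lemma 6.7.1 of \cite{Cos11}, Chapter 5; only a few ingredients need to be matched up to our specific setting. The result is fundamentally formal, so I would not reprove it from scratch but rather verify the inputs.

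First, I would identify the jet $L_\infty$ algebra. The space of fields (after taking off the Lagrange-multiplier summand that is suppressed by the $\C^\times$ action, as explained in the previous lemma) is $\Omega^{0,\ast}(\C) \otimes \g[1]$. Evaluating the $\infty$-jet at the origin $0 \in \C$ gives the curved $L_\infty$ algebra
\[
\g[[z,\zbar,\d\zbar]]
\]
(with $\dbar$-differential and induced $L_\infty$ brackets), now living over the base ring $\Omega^\sharp_X \otimes A$. The abelian Lie algebra $\C\{\partial_z,\partial_{\zbar}\}$ of translations of $\C$ acts on this jet space by derivations, so it acts on the reduced Chevalley--Eilenberg cochains $C^\ast_{red}(\g[[z,\zbar,\d\zbar]])$; this is the standard Gel'fand--Fuks setup.

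Next I would recall the dictionary between local functionals and jets. A translation-invariant local action functional on $\Omega^{0,\ast}(\C,\g[1])$ is, by definition, an integral over $\C$ of a Lagrangian density, which is a smooth function of the field and its derivatives at a point, tensored with the density $\d z\, \d\zbar$. Passing to jets identifies the space of such Lagrangian densities, before quotienting by total derivatives, with $C^\ast_{red}(\g[[z,\zbar,\d\zbar]]) \otimes \C\d z\,\d\zbar$. Quotienting by total derivatives (i.e.\ by the image of $\partial_z$ and $\partial_{\zbar}$) is exactly taking coinvariants for the translation action; replacing this by derived coinvariants is legitimate because integration by parts is homotopically well-behaved in the local setting, and this is the content of Lemma 6.7.1 of \cite{Cos11}. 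Derived coinvariants for $\C[\partial_z,\partial_{\zbar}]$ are the same as the derived tensor product $-\otimes^{\mbb L}_{\C[\partial_z,\partial_{\zbar}]} \C$, giving the expression on the left-hand side before taking $\C^\times$-fixed points. The classical differential $Q + \{I,-\}$ on the right-hand side matches the Chevalley--Eilenberg differential on the left, because the $L_\infty$ brackets on $\g[[z,\zbar,\d\zbar]]$ encode exactly the linear and interacting parts of the classical action.

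Finally, I would handle the extra $\C^\times$ symmetry. We have $\op{Aff}(\C) = \op{Isom}(\C) \times \R_{>0}$; the translation part has already been absorbed into the derived tensor product, while rotations together with the dilation $\R_{>0}$ are encoded in the single $\C^\times$ that acts on $\C$ by multiplication. This acts on $\d z\,\d\zbar$ with weight $-2$ (it rescales $z$) and on $\g[[z,\zbar,\d\zbar]]$ through its action on $z,\zbar,\d\zbar$, so taking $\C^\times$-fixed points implements precisely $\op{Aff}(\C)$-invariance of the local functional.

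The only real subtlety---and the step I would spend the most care on---is justifying that in taking $\C^\times$-fixed points one may work with derived coinvariants of the translation action rather than strict coinvariants. Since $\C^\times$ is reductive, taking $\C^\times$-invariants is exact and commutes with the derived tensor product; and the $\C^\times$-weight grading on $C^\ast_{red}(\g[[z,\zbar,\d\zbar]]) \otimes \C\d z\,\d\zbar$ is bounded above (coming from the non-positive weights of $z,\zbar,\d\zbar$ balanced against the fixed $-2$ from $\d z\,\d\zbar$), so no infinite sums arise when passing to invariants. With these checks in place, the claimed quasi-isomorphism is exactly Lemma 6.7.1 of \cite{Cos11} applied to the translation-invariant, $\C^\times$-equivariant local functionals on our field theory.
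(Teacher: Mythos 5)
Your proposal follows essentially the same route as the paper: the paper's own argument consists of identifying the jets of fields at $0\in\C$ with the dg Lie algebra $\g[[z,\zbar,\d\zbar]]$ carrying its translation action and then citing Lemma 6.7.1 of \cite{Cos11}, exactly as you do, so the only difference is that you spell out more of the dictionary (Lagrangian densities modulo total derivatives as derived coinvariants, and the commutation of $\C^\times$-invariants with the derived tensor product) than the paper bothers to. The one tiny quibble is your sign/weight bookkeeping for the $\C^\times$ action on $\d z\,\d\zbar$ (it scales by $\lambda\bar\lambda$, and the compensating weights live on the dual jet variables), but this does not affect the argument.
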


\subsection{}
Let us now turn to the computation of the relevant part of the Lie algebra cohomology of $\g[[z,\zbar, \d \zbar]]$. Note that there is a quasi-isomorphism of differential graded Lie algebras
$$
\g[[z,\zbar, \d \zbar]] \simeq \g[[z]].
$$

Thus, we only need to compute
$$
\left\{C^\ast_{red} (\g[[z]] ) \otimes^{\mbb L}_{\C\left[ \frac{\d}{\d z},\frac{\d}{\d z} \right] } \C \d z \d \zbar \right\}^{\C^\times}.
$$
In order to complete the proof of the proposition, it remains to show the following.
\begin{lemma}
There is a quasi-isomorphism
$$
\left\{C^\ast_{red} (\g[[z]] ) \otimes^{\mbb L}_{\C\left[ \frac{\d}{\d z},\frac{\d}{\d \zbar} \right] } \C \d z \d \zbar \right\}^{\C^\times}
\simeq \Omega^2_{cl}(B \g)[1].
$$
\end{lemma}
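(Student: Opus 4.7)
The plan is to construct an explicit transgression map
\[
T : \Omega^2_{cl}(B\g)[1] \;\longrightarrow\; \bigl\{C^\ast_{red}(\g[[z]]) \otimes^{\mbb L}_{\C[\partial_z, \partial_{\zbar}]} \C\,\d z\,\d\zbar\bigr\}^{\C^\times},
\]
and to verify by a filtration argument that it is a quasi-isomorphism.

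First I would trivialize the $\partial_{\zbar}$-direction. The operator $\partial_{\zbar}$ acts by zero on both $\g[[z]]$ and on the density line $\C\,\d z\,\d\zbar$, so the Koszul resolution of $\C$ as a $\C[\partial_{\zbar}]$-module exhibits the derived tensor product as a direct sum of two copies of $C^\ast_{red}(\g[[z]]) \otimes^{\mbb L}_{\C[\partial_z]} \C\,\d z\,\d\zbar$ that differ by a cohomological shift of one. Passing to $\C^\times$-invariants retains exactly one summand --- the shifted one --- because the Koszul generator dual to $\partial_{\zbar}$ carries a $\C^\times$-weight that cancels against that of $\d\zbar$; this supplies the $[1]$ shift appearing in the target.

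Next I would construct the transgression. A closed two-form $\omega$ on $B\g$ pulls back along the evaluation-at-zero map $\op{ev}_0 : B(\g[[z]]) \to B\g$ (algebraically, the projection $\g[[z]] \twoheadrightarrow \g$). The vector field on $B(\g[[z]])$ corresponding to $\partial_z$ is a symmetry, so Cartan's formula gives $\op{ev}_0^\ast \omega = \d_{dR}(\iota_{\partial_z} \op{ev}_0^\ast \omega)$ modulo terms that are $\partial_z$-coexact. Hence $\iota_{\partial_z} \op{ev}_0^\ast \omega$ defines a class in $C^\ast_{red}(\g[[z]])$ modulo the image of $\partial_z$, and wedging with $\d z\,\d\zbar$ and extracting the $\C^\times$-weight-zero component gives the map $T$.

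To prove $T$ is a quasi-isomorphism, I would filter both sides by the polynomial degree in $\g^\vee$ (i.e., symmetric degree of Chevalley--Eilenberg generators). On associated graded all higher $L_\infty$ brackets vanish, and the computation reduces, for each $n \geq 1$, to a linear Koszul-type calculation on $\Sym^n((\g[[z]])^\vee[-1])$ with its $\partial_z$-action. Using the fact that $\partial_z$ acts on $(\g[[z]])^\vee$ with zero kernel and cokernel $\g^\vee$, a standard Koszul argument identifies the $\C^\times$-weight-zero piece of the derived coinvariants with exactly the $n$-th symmetric component of $\Omega^2_{cl}(B\g)[1]$; the two units of form-degree are accounted for by the shifts introduced when resolving $\partial_z$ and $\partial_{\zbar}$, and the closedness condition on the two-form emerges from how the CE and Koszul differentials intertwine on the unfiltered complex.

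The main obstacle will be this last step: pinning down precisely how the de Rham differential on $\Omega^\ast(B\g)$ arises from the combined CE and Koszul differentials, and tracking the $\C^\times$-weights carefully enough to see that the invariant part is concentrated in form-degree exactly two (and no other). The weight bookkeeping is where the full power of the $\op{Aff}(\C)$-invariance hypothesis enters: without it, all form-degrees would appear, and one would recover the full de Rham complex rather than its closed two-form truncation.
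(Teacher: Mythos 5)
Your argument is essentially the paper's: both proofs resolve $\C$ over $\C\left[\frac{\d}{\d z},\frac{\d}{\d \zbar}\right]$ by a Koszul complex, use the $\C^\times$-weights to discard everything except the two-column complex $C^\ast_{red}(\g) \to \Omega^1(B\g)$ given by the de Rham differential, and identify this brutal truncation with $\Omega^2_{cl}(B\g)[1]$ via the acyclicity of the de Rham complex of $B\g$ --- your filtration by symmetric degree and the explicit transgression map are a degree-by-degree reorganization of the same linear algebra. The only misstatement is the closing aside: dropping the $\C^\times$-invariance does not recover the full de Rham complex of $B\g$ but a much larger Gel'fand-Fuks-type complex, since functionals involving several powers of $z^\vee$ distributed over the inputs then survive; nothing in your proof depends on this remark.
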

\begin{proof}

We can compute the derived tensor product  in the complex on the right hand side using a Koszul resolution.   The Koszul resolution of the trivial module $\C$ for $\C\left[ \frac{\d}{\d z},\frac{\d}{\d z} \right]$ is the differential graded algebra
$$
\C\left[ \frac{\d}{\d z},\frac{\d}{\d \zbar}, \eps, \br{\eps} \right]
$$
where $\eps, \br{\eps}$ are in cohomological degree $-1$
with differential
\begin{align*}
\d \eps &= \frac{\d}{\d z} \\
\d \br{\eps} &= \frac{\d}{\d \zbar}
\end{align*}
The generators $\eps, \br{\eps}$ are acted on by $\C^\times$ in the obvious way: $\eps \to \lambda \eps$, $\br{\eps} \to \br{\lambda}\br{\eps}$. 

We find that there is a $\C^\times$ equivariant isomorphism
\begin{multline*}
C^\ast_{red} (\g[[z]] ) \otimes^{\mbb L}_{\C\left[ \frac{\d}{\d z},\frac{\d}{\d z} \right] } \C \d z \d \zbar 
\simeq\\
C^\ast_{red} (\g[[z]] ) \eps \br{\eps} \d z \d \zbar  
\to C^\ast_{red} (\g[[z]] ) \eps  \d z \d \zbar  \oplus C^\ast_{red} (\g[[z]] )\br{\eps} \d z \d \zbar  
\\ \to C^\ast_{red} (\g[[z]] ) \d z \d \zbar.
\end{multline*}
The differential arises from the action of the Lie algebra $\C\{\frac{\d}{\d z} , \frac{\d}{\d \zbar}\}$ on $C^\ast_{red}(\g[[z]])$.  

If we take $\C^\times$ invariants of this Koszul resolution, we find that only the terms with precisely one $\br{\eps}$ remain.  Thus,  we find that we need only compute the $\C^\times$ invariants of 
$$
C^\ast_{red} (\g[[z]])  \eps  \d z  [1] \to C^\ast_{red} (\g[[z]] ) \d z  [1]
$$
(where we have removed the $\C^\times$ invariant expression  $\br{\eps} \d \zbar$ from the notation). 

Note that
\begin{align*}
\left(C^\ast_{red} (\g[[z]] )\eps  \d z \right)^{\C^\times} &= C^\ast_{red}(\g) \\
\left( C^\ast_{red} (\g[[z]] ) \d z \right)^{\C^\times}  &= C^\ast(\g, z^\vee \g^\vee ).
\end{align*}
Further, we can identify $C^\ast_{red}(\g)$ with the two-term complex
$$
\A[1] \to \Oo(B \g)
$$
where $\A$ is our base ring.  Also, we can identify
$$
C^\ast(\g, z^\vee \g^\vee ) = \Omega^1( B \g).
$$
The map
$$
C^\ast_{red}(\g) \to C^\ast(\g, z^\vee \g^\vee )
$$
is the de Rham differential $\Oo(B \g) \to \Omega^1(B \g)$. 

Thus, we have shown that there is a quasi-isomorphism
$$
\left\{C^\ast_{red} (\g[[z]] ) \otimes^{\mbb L}_{\C\left[ \frac{\d}{\d z},\frac{\d}{\d z} \right] } \C \d z \d \zbar \right\}^{\C^\times} 
\simeq \A[3] \to \Oo(B \g) [2] \to \Omega^1(B \g) [1].
$$
The complex on the right hand side is quasi-isomorphic, via the de Rham differential, to the complex
$$
\Omega^2_{cl}(B \g)[1] \simeq \Omega^2(B \g)[1] \to \Omega^3(B \g) \to \Omega^4(B \g) [-1] \to \cdots
$$
as desired.

It is not hard to check that the resulting map
$$
\Omega^2_{cl}( B \g) [1] \to \Ool(\Omega^{0,\ast}(\C, \g) [1] ) 
$$
is the transgression map described earlier. 
\end{proof}

\section{Calculation of the obstruction}
\label{section_obstruction_class}
So far we have seen that the obstruction-deformation complex for our field theory is $\Omega^2_{cl}(B \g)[1]$, the complex of closed $2$-forms on $B \g$, with a shift of one.

It remains to identify the actual obstruction.   

For very formal reasons, one can tell that the obstruction must be a multiple of $ch_2( T_{B\g})$. Indeed, the obstruction is additive under direct sum of Lie algebras, and lives in
$$
H^2 ( \Omega^2_{cl}(B \g) ).
$$
The only characteristic classes with these properties are multiples $ch_2(T_{B \g})$.

However, this is not the approach we will take: instead we will compute the obstruction directly, using Feynman diagrams.  The calculation is not hard.  However, as with all diagrammatic calculations, it is difficult to explain to those who have not worked with these techniques.  
\begin{theorem}
The obstruction 
$$
O \in H^2( \Omega^2_{cl}(B \g)) 
$$
is a non-zero multiple of $ch_2 (T_{B \g})$, the second Chern character of the tangent bundle to $B \g$. 
\end{theorem}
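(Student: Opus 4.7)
The authors have already noted the formal reason why $O$ must be proportional to $\mathrm{ch}_2(T_{B\g})$: the obstruction is additive under $\g_1 \oplus \g_2$ and lives in a cohomology group whose only characteristic-class inhabitants of the appropriate weight are multiples of $\mathrm{ch}_2$. So the real content is to show that the coefficient is nonzero, and the plan is to do this by a direct Feynman-diagram calculation, identifying both sides explicitly.

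\textbf{First step: reduce to wheels.} By the $\C^\times$ invariance analysis of section \ref{section_symmetries}, $I_{naive}^{(1)}[L]$ is supported on the subspace $\Omega^{0,\ast}(\C,\g[1])$, and only graphs of genus one contribute to $O[L]$. A standard argument (as in section \ref{section_counterterms}) further reduces these to wheels. I would take the $L\to 0$ limit of the wheel sum and rewrite it as a local functional on $\Omega^{0,\ast}(\C,\g[1])$, i.e.\ as a class in $\mathcal O_{\mathrm{loc}}(\Omega^{0,\ast}(\C)\otimes\g[1])^{\mathrm{Aff}(\C)}$. Using the explicit quasi-isomorphism of section \ref{section_obstruction} that sends this complex to $\Omega^2_{cl}(B\g)[1]$ via transgression against $dz$, the wheel answer becomes an explicit closed $2$-form on $B\g$.

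\textbf{Second step: identify the $2$-form with $\mathrm{ch}_2(T_{B\g})$.} A wheel with $n$ vertices has each vertex labeled by some $l_{k_i}$ (with $k_i\ge 2$), its two internal legs contracted with propagators carrying the Casimir $C_\g \in \g\otimes\g^\vee\oplus\g^\vee\otimes\g$, and $k_i-2$ external legs lying in $\g[1]$. The cyclic product of vertex data around the wheel, after summing over $n$ and symmetrizing in the external legs, is a cyclic trace of products of $l_{k+2}(U_1,\dots,U_k,\chi,-)\in\mathrm{End}(\g)$; by Lemma \ref{lemma_atiyah_hcs} each factor is exactly the Taylor expansion of the Atiyah class $\mathrm{At}(T_{B\g})$ evaluated against tangent insertions. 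Thus, up to a universal analytic constant $c_n$ produced by the Feynman integral with $n$ propagators, the wheel sum transgresses to $\sum_n c_n\,\mathrm{tr}(\mathrm{At}^n)$. Because we are computing the obstruction in degree $1$ of the transgression complex (i.e.\ in the $n=2$ component), only $\mathrm{tr}(\mathrm{At}^2) \propto \mathrm{ch}_2(T_{B\g})$ survives.

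\textbf{Third step: non-vanishing of the coefficient.} The previous step already matches the wheel answer to $c_2\cdot\mathrm{ch}_2(T_{B\g})$, so it remains to show $c_2\neq 0$. For this I would evaluate a single, concrete analytic integral: the two-vertex wheel (or the leading nontrivial wheel) whose analytic weight is exactly of the form computed in section \ref{section_counterterms} at $n=2$, namely a finite, explicit Gaussian integral in $(t_1,t_2,u,\bar u)$ that the Wick-style manipulation of that section already reduces to $\int_0^L\!\!\int_0^L \tfrac{t_1 t_2}{(t_1+t_2)^3}\,dt_1\,dt_2$ times a combinatorial factor. This integral is manifestly positive, so $c_2 \ne 0$. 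Alternatively, I could specialize to $\g = \g_{X_{\dbar}}$ for a complex manifold $X$ with explicitly non-vanishing $\mathrm{ch}_2$ (e.g.\ a K3 surface) and verify nonvanishing there.

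\textbf{Main obstacle.} The real work is the second step: carefully tracking all combinatorial factors (symmetry factors of wheels, the antisymmetrization inherent in $C_\g$, the shuffling of external legs) so that the resulting cyclic trace is genuinely $\mathrm{tr}(\mathrm{At}^n)$ rather than some other cyclic invariant, and then correctly identifying this trace with the de Rham representative of $\mathrm{ch}_2$ on $B\g$ via the transgression. The analytic evaluation is comparatively routine given the integrals already computed in section \ref{section_counterterms}.
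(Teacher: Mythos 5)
Your reduction to wheels and your identification of the Lie--theoretic factor of a wheel with a cyclic trace of Atiyah classes via Lemma \ref{lemma_atiyah_hcs} are both correct and match the paper; so is the formal degree count showing the class of $O$ can only be a multiple of $\op{ch}_2(T_{B\g})$. The gap is in what you actually compute when you try to show the coefficient is non-zero. The obstruction is the quantum-master-equation anomaly
$$
O[L] = \Delta_L I^{(0)}[L] + Q I^{(1)}_{naive}[L] + \{ I^{(0)}[L], I^{(1)}_{naive}[L] \}_L,
$$
not the one-loop effective interaction $I^{(1)}_{naive}[L]$ itself, and your ``wheel sum'' --- in particular the integral $\int\int t_1 t_2 (t_1+t_2)^{-3}\, \d t_1 \d t_2$ you invoke in step 3 --- is the analytic weight of the two-vertex wheel with \emph{both} edges carrying the propagator $P(\eps,L)$. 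That integral is the one from section \ref{section_counterterms}; its convergence shows no counterterm is needed, but its positivity says nothing about whether the QME fails. Indeed the present paper itself illustrates that the two quantities are independent: after trivializing $\op{ch}_2$ the anomaly vanishes while $I^{(1)}[\infty]$ is still the manifestly non-zero logarithm of the Witten class. To compute $O$ one must push the defect $\tfrac{1}{2}\{I_{hCS},I_{hCS}\}_{\eps} - \tfrac{1}{2}\{I_{hCS},I_{hCS}\}_{0}$ through the renormalization group flow; diagrammatically this replaces the propagator on one distinguished edge of each one-loop graph by the heat kernel $K_\eps$ (minus its $\eps = 0$ limit). The paper then shows by a limit-interchange argument that every such graph contributes zero except the wheel with exactly two vertices, and the surviving analytic factor is controlled by $\lim_{\eps \to 0}\int_{\eps}^{1} \eps\,(\eps+t)^{-2}\, \d t$, which is the non-zero number that actually carries the theorem. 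None of this structure appears in your proposal, so step 3 establishes the non-vanishing of an irrelevant quantity.

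A secondary caution about step 2: the claim that ``only $\op{Tr}(\alpha^2)$ survives'' by degree reasons reproduces the paper's formal argument at the level of cohomology classes, but it does not substitute for the cochain-level statement you need, namely that wheels with three or more vertices (and graphs with separating edges) contribute nothing to the anomaly. In the paper that is a separate vanishing lemma proved by interchanging the $\eps \to 0$ and $\delta \to 0$ limits, and it is what isolates the single two-vertex diagram whose analytic weight you must then evaluate. Your proposed fallback --- specializing to a K3 surface --- does not avoid any of this, since verifying non-vanishing there still requires computing the anomaly rather than the effective action.
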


\begin{proof}
Recall that we defined $I_{naive}[L]$ as the limit
$$
I_{naive}[L] = \lim_{\eps \to 0} \EA{P(\eps,L) , I_{hCS} }
$$
where $I_{hCS}$ is the classical holomorphic Chern-Simons theory interaction.  

The obstruction to solving the quantum master equation, at scale $L$, is 
$$
O[L] = \hbar^{-1} \left\{ Q I_{naive}[L] + \tfrac{1}{2} \{ I_{naive}[L], I_{naive}[L] \}_L + \hbar \Delta_L I_{naive}[L] \right\}.
$$
(The obstruction automatically vanishes modulo $\hbar$, and has no $\hbar^2$ or higher contributions).

The obstruction satisfies the classical master equation, and has an $L \to 0$ limit which we denote by
$$
O = \lim_{L \to 0} O[L] \in \Oo_l (\E(\C)). 
$$
\begin{lemma}
The obstruction $O[L]$ is
$$
O [L] =  \lim_{\eps \to 0} 
\frac{\d}{\d \delta}  \EA { P(\eps,L), I_{hCS}  + \delta\left\{ \tfrac{1}{2} \{ I_{hCS}, I_{hCS} \}_{\eps} - \tfrac{1}{2} \{I_{hCS},I_{hCS}\}_0  \right\} }.  $$
Here $\delta$ is a parameter of cohomological degree $-1$ square zero. 
\end{lemma}
\begin{proof}
The compatibility between the renormalization group flow and the quantum master equation implies that, for all functionals $I$,
\begin{multline*}
 Q \EA{P(\eps, L),I } + \tfrac{1}{2}  \{ \EA{P(\eps, L),I } , \EA{P(\eps, L),I } \}_L +  \hbar \Delta_L \EA{P(\eps, L),I }  =  \\
\frac{\d}{\d \delta} \EA { P(\eps,L) , I + \delta\left\{ Q I + \tfrac{1}{2} \{ I, I \}_\eps + \hbar \Delta_\eps I  \right\} } 
\end{multline*}
where $\delta$ is a parameter of cohomological degree $-1$ (and square zero).

Thus, the obstruction $O[L]$ at scale $L$ satisfies 
$$
 O[L] = \hbar^{-1} \lim_{\eps \to 0}\frac{\d}{\d \delta}  \EA { P(\eps,L), I_{hCS} + \delta\left\{ Q I + \tfrac{1}{2} \{ I_{hCS}, I_{hCS} \}_\eps + \hbar \Delta_\eps I_{hCS}  \right\} }.
$$

Also, for all $\eps > 0$,
$$
\Delta_\eps I_{hCS} = 0.
$$
This follows from the expression
$$
K_t  = t^{-1} e^{-\norm{z-w}^2 / t} (\d \br{z} \otimes 1 - 1 \otimes \d \br{w} ) \otimes C_\g
$$
where $C_\g \in (\left( \g[1] \oplus \g^\vee[-1]\right)^{\otimes 2}$. 

In addition, the classical master equation asserts that
$$
Q I_{hCS} = - \tfrac{1}{2} \{ I_{hCS}, I_{hCS} \}_0
$$
where $\{-,-\}_0$ denotes the scale $0$ bracket.

Thus, we see that
$$
 Q I_{hCS}  + \tfrac{1}{2} \{ I_{hCS}, I_{hCS} \}_\eps + \hbar \Delta_\eps I_{hCS} = \tfrac{1}{2} \{ I_{hCS}, I_{hCS} \}_\eps - \tfrac{1}{2} \{ I_{hCS}, I_{hCS} \}_0.
$$
\end{proof}

Let $\gamma$ be a graph, and let $e$ be an edge of $\gamma$. Let us assume that the edge $e$ is not a loop. Let
$$
\EAs{\gamma,e} { P(\eps,L) ,K_\eps - K_0,  I_{hCS}  } \in \Oo(\E(\C))
$$
be obtained by putting the propagator $P(\eps,L)$ at all edges of $\gamma$ except for $e$, and by putting $K_\eps - K_0$ at the edge $e$.

We can write 
\begin{multline*}
 \hbar^{-1} \frac{\d}{\d \delta}  \EA { P(\eps,L), I_{hCS}  + \delta\left\{ \tfrac{1}{2} \{ I_{hCS}, I_{hCS} \}_{\eps} - \tfrac{1}{2} \{I_{hCS},I_{hCS}\}_0  \right\} }
\\ = \sum_{\gamma} \frac{1}{\op{Aut}(\gamma)} 
\EAs{\gamma,e} { P(\eps,L) ,K_\eps - K_0,  I_{hCS}  }
\end{multline*}
as a sum over one-loop graphs $\gamma$, equipped with an edge $e$ which is not a loop.

The obstruction is the limit of this sum as $\eps \to 0$.  
\begin{lemma}
Let $\gamma$ be a one-loop graph, and $e$ an edge of $\gamma$ which is not a loop.   Then,
$$
\lim_{\eps \to 0} \EAs{\gamma,e} { P(\eps,L) ,K_\eps - K_0,  I_{hCS}  } = 0
$$
unless the edge $e$ is contained in a wheel with precisely two vertices (in other words, unless the two vertices $v_1,v_2$ which are connected by $e$ are also connected by a single other edge).
\end{lemma}
\begin{proof}
This is a direct computation.

First, let us suppose that the edge $e$ is separating. Then 
$$
\lim_{\delta \to 0} \lim_{\eps \to 0} \EAs{\gamma,e} { P(\eps,L) ,K_\delta ,  I_{hCS}  } = \lim_{\eps \to 0} \lim_{\delta \to 0} \EAs{\gamma,e} { P(\eps,L) ,K_\delta ,  I_{hCS}  }.
$$
This is simply because the edge $e$ is part of a tree, and trees never contribute anything singular.

This means that
$$
\lim_{\eps \to 0} \EAs{\gamma,e} { P(\eps,L) ,K_\eps ,  I_{hCS}  } = \lim_{\eps \to 0} \EAs{\gamma,e} { P(\eps,L) , K_0,  I_{hCS}  } = 0
$$
so that the desired limit is zero.

Next, let us assume that $e$ is part of a wheel with at least three vertices.  Since $\gamma$ is a one-loop graph, it is a wheel with some trees attached.  We can, as usual, ignore the contributions of this trees.  Thus, let us assume that $\gamma$ is a wheel with three or more vertices.

Then, one can verify by an easy direct computation that
$$
\lim_{\delta \to 0} \lim_{\eps \to 0} \EAs{\gamma,e} { P(\eps,L) ,K_\delta ,  I_{hCS}  } = \lim_{\eps \to 0} \lim_{\delta \to 0} \EAs{\gamma,e} { P(\eps,L) ,K_\delta ,  I_{hCS}  }.
$$
Again, this implies that
$$
\lim_{\eps \to 0} \EAs{\gamma,e} { P(\eps,L) ,K_\eps - K_0,  I_{hCS}  } = 0.
$$
\end{proof}

\begin{corollary}
The obstruction $O = \lim_{L \to 0} O[L]$ can be written as a sum
$$
O = \sum_{\gamma,e} \lim_{\eps \to 0} \EAs{\gamma,e} { P(\eps,1) ,K_\eps ,  I_{hCS}  } 
$$
where the sum is over one-loop graphs which are wheels with two vertices.
\end{corollary}
\begin{proof}
Indeed, the previous lemma implies that the contribution to the obstruction coming from graphs which contain a wheel with more than three vertices vanishes.  

Thus, the obstruction $O[L]$ is 
$$
O[L] = \sum_{\gamma,e} \lim_{\eps \to 0} \EAs{\gamma,e} { P(\eps,L) ,K_\eps ,  I_{hCS}  } 
$$
where the sum is over one-loop graphs which contain a wheel with two vertices, possibly with trees attached on the outside.

Now, if $\gamma$ is a wheel with two vertices, one can check that
$$
\lim_{\eps \to 0} \EAs{\gamma,e} { P(\eps,L) ,K_\eps ,  I_{hCS}  } 
$$
is independent of $L$.   If $\gamma$ is a wheel with two vertices and some trees attached, the limit
$$
\lim_{L \to 0} \lim_{\eps \to 0} \EAs{\gamma,e} { P(\eps,L) ,K_\eps ,  I_{hCS}  } 
$$
is easily seen to be zero.  
\end{proof}
If $\gamma$ is a wheel with two vertices, and $e$ is one of the two edges of $\gamma$, we will let $O_{\gamma,e}$ be the part of the obstruction coming from $\gamma$.  Thus, 
$$O_{\gamma,e} = \lim_{\eps \to 0} \EAs{\gamma,e} { P(\eps,1) ,K_\eps ,  I_{hCS}  }.$$ 
\begin{figure}
\includegraphics{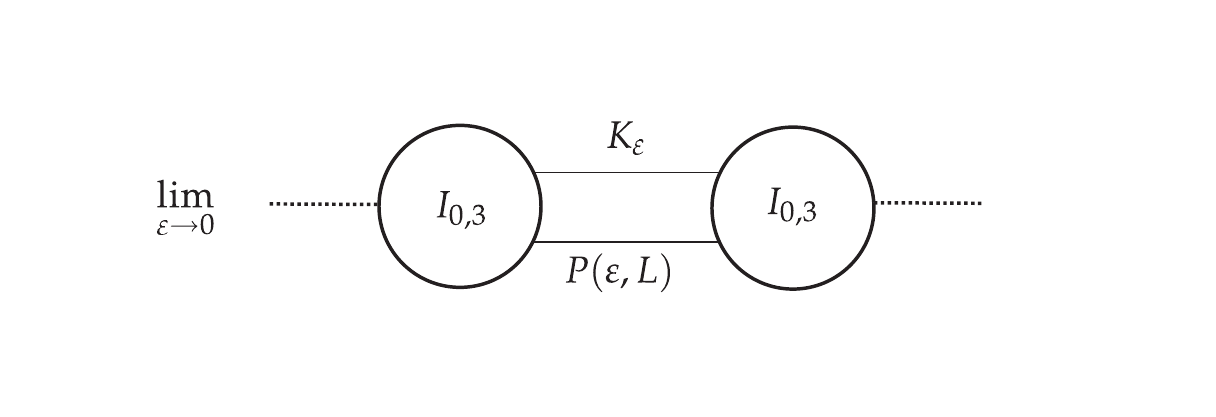}
\label{diagram obstruction}
\caption{The obstruction for the trivalent wheel with two vertices}
\end{figure}

We will view $O_{\gamma,e}$ as a linear map
$$
O_{\gamma,e} : \left( \Omega^{0,\ast}_c(\C) \otimes\g \right)^{\otimes T(\gamma)} \to \C.
$$
Thus, we can view $O_{\gamma,e}$ as a product of an analytic factor
$$
O_{\gamma,e}^{an} : \Omega^{0,\ast}_c(\C)^{\otimes T(\gamma)} \to \C
$$
and a Lie algebra factor
$$
O_{\gamma,e}^{\g} : \g^{\otimes T(\gamma)} \to \C.
$$
We will deal with the Lie algebra factor momentarily.  Let us first consider the analytic factor.
\begin{lemma}
Suppose $\gamma$ has $k+l$ tails, with $k$ attached to one vertex and $l$ to the other. Let $f_1,\ldots, f_k$ and $g_1,\ldots, g_l$ be compactly supported smooth functions on $\C$. Suppose that $f_1 \d \zbar, f_2, \ldots f_k$ are inputted at the first $k$ tails and that $g_1, g_2, \ldots g_l$ at the remaining $l$ tails. 

Then,
$$
O_{\gamma,e}^{an} (f_1 \d \zbar, f_2,\ldots, f_k, g_1, \ldots, g_l ) =c \int_{z \in \C} \left( \frac{\d}{\d z} \prod f_i\right) \prod g_j.$$
for some non-zero constant $c \in \C$.
\end{lemma}
\begin{proof}
Ignoring various factors of $\pi$ and combinatorial factors, 
\begin{multline*}
O_{\gamma,e}^{an} (f_1 \d \zbar, f_2,\ldots, f_k, g_1, \ldots, g_l ) = \\
 \lim_{\eps \to 0} \int_{t = \eps}^1 \int_{z, w \in \C}  \prod f_i(z) \prod g_j (w) \eps^{-1} e^{- \abs{z - w}^2 / \eps} \frac{\d}{\d z} t^{-1} e^{-\abs{z - w}^2 / t } \d t \d z \d \zbar \d w \d \br{w}. 
\end{multline*}
The integral over $z,w \in \C$ can be rewritten as
$$
- \int_{z,w \in \C}  \prod f_i ( z)  \prod g_j (w)( \br{z-w}) \eps^{-1}t^{-2} e^{ - \abs{z-w}^2 \eps^{-1} t^{-1} \left( \eps + t \right) } .
$$
Noting that 
$$
(\br{z-w} )\eps^{-1}t^{-2} e^{ - \abs{z - w}^2 \eps^{-1} t^{-1} \left( \eps + t \right) } = - \frac{1}{t (  \eps + t) } \frac{\d}{\d z } e^{ - \abs{z-w}^2 \eps^{-1} t^{-1} \left( \eps + t \right) } 
$$
allows us to rewrite this integral, using integration by parts, as 
$$
- \int_{z,w \in \C} \left( \frac{\d}{\d z}\prod f_i (z ) \right)  \prod g_j (w)  \frac{1}{t(\eps + t)}  e^{ - \abs{z-w}^2 \eps^{-1} t^{-1} \left( \eps + t \right) } .
$$
We can expand this integral using Wick's lemma: the leading term is (up to a non-zero factor)
$$
\int_{z \in \C} \left( \frac{\d}{\d z}\prod f_i (z ) \right)  \prod g_j (z)  \frac{\eps}{ (\eps + t)^2} .
$$
The final step is to verify that
$$
\lim_{\eps \to 0} \int_{\eps}^1 \frac{\eps}{(\eps + t)^2} \d t 
$$
is non-zero; which is immediate.
\end{proof}

As we will see in Section \ref{section_witten_genus}, we can identify the Lie algebra contribution to the weight of wheels with two vertices as being $\op{ch}_2$ of the tangent bundle to $B \g$.    Putting this observation together with the computation of the analytic component of $O_{\gamma,e}$ we see the obstruction is non-zero at the cochain level, and is constructed from $\op{ch}_2$.   It is not hard to check that the obstruction is a non-zero multiple of the image of $\op{ch}_2$ under the transgression map
$$
\Omega^2_{cl}(B \g)[1] \to \Ool(\Omega^{0,\ast}(\C) \otimes \g[1] ). 
$$ 
Since this transgression map is a quasi-isomorphism, the we see that the obstruction is a non-zero multiple of the second Chern character, thus proving the theorem. 
\end{proof}

\subsection{} 
This theorem implies that if we choose a trivialization of $ch_2(T_{B \g})$ then we find a quantization $I[L]$ of holomorphic Chern-Simons theory on $\C$ or any elliptic curve $E$,  invariant under $\C^\times \times \op{Aff}(\C)$.  This quantization is of the form
$$
I[L] = I_{naive}[L] + \hbar J[L]
$$
where
$$
J[L] \in \Oo(\Omega^{0,\ast}(E) \otimes \g[1] ).
$$
The term $J[L]$ is the correction to the failure of $I_{naive}[L]$ to satisfy the quantum master equation.

\section{The Witten genus}
\label{section_witten_genus}
In this section, we will complete the calculation, and see that the Witten genus is encoded in $I[\infty]$. In order to state the precise calculation, we need to introduce a little notation.  Let 
$$
\mc H(E) \subset \Fields(E) = \Omega^{0,\ast}( E ) \otimes \left( \g[1] \oplus \g^\vee[-1] \right)
$$
denote the subspace of harmonic fields, that is, those in the kernel of both $\dbar$ and $\dbar^\ast$. 

\subsection{}

Let $(E,\omega)$ be an elliptic curve equipped with a holomorphic volume form. 

Serre duality gives rise to a trace map  
$$\op{Tr}_{\omega} : H^1(E, \Oo_E) \to \C.$$   
 We will identify  $H^1(E, \Oo_E)$ with the Dolbeaut cohomology group of $E$.  In these terms, the trace map arises from the map
\begin{align*}
\Omega^{0,1} (E) &\to \C \\
\alpha &\mapsto \int_{E} \omega \wedge \alpha.
\end{align*}
Let
$$
\omega^\vee \in H^1(E,\Oo_E)
$$ 
be the element such that 
$$
\op{Tr} (\omega^\vee) = 1.
$$

Let $\eps$ be a parameter of cohomological degree $1$.  Let us define an isomorphism of graded algebras
\begin{align*}
\C[\eps] & \to H^\ast( E, \Oo_E) \\
\eps  &\mapsto \const \omega^\vee.
\end{align*}

\subsection{}
Let $\mc H(E) \subset \Fields(E)$ be the subspace of harmonic fields.  Note that the pairing on $\Fields(E)$ restricts to an odd symplectic pairing on $\mc H(E)$.   The isomorphism $H^\ast(E,\Oo_E) \iso \C[\eps]$ leads to an isomorphism
$$
\mc H(E) \iso \C[\eps] \otimes \left( \g [1] \oplus \g^\vee[-1] \right) .
$$

The scale $\infty$ effective interaction $I[\infty]$ restricts to a solution to the quantum master equation on $\mc H(E)$.  Further, the one-loop part $I^{(1)}[\infty]$ comes from a functional on the space $\C[\eps] \otimes \g[1]$.

Note that
$$
C^\ast_{red}( \C[\eps] \otimes \g[1] )  = C^\ast_{red}(\g, \Sym^\ast \g^\vee ) = \Omega^{-\ast} (B \g ) ,
$$
where the right hand side is not equipped with the de Rham differential, just with the ``internal'' differential which preserves each space $\Omega^k(B \g)$. 

Thus, we can view the one-loop part of the scale $\infty$ effective interaction as
$$
I^{(1)} [\infty] \in \Omega^{-\ast} (B \g).
$$
\subsection{}
The finite dimensional space $\mc H(E)$ has an odd symplectic pairing. Thus, $\Oo(\mc H(E))$ has a BV operator $\Delta_{\mc H(E)}$ and a BV bracket $\{-,-\}$.   Further, if we equip $\Oo(\Fields(E))$ with the BV bracket $\Delta_\infty$ at scale $\infty$, the map
$$
\Oo(\Fields(E)) \to \Oo( \mc H(E))
$$
is a map of BV algebras, that is, it is compatible with the operators $\Delta_{\mc H(E))}$ on the right ant $\Delta_\infty$ on the left.  This is simply because the BV operator $\Delta_\infty$, when viewed as an element of $\Fields(E)^{\otimes 2}$, actually lies in the subspace $\mc H(E)^{\otimes 2}$.  

The inclusion $\C[\eps] \into \Omega^{0,\ast}(E)$ is a quasi-isomorphism. It follows that the map $\Oo(\Fields(E)) \to \Oo(\mc H(E))$ gives a quasi-isomorphism
\begin{multline*}
\left( \Oo\left( \C[\eps] \otimes \left( \g [1] \oplus \g^\vee[-1]   \right) \right)[[\hbar]],  \hbar \Delta_{\mc H(E)} + \{ I[\infty] \mid_{\mc H(E)}, -  \} \right)  \\
\simeq 
\left( \Oo\left( \Fields(E) \right)[[\hbar]], Q + \hbar \Delta_{\infty} + \{ I[\infty], -  \} \right).
\end{multline*}

\begin{lemma}
There is an isomorphism of cochain complexes
\begin{equation*}
\left( \Oo\left( \C[\eps] \otimes \left( \g [1] \oplus \g^\vee[-1]   \right) \right), \{ I^{(0)} [\infty] \mid_{\mc H(E)}, -  \} \right) \iso \Omega^{-\ast}( T^\ast_{B \g} ) \tag{$\dagger$} \label{eqn_dagger}
\end{equation*}
where $T^\ast _{B \g}$ refers to the formal completion at zero of the cotangent bundle to $B \g$, and the algebra of forms is equipped only with the internal differential, and not the de Rham differential.  

Further,  this isomorphism takes the BV operator $\Delta_{\mc H(E)}$ to the operator 
$$L_\pi : \Omega^{i} (T^\ast _{B\g} ) \to \Omega^{i-1} (T^\ast_{B \g} ) $$
given by Lie derivative with the Poisson tensor $\pi$ on $T^\ast_{B\g}$.
\end{lemma}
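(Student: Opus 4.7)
The plan is to verify the lemma by separately identifying the underlying graded algebras, matching the two differentials, and finally matching the BV operators. Throughout, the choice of holomorphic volume form $\omega$ on $E$ provides the identification $\mc{H}(E) \iso \C[\eps] \otimes (\g[1] \oplus \g^\vee[-1])$ with $\eps$ of cohomological degree $1$, via the trace map of Serre duality as described just before the lemma.

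First I would set up the identification of graded algebras. By the general formula $T[-1](X, \mf h) = (X, \mf h[\eps])$ applied to $\mf h = \g \oplus \g^\vee[-2]$ (recalling that $T^\ast(X,\g) = (X, \g \oplus \g^\vee[-2])$), the $L_\infty$ space $T[-1] T^\ast B\g$ is exactly $(X, \C[\eps] \otimes (\g \oplus \g^\vee[-2]))$. Combined with the general identity $\Oo(T[-1] Y) = \Omega^{-\ast}(Y)$ used in Section~\ref{section_volume_form}, this gives a canonical isomorphism of graded $\Omega^\sharp_X$-algebras between $\Oo(\mc{H}(E))$ and $\Omega^{-\ast}(T^\ast B\g)$. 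This is exactly the identification that appeared in the construction of the trivial projective volume form $\d Vol_0$ on $T[-1](X,\g_{X_{\dbar}})$.

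Next I would check that the differentials agree. The classical interaction $I^{(0)}$ was constructed (in Section~\ref{section_hcs}) so that the derived moduli space of solutions to its equations of motion is the space of Maurer--Cartan elements in the $L_\infty$ algebra $\Omega^{0,\ast}(E) \otimes (\g \oplus \g^\vee[-2])$ over $\Omega^\ast_X$, i.e.\ the derived mapping space from $E$ to $T^\ast(X,\g)$ near constants. Equivalently, $\{I^{(0)}, -\}$ is the Chevalley--Eilenberg differential of this $L_\infty$ algebra on $\Oo(\Fields(E))$. The scale-$\infty$ tree-level effective interaction $I^{(0)}[\infty] = \EA{P(0,\infty), I_{hCS}^{(0)}}$ is precisely the generating function for the homotopy-transferred $L_\infty$ structure along the harmonic projection $\Omega^{0,\ast}(E) \to \mc{H}(E) = \C[\eps]$. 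Because the Dolbeaut complex of an elliptic curve is formal --- indeed, the harmonic subspace $\C \oplus \C \bar{\omega}^\vee$ is closed under wedge product (products of two degree-$1$ classes vanish identically) and the homotopy $\dbar^\ast G$ annihilates harmonic forms --- all the trees with more than one internal edge contribute zero, so the transferred $L_\infty$ structure on $\mc H(E) \otimes (\g \oplus \g^\vee[-2])$ is simply the $\C[\eps]$-linear extension of the curved $L_\infty$ structure on $\g \oplus \g^\vee[-2]$. Its Chevalley--Eilenberg differential is by definition the internal differential on $\Omega^{-\ast}(T^\ast B\g)$.

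Finally I would match the BV operators. The pairing on $\mc H(E)$ is inherited from the scale-$L$ pairing on $\Fields(E)$ (which stabilizes on harmonics) and factors as the composite of the integration pairing $\int_E \omega \wedge (-) \wedge (-)$ on $\Omega^{0,\ast}(E)$ with the canonical pairing between $\g[1]$ and $\g^\vee[-1]$. Under the trivialization by $\omega$, the integration pairing becomes the degree $-1$ pairing on $\C[\eps]$ sending $\eps \otimes 1$ and $1 \otimes \eps$ to $1$ --- this is exactly the ingredient used in Section~8 to construct the canonical volume form on $T[-1] T^\ast (X,\g)$. Consequently the BV operator $\Delta_{\mc{H}(E)}$ coincides with the operator $\tr_0$ constructed there, which by the third description of that operator is precisely the Lie derivative $L_\pi$ with respect to the Poisson tensor on $T^\ast(X,\g)$, acting on $\Omega^{-\ast}(T^\ast B\g)$.

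The main technical obstacle is the vanishing of higher transferred operations in the homotopy-transfer step: one must confirm that every connected tree Feynman diagram with more than one internal edge vanishes when all external legs are harmonic. This reduces to the observation that $\dbar^\ast$ annihilates harmonic forms and that the propagator $P(0,\infty) = \int_0^\infty \dbar^\ast K_t \, \d t$ therefore maps into the $\dbar^\ast$-coexact subspace, combined with the fact that on an elliptic curve the wedge of two elements of $\Omega^{0,1}(E)$ is zero. Once this is verified, all three parts assemble into the claimed isomorphism.
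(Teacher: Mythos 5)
Your proposal is correct and follows essentially the same route as the paper: interpret $I^{(0)}[\infty]|_{\mc H(E)}$ as the homotopy-transferred $L_\infty$ structure, show the correction trees vanish because the homotopy annihilates harmonic forms (products of harmonic forms on $E$ being harmonic), and identify $\Delta_{\mc H(E)}$ with the operator attached to the kernel $(\eps \otimes 1 - 1 \otimes \eps)\otimes C_\g$, which is $L_\pi = [\d_{dR},\iota_\pi]$. One bookkeeping correction: the trees that must be shown to vanish are those with \emph{at least} one internal edge (trees with no internal edges are exactly what produce the $\C[\eps]$-linear tensor-product structure, i.e.\ the internal differential on $\Omega^{-\ast}(T^\ast_{B\g})$), not only those with ``more than one'' internal edge as you state twice; as written your claim would leave the one-internal-edge composites unaccounted for and the differentials would not match. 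Fortunately the mechanism you invoke proves the stronger, needed statement: at any leaf-most vertex all inputs are harmonic, the $L_\infty$ operation outputs a harmonic element, and the propagator $P(0,\infty)=\int_0^\infty \dbar^\ast K_t\,\d t$ kills it.
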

\begin{proof}
The solution of the classical master equation $I^{0}[\infty]$ in $ \Oo\left( \C[\eps] \otimes \left( \g [1] \oplus \g^\vee[-1]  \right) \right)$ can be interpreted as giving the space $\C[\eps] \otimes \left( \g \oplus \g^\vee [-2] \right)$ the structure of an $L_\infty$ algebra, compatible with the pairing of degree $-3$ defined by combining the obvious pairing on $\g \oplus \g^\vee[-2]$ with the trace map
\begin{align*}
\op{Tr} : \C[\eps] &\to  \C \\
\op{Tr} ( \eps ) &= 1.
\end{align*}
The $L_\infty$ structure given by $I^{0}[\infty]$ is obtained by transfer of structure from the $L_\infty$ structure on $\Omega^{0,\ast} (E) \otimes \left( \g \oplus \g^\vee[-2] \right)$.  Indeed, the Feynman diagrams describing $I^{0}[\infty]$ are precisely the trees appearing in the explicit formula \cite{Mer99,KonSoi01} for the homological perturbation lemma.  Let us call this $L_\infty$ structure $A$.

Further, the complex $\Omega^{-\ast} ( T^\ast B \g )$ is, by definition, the cochains of the $L_\infty$ algebra $\C[\eps] \otimes \left( \g \oplus \g^\vee[-2]\right)$, when this is endowed with the $L_\infty$ structure arising from the tensor product of the given $L_\infty$ structure on $\g \oplus \g^\vee[-2]$ with the commutative algebra structure on $\C[\eps]$.    Let us call this $L_\infty$ structure $B$.

Thus, in order to verify the equation ($\dagger$), we need to verify that the $L_\infty$ structures $A$ and $B$ on $\C[\eps] \otimes \left(\g \oplus \g^\vee[-2] \right)$ coincide.   Note that the $L_\infty$ structure $A$ is given by a sum over trees;  the terms in the sum given by trees with no edges yield $L_\infty$ structure $B$.

Thus, we need to verify that the terms in the expansion of $I^{0}[\infty]$ which involve trees with at least one edge all vanish.    For this computation, the Lie algebra will be irrelevant; we will check that the analytic part of the weight attached to any such tree vanishes.

 The sum-over-trees formula involves putting harmonic elements of $\Omega^{0,\ast}(E)$ at each tail of the tree, and putting the propagator 
$$P(0,\infty) = \int_{0}^\infty (\dbar^\ast \otimes 1) K_t$$
at each edge.  Let us isolate the contribution from a single vertex $v$ which has some non-zero number of tails, and a single internal edge.  All trees have at least one such vertex.  The  tails of $v$ are labelled by harmonic elements $h_1,\ldots, h_k \in \Omega^{0,\ast}(E)$.  We can express the weight of the tree as
$$
\int_{t = 0}^\infty \int_{z,w \in E}  \left( h_1(z)  \dots h_k (z) \right)  (\dbar^\ast \otimes 1) K_t(z,w) \Phi(w)
$$   
where $\Phi(w)$ captures the contribution from the rest of the tree.  

Note that the product of harmonic elements of $\Omega^{0,\ast}(E)$ remains harmonic.   Further, by integration by parts, we can (at the price of a sign) move the $\dbar^\ast$ in the above expression so that it acts on $h_1(z) \dots h_k(z)$.  Since $\dbar^\ast (h_1(z) \dots h_k(z) ) = 0$, the integral vanishes, as desired.  

The same argument shows that the weight of any $1$-loop diagram which contains a separating edge  also vanishes. 

Thus, we have checked the quasi-isomorphism ($\dagger$).

Next, we need to verify that the operator $\tr_{\mc{H}(E)}$ corresponds, under this isomorphism, to the operator $L_\pi$.  

Recall that we can identify the tangent bundle to $T^\ast B \g$ with the $\g \oplus \g^\vee[-2]$-module $\g [1] \oplus \g^\vee [-1]$.  The Poisson bivector on the symplectic manifold $T^\ast B \g$ is therefore some element
$$
\pi \in C^\ast ( \g \oplus \g^\vee[-2],  \wedge^2 ( \g[1] \oplus \g^\vee[-1] ) )
$$
of cohomological degree $0$.  This tensor $\pi$ is, in fact, in the subspace
$$
\pi \in \wedge^2 ( \g[1] \oplus \g^\vee[-1]).
$$
In terms of a local basis $V_i$ of sections of $\g$ (and corresponding dual basis $V_i^\vee$ of $\g^\vee$), $\pi$ is given by the formula
$$
\pi = V_i \otimes V_i^\vee + V_i^\vee \otimes V_i.
$$

Note that the BV operator $\tr_{\mc{H}(E)}$ is the order two differential operator on the dga $C^\ast \left( \C[\eps] \otimes \left( \g \oplus \g^\vee[-2] \right) \right)$ associated to the kernel
$$
K_\infty \in \Sym^2 \mc{H}(E)
$$
$K_\infty$ is simply the inverse to the natural non-degenerate pairing on $\mc{H}(E)$, and can be written, once we identify 
$$
\mc{H}(E) = \C[\eps] \otimes \left( \g[1] \oplus \g^\vee[-1] \right),
$$
as 
$$
K_\infty = \left( \eps \otimes 1 - 1 \otimes \eps \right) \left(  \sum V_i \otimes V_i^\vee + V_i^\vee \otimes V_i.   \right) 
$$
In other words, 
$$K_\infty = \left(  \eps \otimes 1 - 1 \otimes \eps \right) \pi. $$

In terms of the local basis $V_i$ as above, the operator $\tr_{\mc{H}(E)}$ is the constant-coefficient differential operator associated to $K_\infty$, and so is given by the formula
$$
\tr_{\mc{H}(E)} = \sum \dpa{ \eps V_i  } \dpa { V_i^\vee }  +  \dpa{  V_i  } \dpa { \eps V_i^\vee } .
$$

Next, we need to compare this to the operator $L_\pi$.  Recall that $\Omega^{-\ast}( T^\ast B \g)$ is generated by the dual of $ \g [1] \oplus \g^\vee[-1]$, which corresponds to the generators of $\Oo( B \g)$; and by the dual of $\eps \left( \g [1] \oplus \g^\vee[-1]\right)$, which corresponds to a basis over $\Oo(B \g)$ of $\Omega^{1} (T^\ast B \g)$.  

Now, the operator $L_\pi$ is the commutator of the contraction operator $\iota_\pi$ with the de Rham differential.    The operator $\iota_\pi$ is given by 
$$
\iota_\pi = \sum \dpa{ \eps V_i  } \dpa { \eps V_i^\vee }  .
$$
The de Rham differential $\d_{dR}$ is the operator associated to the $L_\infty$ derivation  map
$$
\dpa{\eps}  :  \C[\eps] \otimes \left( \g  \oplus \g^\vee[-2] \right)  \to \C[\eps] \otimes \left( \g  \oplus \g^\vee[-2] \right) .
$$
Thus,
\begin{align*}
\left[\d_{dR},   \dpa{ \eps V_i  }   \right] &= \dpa{V_i}\\
\left[\d_{dR},   \dpa{ \eps V^\vee_i  }   \right] &= \dpa{V^\vee_i}.
\end{align*}
It is immediate now that
$$
[\iota_\pi, \d_{dR} ] = \tr_{\mc{H}(E)}
$$
as desired. 
\end{proof}

\subsection{}
Thus, to complete the proof of the theorem, we need to show that $I^{(1)}[\infty]\mid_{\mc H(E)}$ corresponds to the Witten class.  

Recall that we defined the Atiyah class 
$$
\alpha = \alpha( T_{B \g}) \in \Omega^1 (B \g, \op{End} T_{B \g}).
$$
This is an element of cohomological degree $1$.   This Atiyah class is an avatar for the curvature. As usual, we can define the trace of powers of $\alpha$,
$$
\op{Tr} (\alpha)^k \in \Omega^k (B \g ).
$$
This is an element of cohomological degree $k$, and 
$$
\frac{1}{ k! (2 \pi i )^k  } \op{Tr} (\alpha)^k =  ch_{2k}  (T_{B \g}) \in H^k ( \Omega^k (B \g)).
$$
\begin{theorem}
The restriction of $I^{(1)}[\infty]$ to $\C[\eps] \otimes \g[1]$ is cohomologous to
\begin{align*}
 \sum_{k \ge 2} \frac{1}{2k (4 \pi^2)^{2k} } E_{2k} (E,\omega)\op{Tr} (\alpha^{2k} ) &= \\
 \sum_{k \ge 2} \frac{(2k-1)!}{(4 \pi^2)^{2k} } E_{2k} (E,\omega) ch_{2k}(T_{B \g} ) & \in \Omega^{-\ast}( B \g). 
\end{align*}
\end{theorem}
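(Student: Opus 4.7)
The plan is to compute $I^{(1)}[\infty]\mid_{\mc H(E)}$ directly from its Feynman-diagram expansion, factor each surviving diagram into a Lie-algebraic piece and an analytic piece, and identify these with traces of powers of the Atiyah class and with Eisenstein series respectively.

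First I would reduce the sum to pure wheels.  The $\C^\times$-invariance (as used already in Sections \ref{section_obstruction} and \ref{section_counterterms}) forces $I^{(1)}[\infty]$ to come from one-loop graphs all of whose external legs are of $\g[1]$-type; any such graph is a wheel with trees possibly attached at the tails.  Exactly as in the proof of the preceding lemma -- where the fact that products of harmonic sections are harmonic and that $\dbar^\ast$ annihilates harmonic forms forces the $\dbar^\ast$ on any separating or external edge to kill the integrand -- only pure wheels $\gamma_n$ with $n \ge 2$ vertices survive.

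Next I would factor the weight of $\gamma_n$ as $W_{\gamma_n} = W_{\gamma_n}^{\g} \cdot W_{\gamma_n}^{an}$.  For the Lie-algebraic factor, a vertex of $\gamma_n$ carrying $k_i$ external tails uses the interaction $\tfrac{1}{(k_i+1)!}\, l_{k_i+1}$ from $I_{hCS}$, which after contraction with its two internal legs defines an element of $\Oo(B\g) \otimes \op{End}(\g)$.  Summing over all $k_i \ge 0$ at each vertex and applying Lemma \ref{lemma_atiyah_hcs}, which identifies the generating series of the $l_{k+2}$ with the Atiyah class $\alpha = \Atiyah(T_{B\g})$, the composition around the wheel assembles into $\tfrac{1}{n} \op{Tr}(\alpha^n) \in \Omega^n(B\g)$, the factor $\tfrac{1}{n}$ coming from the cyclic symmetry of $\gamma_n$.

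For the analytic factor I would use the spectral expansion of the Dolbeault propagator on $E = \C/\Lambda$.  Each internal edge carries $P(0,\infty) = \int_0^\infty \dbar^\ast K_t\, dt$, which on the non-harmonic modes is a convergent integral.  With all tails labelled by harmonic fields (constants and the class $\omega^\vee$), integration of the product of propagators around the wheel against the volume form $\omega$ on each vertex evaluates, via the eigenfunction expansion of the Laplacian on $E$, to a lattice sum proportional to
\[
\sum_{\lambda \in \Lambda \setminus 0} \lambda^{-n}
\]
times explicit powers of $\pi$ and $i$.  For $n$ odd this sum vanishes by the symmetry $\lambda \mapsto -\lambda$; for $n = 2k$ even it reproduces $E_{2k}(E,\omega)$.

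The hardest step will be the careful bookkeeping of all the constants -- the graph automorphism factors $1/|\op{Aut}(\gamma_n)|$, the vertex factors $1/(k_i+1)!$, and the precise powers of $2\pi i$ produced by the analytic integral -- so as to recover the coefficient $\tfrac{1}{2k(4\pi^2)^{2k}}$ of $\op{Tr}(\alpha^{2k})$, equivalently $\tfrac{(2k-1)!}{(4\pi^2)^{2k}}$ of $\op{ch}_{2k}(T_{B\g})$.  Once these constants are pinned down, the quantum-master-equation corrections $\hbar J[L]$ introduced in the previous section modify $I^{(1)}[\infty]$ only by terms which are exact in the obstruction-deformation complex, so the stated identity holds at the level of cohomology classes in $\Omega^{-\ast}(B\g)$, as required.
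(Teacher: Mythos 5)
Your overall strategy is the same as the paper's: reduce to pure wheels using harmonicity and $\C^\times$-invariance, factor each wheel into a Lie-algebraic piece assembling to $\tfrac{1}{n}\op{Tr}(\alpha^n)$ via Lemma \ref{lemma_atiyah_hcs}, and evaluate the analytic piece as a lattice sum over $\Lambda$ via the eigenfunction expansion (the paper computes $\op{Tr}\bigl(\bigl(\mu^{-1}\tfrac{\d}{\d z}D^{-1}\bigr)^{2k}\bigr) = (4\pi^2)^{-2k}E_{2k}$ using the Fourier basis $F_{n,m}$). The odd-$n$ vanishing by $\lambda \mapsto -\lambda$ also matches.

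However, there is a genuine gap at the two-vertex wheel, and it is not a matter of bookkeeping. For $n=2$ your lattice sum is $\sum_{\lambda \ne 0}\lambda^{-2}$, which is \emph{not} absolutely convergent, so the clean spectral evaluation you describe does not apply there; the paper has to introduce a renormalized Eisenstein function $E_2^{ren}(E,\omega)$ to make sense of this contribution. More importantly, the theorem's sum starts at $k \ge 2$, so you must explain why the two-vertex wheel's contribution $E_2^{ren}\op{Tr}(\alpha^2)$ disappears in cohomology. The reason is specific: $\op{Tr}(\alpha^2)$ is a multiple of $\op{ch}_2(T_{B\g})$, which is precisely the obstruction class computed in section \ref{section_obstruction_class} and which has been trivialized as the precondition for the quantization to exist at all; hence this term is exact. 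Your proposal never isolates the $n=2$ case and so misses both the divergence and the mechanism by which it drops out. Separately, your claim that the correction $J[\infty]$ contributes only exact terms is asserted rather than argued; the paper proves the stronger statement that $J[\infty]\mid_{\mc H(E)} = 0$ on the nose, because $\op{Aff}(\C)$-invariance forces the local functional $J$ to contain at least one $z$-derivative, which annihilates harmonic (constant) inputs, and trees with internal edges die because $\dbar^\ast$ kills harmonic forms. You should supply one of these arguments rather than appeal to exactness.
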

We have already seen that the second Chern character of $B \g$ is an obstruction to solving the quantum master equation. Thus, in order to have the quantum theory, the second Chern character must be exact.  

The precise calculation is the following.
\begin{proposition}
The one-loop part of the scale $\infty$ effective action  $I^{(1)}[\infty]$, when restricted to $\C[\eps] \otimes \g[1]$,  is equal to 
$$
 \tfrac{1}{32 \pi^4} E_2^{ren} (E,\omega) \Tr(\alpha^2) + \sum_{k \ge 2} \frac{1}{2k (4 \pi^2)^{2k} } E_{2k} (E,\omega)\op{Tr} (\alpha^{2k} ) \in \Omega^{-\ast}( B \g).
$$
where $E_2^{ren}(E,\omega)$ is a certain renormalized Eisenstein function.
\end{proposition}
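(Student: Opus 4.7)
The plan is to compute the Feynman diagram expansion of $I^{(1)}[\infty]|_{\mc{H}(E)}$ term by term and match each surviving diagram to one contribution in the stated formula. By the $\C^\times$-invariance analysis of Section \ref{section_symmetries}, $I^{(1)}[\infty]$ is a sum over connected one-loop graphs. Exactly as in the proof of the $L_\infty$-structure on $\mc{H}(E)$ just above, any tree attached to the body of such a graph contributes trivially, because harmonic fields are closed under multiplication and $\dbar^\ast$ annihilates them; the same integration-by-parts argument rules out any graph containing a separating internal edge. Consequently $I^{(1)}[\infty]|_{\mc{H}(E)}$ reduces to a sum over pure wheels $\gamma_n$ with $n \ge 2$ vertices.

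Next I would split each wheel weight as $W_{\gamma_n} = W^{\g}_{\gamma_n}\cdot W^{an}_{\gamma_n}$ into a Lie-algebra factor and an analytic factor. A vertex of $\gamma_n$ with $m$ external legs and two internal edges arises from the $(m{+}1)$-ary bracket $l_{m+1}$ of $\g$ (via the classical interaction $I_{hCS}$). Feeding harmonic fields in $\C[\eps]\otimes \g[1]$ into the external legs and composing around the cycle, Lemma \ref{lemma_atiyah_hcs} identifies the result, after summing over all distributions of external legs among the $n$ vertices and dividing by the cyclic symmetry of the wheel, with $\tfrac{1}{n}\op{Tr}(\alpha^n) \in \Omega^n(B\g)$, where $\alpha$ is the Atiyah class of $T_{B\g}$. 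The combinatorial point is that Lemma \ref{lemma_atiyah_hcs} realizes the higher Taylor components of $\alpha$ as precisely the vertex operators with extra external legs, so that summing those Taylor expansions reconstitutes each $\op{Tr}(\alpha^n)$.

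For the analytic factor I would write the heat kernel on $E = \C/\Lambda$ by the method of images as a sum over $\Lambda$ of translates of the flat heat kernel on $\C$. Performing the $n$ Gaussian vertex integrals and the $n$ Schwinger-time integrals (using $\int_0^\infty t^{-k-1}e^{-t|\lambda|^2}\,dt = \Gamma(k)|\lambda|^{-2k}$ together with standard Gaussian moments) collapses the cyclic integral around $\gamma_n$ to the lattice sum $\sum_{\lambda \in \Lambda\setminus 0}\lambda^{-n}$, up to an explicit prefactor. For odd $n$ this vanishes because $\lambda \mapsto -\lambda$ is a sign-reversing involution, so odd wheels drop out. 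For even $n = 2k \ge 4$ the sum converges absolutely and equals $E_{2k}(E,\omega)$; tracking the prefactor produces exactly $\tfrac{1}{2k(4\pi^2)^{2k}}E_{2k}(E,\omega)\op{Tr}(\alpha^{2k})$, matching the stated formula for $k \ge 2$.

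The anomalous case is the wheel $\gamma_2$, for which the lattice sum $\sum_{\lambda \ne 0}\lambda^{-2}$ is only conditionally convergent and must be regularized. The regularization is pinned down by the requirement that $I[\infty]|_{\mc{H}(E)}$ satisfy the scale-$\infty$ quantum master equation, equivalently by the trivialization of $\op{ch}_2(T_{B\g})$ selected in Section \ref{section_obstruction_class}; the resulting value is by definition $E_2^{ren}(E,\omega)$, yielding the leading $\tfrac{1}{32\pi^4}E_2^{ren}(E,\omega)\op{Tr}(\alpha^2)$ term. The main obstacle is the analytic step: executing the $n$-fold cascade of Gaussian and Schwinger integrations cleanly enough to isolate $\sum_{\lambda}\lambda^{-n}$ with all factors of $\pi$, $i$, $n!$ and Jacobians correctly accounted for, and justifying the conditional-convergence manipulations at $n = 2$. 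The structural reductions (wheels only, trace of Atiyah, vanishing of odd wheels) are comparatively clean consequences of results already established in Sections \ref{section_atiyah}--\ref{section_obstruction_class}.
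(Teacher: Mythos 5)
Your overall strategy---restrict to harmonic fields, reduce to pure wheels, identify the Lie-algebra factor of the $n$-wheel with $\tfrac{1}{n}\op{Tr}(\alpha^n)$ via Lemma \ref{lemma_atiyah_hcs}, and evaluate the analytic factor as an Eisenstein lattice sum---is the paper's. The one genuinely different choice is the analytic step. The paper does not use the method of images: it identifies the propagator on $E$ with the kernel of the operator $\mu^{-1}\tfrac{\d}{\d z}D^{-1}$ acting on $\cinfty(E)$, diagonalizes that operator on the explicit Fourier basis $F_{n,m}$, and reads off $\op{Tr}\bigl(\bigl(\mu^{-1}\tfrac{\d}{\d z}D^{-1}\bigr)^{2k}\bigr) = (4\pi^2)^{-2k}E_{2k}$ as a sum of $2k$-th powers of eigenvalues. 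This sidesteps entirely the ``$n$-fold cascade of Gaussian and Schwinger integrations'' that you correctly identify as the main obstacle in your route; you should adopt the spectral computation, since the trace of a power of a diagonalized operator requires no integration at all, and the odd-wheel vanishing and the conditional convergence at $n=2$ become statements about the single lattice sum $\sum_{\lambda\neq 0}\lambda^{-n}$ rather than about iterated integrals.

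There is also a genuine gap in your reduction to wheels. The full one-loop effective action is $I^{(1)}[\infty] = I^{(1)}_{naive}[\infty] + J[\infty]$, where $J[\infty]$ is the RG flow applied to the local counterterm $J$ that trivializes the obstruction; in the paper's genus convention the graphs contributing to $J[\infty]$ are trees with one genus-one vertex labelled by $J$. Your harmonicity/integration-by-parts argument kills those trees that contain at least one internal edge (the propagator $\dbar^\ast\int e^{-tD}$ annihilates harmonic inputs), but it says nothing about the tree with no internal edges, whose weight is $J_k$ itself evaluated on harmonic fields. The paper disposes of this with a separate observation: $J$ is an $\op{Aff}(\C)$-invariant local functional, so every term in it contains at least one $\tfrac{\d}{\d z}$, and it therefore vanishes on constant (harmonic) sections. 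Without this step you cannot assert that $I^{(1)}[\infty]\mid_{\mc{H}(E)}$ equals the sum of wheel weights. Your suggestion that the quantum master equation ``pins down'' the regularization of the two-vertex wheel conflates the definition of $E_2^{ren}$ with the vanishing of $J\mid_{\mc{H}(E)}$, which \emph{a priori} could contribute terms in $\Omega^{-\ast}(B\g)$ that are not proportional to $\op{Tr}(\alpha^2)$ and so would spoil the stated formula for the $k\ge 2$ terms as well.
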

Because $\Tr(\alpha^2)$ is a multiple of the second Chern character and therefore exact, this proposition implies the previous theorem.

\begin{proof}[Proof of proposition]
Recall that we can write our effective action as
$$
I^{(1)} [\infty] = I^{(1)}_{naive}[\infty] + J[\infty]
$$
where $J[L]$ corrects for the failure of $I_{naive}[L]$ to solve the quantum master equation.

Let us further decompose $I_{naive}^{(1)}[\infty]$ as 
$$
I_{naive}^{(1)}[\infty] = I_{wheels}[\infty]  + I_{other}[\infty]
$$
where 
$$
I_{wheels}[L] = \sum_{\gamma \text{ is a wheel}} \frac{1}{\abs{\op{Aut}(\gamma)} } W_\gamma( P(0,L), I_{hCS}) .
$$
(We say that a graph is a wheel if it is a connected graph with first Betti number $1$, with the property that we can not disconnect the graph by removing a single vertex. This implies that the vertex are arranged cyclically around a circle).

Also,
$$
I_{other}[L] = \sum_{\substack {\gamma \text{ is a one loop graph} \\ \text{which is not a wheel} } } \frac{1}{\abs{\op{Aut}(\gamma)} } W_\gamma( P(0,L), I_{hCS}) .
$$
\begin{lemma}
When restricted to the subspace
$$
\mc H(E) \subset \Fields(E)
$$
of harmonic fields, both $J[\infty]$ and $I_{other}[\infty]$ vanish.
\end{lemma}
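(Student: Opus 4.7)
The plan is to establish the two vanishings — $I_{other}[\infty]|_{\mathcal{H}(E)}=0$ and $J[\infty]|_{\mathcal{H}(E)}=0$ — by separate structural arguments, each exploiting the fact that harmonic $(0,\ast)$-forms on an elliptic curve $E$ are constant coefficients, i.e.\ lie in $\C \oplus \C\,d\bar z$ in each internal degree.

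For $I_{other}[\infty]$, I would adapt, essentially verbatim, the leaf-pruning integration-by-parts argument that the paper has just used in its proof of the identification of $I^{(0)}[\infty]$ with the transferred $L_\infty$ structure. Any one-loop graph $\gamma$ that is not a wheel is a wheel with at least one non-trivial tree branch attached. Choose a leaf vertex $v$ of such a tree, with $k\ge 1$ external tails and a single internal edge $e$. Feeding harmonic inputs $h_1,\ldots,h_k$ into the tails of $v$, the portion of the Feynman weight localized near $v$ has the form
\[
\int_{t=0}^{\infty}\!\!\int_{E\times E}\, l_k\bigl(h_1,\ldots,h_k\bigr)(z)\,(\bar\partial^{\ast}\!\otimes 1)K_t(z,w)\,\Phi(w)\,\omega(z)\,dt,
\]
with $\Phi$ assembled from the rest of $\gamma$. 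Integration by parts in $z$ transfers $\bar\partial^{\ast}$ onto the pointwise $L_\infty$-product $l_k(h_1,\ldots,h_k)(z)$. Because a pointwise product of forms in $\C\oplus\C\,d\bar z$ is again in $\C\oplus\C\,d\bar z$, that product is harmonic and hence annihilated by $\bar\partial^{\ast}$. The entire graph weight vanishes, and summing over all such $\gamma$ gives $I_{other}[\infty]|_{\mathcal H(E)}=0$.

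For $J[\infty]$, the argument is structural rather than diagrammatic. By construction $J[L]$ is a local, $\C^{\times}\times\op{Aff}(\C)$-invariant functional of the $\alpha$-components alone, lying in $\Ool(\Omega^{0,\ast}(\C)\otimes\g[1])^{\op{Aff}(\C)}$. By the $\mathcal{D}_\C$-module translation of Section \ref{section_obstruction}, every such functional in the relevant cohomological degree is cohomologous to one obtained by transgressing, against the volume form $dz$ on $\C$, a closed $2$-form on $B\g$. The Koszul resolution computation there shows that these transgressed representatives all contain at least one factor of $\partial_z$ applied to $\alpha$ in every monomial (the surviving generator of the Koszul complex after taking $\C^{\times}$-invariants is $\eps$, which is the Koszul dual of $\partial/\partial z$). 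Restricting $\alpha$ to $\mathcal H(E)\cap\Omega^{0,\ast}(E,\g[1])\cong\C[\eps]\otimes\g[1]$ gives a section that is constant in both $z$ and $\bar z$, so every such $\partial_z$-derivative kills it and we conclude $J[\infty]|_{\mathcal H(E)}=0$.

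The main obstacle is the second step: the tree-pruning argument for $I_{other}$ is a direct adaptation of a calculation already in the paper, but the vanishing of $J[\infty]$ requires one to verify that a genuinely \emph{local} cochain-level representative of a primitive of $\op{ch}_2(T_{B\g})$ — the correction determined by the chosen trivialization — can always be taken in the image of the transgression. This reduces to a careful inspection of the Koszul resolution in the proof of Proposition~\ref{section_obstruction}, ensuring that cocycles of the form $\eps\,\d z\,\d\bar z\otimes\omega$ with $\omega\in C^\ast_{red}(\g[[z]])$ exhaust all $\C^{\times}$-invariant classes, and hence that every term carries the $\partial_z$ that makes it vanish on harmonic fields. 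I would do this by running the Koszul argument explicitly, keeping track of how many powers of $z$ each generator carries.
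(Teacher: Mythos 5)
Your treatment of $I_{other}[\infty]$ is correct and is essentially the paper's argument: any one-loop graph that is not a wheel has a tree branch, the $\Omega^{0,\ast}(E)$-multilinear products $l_k$ preserve the constant-coefficient (harmonic) forms, and the edge operator $\dbar^\ast \int e^{-tD}$ annihilates them, so the weight of the whole graph vanishes on $\mc H(E)$.

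The $J[\infty]$ half has a genuine gap. You assert that ``by construction $J[L]$ is a local $\ldots$ functional lying in $\Ool(\Omega^{0,\ast}(\C)\otimes\g[1])^{\op{Aff}(\C)}$,'' but for $L>0$ (and in particular for $L=\infty$) it is not local. Only the scale-zero correction $J$ --- the local cochain chosen to trivialize the obstruction $O$ --- lies in $\Ool$; the effective interaction $J[\infty]$ is its image under the renormalization group flow, i.e.\ a sum $\sum_\gamma W_\gamma(P(0,\infty), I_{hCS}+\hbar J)$ over trees with exactly one genus-one vertex. Your structural argument (invariance forces a $\partial_z$ in every monomial, which kills constant fields) is an argument about the local functional $J$, not about $J[\infty]$, and with no intermediate step it proves nothing about the latter. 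The missing step is precisely the tree-pruning argument from your own first paragraph: every tree in the expansion of $J[\infty]$ with at least one internal edge has a genus-zero leaf whose harmonic output is annihilated by the $\dbar^\ast$ on the adjacent edge, so only the edgeless one-vertex tree survives, and its weight is the homogeneous component $J_k$ of $J$ itself. This is how the paper reduces $J[\infty]|_{\mc H(E)}$ to $J|_{\mc H(E)}$, after which the observation that $\op{Aff}(\C)$-invariance forces at least one $z$-derivative (which the paper states in one line and you justify in more detail via the Koszul resolution of Section \ref{section_obstruction}) finishes the proof. All the ingredients are present in your write-up, but the reduction from $J[\infty]$ to $J$ must be carried out explicitly; as written, the premise of your second argument is false.
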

\begin{proof}[Proof of lemma]
Let $\d \zbar$ denote a translation-invariant $(0,1)$ form on $E$.  We can identify
$$
\mc H(E) = \C[\d \zbar] \otimes \left( \mf g[1] \oplus \mf g^\vee[-1] \right).
$$

Let $\gamma$ be a tree with $k+1$ tails. If we choose a root tail for $\gamma$, we can interpret the weight of $\gamma$ as an operator
$$
W'_\gamma (P(\eps,L), I_{hCS} )  : \Fields^{\otimes k} \to \Fields.
$$
This operator is related to the previously-defined weight of $\gamma$ by
$$
W_\gamma ( P (\eps,L), I_{hCS}) (\alpha_1,\ldots,\alpha_{k+1} ) = \ip{ W'_\gamma (P(\eps,L), I_{hCS} ) (\alpha_1,\ldots,\alpha_k) , \alpha_{k+1}}.
$$
Here $\ip{-,-}$ is odd symplectic form on $\Fields(E)$.  

The operator $W'_\gamma( P(\eps,L), I_{hCS})$ is a composition of operators defined for each vertex and each internal edge. The operator at a vertex of valency $k+1$ arises from the $L_\infty$ structure map
$$l_k : \left\{ \Omega^{0,\ast}(E) \otimes \left( \mf g \oplus \mf g^\vee[-2] \right) \right\}^{\otimes k} \to  \Omega^{0,\ast}(E) \otimes \left( \mf g \oplus \mf g^\vee[-2] \right). $$

The operator for each edge is just the operator associated to the kernel $P(\eps,L)$, namely the operator
\begin{align*}
 \Omega^{0,\ast}(E) \otimes \left( \mf g \oplus \mf g^\vee[-2] \right)  & \to  \Omega^{0,\ast}(E) \otimes \left( \mf g \oplus \mf g^\vee[-2] \right) \\
\alpha & \mapsto \dbar^\ast\int_{\eps}^L  e^{-t [\dbar, \dbar^\ast] } \alpha \d t. 
\end{align*}
Let us suppose that the tree $\gamma$ has at least one internal edge.  Then, if each $\alpha_i \in \mc H(E)$, we must have
$$
W'_\gamma (P(\eps,L), I_{hCS} ) (\alpha_1,\ldots,\alpha_k) = 0.
$$
This is just because the $L_\infty$ structure on $ \Omega^{0,\ast}(E) \otimes \left( \mf g \oplus \mf g^\vee[-2] \right) $ preserves the subspace of harmonic Dolbeaut forms, and the operator $e^{- t [\dbar, \dbar^\ast] }$ annihilates this subspace.

Now, let $\gamma$ be a one-loop graph which is not a wheel.  Let $\gamma_{wheel} \subset \gamma$ be the largest wheel containing $\gamma$.  Then we can view $\gamma$ as obtained by grafting some trees onto $\gamma_{wheel}$.  The weight $W_\gamma(P(\eps,L), I)$ is obtained by composing the operators associated to these trees to $W_{\gamma_{wheel}} (P(\eps,L)$.   Since the operators associated to these trees are zero on $\mc H(E)$, it follows that $W_{\gamma}(P(\eps,L))$ is zero on $\mc H(E)$. 

Next, let us consider $J[L]$. We can write  
$$
J[L] = \sum_{\gamma} W_\gamma ( P(0,L), I_{hCS} + \hbar J)
$$
where the sum is over all trees $\gamma$, one of whose vertices is of genus $1$.  Also, $J$ denotes the local functional which kills the obstruction $O$.

The same argument as before shows that, if $\gamma$ is a tree with at least one internal edge which appears in this sum, then
$$
W_\gamma( P(0,L) , I_{hCS} + \hbar J ) \mid_{\mc H(E)} = 0.
$$
It remains to check that if $\gamma$ is the unique tree with no internal edges, then the weight is zero on $\mc H(E)$ .   Let $v_k$ denote the $k$-valent tree with one vertex of genus one and no other vertices.  We can identify
$$
W_{v_k} ( P (0,L) , I_{hCS} + \hbar J ) = J_k
$$
where
$$
J_k : \Fields(E)^{\otimes k} \to \C
$$
is the part of $J$ which is homogeneous of degree $k$.

Thus, it remains to show that $J \mid_{\mc H(E)}= 0$.  To see this, observe that the fact that the local action functional $J$ is invariant under $\op{Aff}(\C)$ implies that it contains at least one $z$-derivative, and is therefore zero when restricted to harmonic functions. 
\end{proof}

The following lemma will complete the proof. 
\begin{lemma}
When we identify 
$$
\Oo( \C[\eps] \otimes \g [1] )= \Omega^{-\ast} (B \g),
$$
then, for all $k > 1$,
$$
\sum_{\substack { \gamma \text{ is a wheel} \\ \text{with } 2k \text{ vertices} } } \frac{1}{\abs{\op{Aut}(\gamma)}} W_\gamma( P (0,\infty) ,    I_{hCS}) )  \mid_{ \C[\eps] \otimes \g[1] } 
$$
corresponds to 
$$
\tfrac{1}{2k (4 \pi^2)^{2k}} E_{2k} (E,\omega) \Tr (\alpha^{2k}) \in H^k \left( \Omega^{-k} (B \g) \right).
$$
Further, if $k \ge 1$, 
$$
\sum_{\substack { \gamma \text{ is a wheel} \\ \text{with } 2k-1 \text{ vertices} } } \frac{1}{\abs{\op{Aut}(\gamma)}} W_\gamma( P (0,\infty) ,    I_{hCS}) )  \mid_{ \C[\eps] \otimes \g[1] } = 0.
$$
\end{lemma}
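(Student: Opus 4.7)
The plan is to split each graph weight $W_\gamma(P(0,\infty), I_{hCS})|_{\C[\eps]\otimes\g[1]}$ into a Lie-algebraic combinatorial factor $W^{\g}_\gamma$ and an analytic factor $W^{an}_\gamma$ which is an integral over $E^n$ of a product of propagators around the wheel, and to identify each factor separately. Since the only fields present on the tails are in $\C[\eps]\otimes\g[1]$, one first checks (as in the previous lemma about $I_{other}[\infty]$ and $J[\infty]$) that only wheels with no attached trees contribute: any non-trivial tree pendant involves an $L_\infty$ bracket applied to harmonic inputs followed by $\dbar^\ast \int e^{-tD}$, which annihilates $\mc{H}(E)$. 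Similarly, one shows that at each tail we may substitute the harmonic representative $1$ or $\eps \cdot \omega^\vee / (i\pi)$ in $\C[\eps]$.

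For the Lie algebra factor, I would use Lemma \ref{lemma_atiyah_hcs}: the Taylor coefficients of the Atiyah class $\alpha$ at a given tangent vector $\chi$ in $\g[1]$ are precisely the higher brackets $l_{n+2}(U_1,\dots,U_n,\chi,-)\in \op{End}(\g)$. Consequently, for a wheel with $n$ vertices of total valency distribution $(k_1+2,\dots,k_n+2)$, the Lie algebra contribution (with all external $\g[1]$-tails fed in either as ``$\chi$'' inputs or as $U_i$-type inputs) is a cyclic composition of such endomorphisms, which after tracing gives a monomial in $\op{Tr}(\alpha^n)\in\Omega^n(B\g)$. Summing over all ways of distributing external tails among the $n$ vertices, together with the $1/|{\op{Aut}(\gamma)}|$ factors, reproduces $\tfrac{1}{n}\op{Tr}(\alpha^n)$; here the $1/n$ is exactly the stabilizer of the cyclic group acting on the wheel.

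For the analytic factor, on $E=\C/\Lambda$ the scale-infinity propagator $P(0,\infty)$ is the Green kernel for the Dolbeault Laplacian on the orthogonal complement of harmonics. Expanding in the Fourier basis $e_\lambda(z)=\exp(2\pi i\langle\lambda,z\rangle)$ indexed by the dual lattice $\Lambda^\vee$, one finds that the wheel integral
\begin{equation*}
W^{an}_{\gamma_n} \;=\; \int_{E^n}\Big(\prod_{i=1}^n \omega(z_i)\Big)\prod_{i\in\Z/n}P(z_i,z_{i+1})
\end{equation*}
reduces, via orthogonality of characters, to a single sum over $\lambda\in\Lambda^\vee\setminus\{0\}$. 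Since each Fourier mode of $P$ is proportional to $\bar\lambda/|\lambda|^2$, the integrand contributes $\lambda^{-n}$ up to an explicit power of $2\pi i$ (and a global factor from the $\dbar^\ast$ and the volume form $\omega$). Thus $W^{an}_{\gamma_n}$ is a non-zero explicit multiple of the Eisenstein series $E_n(E,\omega)=\sum_{\lambda\in\Lambda\setminus 0}\lambda^{-2k}$ when $n=2k$ is even. For $n$ odd, the involution $\lambda\mapsto -\lambda$ of $\Lambda^\vee$ together with the odd parity of $\lambda^{-n}$ forces $W^{an}_{\gamma_n}=0$, which establishes the second statement of the lemma.

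The main obstacle is bookkeeping rather than conceptual: one must verify that all factors of $2\pi i$, $n!$, symmetry factors and the normalization constant $(i\pi)^{-1}$ in the identification $\eps\leftrightarrow (i\pi)^{-1}\omega^\vee$ combine to the precise coefficient $\tfrac{1}{2k(4\pi^2)^{2k}}$. I would carry this out by first checking the simplest non-trivial case $k=2$ (the four-vertex wheel), where the integral is a classical computation; then the general case follows by the same manipulations with an extra $\lambda^{-2(k-2)}$ inserted in the Fourier sum. A secondary subtlety is that the series $E_2$ requires renormalization, explaining the renormalized $E_2^{\mathrm{ren}}$ appearing in the companion proposition; but since $\op{Tr}(\alpha^2)$ is cohomologically a multiple of the second Chern character, which is assumed trivialized, this ambiguous term is exact and does not affect the cohomology class identified with the Witten class.
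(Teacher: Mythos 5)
Your proposal is correct and takes essentially the same route as the paper: the paper organizes the wheel sum as $\tfrac{1}{2k}\op{Tr}\bigl(\bigl(\alpha\otimes\mu^{-1}\tfrac{\d}{\d z}D^{-1}\bigr)^{2k}\bigr)$ and then factors this trace into the Lie-algebraic piece $\op{Tr}(\alpha^{2k})$ (identified via Lemma \ref{lemma_atiyah_hcs}, exactly as you do) and the analytic piece $\op{Tr}\bigl(\bigl(\mu^{-1}\tfrac{\d}{\d z}D^{-1}\bigr)^{2k}\bigr)$, which it evaluates by precisely the Fourier-mode lattice sum you describe, yielding $(4\pi^2)^{-2k}E_{2k}$. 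The differences are purely presentational --- you separate the combinatorial and analytic factors from the outset and defer the constant bookkeeping, while the paper carries out the lattice computation in full --- so this matches the paper's argument.
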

\begin{proof}
We will only prove the first statement; verifying that the sum over wheels with $2k-1$ vertices yields zero is easy.

Recall that the Atiyah class $\alpha$ is an element
$$
\alpha \in C_{red}^\ast(\g, \g^\vee \otimes  \g^\vee \otimes \g).
$$
We will view $\alpha$ as an element
$$
\alpha \in   C_{red}^\ast(\g) \left( \eps  \g^\vee \right) \otimes \op{End}(\g) \subset  C_{red}^\ast( \g \otimes \C[\eps] ) \otimes \op{End}(\g).
$$

Let us restrict the classical holomorphic Chern-Simons action
$$
I_{hCS} : \Omega^{0,\ast}(E) \otimes \left( \g[1] \oplus \g^\vee[-1] \right)  \to \C
$$
to a functional
$$
\til{I}_{hCS} : \C[\eps] \otimes \left( \g[1] \oplus \g^\vee[-1] \right) \to \C.
$$
This functional is linear in $\g^\vee$ and linear in $\eps \g$.  Thus, we can view $\til{I}_{hCS}$ as an element
$$
\til{I}_{hCS} \in C_{red}^\ast(\g ) \otimes \eps \g^\vee  \otimes \g \subset C_{red}^\ast (\g \otimes \C[\eps] ) \otimes \g.
$$
Let
$$
\d_{dR} : C_{red}^\ast(\g) \to C^\ast(\g) \otimes \g^\vee
$$
be the de Rham differential.  Let us extend $\d_{dR}$ to a map
$$
\d_{dR} \otimes 1 : C^\ast_{red}(\g) \otimes \eps \g^\vee \otimes \g \to C^\ast_{red}(\g) \otimes \eps \g^\vee \otimes \op{End}(\g).
$$
Lemma \ref{lemma_atiyah_hcs} implies that
$$
(\d_{dR} \otimes 1)  I_{hCS} = \alpha \in C_{red}^\ast(\g) \otimes \eps \g^\vee \otimes \op{End}(\g) \subset C_{red}^\ast( \g \otimes \C[\eps]  ) \otimes \op{End}(\g).
$$

\subsection{}

 Let us identify
$$
E = \C / \Lambda
$$
where $\Lambda \subset \C$ is a lattice. We will make this identification in such a way that the volume form $\omega$ on $E$ pulls back to $\d z$.  

Via this identification, we can identify translation invariant geometric objects on $\C$ with geometric objects on $E$. Thus, we can talk about $\d \zbar \in \Omega^{0,1}(E)$, and the derivations $\tfrac{\d}{\d z}$ and $\tfrac{\d}{\d \zbar}$ of $\cinfty(E)$.

Let
$$
\mu = \constinv  \int_{E} \d z \wedge \d \zbar. 
$$
The isomorphism
$$
\C[\eps] \to H^\ast( E, \Oo_E) = H^\ast (\Omega^{0,\ast}(E)) 
$$
sends
$$
\eps \mapsto \mu^{-1} \d \zbar.
$$

\subsection{}
There is an isomorphism of commutative graded algebras
$$
\cinfty(E) \otimes \C[\eps] \iso \Omega^{0,\ast}(E) 
$$
which sends $\eps \to \mu^{-1} \d \zbar$, as before.  Under this isomorphism, the trace map $\Omega^{0,1}(E) \to \C$ corresponds to the map
\begin{align*}
\eps \cinfty(E) & \to \C \\
\eps f & \mapsto \mu^{-1}  \int_{E} f  \d z \d \zbar
\end{align*}

This isomorphism leads to an isomorphism
$$
\Fields(E) =  \Omega^{0,\ast}(E) \otimes \left( \g[1] \oplus \g^\vee[-1] \right) \iso \cinfty(E) \otimes \C[\eps] \otimes \left( \g[1] \oplus \g^\vee[-1] \right). 
$$

Let 
$$
C_\g \in \left( \g[1] \oplus \g^\vee[-1] \right)^{\otimes2}
$$
be the element which corresponds, using the pairing on $\g[1] \oplus \g^\vee[-1]$, to the identity map on $\g[1] \oplus \g^\vee[-1]$.  

Let
$$
K_t^{\text{scalar}} \in \cinfty(E) \otimes \cinfty(E)
$$
be the scalar heat kernel.  Thus, if $D : \cinfty(E) \to \cinfty(E)$ is the Laplacian, then $K_t^{\text{scalar}}$ has the property that for all $f \in \cinfty(E)$,
$$
(e^{-t D} f )(z) \int_{w \in E} K_t(z,w) f(w) \d w \d \wbar \mu^{-1}.
$$

The heat kernel $K_t \in \Fields(E) \otimes \Fields(E)$ can be identified with
$$
K_t^{scalar} \otimes (\eps \otimes 1 - 1 \otimes \eps) \otimes C_\g .
$$
The propagator is
$$
P(0,\infty) = \int_{0}^\infty (\dbar^\ast \otimes 1) K_t.
$$
Since
$$
\dbar^\ast ( f (z,\zbar) \d \zbar) = \frac{\d}{\d z} f ,
$$
and $\eps = \mu^{-1} \d \zbar$, we can identify the propagator as
$$
P(0,\infty) = \int_{0}^\infty \mu^{-1} \frac{\d}{\d z}  K_t^{scalar}  \otimes \C_\g.
$$
Note that the kernel $\int_{0}^\infty \mu^{-1} \frac{\d}{\d z}  K_t^{scalar}$ corresponds to the operator
$$
\mu^{-1} \frac{\d}{\d z} D^{-1} : \cinfty(E) \to \cinfty(E).
$$
\subsection{}
 We will view the element
$$
\alpha \otimes \mu^{-1} \frac{\d}{\d z} D^{-1} \in C_{red}^\ast(\g\otimes \C[\eps] ) \op{End} (  \g [1] \otimes \cinfty(E) )
$$
an endomorphism of the $C^\ast_{red}(\g \otimes \C[\eps])$ module $C^\ast_{red}(\g \otimes \C[\eps]) \otimes \g[1] \otimes \cinfty(E)$. 
	
The weight of any wheel can be viewed as the trace of a composition of operators.  A simple Feynman diagram computation shows that
\begin{multline*}
\sum_{ \substack { \gamma \text{ is a wheel} \\ \text{with } 2k \text{ vertices} } } \frac{1}{\abs{\op{Aut}(\gamma)}} W_\gamma( P (0,\infty) ,    I_{hCS}) )  \mid_{ \C[\eps] \otimes \g[1] } 
\\= \frac{1}{2k}\op{Tr} \left( \left( \alpha \otimes \mu^{-1} \tfrac{\d}{\d z} D^{-1}  \right) ^{2k } \right) \in C^\ast(\g \otimes \C[\eps] ).
\end{multline*}
Now, observe that
$$
\op{Tr} \left( \left( \alpha \otimes \mu^{-1} \tfrac{\d}{\d z} D^{-1}  \right) ^{2k } \right) = \op{Tr} (\alpha^{2k} ) \op{Tr} \left( \left(\mu^{-1}  \tfrac{\d}{\d z} D^{-1} \right)^{2k} \right).
$$

\subsection{}
To complete the proof, we need to show that
$$
\op{Tr} \left( \left(\mu^{-1} \tfrac{\d}{\d z} D^{-1} \right)^{2k} \right)  = \frac{1}{(4 \pi^2)^{2k} } E_{2k}.
$$
Recall that our elliptic curve $E$ can be written as
$$
E = \C / \Lambda
$$
where $\Lambda \subset \C$ is a lattice, and the pull-back of the volume form $\omega$ on $E$ to $\C$ is $\d z$.

Let $\alpha + i \beta \in \Lambda$ and $\delta + i \eps \in \Lambda$ be generators for the lattice, where $\alpha,\beta,\delta,\eps \in \R$. 

Recall that
$$
\mu = \pi i  \int_{E} \d z \d \zbar.
$$
Also,
$$
 \det \left(   \begin{matrix} 
      \alpha & \beta \\
      \delta & \eps \\
   \end{matrix}\right) = \frac{1}{2i} \int_{E} \d z \d \zbar = \frac{-1}{ 2 \pi } \mu
$$

If $n,m \in \Z$, let
\begin{align*}
F_{n,m} (x,y) &= \exp \left\{  (\alpha \eps - \beta \delta)^{-1}  2 \pi i \left( n \beta x - n \alpha y +  m \eps x - m \delta y \right)   \right\} \in \cinfty(\C) \\
&= \exp \left\{  -  \mu^{-1} 4 i \pi^2    \left( n \beta x - n \alpha y +  m \eps x - m \delta y \right)   \right\}
\end{align*}

Note that
\begin{align*}
F_{n,m} ( x + \alpha, y + \beta) &= F_{n,m}(x,y) \\
F_{n,m}( x + \delta, y + \eps ) &= F_{n,m}(x,y).
\end{align*}
Thus, $F_{n,m}(x,y)$ is invariant under $\Lambda$, and descends to a smooth function on $E$.  As $n,m$ range over $\Z \times \Z$, the functions $F_{n,m}(x,y)$ form a basis for the space of smooth functions on the elliptic curve $E$.  

Note that
\begin{align*}
\frac{\d}{\d z} F_{n,m} &=  - \mu^{-1}  4i \pi^2 \left(  n \beta +  m \eps + i n \alpha + i m \delta  \right) F_{n,m}  \\
\frac{\d}{\d \zbar} F_{n,m} &= - \mu^{-1} 4 i \pi^2  \left(  n \beta +  m \eps - i n \alpha - i m \delta  \right) F_{n,m}.
\end{align*}
Thus,
$$
\frac{\d}{\d z} D^{-1} F_{n,m} = -\frac{ \mu (4 i \pi^2)^{-1}  F_{n,m}   }{     n \beta +  m \eps - i n \alpha - i m \delta   } .
$$
It follows that
\begin{align*}
\op{Tr}_{\cinfty(E)}  \left( \left( \mu^{-1} \frac{\d}{\d z} D^{-1} \right)^{2k}  \right) &=  \sum_{(n,m) \in \Z \times \Z} \frac{(4 i \pi^2)^{-2k}}{ \left(n \beta - n i \alpha + m \eps - m i \eps\right)^{2k}  }   \\
&=\frac{1}{(4 i \pi^2)^{2k} } \sum_{\lambda \in \Lambda} \frac{1}{ (i \lambda)^{2k}   }\\
&=  \frac{1}{( 4 \pi^2)^{2k} }  E_{2k}.
\end{align*}

\end{proof}

\end{proof}

\section{Appendix}
In this appendix, we will prove that maps from a dg ringed space to an $L_\infty$ space satisfy \v{C}ech descent. 

First, we will recall the definition of the \v{C}ech complex $\check{C}(\mf{U}, F)$ with coefficients in a simplicial presheaf $F$ on a topological space $X$, associated to an open cover $\mf{U} = \{U_i \mid i \in I \}$ of $X$.  If $U \subset X$, we will let $F(U)[k]$ denote the set of $k$-simplices of $F(U)$. 

Let $[k] = \{0,1,\dots, k\}$ be the set with $k+1$ elements.  If $\phi : [k] \to I$, we will let $U_\phi$ be $\cap_{i = 0}^k U_{\phi(i)}$.  

A $0$-simplex $\alpha$ of $\check{C}(\mf{U}, F ) $ is a function which assigns to each $\phi : [k] \to I$ an element
$$
\alpha(\phi)  \in F( U_{\phi} ) [k]
$$
satisfying certain incidence relations.  A non-decreasing map $f : [k] \to [l]$ induces a map 
$$f^\ast : F(U)[l] \to F(U)[k]$$ 
for each $U \subset X$.   Further, for each $\phi : [l] \to I$, 
$$U_{\phi} \subset U_{\phi \circ f }.$$
We require that, for all $\phi : [l] \to I$, and all non-decreasing maps $f : [k] \to [l]$, 
$$
f^\ast \alpha (\phi ) = \alpha ( \phi \circ f ) \mid_{U_{\phi }} \in F (U_\phi ) [k]. 
$$

An $n$-simplex of $\check{C}(\mf{U}, F)$ is defined to be a function $\alpha$, which to each $\phi : [l] \to I$ as above assigns a map of simplicial sets
$$
\alpha(\phi) : \tr^n \times \tr^l \to F( U_\phi ) ,
$$
satisfying the same incidence relation: if $f : [k] \to [l]$ is a non-decreasing map, then
$$
f^\ast \alpha(\phi) = \alpha ( \phi \circ f ) \mid_{U_\phi } 
$$
as a map $\tr^n \times \tr^k \to F(U_\phi)$.  

\subsection{}
Next, let us restate the theorem concerning maps of $L_\infty$ spaces. 
\begin{theorem}
Suppose that $\phi : (Y,\g_Y) \to (X,\g_X)$ is an equivalence of $L_\infty$ spaces. 

Then, for all $L_\infty$ spaces $(Z,\g_Z)$, the maps of simplicial sets
\begin{align*}
\op{Maps}( (Z, \g_Z) , (Y,\g_Y)) &\to \op{Maps}( (Z, \g_Z) , (X,\g_X)) \\
\op{Maps}( (X, \g_X) , (Z,\g_Z)) &\to \op{Maps}( (Y, \g_Y) , (Z,\g_Z))
\end{align*}
given by composing with $\phi$ are both weak homotopy equivalences.  

Further, for all $L_\infty$ spaces $X,Y$ the simplicial presheaf $\op{Maps}((X,\g_X), (Y,\g_Y))$ on $X$ which sends $U \to \op{Maps}((U, \g_X \mid_U ) , (Y, \g_Y) )$ is a homotopy sheaf: that is, for any open subset $U \subset X$,  and any open cover $\mf{U}$ of $U$, the natural map
$$
\Gamma (U,  \op{Maps}((X,\g_X), (Y,\g_Y)) ) \to  \check{C}(\mf{U},  \op{Maps}((X,\g_X), (Y,\g_Y)) )
$$
is a weak equivalence. 
\end{theorem}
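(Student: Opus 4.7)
The plan is to reduce both statements to a single technical lemma: the Maurer-Cartan simplicial set functor takes filtered quasi-isomorphisms between complete filtered curved $L_\infty$ algebras to weak equivalences of simplicial sets. This is a standard result in deformation theory (cf.\ Getzler, Dolgushev), proved by induction on the filtration: the associated graded pieces are abelian, so MC reduces to a Dold-Kan style statement, and one then uses the obstruction-theoretic lifting argument at each filtration step. I would state and use this lemma without reproving it.

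For the invariance assertion, the underlying map of manifolds $Y \to X$ is a diffeomorphism, so I may identify $X = Y$ as smooth manifolds. For any test dg ringed manifold $(N, \mscr{B})$, the simplicial sets $\op{Maps}((Z, \g_Z), (Y, \g_Y))(N, \mscr{B})$ and $\op{Maps}((Z, \g_Z), (X, \g_X))(N, \mscr{B})$ decompose as disjoint unions over smooth maps of underlying manifolds $N \to Z \to X = Y$. For each fixed $f$, the component is the MC simplicial set of a nilpotent curved $L_\infty$ algebra of the form $\Gamma(N, f^\ast \g_{\bullet} \otimes_{\Omega^\ast_N} \mscr{I})$, where $\g_\bullet$ is either $\g_X$ or $\g_Y$ and $\mscr{I}$ is the nilpotent ideal in the appropriate sheaf of algebras. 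The filtration by powers of $\mscr{I}$ exhibits this as a complete filtered curved $L_\infty$ algebra, and the hypothesis that $\phi$ induces a quasi-isomorphism $\op{Gr} C^\ast(\g_Y) \to \op{Gr} C^\ast(\g_X)$ translates exactly to the filtered quasi-isomorphism hypothesis needed to apply the key lemma. The contravariant statement is handled identically, using instead that a map $(X, \g_X) \to (Z, \g_Z)$ is the same data as a smooth map plus a MC element in (a filtered version of) $C^\ast(\g_X) \otimes \g_Z$, which again transforms functorially under the equivalence.

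For the \v{C}ech descent statement, I would argue as follows. Fix an open cover $\mf{U} = \{U_i\}$ of $U \subset X$. Decomposing once more by smooth maps of underlying manifolds (for which \v{C}ech descent is classical, since $Y$ is Hausdorff and smooth maps form a genuine sheaf of sets), it suffices to check descent for each component: the MC simplicial set of a nilpotent curved $L_\infty$ algebra of sections of a sheaf on $U$. Concretely, the relevant $L_\infty$ algebra is obtained from $f^\ast \g_Y$ tensored with $C^\ast(\g_X)$, which is a fine sheaf on $U$ (its terms are sections of smooth vector bundles and hence admit partitions of unity). Consequently the \v{C}ech complex with values in this sheaf of $L_\infty$ algebras is quasi-isomorphic, as a complete filtered $L_\infty$ algebra, to the global sections over $U$. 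Applying the key lemma once more converts this filtered quasi-isomorphism into a weak equivalence of MC simplicial sets, which is the \v{C}ech descent property.

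The main technical obstacle is a compatibility issue: the \v{C}ech complex naturally produces a simplicial object of $L_\infty$ algebras, so we must identify its totalization (as a filtered $L_\infty$ algebra) with the \v{C}ech complex of the simplicial presheaf of MC sets. This reduces to the commutation $\op{MC} \circ \op{Tot} \simeq \op{Tot} \circ \op{MC}$ for cosimplicial diagrams of complete filtered $L_\infty$ algebras, which follows because both functors commute with finite products and send filtered quasi-isomorphisms to weak equivalences, so the comparison is checked on associated graded pieces where it reduces to the analogous classical fact for the Dold-Kan correspondence.
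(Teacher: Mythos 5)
Your proposal is correct and follows essentially the same route as the paper's appendix: decompose by the underlying smooth map, filter the relevant nilpotent curved $L_\infty$ algebra by powers of the ideal, reduce to the abelian/Dold--Kan situation on associated graded pieces, and use partitions of unity for the \v{C}ech statement. The only real difference is presentational -- you cite the ``filtered quasi-isomorphisms induce MC weak equivalences'' lemma as standard, whereas the paper proves it directly via the fibration-tower and obstruction-theoretic horn-filling argument.
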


\begin{proof}
Let us fix a smooth map $f : Z \to Y$.   We have three curved $L_\infty$ algebras $\g_Z$, $f^\ast \g_Y$, and $f^\ast \phi^\ast \g_X$ over $\Omega^\ast_Z$, and a map $f^\ast \g_Y \to f^\ast \phi^\ast \g_X$.  

Let $A_X$ denote the sheaf on $Z$ dga $C^\ast(f^\ast \phi^\ast \g_X)$, where cochains are of course taken over $\Omega^\ast_Z$.  Let $I_X \subset A_X$ denote the ideal generated by $C^{> 0}(f^\ast \phi^\ast \g_X)$ and $\Omega^{> 0}_Z$.  Define $(A_Y, I_Y)$ in the same way, using $f^\ast  \g_Y$ in place of $f^\ast \phi^\ast \g_X$.   Note that there is a map of commutative dgas over $\Omega^\ast_Z$ 
$$
A_X \to A_Y
$$
which takes the ideal $I_X$ to the ideal $I_Y$. 

Let $\op{MC}( A_X \otimes \g_Z)$ denote the simplicial presheaf on $Z$ whose  set of $n$-simplices,  on an open set $U \subset Z$, is the set of Maurer-Cartan elements
$$
\alpha \in A_X(U) \otimes_{\Omega^\ast(U)} \g_Z(U) \otimes_{\R} \Omega^\ast(\tr^n)
$$
which vanish modulo the ideal $I_X (U) \subset A_X(U)$. 

The simplicial set of lifts of the smooth map $f : Z \to Y$ to a map of $L_\infty$ spaces is (by definition) the simplicial set of global sections of the simplicial presheaf  $\op{MC}(A_Y \otimes \g_Z)$.

We need to show that the natural map of simplicial sets
$$
\Gamma(Z, \op{MC}( A_X \otimes \g_Z) ) \to  \Gamma(Z, \op{MC}(A_Y \otimes \g_Z) )
$$
is a weak homotopy equivalence.  

This is proved by Artinian induction.  Let $I_X^n \subset A_X$ denote the $n^{\mathrm{th}}$ power of the ideal $I_X$.  

The first lemma is the following.
\begin{lemma}
The map of simplicial sets
$$
\Gamma(Z, \op{MC}(A_X / I_X^n \otimes \g_Z) ) \to \Gamma(Z, \op{MC}(A_X / I_X^{n-1} \otimes \g_Z)  )
$$
is a fibration. 
\end{lemma}
\begin{proof}
There is a short exact sequence of sheaves of graded vector spaces
$$
I_X^{n-1} / I_X^{n} \otimes \g_Z \to A_X / I_X^n \otimes \g_Z \to A_X / I_X^{n-1} \otimes \g_Z .
$$
Note that $I_X^{n-1} / I_X^n \otimes \g_Z$ is a cochain complex.  So, this exact sequence expresses the curved $L_\infty$ algebra $A_X / I_X^n \otimes \g_Z$ as a central extension of $A_X / I_X^{n-1} \otimes \g_Z$ by the cochain complex $I_X^{n-1} / I_X^n \otimes \g_Z$.  

Because are working in the $\cinfty$ context, this  exact sequence of graded vector spaces splits.  

Now, suppose we have a Maurer-Cartan element 
$$\alpha \in \Gamma(Z, \Omega^\ast(\tr^n ) \otimes A_X / I_X^{n-1} \otimes \g_Z).$$
Let $\til{\alpha}$ be any lift to an element 
$$
\til{\alpha} \in \Gamma(Z, \Omega^\ast(\tr^n ) \otimes A_X / I_X^{n} \otimes \g_Z).
$$

The obstruction to $\til{\alpha}$ satisfying the Maurer-Cartan equation is a cohomology class in 
$$
\Gamma(Z,I_X^{n-1} / I_X^n \otimes \g_Z \otimes \Omega^\ast (\tr^n)).
$$
Now, suppose we know that $\til{\alpha}$ satisfies the Maurer-Cartan equation when restricted to some horn $H \subset \tr_n$.  Then, since the obstruction class vanishes when restricted to the horn.  Since the map $\Omega^\ast(\tr_n) \to \Omega^\ast(H)$ is a quasi-isomorphism, it follows that the obstruction to lifting $\alpha$ also vanishes. 
\end{proof}

\begin{lemma}
The natural map of simplicial presheaves
$$ 
\Gamma(U,\op{MC}(A_Y \otimes \g_Z) ) \to \Gamma(U, \op{MC}(A_X \otimes \g_Z) ))
$$
is a weak homotopy equivalence for all open subsets $U \subset Z$.
\end{lemma}
\begin{proof}
We can write 
$$
\Gamma(U, \op{MC}(A_X \otimes \g_Z) ) = \liminv \Gamma(U, \op{MC}(A_X / I_X^n \otimes \g_Z) )
$$
where the maps in the inverse limit are all fibrations; and similarly for $\Gamma(U, \op{MC}(A_Y \otimes \g_Z) )$.  The map is compatible with the inverse systems, so to check it's a weak equivalence we need only check that the maps
$$
\Gamma(U, \op{MC}(A_X / I_X^n\otimes \g_Z) )
\to \Gamma(U, \op{MC}(A_Y / I_Y^n \otimes \g_Z) )
$$
are weak equivalences.  By induction on $n$, it suffices to verify that we have weak equivalences on the fibres of the maps 
$$
\Gamma(U, \op{MC}(A_X / I_X^n\otimes \g_Z) )
\to \Gamma(U, \op{MC}(A_Y / I_X^{n-1} \otimes \g_Z) ).
$$
The fibres are the Dold-Kan simplicial sets associated to the cochain complexes $\Gamma(U, I_X^{n-1}/ I_X^n \otimes \g_Z)$ and $\Gamma(U, I_Y^{n-1}/ I_Y^n \otimes \g_Z)$.

Now, since the map of sheaves of cochain complexes 
$$
I_X^{n-1}/ I_X^n \to I_Y^{n-1}/ I_Y^n
$$
are, by assumption, homotopy equivalences, the result follows. 
\end{proof}
This proves the first statement of the theorem.  The second statement, that the map
$$
\op{Maps}( (Y, \g_Y) , (Z,\g_Z)) \to \op{Maps}( (X, \g_X) , (Z,\g_Z))
$$
is a weak equivalence, is proved by a similar argument.  

Thus, to complete the proof of the theorem, we need to show the following.
\begin{lemma}
The simplicial presheaf 
$$
U \mapsto \Gamma(U, \op{MC}(A_X \otimes \g_Z) ) 
$$
satisfies \v{C}ech descent.  That is, if $\mc{V} = \{V_i\}$ is an open cover of $U$, then the map
$$
\Gamma(U, \op{MC}(A_X \otimes \g_Z) ) \to \check{C} (\mc{V}, \op{MC}(A_X \otimes \g_Z) )
$$
is a weak homotopy equivalence.
\end{lemma}
\begin{proof}
We have seen that $\op{MC}(A_X \otimes \g_Z) $ is an inverse limit of $\op{MC}(A_X / I_X^n \otimes \g_Z)$, and that the maps in the inverse system are fibrations.  Thus, the \v{C}ech simplicial set $\check{C} (\mc{V}, \op{MC}(A_X \otimes \g_Z) )$ is an inverse limit
$$
\check{C} (\mc{V}, \op{MC}(A_X \otimes \g_Z) ) = \liminv \check{C} (\mc{V}, \op{MC}(A_X /  I_X^n \otimes \g_Z) ),
$$
and again the maps in the inverse system are again all fibrations.   Thus, to prove the lemma, it suffices to verify that the map
$$
\Gamma(U, \op{MC}(I_X^{n-1} / I_X^n \otimes \g_Z) ) \to \check{C} (\mc{V}, \op{MC}(I_X^{n-1}/ I_X^n \otimes \g_Z) )
$$
is a weak equivalence. But, $\op{MC}(I_X^{n-1}/ I_X^n \otimes \g_Z) )$ is the Dold-Kan simplicial presheaf associated to the sheaf of cochain complexes $I_X^{n-1} / I_X^n \otimes \g_Z$.  Since we are working in a $\cinfty$ context, partitions of unity allow one to show as usual that the cohomology with coefficients in the sheaf of complexes $I_X^{n-1}  / I_X^n \otimes \g_Z$ is the same as the cohomology of the global sections of $I_X^{n-1}/ I_X^n \otimes \g_Z$.
\end{proof}
This completes the proof of the theorem. 
\end{proof}

\newpage
\def\cprime{$'$}


\end{document}